\documentclass[10pt,a4paper]{amsart}

\usepackage{amssymb,amsmath,mathrsfs,mathtools}
\usepackage{bbm,bm,esint,combelow}
\usepackage{times}
\usepackage{enumitem}
\usepackage{tikz}
\usetikzlibrary{arrows.meta,calc,decorations.pathreplacing}
\usepackage{float}
\usepackage[left=3.3cm,right=3.3cm,top=4cm,bottom=3.5cm]{geometry}
\usepackage{dsfont}
\usepackage{color}
\usepackage{refcount}
\usepackage[colorlinks=true,linkcolor=blue!60!black,citecolor=green!40!black,urlcolor=blue!60!black,bookmarksnumbered=true,bookmarksdepth=2]{hyperref}

\definecolor{samp}{RGB}{31,87,160}
\definecolor{hlc}{RGB}{204,102,24}
\colorlet{bandc}{samp!13}
\colorlet{fieldc}{black!72}
\colorlet{fieldthin}{black!38}
\colorlet{gridc}{black!22}
\colorlet{basec}{black!55}
\definecolor{conc}{RGB}{160,32,96}

\hypersetup{pdftitle={Geometric rigidity principles for A-free Young measures},
            pdfauthor={Sergio Scalabrino}}

\theoremstyle{plain}
\newtheorem{theorem}{Theorem}[section]
\newtheorem{lemma}[theorem]{Lemma}
\newtheorem{proposition}[theorem]{Proposition}
\newtheorem{corollary}[theorem]{Corollary}
\newtheorem{definition}[theorem]{Definition}
\theoremstyle{definition}
\newtheorem{remark}[theorem]{Remark}
\newtheorem{remo}[theorem]{Remark}
\newtheorem{rmk}[theorem]{Remark}
\newtheorem{example}[theorem]{Example}
\theoremstyle{plain}
\numberwithin{equation}{section}

\newcommand{\N}{\mathbb{N}}
\newcommand{\R}{\mathbb{R}}

\newcommand{\M}{\mathbb{R}^{N \times n}}
\newcommand{\V}{\mathbb V}
\newcommand{\XX}{\mathbb{X}}
\newcommand{\Omegacl}{\overline{\Omega}}

\newcommand{\WC}{\Lambda_{\mathscr{A}}}
\newcommand{\SG}{\mathbb S_{G}}
\newcommand{\BG}{\mathbb B_{G}}
\newcommand{\B}{\mathbb{B}_G}

\newcommand{\HH}{\mathbb{H}}
\newcommand{\E}{\mathbb E}
\newcommand{\G}{\mathbb G}

\newcommand{\Meas}{\mathscr{M}}
\newcommand{\Lebo}{\mathscr{L}^{1}}
\newcommand{\Leb}{\mathscr{L}^{n}}
\newcommand{\Ha}{\mathcal{H}^{n-1}}
\newcommand{\Hm}{\mathcal{H}^{1}}

\newcommand{\CC}{\operatorname{C}}

\newcommand{\LL}{\operatorname{L}}

\newcommand{\WW}{\operatorname{W}}

\newcommand{\bv}{\operatorname{BV}}

\newcommand{\dual}[2]{\langle #1, #2 \rangle}
\newcommand{\ddual}[2]{\langle\!\langle #1, #2 \rangle\!\rangle}
\newcommand{\norm}[1]{\left\lVert#1\right\rVert}
\newcommand{\dd}{\mathrm{d}}
\newcommand{\dif}{\operatorname{d}\!}
\newcommand{\D}{D}
\newcommand{\di}{\operatorname{div}}

\newcommand{\m}{\operatorname{m}}
\newcommand{\wstar}{\stackrel{*}{\rightharpoonup}}
\newcommand{\toY}{\overset{\mathrm{Y}}{\to}}

\newcommand*{\Lcorner}{%
    \mathchoice
        {\mathrel{\makebox[7pt][c]{\rule{.4pt}{7.5pt}\rule{5pt}{.4pt}}}}%
        {\mathrel{\makebox[7pt][c]{\rule{.4pt}{7.5pt}\rule{5pt}{.4pt}}}}%
        {\mathrel{\makebox[5.5pt][c]{\rule{.4pt}{5.25pt}\rule{3.5pt}{.4pt}}}}%
        {\mathrel{\makebox[4pt][c]{\rule{.4pt}{3.75pt}\rule{2.5pt}{.4pt}}}}%
}
\let\llcorner\Lcorner

\DeclareFontFamily{U}{stix2bb}{}
\DeclareFontShape{U}{stix2bb}{m}{n} {<-> stix2-mathbb}{}

\def\Xint#1{\mathchoice
   {\XXint\displaystyle\textstyle{#1}}%
   {\XXint\textstyle\scriptstyle{#1}}%
   {\XXint\scriptstyle\scriptscriptstyle{#1}}%
   {\XXint\scriptscriptstyle\scriptscriptstyle{#1}}%
   \!\int}
\def\XXint#1#2#3{{\setbox0=\hbox{$#1{#2#3}{\int}$}
     \vcenter{\hbox{$#2#3$}}\kern-.5\wd0}}

\def\dashint{\Xint-}

\newcommand{\e}{\varepsilon}
\newcommand{\eps}{\varepsilon}

\newcommand{\tw}{\tilde{w}}

\let\pphi\phi
\let\phi\varphi

\usepackage{etoolbox}
\makeatletter
\newcommand{\@setaffiliation}{%
  \begingroup\centering\footnotesize
  \def\author##1{}%
  \def\address##1##2{\par##2}%
  \def\email##1##2{\par\texttt{##2}}%
  \addresses\par\endgroup
  \global\let\addresses\@empty}
\apptocmd{\@setauthors}{\@setaffiliation}{}{}
\makeatother

\makeatletter
\def\@secref@aux#1.#2\@nil{#1}
\newcommand{\secref}[1]{%
  \refused{#1}%
  \edef\@secref@tmp{\getrefnumber{#1}}%
  \expandafter\@secref@aux\@secref@tmp.\@nil}
\newcommand{\bibalias}[1]{%
  \if@filesw\immediate\write\@auxout{\string\bibcite{#1}{\the\value{\@listctr}}}\fi
  \@ifundefined{hyper@anchorstart}{}{%
    \Hy@raisedlink{\hyper@anchorstart{cite.#1\@extra@b@citeb}\relax\hyper@anchorend}}%
  \ignorespaces}
\makeatother
\begin{document}
\author{Sergio Scalabrino}
\address{SISSA -- International School for Advanced Studies, Via Bonomea 265,
  34136 Trieste, Italy}
\email{sscalabr@sissa.it}
\title{Geometric rigidity principles for $\mathscr{A}$-free Young measures}
\begin{abstract}
We study generalized Young measures generated by $\mathscr{A}$-free
measures, $\mathscr{A}$ a constant-rank operator, in a compactification of
the value space adapted to an energy. Given an $\mathscr{A}$-quasiconvex
integrand $G$ of linear growth with coercive recession function $G^\infty$,
the unit sphere at infinity is replaced by the indicatrix
$\mathbb{S}_G=\{G^\infty=1\}$, which need be neither convex nor symmetric.
The two compactifications are equivalent, but in the adapted one the
equality cases of the Jensen inequalities characterizing $\mathscr{A}$-free
Young measures become statements of convex geometry about $\mathbb{S}_G$. We
prove the characterization in this setting, with a new streamlined proof of the
inhomogenization step based on a dyadic martingale argument. We then show
that if $\mathscr{A}$-free measures $\mu_j\overset{*}{\rightharpoonup}\mu$
and their $G$-energies converge, then at $|\mu^s|$-almost every point the
concentration-angle measure is supported on the smallest exposed face of
$\overline{\mathrm{co}}(\mathbb{S}_G)$ containing the $G$-polar of $\mu^s$;
in particular, it never charges the set where $G^\infty$ differs from its
convex envelope, which, by the automatic convexity of Kirchheim and
Kristensen, does not meet the wave cone. The concentration on the singular
part of $\mu$ is elementary for every such sequence if and only if each
point of the wave cone on $\mathbb{S}_G$ is an extreme point of
$\overline{\mathrm{co}}(\mathbb{S}_G)$, as for the Euclidean norm underlying
Reshetnyak's continuity theorem; on flat faces of $\mathbb{S}_G$, rigidity
degrades from a point to a face. Finally, we prove a transfer
principle for the convergence of energies of $\mathrm{BV}$ maps under
Dirichlet boundary conditions.
\end{abstract}
\maketitle
\setcounter{tocdepth}{1}
\tableofcontents
\newpage
\section{Introduction}\label{intro section}

\subsection{Oscillation, concentration and generalized Young measures}

A large class of variational problems arising in plasticity, fracture mechanics,
phase transitions and image reconstruction leads to integral functionals
\begin{equation}\label{intro:functional}
\mathcal{F}[v]=\int_{\Omega} F(x,v(x))\,\dd x,
\qquad \mathscr{A}v=0,
\end{equation}
whose integrand grows only \emph{linearly} in the second variable. Two
structural difficulties appear simultaneously. First, $\LL^1$ is not a
dual space: a sequence $(v_j)$ with $\sup_j\norm{v_j}_{\LL^1}<\infty$ is
only weakly$^*$ precompact in the space $\mathscr{M}(\Omega,\V)$ of
bounded Radon measures, so that the natural limit object is a measure
and not a function. Second, and more seriously, the nonlinear quantity
$F(x,v_j)$ need not converge to $F(x,v)$ even along a subsequence,
because of two independent and genuinely different mechanisms:
\emph{oscillation}, in which $v_j$ fluctuates on a vanishing spatial
scale with amplitudes of order one, and \emph{concentration}, in which
$|v_j|$ blows up on sets of vanishing Lebesgue measure. The interplay of
the two is precisely what the theory of generalized Young measures is
designed to record.

Classical Young measures, introduced by L.~C.~Young in his theory of
generalized curves \cite{young} and later brought into the theory of
partial differential equations by Tartar in connection with
compensated compactness \cite{tartar,murat}, keep exact track of
oscillations, but they are blind to concentrations. The remedy proposed by DiPerna and Majda
\cite{dipernamajda} in the context of the incompressible Euler equations
is to \emph{compactify the value space}. One selects a complete separable
subring $\mathscr{R}\subset\CC(\V)$, forms the associated compactification
$\beta_{\mathscr{R}}\V$, and represents the limit of the sequence of
elementary Young measures by a positive Radon measure on
$\Omegacl\times\beta_{\mathscr{R}}\V$. The set of admissible integrands,
and hence the amount of information retained in the limit, is dictated
entirely by the choice of $\mathscr{R}$. In the calculus of variations with linear growth one
choice has become canonical, going back to Alibert and Bouchitt\'e
\cite{alibertbouchitte} and taking its definitive form in the work of
Kristensen and Rindler \cite{KristensenRindler10}: the \emph{sphere}
(or radial) compactification
$$
\overline{B}=B\cup\mathbb{S}_{\V},
\qquad
\iota(z)=\frac{z}{1+|z|},
$$
where $B$ and $\mathbb{S}_{\V}$ are the open unit ball and the unit sphere
of $\V$. It is dual to the space of continuous integrands of linear growth
admitting a regular recession function. In this representation a
generalized Young measure is a triple
$\nu=\big((\nu_x)_{x\in\Omega},\lambda,(\nu^\infty_x)_{x\in\Omegacl}\big)$
consisting of an \emph{oscillation} measure $\nu_x\in\mathscr{M}_1^+(\V)$,
a \emph{concentration} measure $\lambda\in\mathscr{M}^+(\Omegacl)$ and a
family of \emph{concentration-angle} measures
$\nu^\infty_x\in\mathscr{M}_1^+(\mathbb{S}_{\V})$ living on the sphere at
infinity.

The central question of the theory is the \emph{characterization
problem}: which triples arise as limits of elementary Young measures
generated by sequences that respect a given differential constraint? For
gradients in the superlinear regime this is the celebrated theorem of
Kinderlehrer and Pedregal \cite{kinderlehrerpedregal,
kinderlehrerpedregal2}, which places gradient Young measures in duality
with quasiconvex integrands of suitable growth through a Hahn--Banach separation argument.
The linear-growth counterpart, for $\WW^{1,1}$- and $\bv$-derivatives,
is due to Kristensen and Rindler \cite{KristensenRindler10} and was generalized to arbitrary separable compactifications by Seneci \cite{Seneci}; the version
for symmetric gradients and $\mathrm{BD}$ is due to De Philippis and
Rindler \cite{dephilippisrindler-bd}; the $\mathscr{A}$-quasiconvex
setting was developed by Fonseca and M\"uller \cite{FonsecaMuller99} in the case of classical Young measures and,
in the generalized Young measure framework, by Ba\'{\i}a, Matias and
Santos \cite{baiamatiassantos} and Ba\'{\i}a, Chermisi, Matias and Santos
\cite{baiachermisimatiassantos}; finally, for $\mathscr{A}$-free measures
with $\mathscr{A}$ of arbitrary order, complete characterizations were
obtained by Arroyo-Rabasa \cite{ArroyoRabasa21} and by Kristensen and
Rai\cb{t}\u{a} \cite{KristensenRaita22}, whose framework is the one we
follow most closely. Two results have been decisive in making the singular
part of the theory tractable: Alberti's rank-one theorem \cite{Alberti} and
its $\mathscr{A}$-free generalization by De Philippis and Rindler
\cite{DePhilippisRindler16}, which force the polar of the singular part of
an $\mathscr{A}$-free measure to lie in the wave cone $\WC$, and the authomatic convexity result of Kirchheim
and Kristensen \cite{KirchheimKristensen16}. The
existence of potentials $\mathscr{B}$ for constant-rank operators, proved
by Rai\cb{t}\u{a} \cite{raita potentials}, allows a unified exposition.

\subsection{A compactification adapted to the energy}

All of the above, except Seneci \cite{Seneci}, is carried out in the radial compactification, that is,
with respect to the Euclidean norm on $\V$. There is, however, nothing
canonical about this choice, and DiPerna and Majda's original formulation
makes this explicit. The purpose of the present work is to develop the
theory of generalized $\mathscr{A}$-free Young measures in a
compactification \emph{adapted to the energy density under
consideration}, and to show that several statements which look
accidental in the radial picture become transparent --- and strictly
sharper --- in the adapted one.

Concretely, we fix an $\mathscr{A}$-quasiconvex integrand
$G\colon\V\to\R$ of linear growth whose recession function
$$
G^\infty(z)=\lim_{t\to\infty}\frac{G(tz)}{t}
$$
exists locally uniformly in $z$, and we assume that $G^\infty$ is
coercive, $c|z|\le G^\infty(z)\le C|z|$; equivalently, the open sublevel
set
$$
\BG:=\{G^\infty<1\}
$$
is bounded and contains a neighbourhood of the origin (the precise
standing assumptions \textup{(A1)}--\textup{(A3)} are listed in
Section~\ref{notation section}). Since $G^\infty$ is positively
$1$-homogeneous, these assumptions say exactly that $G^\infty$ is a
\emph{gauge} on $\V$: it is positive away from the origin,
$1$-homogeneous, and its indicatrix
$$
\SG:=\{G^\infty=1\}=\partial\BG
$$
is a compact set meeting every ray from $0$ exactly once. What $G^\infty$
need \emph{not} be is convex, or symmetric; by the results of Kirchheim
and Kristensen \cite{KirchheimKristensen16} a positively $1$-homogeneous $\WC$-convex integrand is
convex at wave-cone directions, but it is not convex in general
\cite{Muller92}, and $\SG$
may well have concave portions, flat faces and corners.
Figure~\ref{support sets for nu_x infty} is meant to be read with this in
mind. The non-convexity is, however, confined: the \emph{dent} $N_G$, that
is, the set of points of $\SG$ at which $G^\infty$ differs from its convex
envelope, does not meet the wave cone (Definition~\ref{def:dent} and
Lemma~\ref{lem:dent}). This fact will be decisive below.

In Section~\ref{compactification section} we show that a continuous
integrand $\Phi\colon\V\to\R$ has linear growth and a regular recession
function --- we write $\Phi\in\HH$ --- precisely when the transformed
integrand
$$
T\Phi(\hat z)=\big(1-G^\infty(\hat z)\big)\,
\Phi\!\left(\frac{\hat z}{1-G^\infty(\hat z)}\right),
\qquad \hat z\in\BG,
$$
extends continuously to $\overline{\BG}=\BG\cup\SG$, with boundary values
$\Phi^\infty$; here $\hat z\mapsto\hat z/(1-G^\infty(\hat z))$ is the
inverse of the homeomorphism $\iota_G(z):=z/(1+G^\infty(z))$ of $\V$ onto
$\BG$. Thus $T\colon\HH\to\CC(\overline{\BG})$ is an isometric
isomorphism, the Riesz representation theorem identifies $\HH^*$ with
$\mathscr{M}(\overline{\BG})$, and the whole duality machinery of the
theory goes through verbatim with the Euclidean unit ball $B$ replaced by
$\BG$ and the unit sphere $\mathbb{S}_{\V}$ replaced by $\SG$. For
$G^\infty=|\cdot|$ one recovers the usual radial compactification,
$\iota_G=\iota$. In Section~\secref{rmk:transfer} the same objects are
reached from the opposite direction: the triple representing a Young
measure in the $G$-normalisation is computed from its spherical triple
of \cite{KristensenRindler10} by explicit transfer formulas
(Lemma~\ref{rmk:transfer}), which gives a second proof, by a different
route, of the representation of Young measures in the $G$-compactification
(Lemma~\ref{YM measure theoretic definition}, proved in the Appendix by
disintegration).

It should be said plainly that nothing is gained at the abstract level:
$\overline{\BG}$ and the closed Euclidean ball are equivalent
compactifications of $\V$ (Remark~\ref{rmk:compact}), the two generate the
same admissible class $\HH$, and the two theories are isomorphic. What
changes is the \emph{bookkeeping}, and the bookkeeping is where the
geometry lives. Three consequences are worth isolating.

\begin{enumerate}[label=(\alph*)]
\item The growth condition, the first-moment condition on the
oscillation measure and the normalization
$\int_{\SG}G^\infty\dif\nu^\infty_x=1$ (see \eqref{eq:unitmass}) are all
expressed through the same gauge, so that the energy $G$ enters the
definition of the Young measure rather than being tested against it
afterwards.

\item The concentration-angle measures are probability measures on the
\emph{indicatrix of the energy}, $\SG=\{G^\infty=1\}$. Their barycentres
$\overline{\nu}^\infty_x$ therefore carry intrinsic information: they
always lie in $\overline{\mathrm{co}}(\SG)$, and the rigidity results of
Sections~\ref{convergence of energy section}--\ref{G strict rigidity principle}
turn on \emph{where} in that convex body they happen to sit.

\item Equality cases in the Jensen inequalities that characterize
$\mathscr{A}$-free Young measures become statements of convex geometry
about $\SG$, rather than statements about an auxiliary sphere with no
relation to the functional. This is an important point of the paper, and we expand
on it in \S\ref{intro:energy} below.
\end{enumerate}

The nearest precedent we are aware of is Delladio's notion of
$\Psi$-strict convergence of measures \cite{delladio}, where a convex,
positively $1$-homogeneous $\Psi$ replaces the total variation in
Reshetnyak's theorems, and lower semicontinuity and continuity are
investigated relative to $\Psi$. Our framework differs in two respects:
$G^\infty$ is only assumed $\mathscr{A}$-quasiconvex, hence possibly
non-convex, and the analysis is carried out at the level of Young
measures, which allows us to describe not only when rigidity holds but
also, and with the same geometric criterion, how it fails. A different kind of rigidity, imposed by the differential constraint alone
rather than by the convergence of energies, is the subject of
\cite{ArroyoRabasaAussedat26}, where the rigidity of
\cite{DePhilippisRindler16} is quantified through a spreading inequality
which prevents the concentration angles of $\mathscr{A}$-free measures from
clustering near directions outside $\WC$.

\subsection{Organization and main results}

Section~\ref{notation section} fixes the notation and the standing
assumptions \textup{(A1)}--\textup{(A3)} on $\mathscr{A}$ and $G$.
Section~\ref{compactification section} constructs the $G$-compactification:
the spaces $\HH$ and $\E_G(\Omega,\V)$ of admissible integrands, the
isometries $T$ onto spaces of continuous functions, the identification of
the dual spaces with spaces of measures, the Kantorovich metric on the
positive cones (which makes diagonal extraction arguments legitimate, as
in \cite{KristensenRindler10}), and finally generalized Young measures,
together with their representation by triples
$((\nu_x),\lambda,(\nu^\infty_x))$ satisfying the moment condition
$\int_\Omega\langle G^\infty,\nu_x\rangle\dd x<\infty$
(Lemma~\ref{YM measure theoretic definition}).

Section~\ref{approximation qc function section} collects the properties of
$\mathscr{A}$-quasiconvex integrands of linear growth that are used
throughout, and contains the approximation lemma on which all duality
arguments rest: each $\mathscr{A}$-quasiconvex integrand $f$ of linear
growth is the pointwise decreasing limit of the equi-Lipschitz family
$$
g_\delta=\max\big\{f,\,f^\infty+\delta G^\infty-\tfrac1\delta\big\},
\qquad \delta\searrow0,
$$
which coincides with $f^\infty+\delta G^\infty-\tfrac1\delta$ outside a
sublevel set of $G^\infty$ and satisfies $g_\delta^\infty\searrow f^\infty$.
This is the $G$-analogue of \cite[Lemma~4.3]{KristensenRaita22}, the
$\mathscr{A}$-free version of the corresponding result for gradients
\cite[Lemma~6.3]{KirchheimKristensen16}; see also
\cite[Lemma~5]{KristensenRindler10}, which goes back to
\cite[Lemma~2.3]{alibertbouchitte}; it also shows that the
Jensen inequalities below need only be tested on integrands which coincide
with their recession function outside a ball
(Corollary~\ref{special integrands corollary}).

Section~\ref{hahn banach section} establishes the homogeneous
characterization: a pair
$(\nu^0,\nu^\infty)\in\mathscr{M}_1^+(\V)\times\mathscr{M}^+(\SG)$ belongs
to the set $\mathbf{Y}^{\mathscr{A}}_z$ of homogeneous $\mathscr{A}$-free
Young measures with barycentre $z$ if and only if
$\overline{\nu^0}+\overline{\nu^\infty}=z$ and the Jensen inequality
$$
\int_\V f\dif\nu^0+\int_{\SG}f^\infty\dif\nu^\infty\ \geq\ f(z)
$$
holds for every $\mathscr{A}$-quasiconvex $f$ of linear growth. The proof
follows the Kinderlehrer--Pedregal scheme \cite{kinderlehrerpedregal}, with the necessary modifications for the measure theoretic framework from \cite{KristensenRaita22}:
weak$^*$ closedness and convexity of $\mathbf{Y}^{\mathscr{A}}_z$, followed
by Hahn--Banach separation.

Sections~\secref{sufficiency proposition}--\secref{lemma: CD_f as sup of differentiability point}
contain the characterization of $\mathscr{A}$-free Young measures in the
$G$-compactification. Proposition~\ref{sufficiency proposition} shows that
if $v_j\wstar v$ in $\mathscr{M}(\Omega,\V)$ with
$\mathscr{A}v_j\to\mathscr{A}v$ in $\WW^{-k,p}_{\mathrm{loc}}$ for some
$p\in(1,\frac{n}{n-1})$, and $v_j\toY\nu$, then the Jensen inequalities
hold at $\Leb$-a.e.\ point and the concentration-angle barycentres obey
$$
b_G\,\overline{\nu}^\infty_x
=p_G\in\WC\cap\SG
\quad |v^s|_G\text{-a.e.},
\qquad
\overline{\nu}^\infty_x=0
\quad \lambda^*\text{-a.e.},
$$
where $p_G$ is the $G$-polar of $v^s$ and $\lambda^s=b_G|v^s|_G+\lambda^*$
is the decomposition of $\lambda^s$ with respect to the $G$-variation of
$v^s$; moreover $b_G\ge1$, so that $\overline{\nu}^\infty_x$ lies on the
segment from $0$ to $p_G(x)$ and reaches $\SG$ exactly where $b_G=1$.
The argument combines localization, mollification to area-strict
convergence, Murat's inequality \cite{KristensenRaita22,murat} with Rai\cb{t}\u{a}'s
potentials \cite{raita potentials}, the automatic convexity of Kirchheim-Kristensen \cite{KirchheimKristensen16} and the De Philippis--Rindler theorem
\cite{DePhilippisRindler16}. Section~\ref{inhomogenization section} proves
the converse by a process that, following \cite{KristensenRaita22}, is called inhomogenization, separately for the singular and the
absolutely continuous parts; in place of the Lusin-type approximation of
\cite{KristensenRaita22} we use a dyadic martingale
argument, and for the singular part a Jensen inequality for averages of
concentration-angle measures (Lemma~\ref{integral jensen inequality lemma}),
which rests on the automatic convexity of \cite{KirchheimKristensen16}.
Section~\secref{lemma: CD_f as sup of differentiability point}
characterizes the concentration part alone through the convex deficiency
integrand $\mathrm{CD}_f$, as in \cite{KristensenRaita22}.

Sections~\ref{convergence of energy section}--\ref{G strict rigidity principle}
contain what we regard as the main new results; they are discussed at
length in \S\ref{intro:energy} below. Section~\ref{convergence of energy section}
sets up the convergence of energy principle and describes the three
possible positions of the concentration-angle barycentres;
Section~\secref{lem:dent} revisits the $G$-compactification starting from
the spherical one, introduces the contact set, the dent and the faces of
the indicatrix, and shows that the dent avoids the wave cone;
Section~\ref{G strict rigidity principle} proves the $G$-strict rigidity
principle and discusses its sharpness. Section~\ref{transfer principle section} is devoted to a transfer
principle for the convergence of energies under Dirichlet boundary
conditions. The Appendix
(Section~\ref{appendix section}) contains the proof of
Lemma~\ref{YM measure theoretic definition} and the facts on the
Kantorovich norm used throughout.

\subsection{Convergence of energies: from Reshetnyak's theorem to a
geometric rigidity principle}\label{intro:energy}

The oldest result of the kind we are after is Reshetnyak's continuity
theorem \cite{reshetnyak} (see also \cite[Thm.~2.39]{AFP},
\cite{goffmanserrin,spector}): if $\mu_j\wstar\mu$ in
$\mathscr{M}(\Omega,\V)$ \emph{and} $|\mu_j|(\Omega)\to|\mu|(\Omega)$,
then
$$
\int_{\Omega} f\!\left(\frac{\dif\mu_j}{\dif|\mu_j|}\right)\dif|\mu_j|
\ \longrightarrow\
\int_{\Omega} f\!\left(\frac{\dif\mu}{\dif|\mu|}\right)\dif|\mu|
$$
for every continuous, positively $1$-homogeneous $f$. In Young measure
language the hypothesis of strict convergence is exactly a
\emph{convergence of energies} statement for the particular energy
$G=|\cdot|$, and the conclusion is that, \emph{on positively
$1$-homogeneous integrands}, the generated Young measure acts as the
elementary one: for every continuous, positively $1$-homogeneous $f$,
$$
\int_\Omega\dual{f}{\nu_x}\dif x+\int_\Omega\dual{f}{\nu^\infty_x}\dif\lambda
=\int_\Omega f\Bigl(\frac{\dif\mu}{\dif|\mu|}\Bigr)\dif|\mu| .
$$
This does not make $\nu$ elementary. On the singular part it is, since
$\lambda^s=|\mu^s|$ and
$\nu^\infty_x=\delta_{\frac{\dif\mu^s}{\dif|\mu^s|}(x)}$: no concentration
angles survive. At Lebesgue points, however, oscillation and diffuse
concentration may survive along the half-line through $\mu^a(x)$, which
such integrands cannot detect; $\nu_x=\delta_{\mu^a(x)}$ and $\lambda^a=0$
hold exactly when the convergence is area-strict.
Example~\ref{ex:reshetnyak} recovers this statement as the
special case $G=G^\infty=|\cdot|$ of our framework, and the proof isolates
the mechanism: convergence of energies forces equality in the Jensen
inequalities of Proposition~\ref{sufficiency proposition}, and the
Euclidean sphere is \emph{strictly convex}, so that a probability measure
on $\mathbb{S}_{\V}$ whose barycentre returns to $\mathbb{S}_{\V}$ must be
a Dirac mass.

Once the mechanism is isolated, the natural question is what survives
when $|\cdot|$ is replaced by a general energy. We therefore consider the
following \emph{energy-convergence principle}. Suppose
\begin{equation}\label{intro:hyp}
\mu_j\wstar\mu \ \text{ in } \mathscr{M}(\Omega,\V),
\qquad
\mathscr{A}\mu_j=0,
\qquad
\mathcal{G}(\mu_j)\longrightarrow\mathcal{G}(\mu),
\end{equation}
where $\mathcal{G}(\mu)=\int_\Omega G(\mu)$ is understood in the
Goffman--Serrin sense \cite{goffmanserrin},
$$
\mathcal{G}(\mu)=\int_\Omega G(\mu^a)\dd x
+\int_\Omega G^\infty\Big(\frac{\dif\mu^s}{\dif|\mu^s|}\Big)\dif|\mu^s| ;
$$
in the terminology of Definition~\ref{def:Gvar}, $\mu_j\to\mu$
\emph{$G$-strictly}. (In Section~\ref{convergence of energy section} the
constraint is relaxed to $\mathscr{A}v_j\to\mathscr{A}v$ in
$\WW^{-k,p}_{\mathrm{loc}}(\Omega,\mathbb{W})$.) What does
\eqref{intro:hyp} say about the Young measures $\nu$ generated by
$(\mu_j)$?

The answer is governed by the facial structure of the convex body
$\overline{\mathrm{co}}(\SG)=\{G^{\infty\ast\ast}\le1\}$, where
$G^{\infty\ast\ast}$ is the convex envelope of $G^\infty$. Combining
\eqref{intro:hyp} with the Jensen inequalities turns those inequalities
into equalities, which yields
$$
\langle G,\nu_x\rangle+\lambda^a(x)\langle G^\infty,\nu^\infty_x\rangle
=G\big(\overline{\nu}_x+\lambda^a(x)\overline{\nu}^\infty_x\big)
\quad\Leb\text{-a.e.},
\qquad
\langle G^\infty,\nu^\infty_x\rangle=G^\infty(\overline{\nu}^\infty_x)
\quad |\mu^s|\text{-a.e.},
$$
and shows that the singular part of the concentration measure is exactly
the $G$-variation of $\mu^s$ (Proposition~\ref{prop:rigidity}(i)). Since
$G^\infty\equiv1$ on $\SG$ and $\nu^\infty_x$ is a probability measure
there, the second identity reads
$$
G^\infty(\overline{\nu}^\infty_x)=1,
\qquad\text{that is}\qquad
\overline{\nu}^\infty_x\in\SG :
$$
the barycentre of a probability measure carried by the indicatrix lands
back on the indicatrix; in fact $\overline{\nu}^\infty_x=p_G(x)$ is the
point where the ray through $\frac{\dif\mu^s}{\dif|\mu^s|}(x)$ meets $\SG$.
A priori, three mutually exclusive situations may occur for
$\theta_x:=\overline{\nu}^\infty_x$
(Figure~\ref{support sets for nu_x infty}):

\begin{enumerate}[label=\emph{Case~\arabic*.}, leftmargin=*]
\item $\theta_x$ is an \emph{extreme point} of
$\overline{\mathrm{co}}(\SG)$ --- a point of strict convexity of
$G^\infty$. Then no nontrivial convex combination can reproduce it, and
necessarily $\nu^\infty_x=\delta_{\theta_x}$. This is exactly Reshetnyak
rigidity, localized at $x$.

\item $\theta_x$ lies on a proper \emph{face} of
$\overline{\mathrm{co}}(\SG)$ without being extreme. Then
$\nu^\infty_x$ is supported in the set $F_{\theta_x}$ of
Definition~\ref{def:face}, the trace on $\SG$ of the smallest exposed face
containing $\theta_x$, and no further restriction
holds: rigidity degrades from a point to a face. Case~1 is
characterized by extremality, whereas $F_{\theta}=\{\theta\}$ holds
precisely at the (strictly stronger) \emph{exposed} points
(Remark~\ref{rmk:sharp}); by Straszewicz's theorem the exposed points are
dense in the extreme points of a compact convex set, see
\cite[\S1.4]{schneider}.

\item $\theta_x$ lies in the interior of
$\overline{\mathrm{co}}(\SG)$, which can happen precisely at the points of
the dent $N_G$, where $G^\infty$ is not convex. There convergence of
energies would impose \emph{no} constraint on the support of
$\nu^\infty_x$.
\end{enumerate}

The third case, however, never occurs. By the De Philippis--Rindler
theorem \cite{DePhilippisRindler16} the polar of $\mu^s$, and hence
$\theta_x$, lies in the wave cone $\WC$, while by the automatic convexity
of Kirchheim and Kristensen \cite{KirchheimKristensen16} the dent does not
meet $\WC$ (Lemma~\ref{lem:dent}). This is the content of our main
result, Proposition~\ref{prop:rigidity}: if $\Omega$ is a Lipschitz domain,
no concentration occurs at $\partial\Omega$, and \eqref{intro:hyp} holds,
then
$$
\theta_x=p_G(x)\in\WC\cap\SG
\qquad\text{and}\qquad
\operatorname{supp}\nu^\infty_x\subseteq F_{\theta_x}\subseteq K_G\cap\SG
\qquad\text{for }|\mu^s|\text{-a.e.\ }x,
$$
where $K_G$ is the contact set of Definition~\ref{def:dent}; the
concentration of $\nu$ on the singular part of $\mu$ never charges the
dent. If, moreover, every $\theta\in\WC\cap\SG$ is an extreme point of
$\overline{\mathrm{co}}(\SG)$ (condition \eqref{eq:star}), then
$\nu^\infty_x=\delta_{p_G(x)}$ and no Euclidean mass is lost on the
singular part of $\mu$. Condition \eqref{eq:star} is also necessary
(Remark~\ref{rmk:sharp}), and the remaining two cases genuinely occur: for
the convex gauge $G^\infty(A)=|Ae_1|+|Ae_2|$ on $\R^{N\times2}$, whose
unit ball has flat faces, Example~\ref{ex:staircase} exhibits
$G$-strictly convergent gradients whose concentration-angle measure is a
nontrivial convex combination of two points of a face, and
Example~\ref{ex:transversal} shows that rigidity fails even on a face
transversal to the wave cone. The convex-geometric vocabulary of extreme,
exposed and supporting-hyperplane structure --- for which we refer to
Schneider \cite{schneider} --- is thus not decorative: it is the exact
language in which the rigidity of \eqref{intro:hyp} is expressed.
All of this concerns the singular part of $\mu$, whose polar lies in $\WC$.
At points of diffuse concentration, where $\lambda^a(x)>0$, the barycentre
need not lie in the wave cone, and there automatic convexity has no
counterpart: Cassese \cite[Cor.~1.6]{Cassese26} has recently constructed a
positively $1$-homogeneous rank-one convex integrand on $\R^{4\times4}$ that
is not even quasiconvex at a matrix of rank two. We discuss the consequences
for diffuse concentration in Remark~\ref{rmk:diffuse}. In the absence of
oscillation they are harmless: by the decomposition of concentrations due to
Gennaioli and Rindler \cite{GennaioliRindler26}, diffuse concentration does
not charge the dent either (Proposition~\ref{prop:diffuse}).

This should be compared with a classical circle of results in $\LL^1$.
Visintin \cite{visintin} proved that if $u_j\rightharpoonup u$ weakly in
$\LL^1$ and $u(x)$ is an extreme point of the closed convex hull of the
asymptotic values of $(u_j)$ at $x$, then $u_j\to u$ strongly. Balder
\cite{balder,balder2} and Valadier \cite{valadier} recast this in Young
measure terms, where it becomes the statement that $\nu_x=\delta_{u(x)}$
as soon as $u(x)=\overline{\nu}_x$ is extreme in
$\overline{\mathrm{co}}(\mathrm{supp}\,\nu_x)$. The principle described
above is precisely this extremality mechanism, transplanted from the
oscillation measure $\nu_x$, where it constrains values, to the
concentration-angle measure $\nu^\infty_x$, where it constrains
directions of blow-up --- and with the abstract convex hull of the
support replaced by the concrete convex body generated by the energy.
Proposition~\ref{C closed set and supp(nu_x^infty) subset C_G cap S_G}
makes the analogy complete in the other direction: if the generating
sequence is constrained to take values in a closed set $C\subset\V$, then
$\mathrm{supp}\,\nu_x\subset C$ and
$\mathrm{supp}\,\nu^\infty_x\subset\widetilde{C}_G\cap\SG$, where
$\widetilde{C}_G=\overline{\iota_G(C)}$; this is the generalized-Young-measure
form of the support constraints underlying Tartar's programme on
differential inclusions \cite{tartar,murat}.

In the superlinear regime the analogous principle is well understood and
takes a different shape. Evans and Gariepy
\cite{evans gariepy some remarks} observed that if $F=G+\lambda|\cdot|^q$
with $G$ quasiconvex, $1<q<\infty$, then weak convergence in
$\WW^{1,q}$ together with $I[u_k]\to I[u]$ forces \emph{strong}
convergence, the proof being one line of lower semicontinuity (Morrey
\cite{morrey}, Acerbi--Fusco \cite{acerbifusco}) followed by the uniform
convexity of $\LL^q$. The same idea, in the guise of \emph{strict} or
\emph{uniformly strict} quasiconvexity, goes back to Ball and Murat
\cite{ballmurat} and is central in Evans's partial regularity theory
\cite{evans86}; see also Ball and Zhang \cite{ballzhang} for the
biting-lemma refinements and Bene\v{s}ov\'a and Kru\v{z}\'{\i}k
\cite{benesovakruzik} for a survey. Remark~\ref{evans gariepy connection}
recalls this argument. The linear-growth situation is genuinely harder,
for the simple reason that $\LL^1$ has no uniform convexity to exploit and
that the correct substitute for strong convergence is not norm
convergence but \emph{area-strict} convergence,
$\langle v_j\rangle(\Omega)\to\langle v\rangle(\Omega)$ with
$\langle z\rangle=\sqrt{1+|z|^2}$ --- the notion under which Reshetnyak
continuity does hold for integrands with regular recession, see
\cite[Thm.~2.39]{AFP}, \cite{KristensenRindler10,rindlerbook} and, for the
$\mathscr{A}$-free approximation theory,
\cite{ArroyoRabasa21,ArroyoRabasaDePhilippisRindler20}.
While in the case of $F$ strictly convex, coupled with no concentration
 occurring at $\partial\Omega$, the convergence of energies upgrades
weak$^*$ convergence to area-strict convergence, for non-convex energies
the oscillation part is not controlled by the energy alone; see the remark
\emph{What is not controlled} following Proposition~\ref{prop:rigidity}.

Finally, Section~\ref{transfer principle section} contains a
\emph{transfer} principle, which is the linear-growth analogue of the
Evans--Gariepy splitting. Let $G$ be quasiconvex with regular recession,
let $F$ be continuous of linear growth with generalized recession function
$F^{\#}$ (see the definition in \cite{fonsecamueller-bv, Muller92}), assume that $F-G$ is quasiconvex, and assume the nondegeneracy
condition $F^{\#}>0$ on $\M\setminus\{0\}$. Let $g\in\bv(\R^n,\R^N)$ with
$|D^sg|(\partial\Omega)=0$, for instance $g\in W^{1,1}(\R^n,\R^N)$. Then, for every
sequence $(u_j)\subset\bv_g(\Omega,\R^N)$, that is, $u_j-g$ belongs to the closure of
$\CC^\infty_c(\Omega,\R^N)$ under strict convergence, so that $u_j=g$ on
$\partial\Omega$ in the sense of rough traces, such that $u_j\wstar u$ in
$\bv(\Omega,\R^N)$, where a priori the limit $u$ need not belong to
$\bv_g(\Omega,\R^N)$, we have
$$
\lim_{j\to\infty}\int_\Omega F(Du_j)=\int_\Omega F(Du)
\qquad\Longrightarrow\qquad
\lim_{j\to\infty}\int_\Omega G(Du_j)=\int_\Omega G(Du),
$$
and moreover the extension of $u-g$ by zero does not charge $\partial\Omega$:
no boundary datum is lost in the limit.
The structure of the argument is instructive: after extending $u_j$ and
$u$ by the datum $g$ to a slightly larger Lipschitz domain
$\Omega'\Supset\Omega$, one finds that $D\tilde u_j$ splits exactly,
whereas $D\tilde u$ acquires an extra finite measure $\mu$ concentrated
on $\partial\Omega$ which measures the loss of boundary data in the
weak$^*$ limit. Lower semicontinuity of $F$ on $\Omega'$, in the signed form
with fixed boundary values of Kristensen and Rindler
\cite{kristensenrindler-relax}, then yields $\int_{\partial\Omega}F(\mu)\leq0$, and
the key step is to identify $\mu$, in terms of Young measures, as the barycenter
of the part of the concentration of $(Du_j)$ that escapes to $\partial\Omega$,
and to show that it is singular with respect to $\Leb$, so that $F$ acts on
it through $F^{\#}$ and the sign condition $F^{\#}>0$ forces $\mu=0$; lower
semicontinuity of $F-G$ then gives the convergence of the $G$-energies. If
$F$ has a regular recession function as well, a direct Young measure argument
removes the assumption on $g$. The assumption that $F-G$ is quasiconvex therefore
plays the role that the uniformly convex perturbation $\lambda|\cdot|^q$
plays in \cite{evans gariepy some remarks}: it makes the energy $F$
strictly stronger than $G$ in a way that survives the passage to the
limit. We remark that Example~\ref{example u limit not in bv_g} again shows the
well-known loss of boundary data to be a real phenomenon and not a
technicality, although under the assumptions of the transfer principle it is
ruled out a posteriori.

\subsection{Boundary concentrations}

The measure $\mu$ just described is the trace of a well-known difficulty:
for functionals of linear growth, Dirichlet conditions are not stable
under weak$^*$ convergence, and the relaxed functional on $\bv$ carries
an extra surface term accounting for the discrepancy between the inner trace
of $u$ and the prescribed datum $g$ --- for Lipschitz $\Omega$
$$
\int_{\partial\Omega}F^{\#}\big((g-u)\otimes\nu_\Omega\big)\dif\Ha ,
$$
which is precisely the energy of $\mu=(g-u)\otimes\nu_\Omega\,\Ha\Lcorner\partial\Omega$.
This is classical for the area functional and for convex integrands, see
Giusti \cite{giusti}, Anzellotti and Giaquinta
\cite{anzellottigiaquinta} and Giaquinta, Modica and Sou\v{c}ek
\cite{giaquintamodicasoucek}, and it was extended to the quasiconvex
setting by Ambrosio and Dal Maso \cite{ambrosiodalmaso}, Fonseca and
M\"uller \cite{fonsecamueller-bv} and, in Young measure form, by
Kristensen and Rindler \cite{kristensenrindler-relax,KristensenRindler10}
and by Rindler \cite{rindler-noalberti}.

\section{Notation and standing assumptions}\label{notation section}

\subsection{General notation}

Throughout, $\Omega\subset\R^n$ is a bounded open set, $\Omegacl$ is its
closure, and $\Omega'\Subset\Omega$ means that $\overline{\Omega'}$ is a
compact subset of $\Omega$. We write $\XX:=(-\frac12,\frac12)^n$ for the
open unit cube; a cube is a set $Q=x_Q+r\XX$ (half-open cubes are used in
Section~\ref{inhomogenization section}) with side length $l(Q)=r$, and
$\dashint_Q g\dd x:=|Q|^{-1}\int_Q g\dd x$, where $|Q|=\Leb(Q)$. On $\R^n$
we use the Euclidean norm $|x|$ and the $\ell^\infty$-norm
$\|x\|:=\max_i|x_i|$. The spaces $\V$ and $\mathbb{W}$ are finite
dimensional real inner product spaces, with scalar product $z\cdot w$ and
norm $|\cdot|$; $B$ and $\mathbb{S}_{\V}=\partial B$ are the open unit ball
and the unit sphere of $\V$. When $\V=\M$ is the space of real $N\times n$
matrices we use the Frobenius norm and $a\otimes b:=ab^{\mathsf T}$; in
this case $\mathbb{S}_{\V}=\mathbb{S}^{N\times n-1}$.

\emph{Measures.} For a locally compact separable metric space $X$,
$\mathscr{M}(X,\V)$ is the space of bounded $\V$-valued Radon measures on
$X$, identified with the dual of $\CC_0(X,\V)$; $\mathscr{M}(X)$,
$\mathscr{M}^+(X)$ and $\mathscr{M}^+_1(X)$ are the bounded signed, the
positive and the probability Radon measures on $X$. For
$\mu\in\mathscr{M}(X,\V)$, $|\mu|$ is its total variation, $\mu\Lcorner A$
its restriction to a Borel set $A$, $F_\#\mu$ its push-forward under a
Borel map $F$, and $\frac{\dif\mu}{\dif|\mu|}$ its polar, so that
$\mu=\frac{\dif\mu}{\dif|\mu|}|\mu|$ and
$\big|\frac{\dif\mu}{\dif|\mu|}\big|=1$ $|\mu|$-a.e.\ \cite[Cor.~1.29]{AFP}.
$\Leb$ is the Lebesgue measure on $\R^n$, and $\mathcal{H}^{s}$ is the
$s$-dimensional Hausdorff measure. The Lebesgue--Radon--Nikod\'ym
decomposition of $\mu\in\mathscr{M}(\Omega,\V)$ with respect to $\Leb$ is
written
$$
\mu=\mu^a\Leb\Lcorner\Omega+\mu^s,
\qquad
\mu^a=\frac{\dif\mu}{\dif\Leb}\in\LL^1(\Omega,\V),
\qquad \mu^s\perp\Leb ;
$$
we also write $\frac{\dif\mu}{\dif x}$ for $\mu^a$, $|\mu|^s=|\mu^s|$, and
$\frac{\mu^s}{|\mu^s|}$ or $\frac{\mu^s}{|\mu|^s}$ for the polar
$\frac{\dif\mu^s}{\dif|\mu^s|}$ of the singular part. More generally, for
$\lambda,\rho\in\mathscr{M}^+(\Omegacl)$ we write
$\lambda=\frac{\lambda}{\rho}\,\rho+\lambda^*$, where
$\frac{\lambda}{\rho}=\frac{\dif\lambda}{\dif\rho}$ and
$\lambda^*\perp\rho$. If $\sigma\in\mathscr{M}^+(\V)$ and
$\int|z|\dif\sigma<\infty$, its barycentre is
$\overline{\sigma}:=\int_\V z\dif\sigma(z)$, and for any integrable $f$ we
write $\langle f,\sigma\rangle=\langle\sigma,f\rangle:=\int f\dif\sigma$.

\emph{Convergence of measures.} We write $\mu_j\wstar\mu$ in
$\mathscr{M}(\Omega,\V)$ for weak$^*$ convergence in duality with
$\CC_0(\Omega,\V)$. The convergence is \emph{strict} if moreover
$|\mu_j|(\Omega)\to|\mu|(\Omega)$, and it is \emph{area-strict} (or
$\langle\cdot\rangle$-strict) if moreover
$\langle\mu_j\rangle(\Omega)\to\langle\mu\rangle(\Omega)$, where
$\langle z\rangle:=\sqrt{1+|z|^2}$ and
$\langle\mu\rangle(A):=\int_A\langle\mu^a\rangle\dd x+|\mu^s|(A)$
\cite{AFP,KristensenRindler10}. The $G$-strict convergence is introduced in
Definition~\ref{def:Gvar}.

\emph{Functions.} $\CC(X)$, $\CC_b(X)$, $\CC_c(X)$, $\CC_0(X)$ and
$\CC^\infty_c(X)$ have the usual meaning, and $\LL^p$, $\WW^{k,p}$,
$\WW^{k,p}_0$, $\bv$ denote the Lebesgue, Sobolev and BV spaces.
$\WW^{-k,p}(\R^n,\mathbb{W})$ is the dual of
$\WW^{k,p'}_0(\R^n,\mathbb{W})$, $\frac1p+\frac1{p'}=1$, and we write
$w_j\to w$ in $\WW^{-k,p}_{\mathrm{loc}}(\Omega,\mathbb{W})$ if
$\eta w_j\to\eta w$ in $\WW^{-k,p}(\R^n,\mathbb{W})$ for every
$\eta\in\CC^\infty_c(\Omega)$. For a metric space $(X,\mathrm{d}_X)$ and
$\Phi\colon X\to\R$, $\mathrm{lip}(\Phi)$ is the Lipschitz constant of
$\Phi$, and $\mathrm{LIP}(X)$ is the space of bounded Lipschitz functions
with the norm
$\|\Phi\|_{\mathrm{LIP}}:=\sup_X|\Phi|+\mathrm{lip}(\Phi)$ (see
Section~\ref{kantorovich norm subsection}). Subsets of $\R^n$ are endowed
with the distance induced by the $\ell^\infty$-norm $\|\cdot\|$, and
subsets of $\V$, in particular $\overline{\BG}$, with the Euclidean
distance. The indicator function of a set $A$ is $\boldsymbol{1}_A$.
Finally, $(\rho_\eps)_{\eps>0}$ is a standard mollifier:
$\rho_\eps(x)=\eps^{-n}\rho(x/\eps)$ for a fixed radially symmetric
$\rho\in\CC^\infty_c(\R^n)$ with $\rho\ge0$, $\operatorname{supp}\rho\subset B_1(0)$
and $\int\rho\dd x=1$ (in Section~\ref{inhomogenization section} a mollifier
supported in cubes is used instead).

\emph{Convex sets.} $\mathrm{co}\,K$ and $\overline{\mathrm{co}}\,K$ are
the convex and the closed convex hull of $K\subseteq\V$, and $h^{\ast\ast}$
is the convex envelope of $h\colon\V\to\R$. For extreme points, exposed
points, faces and supporting hyperplanes of convex bodies we follow
\cite{schneider}.

\subsection{The differential operator}

$\mathscr{A}$ is a homogeneous linear differential operator of order $k$
with constant coefficients from $\V$ to $\mathbb{W}$,
$$
\mathscr{A}=\sum_{|\alpha|=k}A_\alpha\partial^\alpha,
\qquad A_\alpha\in\operatorname{Lin}(\V,\mathbb{W}),
$$
acting on $\V$-valued distributions; $v$ is \emph{$\mathscr{A}$-free} if
$\mathscr{A}v=0$ in the sense of distributions. Its principal symbol is
$\mathscr{A}(\xi):=\sum_{|\alpha|=k}\xi^\alpha A_\alpha$, $\xi\in\R^n$, and
its \emph{wave cone} is
$$
\WC:=\bigcup_{\xi\in\R^n\setminus\{0\}}\ker\mathscr{A}(\xi),
$$
a closed and balanced cone, which we also denote by $\Lambda$ when no
confusion can arise. Throughout the paper $\mathscr{A}$ has
\emph{constant rank}, that is, $\operatorname{rank}\mathscr{A}(\xi)$ does
not depend on $\xi\in\R^n\setminus\{0\}$. For $\mathscr{A}=\operatorname{curl}$
on $\V=\M$, $\mathscr{A}$-free fields are (locally) gradients and
$\WC=\{a\otimes\xi:a\in\R^N,\ \xi\in\R^n\}$ is the rank-one cone.

By \cite{raita potentials}, constant rank is equivalent to the existence of
a \emph{potential}: a homogeneous linear differential operator
$\mathscr{B}$ of some order $l$ with constant coefficients, from a finite
dimensional space $\mathbb{U}$ to $\V$, such that
$\operatorname{im}\mathscr{B}(\xi)=\ker\mathscr{A}(\xi)$ for all
$\xi\neq0$; in particular $\mathscr{A}\mathscr{B}=0$. We fix such
$\mathscr{B}$ and $l$ once and for all. To keep the notation light, the
spaces of $\mathbb{U}$-valued maps on which $\mathscr{B}$ acts are written
$\CC^\infty_c(U,\V)$ and $\WW^{l-1,1}(U,\V)$, $U\subseteq\R^n$ open: the
space $\mathbb{U}$ plays no role in what follows.

A locally bounded Borel function $f\colon\V\to\R$ is
\emph{$\mathscr{A}$-quasiconvex} if
$$
f(z)\le\int_\XX f\big(z+\mathscr{B}\pphi(x)\big)\dd x
\qquad\text{for all } z\in\V \text{ and } \pphi\in\CC^\infty_c(\XX,\V) ;
$$
by \cite{raita potentials} this agrees with the definition of Fonseca and
M\"uller \cite{FonsecaMuller99}, which uses $\XX$-periodic
$\mathscr{A}$-free fields with zero mean. For $\mathscr{A}=\operatorname{curl}$
it is quasiconvexity in the sense of Morrey. The
\emph{$\mathscr{A}$-quasiconvex envelope} of a continuous $\Phi\colon\V\to\R$
is
$$
\Phi^{\mathrm{qc}}(z):=\inf\Big\{\int_\XX\Phi\big(z+\mathscr{B}\pphi(x)\big)\dd x
:\ \pphi\in\CC^\infty_c(\XX,\V)\Big\},\qquad z\in\V.
$$
A function $f\colon\V\to\R$ is \emph{$\WC$-convex} if $t\mapsto f(z+tw)$ is
convex for all $z\in\V$ and $w\in\WC$; $\mathscr{A}$-quasiconvex functions
are $\WC$-convex \cite[Prop.~3.4]{FonsecaMuller99}.

\subsection{Recession functions and standing assumptions}

For $f\colon\V\to\R$ of linear growth, $|f(z)|\le c_f(1+|z|)$, the
\emph{upper recession function}
$$
f^\infty(z):=\limsup_{\substack{z'\to z\\ t\to+\infty}}\frac{f(tz')}{t},
\qquad z\in\V,
$$
is real-valued and positively $1$-homogeneous; in the transfer principle of
Section~\ref{transfer principle section}, where the integrand $F$ need not
have a regular recession function, it is denoted by $F^{\#}$. A
continuous $f$ of linear growth has a \emph{regular} recession function if
the limit $\lim_{z'\to z,\,t\to+\infty}t^{-1}f(tz')$ exists for every
$z\in\V$; equivalently, $t^{-1}f(t\,\cdot\,)\to f^\infty$ locally uniformly
on $\V$ as $t\to\infty$. In this case $f^\infty$ is continuous.

The following assumptions are in force throughout the paper; they are
recalled at the beginning of Section~\secref{lem:dent}.

\begin{itemize}
  \item[\textbf{(A1)}] $\WC$ spans $\V$.
  \item[\textbf{(A2)}] $G^{\infty}\colon \V \to [0,\infty)$ is positively
    $1$-homogeneous, $\mathscr{A}$-quasiconvex, and coercive: there are
    $0 < c \le C < \infty$ with $c|z| \le G^{\infty}(z) \le C|z|$ on $\V$.
  \item[\textbf{(A3)}] $G\colon \V\to\mathbb{R}$ is continuous,
    $\mathscr{A}$-quasiconvex, of linear growth, and admits $G^{\infty}$ as
    \emph{regular} recession function:
    $$
      G^{\infty}(z)
      = \lim_{\substack{z'\to z\\ t\to\infty}}\frac{G(t z')}{t}
      \qquad\text{for every } z\in\V .
    $$
\end{itemize}

By \textup{(A2)}, $G^\infty$ is a \emph{gauge}: it vanishes only at the
origin and is positively $1$-homogeneous, but it need be neither convex
nor symmetric. Its open unit sublevel set $\BG:=\{G^\infty<1\}$ and its
indicatrix $\SG:=\{G^\infty=1\}$ are studied in
Section~\ref{compactification section}. By \textup{(A2)} a function $\Phi$
on $\V$ (or on $\Omegacl\times\V$) has linear growth if and only if
$\sup_z|\Phi(z)|/(1+G^\infty(z))<\infty$, and this is the form in which
linear growth will be quantified. By \textup{(A1)}, $\WC$-convex functions
of linear growth are Lipschitz (Lemma~\ref{qc properties lemma}); in
particular $G$ and $G^\infty$ are Lipschitz.

\emph{Functions of measures.} For $F\colon\V\to\R$ continuous of linear
growth and $\mu\in\mathscr{M}(\Omega,\V)$ we use the Goffman--Serrin
convention \cite{goffmanserrin}
$$
\int_A F(\mu):=\int_A F(\mu^a)\dd x
+\int_A F^\infty\Big(\frac{\dif\mu^s}{\dif|\mu^s|}\Big)\dif|\mu^s|,
\qquad A\subseteq\Omega\ \text{Borel},
$$
where the last term is also written $\int_A F^\infty(\mu^s)$. For $F=G$ and
$A=\Omega$ this is the energy $\mathcal{G}(\mu)$ of \eqref{eq:energy}. For
$F=G^\infty$ it is the \emph{$G$-variation}
$$
|\mu|_G:=G^\infty\Big(\frac{\dif\mu}{\dif|\mu|}\Big)|\mu|\in\mathscr{M}^+(\Omega),
\qquad c|\mu|\le|\mu|_G\le C|\mu|,
$$
evaluated on $A$, and the \emph{$G$-polar}
$p_G:=\frac{\dif\mu}{\dif|\mu|_G}=\frac{\dif\mu}{\dif|\mu|}\big/G^\infty\big(\frac{\dif\mu}{\dif|\mu|}\big)$
takes values in $\SG$ $|\mu|_G$-a.e.; see Definition~\ref{def:Gvar} and
Lemma~\ref{lem:wellposed}, where these objects are studied in detail.

\section{The \texorpdfstring{$G$}{G}-compactification and generalized Young measures}\label{compactification section}

\subsection{The compactification of \texorpdfstring{$\V$}{V} induced by \texorpdfstring{$G^\infty$}{G infty}}

Let
$$
\BG:=\{z\in\V: G^\infty(z)<1\},
\qquad
\overline{\BG}=\{G^\infty\le1\},
\qquad
\SG:=\{G^\infty=1\}=\partial\BG .
$$
By \textup{(A2)}, $\BG$ is a bounded open set, star-shaped with respect to
the origin, with $\{|z|<1/C\}\subseteq\BG\subseteq\{|z|<1/c\}$, and $\SG$ is
a compact set which meets every ray $\{tz:t>0\}$, $z\neq0$, exactly once,
namely at $z/G^\infty(z)$. The map
$$
\iota_G(z):=\frac{z}{1+G^\infty(z)},\qquad z\in\V,
$$
is a homeomorphism of $\V$ onto $\BG$, with inverse
$$
\iota_G^{-1}(\hat z)=\frac{\hat z}{1-G^\infty(\hat z)},
\qquad \hat z\in\BG,
$$
and $1-G^\infty(\iota_G(z))=(1+G^\infty(z))^{-1}$. Since
$\iota_G(tz)\to z/G^\infty(z)$ as $t\to\infty$, the compact set
$\overline{\BG}=\BG\cup\SG$ is a compactification of $\V$ whose points at
infinity are the directions of $\V$, represented on the indicatrix $\SG$.
As in \cite{KristensenRindler10} we use the variable $\hat z$ for points of
$\overline{\BG}$. For $G^\infty=|\cdot|$ we recover the ball
compactification $\overline{B}=B\cup\mathbb{S}_{\V}$, with
$\iota(z)=z/(1+|z|)$; by \textup{(A2)} the two compactifications are
equivalent, i.e.\ the identity of $\V$ extends to a homeomorphism
$\overline{B}\to\overline{\BG}$ (Remark~\ref{rmk:compact}).

\subsection{Integrands on \texorpdfstring{$\V$}{V}}\label{homogeneous integrands subsection}

We denote by $\HH$ the space of continuous functions $\Phi\colon\V\to\R$ of
linear growth with a regular recession function $\Phi^\infty$, normed by
$$
\|\Phi\|_{\HH}:=\sup_{z\in\V}\frac{|\Phi(z)|}{1+G^\infty(z)} .
$$
For $\Phi\in\HH$ we define $T\Phi\colon\overline{\BG}\to\R$ by
\begin{equation}\label{definition of T homogeneous setting}
T\Phi(\hat{z})= \begin{dcases}
    \big(1-G^\infty(\hat{z})\big)\, \Phi \left(  \frac{\hat{z}}{1- G^\infty(\hat{z}) }\right) & \text{if } \hat{z} \in \BG ,
    \\
    \Phi^\infty(\hat{z}) & \text{if } \hat{z} \in \SG .
\end{dcases}
\end{equation}

\begin{lemma}\label{T isometry lemma}
$T$ is an isometric isomorphism of $\HH$ onto $\CC(\overline{\BG})$. Its
inverse is $T^{-1}g=(1+G^\infty)\,g\circ\iota_G$, and
$(T^{-1}g)^\infty(z)=G^\infty(z)\,g\big(z/G^\infty(z)\big)$ for $z\neq0$.
\end{lemma}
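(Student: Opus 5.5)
The plan is to verify in turn that $T\Phi$ is continuous on $\overline{\BG}$, that $T$ is isometric, and that the formula given for $T^{-1}$ really produces an element of $\HH$ with the stated recession function. Linearity of $T$ is obvious. Since an isometry is injective, these three facts give an isometric isomorphism.

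\emph{Continuity of $T\Phi$.} On $\BG$, $T\Phi$ is a composition of continuous maps, because $G^\infty$ is continuous (Lipschitz). It remains to check continuity at a point $\hat z_0\in\SG$. Take $\hat z_k\to\hat z_0$ with $\hat z_k\in\BG$, and set $t_k:=(1-G^\infty(\hat z_k))^{-1}$. Then $t_k\to\infty$, and
$$
T\Phi(\hat z_k)=\frac{\Phi(t_k\hat z_k)}{t_k}.
$$
Regularity of the recession function, i.e.\ the joint limit $z'\to z$, $t\to\infty$, gives $T\Phi(\hat z_k)\to\Phi^\infty(\hat z_0)$. For sequences lying in $\SG$ we use continuity of $\Phi^\infty$, which also follows from regularity. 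A mixed sequence splits into these two cases. This step is where the regularity hypothesis is genuinely used, and it is the only point needing care.

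\emph{Isometry.} For $z\in\V$ put $\hat z=\iota_G(z)$. The identity $1-G^\infty(\iota_G(z))=(1+G^\infty(z))^{-1}$ gives
$$
T\Phi(\hat z)=\frac{\Phi(z)}{1+G^\infty(z)}.
$$
Since $\iota_G$ maps $\V$ onto $\BG$, this yields $\sup_{\BG}|T\Phi|=\|\Phi\|_{\HH}$. Because $\BG$ is dense in $\overline{\BG}$ (every point of $\SG$ is a limit along its ray) and $T\Phi$ is continuous, the supremum over $\overline{\BG}$ is the same.

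\emph{Surjectivity and the inverse.} Given $g\in\CC(\overline{\BG})$, define $\Phi:=(1+G^\infty)\,g\circ\iota_G$.
\begin{itemize}
\item $\Phi$ is continuous, and $|\Phi|\le\|g\|_\infty(1+G^\infty)$, so $\Phi$ has linear growth.
\item For $z\ne0$, $z'\to z$ and $t\to\infty$ we have $\iota_G(tz')\to z/G^\infty(z)$, by homogeneity and continuity of $G^\infty$. Also $(1+tG^\infty(z'))/t\to G^\infty(z)$. Hence $\Phi(tz')/t\to G^\infty(z)\,g(z/G^\infty(z))$, which is the stated formula for $\Phi^\infty$.
\item At $z=0$ the quotient $\Phi(tz')/t$ is bounded by $\|g\|_\infty(1/t+G^\infty(z'))\to0$, so the limit exists there as well.
\end{itemize}
Thus $\Phi\in\HH$.

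Finally we check $T\Phi=g$. On $\BG$ this follows directly from the identity above. On $\SG$ we have $G^\infty(\hat z)=1$, so $\Phi^\infty(\hat z)=g(\hat z)$. The same computation shows $T^{-1}T\Phi=\Phi$, which completes the argument.
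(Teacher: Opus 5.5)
Your proof is correct and follows the same route as the paper's. Continuity at $\SG$ comes from $T\Phi(\hat z)=t^{-1}\Phi(t\hat z)$ with $t=(1-G^\infty(\hat z))^{-1}\to\infty$ and the joint limit in the regular recession function; the isometry comes from $1-G^\infty(\iota_G(z))=(1+G^\infty(z))^{-1}$ together with density of $\BG$ in $\overline{\BG}$; and surjectivity follows by checking directly that $(1+G^\infty)\,g\circ\iota_G$ lies in $\HH$ with the stated recession function. Your explicit handling of the limit at $z=0$ and of sequences lying on $\SG$ only spells out details the paper leaves implicit.
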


\begin{proof}
For $\hat z\in\BG$ put $t:=(1-G^\infty(\hat z))^{-1}$, so that
$T\Phi(\hat z)=t^{-1}\Phi(t\hat z)$. If $\hat z\to\theta\in\SG$, then
$t\to\infty$, and the regularity of the recession function gives
$T\Phi(\hat z)\to\Phi^\infty(\theta)$; as $\Phi^\infty$ is continuous,
$T\Phi\in\CC(\overline{\BG})$. Conversely, let $g\in\CC(\overline{\BG})$
and $\Phi:=(1+G^\infty)\,g\circ\iota_G$. Then $\Phi$ is continuous,
$|\Phi|\le\|g\|_\infty(1+G^\infty)$, and, since $\iota_G(tz')\to
z/G^\infty(z)$ as $z'\to z\neq0$ and $t\to\infty$,
$$
\frac{\Phi(tz')}{t}=\Big(\frac1t+G^\infty(z')\Big)\,g\big(\iota_G(tz')\big)
\longrightarrow G^\infty(z)\,g\Big(\frac{z}{G^\infty(z)}\Big),
$$
while the limit is $0$ for $z=0$. Hence $\Phi\in\HH$, and $T\Phi=g$ (on
$\SG$ by the formula for $\Phi^\infty$). Finally
$\|T\Phi\|_\infty=\sup_{\BG}|T\Phi|=\|\Phi\|_{\HH}$, because $\BG$ is dense
in $\overline{\BG}$ and $1-G^\infty(\iota_G(z))=(1+G^\infty(z))^{-1}$.
\end{proof}

For instance, $T(1+G^\infty)\equiv1$, $TG^\infty=G^\infty$ on
$\overline{\BG}$, and $T\ell_e(\hat z)=\hat z\cdot e$ for the linear
integrand $\ell_e(z):=z\cdot e$, $e\in\V$.

By Lemma~\ref{T isometry lemma} and the Riesz representation theorem,
$T^*\colon\mathscr{M}(\overline{\BG})\cong\CC(\overline{\BG})^*\to\HH^*$ is
an isometric isomorphism. Given $\ell\in\HH^*$, let
$\mu:=(T^*)^{-1}\ell$ and
$$
\mu^0:= (\iota_G^{-1})_{\#}\left[ \big(1-G^\infty\big) \mu  \Lcorner  \BG \right]\in\mathscr{M}(\V),
\qquad
\mu^\infty :=\mu  \Lcorner \SG\in\mathscr{M}(\SG).
$$
Then, for every $\Phi\in\HH$,
$$
\ell(\Phi)=\int_{\overline{\BG}} T\Phi \dif \mu
= \int_\V \Phi  \dif \mu^0 + \int_{\SG} \Phi^{\infty}  \dif \mu^\infty ,
$$
and, since $\big(1+G^\infty(\iota_G^{-1}(\hat z))\big)\big(1-G^\infty(\hat z)\big)=1$
for $\hat z\in\BG$,
$$
\int_\V \big(1+G^\infty\big) \dif |\mu^0| = |\mu|(\BG),
\qquad
\norm{\ell}_{\HH^*}= \int_\V \big(1+G^\infty\big) \dif |\mu^0| + |\mu^\infty|(\SG).
$$
In particular $\mu^0$ has finite first moment, and $\ell\ge0$ (that is,
$\ell(\Phi)\ge0$ whenever $\Phi\ge0$) if and only if $\mu^0\ge0$ and
$\mu^\infty\ge0$. In view of this representation we identify $\ell$ with
the pair $(\mu^0,\mu^\infty)$.

\emph{Kantorovich metric.} The compact set $\overline{\BG}$ is a metric space
with the Euclidean distance, so that, by
Section~\ref{kantorovich norm subsection}, the relative weak$^*$ topology of
$\mathscr{M}^+(\overline{\BG})$ is induced by the Kantorovich norm
$\|\cdot\|_{\mathrm{K}}$. Transporting it through $T^*$, for
$\ell\in\HH^*$ we set
$$
\|\ell\|_{\mathrm{K}}:=\|(T^*)^{-1}\ell\|_{\mathrm{K}}
=\sup\Big\{|\ell(\Phi)|:\ \Phi\in\HH,\ \|T\Phi\|_{\mathrm{LIP}}\le1\Big\},
\qquad
d_{\mathrm{K}}(\ell,\ell'):=\|\ell-\ell'\|_{\mathrm{K}} ,
$$
so that, for $\ell=(\mu^0,\mu^\infty)$,
$\|\ell\|_{\mathrm{K}}=\sup_{\|T\Phi\|_{\mathrm{LIP}}\le1}
\big|\int_\V\Phi\dif\mu^0+\int_{\SG}\Phi^\infty\dif\mu^\infty\big|$. As
$T^*$ is a homeomorphism of $\mathscr{M}^+(\overline{\BG})$ onto the
positive cone $(\HH^*)^+$ for the relative weak$^*$ topologies, $d_{\mathrm{K}}$
metrizes the weak$^*$ topology of $(\HH^*)^+$.

An integrand $\Phi\in\HH$ with $\|T\Phi\|_{\mathrm{LIP}}\le1$ is called
\emph{admissible}. Since $\Phi=(1+G^\infty)\,T\Phi\circ\iota_G$, every
admissible $\Phi$ satisfies $|\Phi|\le1+G^\infty$, $|\Phi(0)|\le1$ and
$|\Phi^\infty|\le1$ on $\SG$, and it is Lipschitz, with
\begin{equation}\label{lip constant L}
\mathrm{lip}(\Phi)\le L:=\max\Big\{\mathrm{lip}(G^\infty),\ 1+\frac{\mathrm{lip}(G^\infty)}{c}\Big\}
\end{equation}
by Remark~\ref{lip remark}; for $G^\infty=|\cdot|$ one has $L=2$. The
integrand $1+G^\infty$ is admissible. A linear integrand $\ell_e$ with
$|e|=1$ is admissible only up to a factor: as $|\hat z|\le c^{-1}$ on
$\overline{\BG}$, one has $\|T\ell_e\|_{\mathrm{LIP}}\le1+c^{-1}$, so that
$(1+c^{-1})^{-1}\ell_e$ is admissible, and similarly for
$(1+c^{-1})^{-1}|\cdot|$.

\subsection{Integrands on \texorpdfstring{$\Omegacl\times\V$}{the closure of Omega times V}}\label{inhomogeneous integrands subsection}

We denote by $\E_G(\Omega,\V)$ the space of continuous
$\Phi\colon\Omegacl\times\V\to\R$ such that
$$
\|\Phi\|_{\E_G}:=\sup_{(x,z) \in \Omegacl \times \V} \frac{|\Phi(x,z)|}{1+G^\infty(z)} < \infty
$$
and such that the recession function
$$
\Phi^\infty(x,z) = \lim_{t \to \infty} \frac{\Phi(x,tz)}{t}
$$
exists, uniformly in $(x,z)\in\Omegacl\times\SG$. As in
Lemma~\ref{T isometry lemma}, the map
$T\colon\E_G(\Omega,\V)\to\CC(\overline{\Omega\times\BG})$, where
$\overline{\Omega\times\BG}=\Omegacl\times\overline{\BG}$, defined by
\begin{equation}\label{isomorphism T inhomogeneous setting}
T\Phi(x,\hat{z})= \begin{dcases}
    \big(1-G^\infty(\hat{z})\big)\, \Phi \left( x, \frac{\hat{z}}{1- G^\infty(\hat{z}) }\right) & \text{if } (x,\hat{z})\in \Omegacl \times \BG ,
    \\
    \Phi^\infty(x,\hat{z}) & \text{if } (x,\hat{z})\in \Omegacl \times \SG ,
\end{dcases}
\end{equation}
is an isometric isomorphism, with inverse
$T^{-1}g(x,z)=(1+G^\infty(z))\,g(x,\iota_G(z))$. In other words,
$$
\E_G(\Omega,\V)=\Big\{(x,z)\mapsto\big(1+G^\infty(z)\big)\,g\big(x,\iota_G(z)\big)
:\ g\in\CC(\Omegacl\times\overline{\BG})\Big\},
$$
which is the description of $\E_G(\Omega,\V)$ used in
Section~\secref{rmk:compact}. For $G^\infty=|\cdot|$ one obtains the class
$\E(\Omega,\V)$ of \cite{KristensenRindler10,KristensenRaita22}; by
\textup{(A2)}, $\E_G(\Omega,\V)=\E(\Omega,\V)$ as sets, with equivalent
norms. For $\eta\in\CC(\Omegacl)$ and $\Phi\in\HH$ we write
$(\eta\otimes\Phi)(x,z):=\eta(x)\Phi(z)$. Then
$\eta\otimes\Phi\in\E_G(\Omega,\V)$ and $T(\eta\otimes\Phi)=\eta\otimes T\Phi$,
so that, by the Stone--Weierstrass theorem, the linear span of such
products is dense in $\E_G(\Omega,\V)$.

Consequently $T^*\colon\mathscr{M}(\overline{\Omega\times\BG})\to\E_G(\Omega,\V)^*$
is an isometric isomorphism, and a homeomorphism of the positive cones for
the relative weak$^*$ topologies. Every
$\ell=T^*\mu\in\E_G(\Omega,\V)^*$ acts as
\begin{equation}\label{expression of ell}
\ell(\Phi)= \int_{\Omegacl\times \V} \Phi \, \dif \mu^0 + \int_{\Omegacl\times \SG} \Phi^{\infty} \, \dif \mu^\infty,
\qquad \Phi\in\E_G(\Omega,\V),
\end{equation}
where
$$
\mu^0:= (\mathrm{id}\times\iota_G^{-1})_{\#}\left[ \big(1-G^\infty(\hat{z})\big) \mu  \Lcorner \Omegacl \times \BG \right]\in\mathscr{M}(\Omegacl\times\V),
\qquad
\mu^\infty :=\mu  \Lcorner \Omegacl \times \SG ,
$$
and $\int(1+G^\infty(z))\dif|\mu^0|(x,z)=|\mu|(\Omegacl\times\BG)$.

\subsection{Generalized Young measures}\label{young measures subsection}

\emph{Elementary Young measures.} Every $v\in\mathscr{M}(\Omega,\V)$
defines an element $\boldsymbol{\eps}_v\in\E_G(\Omega,\V)^*$ by
$$
\boldsymbol{\eps}_v(\Phi):=\int_{\Omega}\Phi(x,v^a(x))\dd x
+ \int_{\Omega} \Phi^\infty \left(x, \frac{\dd v^s}{\dd |v^s|}(x)\right) \dd |v^s|(x),
\qquad \Phi \in \E_G(\Omega,\V),
$$
which is well defined by the growth condition on $\Phi$; we call
$\boldsymbol{\eps}_v$ the \emph{elementary Young measure} associated with
$v$. As $\Phi^\infty(x,\cdot)$ is positively $1$-homogeneous, in terms of
the $G$-variation and of the $G$-polar $p_G$ of $v^s$ (see
Section~\ref{notation section})
$$
\boldsymbol{\eps}_v(\Phi)=\int_{\Omega}\Phi(x,v^a(x))\dd x
+\int_\Omega\Phi^\infty\big(x,p_G(x)\big)\dif|v^s|_G(x),
$$
that is, $\boldsymbol{\eps}_v$ is represented by the triple
$(\delta_{v^a(x)},|v^s|_G,\delta_{p_G(x)})$ in the sense of
Lemma~\ref{YM measure theoretic definition} below. Since
$\boldsymbol{\eps}_v\ge0$ and $T(1+G^\infty)\equiv1$,
$$
\|\boldsymbol{\eps}_v\|_{\E_G(\Omega,\V)^*}=\boldsymbol{\eps}_v(1+G^\infty)
=\Leb(\Omega)+|v|_G(\Omega)\le\Leb(\Omega)+C|v|(\Omega).
$$

\emph{Generation.} If $(v_j)\subset\mathscr{M}(\Omega,\V)$ and
$\boldsymbol{\eps}_{v_j}\wstar\nu$ in $\E_G(\Omega,\V)^*$, we write
$v_j\toY\nu$ and say that $(v_j)$ \emph{generates} $\nu$; equivalently,
$(T^*)^{-1}\boldsymbol{\eps}_{v_j}\wstar(T^*)^{-1}\nu$ in
$\mathscr{M}(\overline{\Omega\times\BG})$. As $\E_G(\Omega,\V)$ is
separable, every sequence with $\sup_j|v_j|(\Omega)<\infty$ has a
subsequence which generates some $\nu$. The limit $\nu$ is in general not
elementary, but $(T^*)^{-1}\nu$ belongs to the set
\begin{equation}\label{T*-1 of YM}
\bigcap_{\phi \in \CC(\overline{\Omega})}  \left\{\mu \in \mathscr{M}^+(\overline{\Omega \times \BG}): \int_{\overline{\Omega \times \BG}} \phi(x)(1-G^\infty(\hat z)) \dif \mu(x,\hat z)=\int_{\Omega} \phi \dif x  \right\}.
\end{equation}
Indeed, $T^{-1}\big[\phi(x)(1-G^\infty(\hat z))\big]=\phi\otimes1$ and
$(\phi\otimes 1)^\infty=0$, so that for $\mu=(T^*)^{-1}\boldsymbol{\eps}_v$
the integral in \eqref{T*-1 of YM} equals
$\boldsymbol{\eps}_v(\phi\otimes1)=\int_\Omega\phi\dd x$; moreover the set
\eqref{T*-1 of YM} is convex and weak$^*$ closed, being an intersection of
preimages of points under the weak$^*$ continuous maps
$\mu\mapsto\int\phi(x)(1-G^\infty(\hat z))\dif\mu(x,\hat z)$.

\begin{definition}\label{YM definition}
A \emph{(generalized) Young measure} on $\Omega$ with values in $\V$ is an
element $\nu\in\E_G(\Omega,\V)^*$ such that $(T^*)^{-1}\nu$ belongs to the
set \eqref{T*-1 of YM}. We denote the set of Young measures by
$\mathbf{Y}(\Omega,\V)$.
\end{definition}

\begin{lemma}\label{YM measure theoretic definition}
A functional $\nu \in \E_G(\Omega,\V)^*$ is a Young measure if and only if
it can be identified with a triple
$((\nu_x)_{x \in \Omega}, \lambda, (\nu_x^\infty)_{x \in \Omegacl})$, uniquely
determined by $\nu$, where
\begin{enumerate}
    \item $\nu_x \in \mathscr{M}_1^+(\V)$ for $\Leb$-almost every $x \in \Omega$ and $x \mapsto \nu_x$ is weakly$^*$ $\Leb$-measurable;
    \item $\lambda \in \mathscr{M}^+(\Omegacl)$;
    \item $\nu_x^\infty \in \mathscr{M}_1^+(\SG)$ for $\lambda$-almost every $x \in \Omegacl$ and $x \mapsto \nu^\infty_x$ is weakly$^*$ $\lambda$-measurable;
    \item $\int_\Omega \int_\V G^\infty(z) \dif \nu_x(z)\dif x < \infty$;
\end{enumerate}
and where
$$
\nu(\Phi)= \int_{ \Omega}\int_\V \Phi(x,z)\dif \nu_x(z) \dif x + \int_{\Omegacl}\int_{\SG} \Phi^\infty(x,z)\dif \nu^\infty_x(z) \dif \lambda (x), \qquad \text{for } \Phi \in \E_G(\Omega,\V).
$$
\end{lemma}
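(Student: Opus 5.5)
The plan is to work with $\mu:=(T^*)^{-1}\nu\in\mathscr{M}^+(\Omegacl\times\overline{\BG})$, characterize the constraint set \eqref{T*-1 of YM} through disintegration, and then translate back through the formula \eqref{expression of ell}.

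\emph{Sufficiency.} Suppose $\nu$ is given by a triple as in the statement. Then $\nu$ is a bounded functional on $\E_G(\Omega,\V)$: by (4) and $|\Phi|\le\|\Phi\|_{\E_G}(1+G^\infty)$, together with $|\Phi^\infty|\le\|\Phi\|_{\E_G}$ on $\Omegacl\times\SG$, we get $|\nu(\Phi)|\le\|\Phi\|_{\E_G}\big(\Leb(\Omega)+\int_\Omega\langle G^\infty,\nu_x\rangle\dd x+\lambda(\Omegacl)\big)$. It is positive, and its preimage is
$$
\mu=(\mathrm{id}\times\iota_G)_\#\big[(1+G^\infty(z))\,\nu_x\otimes\Leb\Lcorner\Omega\big]+\nu^\infty_x\otimes\lambda .
$$
To check this, compare $\int T\Phi\dif\mu$ with $\nu(\Phi)$ using $T\Phi(x,\iota_G(z))=\Phi(x,z)/(1+G^\infty(z))$. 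Testing the constraint with $\phi(x)(1-G^\infty(\hat z))=T(\phi\otimes1)$ gives $\int_\Omega\phi\,\nu_x(\V)\dd x=\int_\Omega\phi\dd x$, since $(\phi\otimes1)^\infty=0$. Hence $\nu\in\mathbf Y(\Omega,\V)$.

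\emph{Necessity.} Let $\mu\ge0$ lie in \eqref{T*-1 of YM}. Split $\mu=\mu\Lcorner(\Omegacl\times\BG)+\mu\Lcorner(\Omegacl\times\SG)$, and set $\mu^0=(\mathrm{id}\times\iota_G^{-1})_\#[(1-G^\infty)\mu\Lcorner\Omegacl\times\BG]$ and $\mu^\infty=\mu\Lcorner\Omegacl\times\SG$. These are positive measures, and \eqref{expression of ell} holds. The constraint says exactly that the $x$-marginal $\pi_\#\mu^0$ equals $\Leb\Lcorner\Omega$: the test functions $(1-G^\infty)\phi$ vanish on $\SG$, so only $\mu^0$ contributes, and $\int\phi\dif\pi_\#\mu^0=\int_\Omega\phi\dd x$ for every $\phi\in\CC(\Omegacl)$. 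Next apply the disintegration theorem, e.g.\ \cite[Thm.~2.28]{AFP}, on the product $\Omegacl\times\V$. For $\mu^0$ this yields a weakly$^*$ measurable family $\nu_x\in\mathscr{M}^+_1(\V)$ with $\mu^0=\Leb\Lcorner\Omega\otimes\nu_x$. For $\mu^\infty$, on the compact product $\Omegacl\times\SG$, set $\lambda:=\pi_\#\mu^\infty$; this yields probability measures $\nu^\infty_x$ on $\SG$ for $\lambda$-a.e.\ $x$. The moment condition (4) follows from $\int(1+G^\infty)\dif\mu^0=\mu(\Omegacl\times\BG)<\infty$, and the representation formula is just \eqref{expression of ell} rewritten through the disintegrations. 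Note that $\nu_x$ is defined for $x\in\Omega$ only, because $\Leb(\partial\Omega)$ may be positive in general. Since the marginal is $\Leb\Lcorner\Omega$ restricted to $\Omega$ (the test functions $\phi\in\CC(\Omegacl)$ integrate against $\dd x$ on $\Omega$), the measure $\mu^0$ does not charge $\partial\Omega\times\V$.

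\emph{Uniqueness.} This is the step requiring the most care, though it is routine. If two triples represent $\nu$, the two corresponding measures $\mu$ both equal $(T^*)^{-1}\nu$, because $T^*$ is injective. Restricting to $\Omegacl\times\BG$ and to $\Omegacl\times\SG$ recovers $\mu^0$ and $\mu^\infty$. Therefore $\lambda$ is determined as the marginal of $\mu^\infty$, and $\nu_x$, $\nu^\infty_x$ are determined a.e.\ by the uniqueness part of the disintegration theorem.

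The main obstacle is the measurability bookkeeping: showing that the formula for $\mu$ in the sufficiency step really defines a Radon measure, which uses weak$^*$ measurability and (4), and matching the weak$^*$ measurability provided by the disintegration theorem with requirements (1) and (3). The functional-analytic part is immediate from Lemma~\ref{T isometry lemma} and its inhomogeneous version.
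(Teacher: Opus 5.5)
Your proof is correct. It follows the paper's overall strategy of passing to $\mu=(T^*)^{-1}\nu$ and disintegrating, but you organize the necessity direction differently.

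The paper disintegrates $\mu$ once on $\Omegacl\times\overline{\BG}$ with respect to its full $x$-marginal $\kappa$, with fibres $\eta_x\in\mathscr{M}^+_1(\overline{\BG})$. It then Lebesgue-decomposes $\kappa=\kappa^a\Leb\Lcorner\Omega+\kappa^s$ and uses the constraint, together with uniqueness of that decomposition, to show two things: that $\eta_x$ is carried by $\SG$ for $\kappa^s$-a.e.\ $x$, and that $\kappa^a=\bigl(\int(1-G^\infty)\dif\eta_x\bigr)^{-1}$. The triple is then assembled by hand, with $\lambda=\eta_x(\SG)\,\kappa$ and $\nu^\infty_x=\eta_x\Lcorner\SG/\eta_x(\SG)$.

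You instead split $\mu$ into its parts over $\BG$ and over $\SG$ before disintegrating. You observe that the constraint is literally $\pi_\#\mu^0=\Leb\Lcorner\Omega$, and you disintegrate $\mu^0$ and $\mu^\infty$ separately, each with respect to its own marginal. This makes several things immediate:
\begin{itemize}
\item the normalisation $\nu_x(\V)=1$;
\item the identification $\lambda=\pi_\#\mu^\infty$;
\item the fact that $\mu^0$ puts no mass over $\partial\Omega$.
\end{itemize}
It also needs no Lebesgue decomposition of the base measure and no division by $\eta_x(\SG)$. Uniqueness of the triple follows from injectivity of $T^*$ and uniqueness of disintegrations, a point the paper's proof does not address. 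Likewise, your explicit formula for $\mu$ in the sufficiency direction exhibits its positivity, which the paper leaves implicit.

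What the paper's single disintegration buys is the auxiliary pair $(\kappa,\eta_x)$ and the relations $\lambda=\eta_x(\SG)\,\kappa$ and $\bigl(\int(1-G^\infty)\dif\eta_x\bigr)\kappa=\Leb\Lcorner\Omega$. These, in their spherical version, are the ``standard formulas'' invoked in the proof of Lemma~\ref{rmk:transfer}, where one disintegration of the whole measure is compared across the two compactifications.
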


We call $\nu_x$ the \emph{oscillation measures}, $\lambda$ the
\emph{concentration measure} and $\nu^\infty_x$ the
\emph{concentration-angle measures} of $\nu$, and we write
$\nu=(\nu_x,\lambda,\nu^\infty_x)$. Condition (4) is the \emph{moment
condition}. The proof of Lemma~\ref{YM measure theoretic definition}, by
disintegration of $(T^*)^{-1}\nu$, is given in the Appendix
(Section~\ref{appendix section}). A different derivation is given in
Section~\secref{rmk:transfer}: by Lemma~\ref{rmk:transfer} the triple of
$\nu$ is obtained from its triple in the spherical normalisation of
\cite{KristensenRindler10}, whose concentration measure is denoted
$\lambda^{\mathbb{S}}$, by the explicit formulas \eqref{eq:transfer}.

\begin{rmk}[The $G$-normalisation]\label{rmk:G normalisation}
Since $G^\infty\equiv1$ on $\SG$, we have $\langle G^\infty,\nu^\infty_x\rangle=1$
for $\lambda$-a.e.\ $x$ (cf.\ \eqref{eq:unitmass}), and testing $\nu$ with
$\eta\otimes(1+G^\infty)$, $\eta\in\CC(\Omegacl)$, gives
$$
\nu\big(\eta\otimes(1+G^\infty)\big)=\int_\Omega\eta\,\langle1+G^\infty,\nu_x\rangle\dd x
+\int_{\Omegacl}\eta\dif\lambda .
$$
Thus $\lambda$ is precisely the part of the $G$-energy carried by
concentrations. In particular, if $v_j\toY\nu$, then
$(1+G^\infty(v_j^a))\Leb\Lcorner\Omega+|v_j^s|_G\wstar
\langle1+G^\infty,\nu_x\rangle\Leb\Lcorner\Omega+\lambda$ in
$\mathscr{M}(\Omegacl)$.
\end{rmk}

\emph{Pairings and barycentre.} We also write $\nu(\Phi)=\ddual{\Phi}{\nu}$,
and $\nu(\eta\otimes\Phi)=\ddual{\eta\otimes\Phi}{\nu}=\langle\nu,\eta\otimes\Phi\rangle$;
by density, $\nu$ is determined by its values on such tensor products,
$\eta\in\CC(\Omegacl)$, $\Phi\in\HH$. The \emph{barycentre} of
$\nu\in\mathbf{Y}(\Omega,\V)$ is the measure
$$
\overline{\nu}:=\overline{\nu}_x\Leb\Lcorner\Omega+\overline{\nu}^\infty_x\lambda\in\mathscr{M}(\Omegacl,\V),
\qquad
\overline{\nu}_x:=\int_\V z\dif\nu_x(z),
\quad
\overline{\nu}^\infty_x:=\int_{\SG}z\dif\nu^\infty_x(z);
$$
here $\overline{\nu}_{(\cdot)}\in\LL^1(\Omega,\V)$ by (4) and \textup{(A2)},
and $\overline{\nu}^\infty_x\in\overline{\mathrm{co}}(\SG)$, so that
$|\overline{\nu}^\infty_x|\le c^{-1}$. If
$\lambda=\lambda^a\Leb\Lcorner\Omega+\lambda^s$, then
$\overline{\nu}\Lcorner\Omega=(\overline{\nu}_x+\lambda^a(x)\overline{\nu}^\infty_x)\Leb\Lcorner\Omega
+\overline{\nu}^\infty_x\lambda^s\Lcorner\Omega$. Testing the generation
with $\eta\otimes\ell_e$, $\eta\in\CC_0(\Omega)$, $e\in\V$, shows that
$v_j\toY\nu$ implies $v_j\wstar\overline{\nu}\Lcorner\Omega$ in
$\mathscr{M}(\Omega,\V)$.

\emph{Homogeneous Young measures.} An element of $\HH^*$ of the form
$$
\Phi\longmapsto\int_\V\Phi\dif\nu^0+\int_{\SG}\Phi^\infty\dif\nu^\infty,
\qquad \nu^0\in\mathscr{M}^+_1(\V),\ \ \int_\V G^\infty\dif\nu^0<\infty,
\ \ \nu^\infty\in\mathscr{M}^+(\SG),
$$
is called a \emph{homogeneous Young measure} and is identified with the
pair $(\nu^0,\nu^\infty)$; its barycentre is
$\overline{\nu^0}+\overline{\nu^\infty}$. Writing $\nu^\infty=m\sigma$ with
$m=\nu^\infty(\SG)\ge0$ and $\sigma\in\mathscr{M}^+_1(\SG)$, we also use
the triple notation $(\nu^0,m,\sigma)$; on a cube $Q$,
$(\nu^0,m\Leb\Lcorner Q,\sigma)$ denotes the Young measure on $Q$ with
constant oscillation and concentration-angle measures and with
concentration measure $m\Leb\Lcorner Q$, whose action on
$x$-independent integrands $\Phi\in\HH$ is $|Q|$ times that of
$(\nu^0,m\sigma)$. For instance, if $\nu=(\nu_x,\lambda,\nu^\infty_x)$ is a
Young measure, then for $\Leb$-a.e.\ $x\in\Omega$ the pair
$(\nu_x,\lambda^a(x)\nu^\infty_x)$, written $(\nu_x,\lambda^a(x),\nu^\infty_x)$
in Section~\ref{inhomogenization section}, is a homogeneous Young measure
with barycentre $\overline{\nu}_x+\lambda^a(x)\overline{\nu}^\infty_x$. For
a bounded open set $U\subset\R^n$ and $w\in\LL^1(U,\V)$, the
\emph{homogeneous elementary Young measure} $\boldsymbol{\eps}_w\in\HH^*$ is
$$
\boldsymbol{\eps}_w(\Phi):=\dashint_U\Phi(w(x))\dd x,\qquad\Phi\in\HH,
$$
that is, $\boldsymbol{\eps}_w=\big(\Leb(U)^{-1}w_\#(\Leb\Lcorner U),0\big)$;
the set $U$ (usually $\XX$ or a cube $Q$) will always be clear from the
context.

\emph{Metrizability.} Bounded subsets of $\E_G(\Omega,\V)^*$ are weak$^*$
metrizable, $\E_G(\Omega,\V)$ being separable. A convenient metric is the
bounded-Lipschitz distance
$$
d(\nu,\nu'):=\sup\Big\{|\nu(\eta\otimes\Phi)-\nu'(\eta\otimes\Phi)|:\
\|\eta\|_{\mathrm{LIP}}\le1,\ \Phi\in\HH,\ \|T\Phi\|_{\mathrm{LIP}}\le1\Big\},
$$
the supremum being over $\eta\in\mathrm{LIP}(\Omegacl)$, with
$\mathrm{lip}(\eta)$ computed with respect to the $\ell^\infty$-distance
(Section~\ref{notation section}); this is the distance used in
Section~\ref{inhomogenization section}. On bounded sets, convergence in $d$
is equivalent to weak$^*$ convergence: the functions $\eta\otimes T\Phi$
appearing in the supremum form a relatively compact subset of
$\CC(\Omegacl\times\overline{\BG})$ by the Arzel\`a--Ascoli theorem, and
bounded weak$^*$ convergent sequences converge uniformly on such sets;
conversely, their linear span is dense by the Stone--Weierstrass theorem.
This is what makes the diagonal extraction arguments of
Sections~\secref{sufficiency proposition}
and~\ref{inhomogenization section} legitimate (cf.\ \cite[Prop.~6]{laszlo},
\cite[Lemma~3]{KristensenRindler10}, \cite{KristensenRaita22}).

\section{\texorpdfstring{$\mathscr{A}$}{A}-quasiconvex integrands of linear growth}\label{approximation qc function section}

We denote by $\mathbf{Q}$ the class of $\mathscr{A}$-quasiconvex functions
$f\colon\V\to\R$ of linear growth, that is, such that for some constant
$c_f>0$
$$
|f(z)| \leq c_f\big(1+G^\infty(z)\big) \qquad \text{for all }z \in \V.
$$
For $f \in \mathbf{Q}$ the upper recession function
\begin{equation}\label{upper recession function}
f^\infty(z)= \limsup_{\substack{z' \to z\\ t \to +\infty }} \frac{f(tz')}{t}
\end{equation}
is positively $1$-homogeneous, $f^\infty(tz)=tf^\infty(z)$ for all $t>0$
and $z\in\V$, and $|f^\infty|\le c_fG^\infty$. If $f$ has a regular
recession function, the upper limit in \eqref{upper recession function} is
a limit; this is the case for every $f\in\mathbf{Q}\cap\HH$.

\begin{lemma}\label{qc properties lemma}
Let $f\in\mathbf{Q}$. Then:
\begin{enumerate}[label=\textup{(\alph*)}]
\item $f$ is $\WC$-convex and Lipschitz, with $\mathrm{lip}(f)$ bounded in
terms of $c_f$, $C$ and $\WC$ only;
\item $f^\infty$ is $\mathscr{A}$-quasiconvex, $\WC$-convex and Lipschitz;
\item $f(z+w)\le f(z)+f^\infty(w)$ for all $z\in\V$ and $w\in\WC$.
\end{enumerate}
\end{lemma}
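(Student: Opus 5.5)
For (a), I will start from the known fact, recorded in Section~\ref{notation section}, that $\mathscr{A}$-quasiconvex functions are $\WC$-convex \cite[Prop.~3.4]{FonsecaMuller99}. The Lipschitz bound then follows by the standard argument for separately convex functions of linear growth. By \textup{(A1)} I choose a basis $w_1,\dots,w_d$ of $\V$ with $w_i\in\WC$ and $|w_i|=1$. Fix a point $z$ and a direction $w_i$. The map $t\mapsto f(z+tw_i)$ is convex on $\R$, and by the growth bound and \textup{(A2)} it satisfies $|f(z+tw_i)|\le c_f(1+C|z|+C|t|)$. A convex function on $\R$ with this growth has difference quotients controlled by comparing secants over intervals of length $R\sim 1+|z|$. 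This gives $|\partial_{w_i}f|\le c_fC+O\big(c_f(1+C|z|)/R\big)$. To obtain a bound uniform in $z$, I will let $R\to\infty$ in the secant inequality: for convex $\phi$ with $|\phi(t)|\le a+b|t|$ one has $|\phi'(t)|\le b$. Indeed, $\phi(t+R)-\phi(t)\le a+b|t+R|+a+b|t|$, and dividing by $R$ and letting $R\to\infty$ gives $\phi'\le b$; the lower bound is symmetric. Hence $|f(z+tw_i)-f(z)|\le c_fC|t|$. Writing $z'-z=\sum t_iw_i$ and using the triangle inequality along the broken line then gives $\mathrm{lip}(f)\le c_fC\sum|t_i|/|z'-z|\le c_fC\,K_\Lambda$, where $K_\Lambda$ depends only on the chosen basis, hence only on $\WC$.

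For (b), Lipschitz continuity of $f$ with constant $L_f$ implies that the functions $f_t(z):=f(tz)/t$ are equi-Lipschitz with the same constant. It also implies that the limsup over $z'\to z$ in \eqref{upper recession function} reduces to $\limsup_{t\to\infty}f_t(z)$. Thus $f^\infty$ is Lipschitz with constant $L_f$ and positively $1$-homogeneous. $\WC$-convexity passes to $f^\infty$ directly: each $f_t$ is $\WC$-convex, and convexity inequalities along lines survive a limsup in the needed direction, since for a convex combination $f^\infty(\text{comb})\le\limsup(\text{comb of } f_t)\le\text{comb of }\limsup f_t$. For $\mathscr{A}$-quasiconvexity, each $f_t$ is $\mathscr{A}$-quasiconvex, because $\mathscr{B}$ is homogeneous and the rescaling $\pphi\mapsto t^{-1}\pphi$ keeps the test class invariant. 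So I have $f_t(z)\le\int_\XX f_t(z+\mathscr{B}\pphi)\dd x$. Taking the limsup on the left along a sequence $t_k$ realising $f^\infty(z)$, I need $\limsup_k\int f_{t_k}(z+\mathscr{B}\pphi)\le\int f^\infty(z+\mathscr{B}\pphi)$. This follows from reverse Fatou, which applies since the $f_t$ are uniformly bounded on the compact range of $z+\mathscr{B}\pphi$. I expect this to be the main (mild) obstacle: the recession function is only a limsup, and one must make sure the pointwise limsup, not a limit, is what is integrated. Reverse Fatou handles exactly that.

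For (c), fix $z\in\V$ and $w\in\WC$. By $\WC$-convexity, $s\mapsto f(z+sw)$ is convex, so its difference quotients are nondecreasing. In particular, for every $t\ge1$,
\[
f(z+w)-f(z)\le\frac{f(z+tw)-f(z)}{t}.
\]
As $t\to\infty$, the Lipschitz bound gives $|f(z+tw)-f(tw)|\le L_f|z|$, so the right-hand side has limsup at most $\limsup_t f(tw)/t\le f^\infty(w)$. This proves the inequality.
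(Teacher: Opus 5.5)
Your proof is correct. The paper gives no proof of this lemma to compare with: it records the statement as standard. It quotes the $\WC$-convexity of $\mathscr{A}$-quasiconvex functions from \cite[Prop.~3.4]{FonsecaMuller99}, exactly as you do, and that citation is the only input your argument takes from outside. Your argument supplies the remaining steps.

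In (a), the secant bound $|\phi'_\pm|\le b$ for convex $\phi$ with $|\phi(t)|\le a+b|t|$ holds with $b=c_fC$, which does not depend on $z$. You apply it along a basis of $\V$ drawn from $\WC$, which exists by \textup{(A1)}, and this gives $\mathrm{lip}(f)\le c_fC\,K_{\WC}$ as claimed.

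In (b), you correctly identify the one delicate point: $t^{-1}f(t\,\cdot)$ need not converge off $\WC$. The following steps are all sound:
\begin{enumerate}
\item the equi-Lipschitz bound reduces the upper recession function to $\limsup_{t\to\infty}f_t(z)$;
\item $\WC$-convexity passes to a pointwise $\limsup$;
\item reverse Fatou applies, since for $t\ge1$ the $f_t$ are dominated on the bounded range of $z+\mathscr{B}\varphi$.
\end{enumerate}

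In (c), you could skip the Lipschitz bound. Write $t^{-1}f(z+tw)=t^{-1}f\bigl(t(w+z/t)\bigr)$ and note $w+z/t\to w$; then $\limsup_{t}t^{-1}f(z+tw)\le f^\infty(w)$ follows directly from the definition of the upper recession function.
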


We shall also use the following automatic convexity result, in the form in
which it is applied in the proof of Lemma~\ref{lem:dent}.

\begin{theorem}[Kirchheim--Kristensen {\cite[Thm.~1.1]{KirchheimKristensen16}}]\label{KK theorem}
Assume \textup{(A1)}, and let $h\colon\V\to\R$ be positively
$1$-homogeneous and $\WC$-convex. Then for every $\theta\in\WC$ there is a
linear function $\ell\colon\V\to\R$ with $\ell\le h$ on $\V$ and
$\ell(\theta)=h(\theta)$. Consequently $h(\overline{\sigma})\le\langle h,\sigma\rangle$
for every $\sigma\in\mathscr{M}^+_1(\V)$ with finite first moment and
$\overline{\sigma}\in\WC$.
\end{theorem}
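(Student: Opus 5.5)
The first assertion is exactly \cite[Thm.~1.1]{KirchheimKristensen16}, so the plan is to quote it and check that its conclusion has the normalised form stated here. The Jensen-type consequence then follows by integrating against the supporting linear function. No earlier result of the paper is needed beyond \textup{(A1)} and the remark that $\WC$-convex functions of linear growth are Lipschitz.

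\emph{Step 1: from a supporting affine function to a linear one.} Under \textup{(A1)}, Kirchheim and Kristensen prove that a positively $1$-homogeneous $\WC$-convex function $h$ is convex at every $\theta\in\WC$. This means there is an affine $a(z)=\ell(z)+b$ with $a\le h$ on $\V$ and $a(\theta)=h(\theta)$.
\begin{itemize}
\item Replacing $z$ by $tz$ and using homogeneity gives $\ell(z)+b/t\le h(z)$ for every $t>0$. Letting $t\to\infty$ yields $\ell\le h$ on $\V$.
\item Testing at $t\theta$ and using $\ell(\theta)+b=h(\theta)$ gives $b\le tb$ for all $t>0$. Letting $t\to0$ gives $b\le0$. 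If $b<0$, any $t>1$ gives $tb<b$, a contradiction. Hence $b=0$ and $\ell(\theta)=h(\theta)$.
\end{itemize}
This is the form stated in the theorem.

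\emph{Step 2: the Jensen inequality.} Let $\sigma\in\mathscr{M}^+_1(\V)$ have finite first moment and $\theta:=\overline{\sigma}\in\WC$. First check integrability.
\begin{itemize}
\item By \textup{(A1)} and $\WC$-convexity, $h$ is locally bounded; with homogeneity this gives $|h|\le C_h|\cdot|$.
\item Alternatively, $h$ is Lipschitz, arguing as in Lemma~\ref{qc properties lemma}(a).
\end{itemize}
In either case $h$ and $\ell$ are $\sigma$-integrable. Taking $\ell$ from Step 1 at $\theta$ and using linearity of $\ell$ together with $\int\dif\sigma=1$,
$$
\langle h,\sigma\rangle\ \ge\ \int_\V\ell\dif\sigma=\ell(\overline{\sigma})=h(\overline{\sigma}).
$$

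\emph{Main obstacle.} The only genuinely deep point is the existence of the supporting affine function at wave-cone points, which is the content of \cite{KirchheimKristensen16}; we do not reprove it. If one wanted to, the first attempt would follow their route. One shows that the convex envelope satisfies $h^{\ast\ast}(\theta)=h(\theta)$, using homogeneity to rescale $\WC$-segments through $\theta$ and the spanning hypothesis \textup{(A1)} to control all directions. A supporting hyperplane to the epigraph of $h^{\ast\ast}$ at $\theta$ then gives $a$. Everything else is the elementary normalisation above.
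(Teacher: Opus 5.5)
Your proposal is correct and follows the paper, which likewise takes the supporting linear function directly from Kirchheim--Kristensen's Theorem~1.1 (applied with $\mathcal{C}=\V$ and $\mathcal{D}=\WC$, as in the proof of Lemma~\ref{lem:dent}) and obtains the Jensen inequality by integrating $\ell\le h$ against $\sigma$. Your affine-to-linear normalisation via homogeneity and your integrability check are correct details that the paper leaves implicit; the integrability comes from local boundedness via separate convexity along a basis of $\V$ chosen in $\WC$, which gives $|h|\le C_h|\cdot|$.
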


By Lemma~\ref{qc properties lemma}(b), Theorem~\ref{KK theorem} applies to
$h=f^\infty$ for every $f\in\mathbf{Q}$, and in particular to $h=G^\infty$.
The next lemma is the $G$-analogue of \cite[Lemma~4.3]{KristensenRaita22};
it underlies all the duality arguments below.

\begin{lemma}\label{approximation lemma}
Let $f\in \mathbf{Q}$ and $\delta\in(0,1)$. Then
$$
g_\delta:= \max\Big\{f,\ f^\infty + \delta G^\infty -\frac{1}{\delta} \Big\}
$$
is $\mathscr{A}$-quasiconvex, and there is $s=s(\delta)>0$ such that
$g_\delta(z)= f^\infty(z) + \delta G^\infty(z) -\frac{1}{\delta}$ for all
$z$ with $G^\infty(z)\geq s$; in particular $g_\delta\in\HH$ and
$g_\delta^\infty=f^\infty+\delta G^\infty$. Furthermore
$(g_\delta)_{\delta \in (0,1)}$ is equi-Lipschitz, and
$$
g_\delta(z) \searrow f(z) \quad \text{and}\quad g^\infty_\delta(z) \searrow f^\infty(z)
\qquad\text{for every } z\in\V, \text{ as } \delta \searrow 0 .
$$
\end{lemma}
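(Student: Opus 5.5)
The proof splits into four steps: quasiconvexity, the identification of $g_\delta$ at infinity, the Lipschitz bound, and monotone convergence.

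\emph{Quasiconvexity.} By Lemma~\ref{qc properties lemma}(b), $f^\infty$ is $\mathscr{A}$-quasiconvex. Hence so is $h_\delta:=f^\infty+\delta G^\infty-\frac1\delta$: quasiconvexity is preserved under sums, under multiplication by positive constants and under addition of constants, and $G^\infty$ is $\mathscr{A}$-quasiconvex by \textup{(A2)}. The maximum of two $\mathscr{A}$-quasiconvex functions is again $\mathscr{A}$-quasiconvex. Indeed, for $\pphi\in\CC^\infty_c(\XX,\V)$,
$$
\max\{f,h_\delta\}(z)=\max\Big\{f(z),h_\delta(z)\Big\}\le\max\Big\{\int_\XX f(z+\mathscr{B}\pphi),\int_\XX h_\delta(z+\mathscr{B}\pphi)\Big\}\le\int_\XX g_\delta(z+\mathscr{B}\pphi).
$$

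\emph{Identification at infinity.} This is the main step. We need $s$ with $f(z)\le h_\delta(z)$ whenever $G^\infty(z)\ge s$. Since $f$ is only quasiconvex, with merely an upper recession function, I use the definition of $f^\infty$ as a limsup and argue uniformly on the compact set $\SG$. For each $\theta\in\SG$ there are a neighbourhood $U_\theta$ and $t_\theta$ such that $f(tz')/t\le f^\infty(\theta)+\delta/4$ for $z'\in U_\theta$ and $t\ge t_\theta$. The function $f^\infty$ is continuous by Lemma~\ref{qc properties lemma}(b), so shrinking $U_\theta$ gives $f^\infty(\theta)\le f^\infty(z')+\delta/4$ on $U_\theta$. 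Compactness of $\SG$ then yields a single $t_0$. Writing $z=t\theta'$ with $t=G^\infty(z)$ and $\theta'\in\SG$, we obtain
$$
f(z)\le f^\infty(z)+\tfrac{\delta}{2}G^\infty(z)\qquad\text{for } G^\infty(z)\ge t_0 .
$$
This is at most $h_\delta(z)$ as soon as $\frac{\delta}{2}G^\infty(z)\ge\frac1\delta$. So $s:=\max\{t_0,2/\delta^2\}$ works. It follows that $g_\delta$ coincides with a continuous positively $1$-homogeneous function plus a constant outside the compact set $\{G^\infty\le s\}$. Hence $g_\delta\in\HH$ with $g_\delta^\infty=f^\infty+\delta G^\infty$. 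Continuity of $g_\delta$ holds because $f$ is Lipschitz (Lemma~\ref{qc properties lemma}(a)).

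\emph{Equi-Lipschitz bound.} The function $f$ is Lipschitz, and so are $f^\infty$ and $G^\infty$, by Lemma~\ref{qc properties lemma}. Therefore
$$
\mathrm{lip}(g_\delta)\le\max\{\mathrm{lip}(f),\ \mathrm{lip}(f^\infty)+\mathrm{lip}(G^\infty)\}\qquad\text{for }\delta<1,
$$
and this bound is independent of $\delta$.

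\emph{Monotone convergence.} Both $\delta\mapsto\delta G^\infty(z)$ and $\delta\mapsto-1/\delta$ are increasing in $\delta$, so $g_\delta$ and $g_\delta^\infty$ decrease as $\delta\searrow0$. For fixed $z$ we have $h_\delta(z)\to-\infty$, so $g_\delta(z)=f(z)$ for small $\delta$. Finally, $g_\delta^\infty=f^\infty+\delta G^\infty\searrow f^\infty$ pointwise.

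The only delicate point is the uniform bound $f\le f^\infty+\frac\delta2 G^\infty$ at large $G^\infty$. It needs the joint limsup in $(z',t)$ together with the continuity of $f^\infty$. Without the latter one would argue directly with the Lipschitz bound on $f$.
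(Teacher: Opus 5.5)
Your proof is correct and matches the paper's argument step for step: you get $\mathscr{A}$-quasiconvexity of $g_\delta$ as a maximum of two $\mathscr{A}$-quasiconvex functions, and you find $s$ by a compactness argument on $\SG$ that uses the joint $\limsup$ and the continuity of $f^\infty$, with $s=\max\{t_0,2/\delta^2\}$; the monotone convergence comes from monotonicity in $\delta$. The one deviation is the equi-Lipschitz step: you bound $\mathrm{lip}(g_\delta)\le\max\{\mathrm{lip}(f),\mathrm{lip}(f^\infty)+\mathrm{lip}(G^\infty)\}$ directly, whereas the paper first proves the uniform linear-growth bound \eqref{uniform growth of g delta} and then applies Lemma~\ref{qc properties lemma}(a). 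Your route is more direct, while the paper's also records the growth bound that it later uses for dominated convergence.
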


\begin{proof}
By Lemma~\ref{qc properties lemma}(b) and \textup{(A2)},
$f^\infty+\delta G^\infty-\frac1\delta$ is $\mathscr{A}$-quasiconvex, the
defining inequality being linear in the integrand; hence so is $g_\delta$,
as a maximum of two $\mathscr{A}$-quasiconvex functions.

We find $s=s(\delta)$. By the definition of $f^\infty$, for every
$\theta\in\SG$ there is $s_\theta>0$ such that
$$
\frac{f(t\theta')}{t}<f^\infty(\theta)+\frac{\delta}{4}
\qquad\text{for all } t\ge s_\theta \text{ and } \theta'\in\SG \text{ with } |\theta'-\theta|<1/s_\theta ,
$$
and, $f^\infty$ being continuous, we may also assume that
$f^\infty(\theta)\le f^\infty(\theta')+\frac{\delta}{4}$ for such $\theta'$.
Cover the compact set $\SG$ by finitely many balls
$B_{1/s_{\theta_i}}(\theta_i)$, $i=1,\dots,N$, and let
$s:=\max\{s_{\theta_1},\dots,s_{\theta_N},2/\delta^2\}$. Then for all
$\theta'\in\SG$ and $t\ge s$
$$
\frac{f(t\theta')}{t}<f^\infty(\theta')+\frac{\delta}{2}
\le f^\infty(\theta')+\delta-\frac{1}{t\delta},
$$
because $\frac{1}{t\delta}\le\frac{1}{s\delta}\le\frac{\delta}{2}$.
Multiplying by $t$ and using $G^\infty(t\theta')=t$ and the homogeneity of
$f^\infty$, we get $f(z)<f^\infty(z)+\delta G^\infty(z)-\frac1\delta$ for
every $z=t\theta'$ with $G^\infty(z)=t\ge s$. Hence
$g_\delta=f^\infty+\delta G^\infty-\frac1\delta$ on $\{G^\infty\ge s\}$, and
since $f^\infty$ and $G^\infty$ are continuous and positively
$1$-homogeneous, $g_\delta\in\HH$ with $g^\infty_\delta=f^\infty+\delta G^\infty$.

As $\delta\searrow0$, both $\delta G^\infty$ and $-\frac1\delta$ decrease;
hence $g_\delta$ decreases to $f$, as $f^\infty+\delta G^\infty-\frac1\delta\to-\infty$,
and $g^\infty_\delta=f^\infty+\delta G^\infty$ decreases to $f^\infty$.
Finally, for $\delta\in(0,1)$,
\begin{equation}\label{uniform growth of g delta}
-c_f(1+G^\infty)\le f\le g_\delta\le|f|+|f^\infty|+G^\infty\le(2c_f+1)(1+G^\infty),
\qquad |g^\infty_\delta|\le(c_f+1)G^\infty ,
\end{equation}
so that the functions $g_\delta$ have uniformly linear growth, and they are
equi-Lipschitz by Lemma~\ref{qc properties lemma}(a).
\end{proof}

We call $f\in\mathbf{Q}$ \emph{special} if there is $r=r(f)>0$ such that
$f(z)=f^\infty(z)$ for $|z|>r$; special integrands belong to $\HH$. In
Sections~\ref{inhomogenization section}
and~\secref{lemma: CD_f as sup of differentiability point} the Jensen
inequalities are tested on special integrands only; this is no loss of
generality, by the following consequence of
Lemma~\ref{approximation lemma}.

\begin{corollary}\label{special integrands corollary}
Let $\nu^0\in\mathscr{M}^+_1(\V)$ with $\int_\V G^\infty\dif\nu^0<\infty$,
$\nu^\infty\in\mathscr{M}^+(\SG)$ and $z\in\V$. If
\begin{equation}\label{jensen special}
\int_\V f\dif\nu^0+\int_{\SG}f^\infty\dif\nu^\infty\ \ge\ f(z)
\end{equation}
holds for every special $f\in\mathbf{Q}$, then it holds for every
$f\in\mathbf{Q}$.
\end{corollary}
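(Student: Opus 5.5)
Given $f\in\mathbf{Q}$, we approximate it by the functions $g_\delta$ of Lemma~\ref{approximation lemma}, show that each $g_\delta$ is special, and pass to the limit $\delta\searrow0$ by monotone convergence.

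\emph{Step 1: $g_\delta$ is special.} By Lemma~\ref{approximation lemma}, $g_\delta=f^\infty+\delta G^\infty-\frac1\delta$ on $\{G^\infty\ge s(\delta)\}$ and $g_\delta^\infty=f^\infty+\delta G^\infty$. So on that set $g_\delta$ differs from its recession function by the constant $-\frac1\delta$, and $g_\delta$ itself is not special. Instead consider
$$
\tilde g_\delta:=g_\delta+\tfrac1\delta .
$$
It is $\mathscr{A}$-quasiconvex, because adding a constant preserves the defining inequality. It has linear growth, satisfies $\tilde g_\delta^\infty=g_\delta^\infty$, and equals $\tilde g_\delta^\infty$ on $\{G^\infty\ge s\}\supseteq\{|z|>s/c\}$ by \textup{(A2)}. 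Hence $\tilde g_\delta$ is special with $r=s/c$.

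\emph{Step 2: Jensen for $g_\delta$.} Applying \eqref{jensen special} to $\tilde g_\delta$ gives
$$
\int_\V g_\delta\dif\nu^0+\tfrac1\delta+\int_{\SG}g_\delta^\infty\dif\nu^\infty\ge g_\delta(z)+\tfrac1\delta ,
$$
using that $\nu^0$ is a probability measure. The constants $\frac1\delta$ cancel, so \eqref{jensen special} holds for $g_\delta$.

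\emph{Step 3: limit $\delta\searrow0$.} This is the only step needing care, namely integrability for monotone convergence. By Lemma~\ref{approximation lemma}, $g_\delta\searrow f$ and $g_\delta^\infty\searrow f^\infty$ pointwise, and $g_\delta(z)\to f(z)$ on the right-hand side. For the integrals we use decreasing monotone convergence, which needs an integrable upper bound for some fixed $\delta_0$ together with the monotonicity.
\begin{itemize}
\item By \eqref{uniform growth of g delta}, $g_\delta\le(2c_f+1)(1+G^\infty)$. This bound is $\nu^0$-integrable by the moment assumption $\int G^\infty\dif\nu^0<\infty$.
\item Likewise $|g_\delta^\infty|\le(c_f+1)G^\infty=c_f+1$ on $\SG$, which is integrable against the finite measure $\nu^\infty$.
\item The limits $f$ and $f^\infty$ are bounded below by $-c_f(1+G^\infty)$ and $-c_fG^\infty$ respectively, so they are integrable as well.
\end{itemize}
Monotone convergence (for decreasing sequences dominated above by an integrable function) therefore gives convergence of both integrals. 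This yields \eqref{jensen special} for $f$.
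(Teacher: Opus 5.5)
Your proposal is correct and matches the paper's proof: you shift $g_\delta$ by $\frac1\delta$ to obtain a special integrand, cancel the constant because $\nu^0$ is a probability measure, and let $\delta\searrow0$ using the bounds \eqref{uniform growth of g delta}. The only difference is cosmetic: you invoke decreasing monotone convergence where the paper uses dominated convergence, and the same bounds, $|g_\delta|\le(2c_f+1)(1+G^\infty)$ and $|g^\infty_\delta|\le c_f+1$ on $\SG$, justify either.
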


\begin{proof}
Let $f\in\mathbf{Q}$, and let $g_\delta$ and $s=s(\delta)$ be as in
Lemma~\ref{approximation lemma}. The function $g_\delta+\frac1\delta$ is
$\mathscr{A}$-quasiconvex, of linear growth, with recession function
$g^\infty_\delta$, and it coincides with $g^\infty_\delta$ on
$\{G^\infty\ge s\}\supseteq\{|z|\ge s/c\}$; thus it is special. Since
$\nu^0$ is a probability measure, \eqref{jensen special} for
$g_\delta+\frac1\delta$ is equivalent to \eqref{jensen special} for
$g_\delta$. Letting $\delta\searrow0$, \eqref{jensen special} for $f$ follows
by dominated convergence, thanks to \eqref{uniform growth of g delta}.
\end{proof}

The same argument applies, at every point $x$ at which it holds, to the
pointwise Jensen inequality of Proposition~\ref{sufficiency proposition},
which is \eqref{jensen special} with $\nu^0=\nu_x$,
$\nu^\infty=\lambda^a(x)\nu^\infty_x$ and
$z=\overline{\nu}_x+\lambda^a(x)\overline{\nu}^\infty_x$.

\section{Homogeneous \texorpdfstring{$\mathscr{A}$}{A}-free Young measures}\label{hahn banach section}

Fix $z \in \V$. We denote by $\mathbf{Y}^{\mathscr{A}}_z$ the set of
homogeneous Young measures
$(\nu^0,\nu^\infty)\in\mathscr{M}^+_1(\V)\times\mathscr{M}^+(\SG)$ for which
there is a sequence $(\pphi_j)\subset\CC^\infty_c(\XX,\V)$ such that
$\|\pphi_j\|_{\WW^{l-1,1}(\XX,\V)}\to0$ and
\begin{equation}\label{membership Y condition}
\boldsymbol{\eps}_{z+ \mathscr{B} \pphi_j}(\Phi)=\int_\XX   \Phi\big(z+ \mathscr{B} \pphi_j(x)\big)\dd x
\longrightarrow \int_\V \Phi \, \dd \nu^0 + \int_{\SG} \Phi^\infty \dd \nu^\infty
\qquad \text{for all } \Phi \in \HH,
\end{equation}
that is, $\boldsymbol{\eps}_{z+\mathscr{B}\pphi_j}\wstar(\nu^0,\nu^\infty)$
in $\HH^*$. These are the \emph{homogeneous $\mathscr{A}$-free Young
measures with barycentre $z$}. Since all the functionals involved are
positive, \eqref{membership Y condition} can equivalently be stated in
terms of the Kantorovich metric:
$$
\big\|\boldsymbol{\eps}_{z+ \mathscr{B} \pphi_j} - (\nu^0,\nu^\infty)\big\|_{\mathrm{K}} \longrightarrow 0
\qquad (j\to\infty).
$$

\begin{lemma}\label{elementary in Y lemma}
For every $\pphi\in\CC^\infty_c(\XX,\V)$ we have
$\boldsymbol{\eps}_{z+\mathscr{B}\pphi}\in\mathbf{Y}^{\mathscr{A}}_z$. In
particular the family
$\{\boldsymbol{\eps}_{z+\mathscr{B}\pphi}:\pphi\in\CC^\infty_c(\XX,\V)\}$ is
a weakly$^*$ dense subset of $\mathbf{Y}^{\mathscr{A}}_z$.
\end{lemma}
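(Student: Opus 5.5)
The plan is to build, for a fixed $\pphi\in\CC^\infty_c(\XX,\V)$, a sequence of test fields by periodic rescaling. Extend $\pphi$ $\XX$-periodically (its support is compact in $\XX$, so the extension is smooth), set $\pphi_j(x):=j^{-l}\pphi(jx)$ on a subcube grid, and cut off so that $\pphi_j\in\CC^\infty_c(\XX,\V)$. Concretely, I take $j$ odd or work with the $j^n$ subcubes $Q_{j,i}=a_i+j^{-1}\XX$ tiling $\XX$ up to a null set. On each subcube I put $\pphi_j(x):=j^{-l}\pphi(j(x-a_i))$, which is compactly supported in $Q_{j,i}$, so no cut-off is needed and $\pphi_j\in\CC^\infty_c(\XX,\V)$.

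Since $\mathscr{B}$ is homogeneous of order $l$, we have $\mathscr{B}\pphi_j(x)=(\mathscr{B}\pphi)(j(x-a_i))$ on $Q_{j,i}$. A change of variables on each subcube then gives, for every $\Phi\in\HH$,
$$\int_\XX\Phi\big(z+\mathscr{B}\pphi_j\big)\dd x=\sum_i j^{-n}\int_\XX\Phi\big(z+\mathscr{B}\pphi(y)\big)\dd y=\boldsymbol{\eps}_{z+\mathscr{B}\pphi}(\Phi).$$
So the sequence is exactly constant in $\HH^*$, and \eqref{membership Y condition} holds trivially with limit $(\nu^0,\nu^\infty)=\boldsymbol{\eps}_{z+\mathscr{B}\pphi}=((z+\mathscr{B}\pphi)_\#\Leb\Lcorner\XX,0)$. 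It remains to check $\|\pphi_j\|_{\WW^{l-1,1}}\to0$. For $|\beta|\le l-1$ we have $\partial^\beta\pphi_j=j^{|\beta|-l}(\partial^\beta\pphi)(j(\cdot-a_i))$, whose $\LL^1(\XX)$ norm equals $j^{|\beta|-l}\|\partial^\beta\pphi\|_{\LL^1}\le j^{-1}\|\pphi\|_{\WW^{l-1,1}}\to0$.

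For the density statement, take $(\nu^0,\nu^\infty)\in\mathbf{Y}^{\mathscr{A}}_z$. By definition there is a sequence $\pphi_j$ with $\boldsymbol{\eps}_{z+\mathscr{B}\pphi_j}\wstar(\nu^0,\nu^\infty)$, and each $\boldsymbol{\eps}_{z+\mathscr{B}\pphi_j}$ lies in $\mathbf{Y}^{\mathscr{A}}_z$ by the first part. Hence the family is sequentially weak$^*$ dense, and in particular dense. The only point needing care, and the one I expect to be the (mild) obstacle, is the bookkeeping: the subcubes must tile $\XX$ up to a null set (which they do for the grid $j^{-1}\mathbb{Z}^n$ shifted so that the cells are $j^{-1}$-cubes inside $\XX$), and the scaling exponent $j^{-l}$ must match the order of $\mathscr{B}$ so that $\mathscr{B}\pphi_j$ is an exact rescaled copy.
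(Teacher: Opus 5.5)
Your argument is correct. Tiling $\XX$ by $j^n$ translates of $j^{-1}\XX$ and placing $j^{-l}\pphi(j(\cdot-a_i))$ in each makes $\boldsymbol{\eps}_{z+\mathscr{B}\pphi_j}$ exactly equal to $\boldsymbol{\eps}_{z+\mathscr{B}\pphi}$, with $\|\pphi_j\|_{\WW^{l-1,1}(\XX,\V)}\le j^{-1}\|\pphi\|_{\WW^{l-1,1}(\XX,\V)}$, and density then follows directly from the definition of $\mathbf{Y}^{\mathscr{A}}_z$.

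The paper gives no written proof of this lemma, but the sentence after it and Remark~\ref{rescaling remark} point to the same homogeneity-of-$\mathscr{B}$ rescaling. Taking $r=1/j$ in that remark and summing over the $j^n$ subcubes reproduces exactly your estimates.
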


The same scaling allows us to realize homogeneous $\mathscr{A}$-free Young
measures on an arbitrary cube.

\begin{rmk}[Realization on cubes]\label{rescaling remark}
Let $Q=x_Q+r\XX$ be a cube. For $\pphi\in\CC^\infty_c(\XX,\V)$ the map
$\pphi^Q(x):=r^l\pphi\big((x-x_Q)/r\big)$ belongs to $\CC^\infty_c(Q,\V)$,
and for $w\in\V$, $\Phi\in\HH$ and $0\le m\le l-1$
$$
\mathscr{B}\pphi^Q(x)=(\mathscr{B}\pphi)\Big(\frac{x-x_Q}{r}\Big),
\qquad
\dashint_Q\Phi(w+\mathscr{B}\pphi^Q)\dd x=\int_\XX\Phi(w+\mathscr{B}\pphi)\dd x,
$$
$$
\|\nabla^m\pphi^Q\|_{\LL^1(Q)}=r^{\,l-m+n}\,\|\nabla^m\pphi\|_{\LL^1(\XX)} .
$$
Consequently, for every $(\nu^0,\nu^\infty)\in\mathbf{Y}^{\mathscr{A}}_z$
and all $\eps,\tau>0$ there is a map $\pphi^Q\in\CC^\infty_c(Q,\V)$ with
$\|\pphi^Q\|_{\WW^{l-1,1}(Q,\V)}<\tau$ and
$$
\big\|(\nu^0,\nu^\infty)-\boldsymbol{\eps}_{z+\mathscr{B}\pphi^Q}\big\|_{\mathrm{K}}<\eps,
\qquad\text{where}\quad
\boldsymbol{\eps}_{z+\mathscr{B}\pphi^Q}(\Phi)=\dashint_Q\Phi(z+\mathscr{B}\pphi^Q)\dd x .
$$
This is what is meant in Section~\ref{inhomogenization section} by
realizing the elementary Young measures on $Q$ rather than on $\XX$.
\end{rmk}

\begin{lemma}\label{weak star closed Y lemma}
The set $\mathbf{Y}^{\mathscr{A}}_z$ is weakly$^*$ closed in $\HH^*$.
\end{lemma}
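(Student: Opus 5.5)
The plan is a diagonal extraction in the Kantorovich metric. Let $(\nu^0_k,\nu^\infty_k)\in\mathbf{Y}^{\mathscr{A}}_z$ converge weakly$^*$ in $\HH^*$ to some $\ell\in\HH^*$. By the uniform boundedness principle $\sup_k\|(\nu^0_k,\nu^\infty_k)\|_{\HH^*}<\infty$. Since $\ell$ is a weak$^*$ limit of positive functionals, it is positive. Testing with the constant integrand $1\in\HH$, whose recession function vanishes, gives $\ell(1)=\lim_k\nu^0_k(\V)=1$. By the representation following Lemma~\ref{T isometry lemma}, $\ell$ is therefore a pair $(\nu^0,\nu^\infty)$ with $\nu^0\in\mathscr{M}^+_1(\V)$, $\int_\V G^\infty\dif\nu^0<\infty$ and $\nu^\infty\in\mathscr{M}^+(\SG)$. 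So $\ell$ is a homogeneous Young measure, and it remains to produce a generating sequence.

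All functionals involved lie in the positive cone $(\HH^*)^+$, where $d_{\mathrm{K}}$ metrizes the weak$^*$ topology. Hence $\|(\nu^0_k,\nu^\infty_k)-(\nu^0,\nu^\infty)\|_{\mathrm{K}}\to0$. For each $k$ the definition of $\mathbf{Y}^{\mathscr{A}}_z$ gives a sequence $(\pphi_{k,j})_j\subset\CC^\infty_c(\XX,\V)$ with $\|\pphi_{k,j}\|_{\WW^{l-1,1}}\to0$ and $\|\boldsymbol{\eps}_{z+\mathscr{B}\pphi_{k,j}}-(\nu^0_k,\nu^\infty_k)\|_{\mathrm{K}}\to0$ as $j\to\infty$. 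Choose $j(k)$ so that both quantities are less than $1/k$, and set $\psi_k:=\pphi_{k,j(k)}$. The triangle inequality then gives $\|\psi_k\|_{\WW^{l-1,1}}\to0$ and $\|\boldsymbol{\eps}_{z+\mathscr{B}\psi_k}-(\nu^0,\nu^\infty)\|_{\mathrm{K}}\to0$.

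It remains to pass from Kantorovich convergence back to \eqref{membership Y condition} for every $\Phi\in\HH$. Again $d_{\mathrm{K}}$ metrizes weak$^*$ convergence on $(\HH^*)^+$, but this needs boundedness. Here $\|\boldsymbol{\eps}_{z+\mathscr{B}\psi_k}\|_{\HH^*}=\boldsymbol{\eps}_{z+\mathscr{B}\psi_k}(1+G^\infty)$, and since $1+G^\infty$ is admissible, this converges to $\ell(1+G^\infty)$. So the sequence is bounded. Bounded-Lipschitz functions on the compact set $\overline{\BG}$ are dense in $\CC(\overline{\BG})$, so convergence extends from Lipschitz test functions to all $T\Phi$, which is \eqref{membership Y condition}. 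Thus $(\nu^0,\nu^\infty)\in\mathbf{Y}^{\mathscr{A}}_z$.

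The only delicate point is that $\mathbf{Y}^{\mathscr{A}}_z$ may be unbounded, so weak$^*$ closedness cannot be reduced to sequential closedness via metrizability alone. I would handle this in one of two ways. One option is to note that, by positivity and the normalisation, the norm $\ell(1+G^\infty)$ is weak$^*$ continuous on the positive cone; then, by Krein--\v{S}mulian, it suffices to check closedness of the intersections with balls, which are metrizable, and the argument above applies to nets there too. Alternatively, one can interpret the lemma as sequential closedness, which is what the Hahn--Banach separation step actually uses.
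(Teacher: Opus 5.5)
Your proof is correct. The paper states this lemma without proof, but your diagonal extraction in $d_{\mathrm{K}}$ is exactly what the paper's set-up of the Kantorovich metric on $(\HH^*)^+$ (introduced to make such extractions legitimate) is designed for.

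Your ``delicate point'' is already settled by the fact you yourself invoke. The cone $(\HH^*)^+$ is weak$^*$ closed, and $d_{\mathrm{K}}$ metrizes the weak$^*$ topology on all of it, bounded or not, since positivity together with testing against $1+G^\infty$ controls the mass. Hence the weak$^*$ closure of $\mathbf{Y}^{\mathscr{A}}_z$ equals its sequential closure, and Krein--\v{S}mulian is not needed; it would in any case additionally require the convexity of Lemma~\ref{convex Y lemma}. Your alternative reading does not suffice: the Hahn--Banach step in Proposition~\ref{homogeneous case characterization} needs topological, not merely sequential, weak$^*$ closedness.
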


\begin{lemma}\label{convex Y lemma}
The set $\mathbf{Y}^{\mathscr{A}}_z$ is convex.
\end{lemma}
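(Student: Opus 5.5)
To prove Lemma~\ref{convex Y lemma} I would show that for $(\nu^0_1,\nu^\infty_1),(\nu^0_2,\nu^\infty_2)\in\mathbf{Y}^{\mathscr{A}}_z$ and $t\in(0,1)$ the combination $t(\nu^0_1,\nu^\infty_1)+(1-t)(\nu^0_2,\nu^\infty_2)$ is again in $\mathbf{Y}^{\mathscr{A}}_z$. It is enough to prove this for rational $t$, or for $t$ in a dense set. The set is weak$^*$ closed (Lemma~\ref{weak star closed Y lemma}), and the combination depends continuously on $t$ in $\HH^*$, since the functionals are bounded. By Lemma~\ref{elementary in Y lemma} and closedness again, it suffices to treat elementary measures $\boldsymbol{\eps}_{z+\mathscr{B}\pphi_1}$ and $\boldsymbol{\eps}_{z+\mathscr{B}\pphi_2}$ with $\pphi_1,\pphi_2\in\CC^\infty_c(\XX,\V)$. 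I would still track the $\WW^{l-1,1}$ smallness, so that the final generating sequence has $\|\pphi_j\|_{\WW^{l-1,1}}\to0$.

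The construction is a fine tiling. Fix $t$, a level $k\in\N$, and partition $\XX$ into $k^n$ dyadic-type subcubes $Q$ of side $1/k$. Up to a boundary set of measure $O(1/k)$, choose a family $\mathcal{Q}_1$ of subcubes of total volume $t_k\to t$, and let $\mathcal{Q}_2$ be the remaining ones. On each $Q\in\mathcal{Q}_i$ place the rescaled map $\pphi_i^Q$ of Remark~\ref{rescaling remark}, and define $\psi_k:=\sum_{Q}\pphi^Q_{i(Q)}\in\CC^\infty_c(\XX,\V)$. The supports are disjoint and compact in the open cubes, so $\mathscr{B}\psi_k=\mathscr{B}\pphi_i^Q$ on each $Q$. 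The rescaling identity then gives, for every $\Phi\in\HH$,
$$
\int_\XX\Phi(z+\mathscr{B}\psi_k)\dd x=t_k\,\boldsymbol{\eps}_{z+\mathscr{B}\pphi_1}(\Phi)+(1-t_k)\,\boldsymbol{\eps}_{z+\mathscr{B}\pphi_2}(\Phi).
$$
Here I use that $\mathscr{B}\psi_k=0$ off the supports, so the integrand equals $\Phi(z)$ there, consistently with both elementary measures. The right-hand side tends to the desired convex combination as $k\to\infty$.

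The smallness step uses the scaling $\|\nabla^m\pphi^Q\|_{\LL^1(Q)}=r^{l-m+n}\|\nabla^m\pphi\|_{\LL^1}$ with $r=1/k$ and $m\le l-1$. Summing over the $k^n$ cubes gives $\|\psi_k\|_{\WW^{l-1,1}}\lesssim k^{-1}\max_i\|\pphi_i\|_{\WW^{l-1,1}}\to0$. To pass from elementary measures back to general members, I would take a diagonal sequence using the Kantorovich metric $d_{\mathrm{K}}$, which metrizes weak$^*$ convergence on positive bounded sets. Choose $\pphi_i^{(j)}$ realizing $(\nu^0_i,\nu^\infty_i)$ up to $1/j$ with $\WW^{l-1,1}$ norm below $1/j$. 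The combination of elementary measures is within $1/j$ of the target in $\|\cdot\|_{\mathrm{K}}$, by linearity of the norm. Then pick $k=k(j)$ so that $\psi$ is within $1/j$ of that combination.

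The main obstacle is bookkeeping rather than depth. The target has mass $\int(1+G^\infty)$ that may be unbounded along the sequence; I would control this by working with positive functionals of norm $\le 1+C\,|z|+\sup_j\int|\mathscr{B}\pphi|$. The weak$^*$ limit of the combination must be taken within a bounded set for $d_{\mathrm{K}}$ to apply. The remaining point is that $t_k$ need not equal $t$ exactly, which is harmless since $t_k\to t$.
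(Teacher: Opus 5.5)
Your proof is correct and is the standard gluing argument that the paper relies on without writing out. The paper states this lemma without proof, pointing to the Kinderlehrer--Pedregal and Kristensen--Rai\cb{t}\u{a} scheme and supplying the rescaling identities of Remark~\ref{rescaling remark}. The ingredients are the ones you use: rescaled copies of the two generating maps on a uniform grid of $k^n$ subcubes in proportions $t_k=\lfloor tk^n\rfloor/k^n\to t$, the gain $k^{-(l-m)}$ in the $\WW^{l-1,1}$ norms, and a diagonal choice $k=k(j)$ in $d_{\mathrm{K}}$.

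Your direct diagonal argument makes the preliminary reduction through Lemma~\ref{weak star closed Y lemma} and rational $t$ superfluous, because the finite grid tiles $\XX$ exactly and leaves no boundary set. That is preferable: weak$^*$ closedness of the unbounded set $\mathbf{Y}^{\mathscr{A}}_z$ is typically deduced from sequential closedness via Krein--\v{S}mulian, that is, using convexity.
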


\begin{proposition}\label{homogeneous case characterization}
Let $\nu=(\nu^0,\nu^\infty)$ be a homogeneous Young measure, that is,
$\nu^0 \in \mathscr{M}_1^+(\V)$ has finite first moment and
$\nu^\infty \in \mathscr{M}^+(\SG)$, and let $z \in \V$. Then
$\nu \in \mathbf{Y}^{\mathscr{A}}_z$ if and only if
$\overline{\nu^0}+\overline{\nu^\infty}=z$ and
$$
\int_\V f \, \dd \nu^0 + \int_{\SG} f^\infty \, \dd \nu^\infty \geq f(z)
\qquad\text{for all } f\in\mathbf{Q}.
$$
\end{proposition}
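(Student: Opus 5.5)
The plan is the Kinderlehrer--Pedregal scheme, with necessity obtained by passing to the limit in generating sequences and sufficiency by Hahn--Banach separation in $\HH^*$.

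\emph{Necessity.} Let $\nu\in\mathbf{Y}^{\mathscr{A}}_z$, generated by $z+\mathscr{B}\pphi_j$ with $\pphi_j\in\CC^\infty_c(\XX,\V)$.
\begin{itemize}
\item[(1)] Test \eqref{membership Y condition} with the linear integrands $\ell_e$. Since $\int_\XX\mathscr{B}\pphi_j\dd x=0$ by compact support, this gives $\overline{\nu^0}+\overline{\nu^\infty}=z$.
\item[(2)] Take $f\in\mathbf{Q}$. By Corollary~\ref{special integrands corollary} it suffices to treat special $f$, which belong to $\HH$.
\item[(3)] $\mathscr{A}$-quasiconvexity gives $f(z)\le\int_\XX f(z+\mathscr{B}\pphi_j)\dd x$. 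The right-hand side converges to $\int f\dif\nu^0+\int_{\SG} f^\infty\dif\nu^\infty$ by \eqref{membership Y condition}, and the Jensen inequality follows.
\end{itemize}

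\emph{Sufficiency.} Suppose $\nu$ satisfies the barycentre condition and the Jensen inequalities, but $\nu\notin\mathbf{Y}^{\mathscr{A}}_z$.
\begin{itemize}
\item[(1)] By Lemmas~\ref{weak star closed Y lemma} and~\ref{convex Y lemma}, $\mathbf{Y}^{\mathscr{A}}_z$ is a nonempty, weak$^*$ closed, convex subset of $\HH^*$. It is nonempty because it contains $\boldsymbol{\eps}_z$.
\item[(2)] Hahn--Banach in the weak$^*$ topology gives $\Phi\in\HH$ and $\gamma\in\R$ with $\nu(\Phi)<\gamma\le\mu(\Phi)$ for all $\mu\in\mathbf{Y}^{\mathscr{A}}_z$.
\item[(3)] By Lemma~\ref{elementary in Y lemma}, $\int_\XX\Phi(z+\mathscr{B}\pphi)\dd x\ge\gamma$ for every $\pphi\in\CC^\infty_c(\XX,\V)$, that is, $\Phi^{\mathrm{qc}}(z)\ge\gamma$.
\end{itemize}

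The goal is then to contradict the Jensen inequality with $f=\Phi^{\mathrm{qc}}$, or a suitable approximation of it.
\begin{itemize}
\item[(4)] $\Phi^{\mathrm{qc}}\le\Phi$, so $\Phi^{\mathrm{qc}}$ is bounded above with linear growth. If it is finite everywhere, it is $\mathscr{A}$-quasiconvex by the standard envelope argument (via \cite{raita potentials}, the periodic formulation of \cite{FonsecaMuller99}), and hence lies in $\mathbf{Q}$.
\item[(5)] Pointwise $\Phi^{\mathrm{qc}}\le\Phi$, and also $(\Phi^{\mathrm{qc}})^\infty\le\Phi^\infty$ on $\SG$, since $t^{-1}\Phi^{\mathrm{qc}}(tz')\le t^{-1}\Phi(tz')$. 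The Jensen inequality then yields
$$
\gamma\le\Phi^{\mathrm{qc}}(z)\le\int\Phi^{\mathrm{qc}}\dif\nu^0+\int(\Phi^{\mathrm{qc}})^\infty\dif\nu^\infty\le\nu(\Phi)<\gamma,
$$
which is a contradiction.
\end{itemize}

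\emph{The main obstacle} is step (4): showing $\Phi^{\mathrm{qc}}>-\infty$, so that it is a genuine element of $\mathbf{Q}$.

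The first thing I would try is to replace $\Phi$ by $\Phi_k:=\Phi+\frac1k(1+G^\infty)\cdot M$, or more simply to exploit the barycentre constraint by adding a linear function. Since $\overline{\nu^0}+\overline{\nu^\infty}=z$ and every $\mu\in\mathbf{Y}^{\mathscr{A}}_z$ also has barycentre $z$, replacing $\Phi$ by $\Phi+\ell_e$ shifts both sides of the separation by $e\cdot z$. Adding a large multiple of $G^\infty$ is not neutral, however, because $\nu(G^\infty)$ may differ from $\mu(G^\infty)$. So I would instead take $\Psi:=\Phi+KG^\infty$ with $K\ge\|\Phi\|_\HH+1$. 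Then $\Psi\ge-\|\Phi\|_{\HH}$ is bounded below, so $\Psi^{\mathrm{qc}}$ is finite and, by (A2) and Lemma~\ref{qc properties lemma}, lies in $\mathbf{Q}$.

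It remains to show that the separation survives this change. Here I would use that $G^\infty$ itself satisfies equality $\mu(G^\infty)=G^\infty$-energy along the generating sequences, which is lower semicontinuous in a controlled way. If this bookkeeping fails, the fallback is the truncation $\max\{\Phi^{\mathrm{qc}},-m\}$ combined with Lemma~\ref{approximation lemma}, letting $m\to\infty$ by monotone convergence.
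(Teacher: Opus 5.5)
Your necessity argument is correct (going through Corollary~\ref{special integrands corollary} is equivalent to the paper's direct use of $g_\delta$). Sufficiency steps (1)--(3) and (5) coincide with the paper's proof. The gap is exactly the one you flag in step (4), and neither of your proposed repairs closes it.

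Passing to $\Psi=\Phi+KG^\infty$ does not preserve the separation. The element $\boldsymbol{\eps}_z=(\delta_z,0)\in\mathbf{Y}^{\mathscr{A}}_z$ has $\boldsymbol{\eps}_z(G^\infty)=G^\infty(z)$. Your hypothesis only yields $\nu(G^\infty)\ge G^\infty(z)$, possibly with strict inequality. Hence $\nu(\Psi)-\boldsymbol{\eps}_z(\Psi)=\nu(\Phi)-\Phi(z)+K\bigl(\nu(G^\infty)-G^\infty(z)\bigr)$ can be positive for the $K$ you need. This is the sign of a fixed number, and no lower semicontinuity of the $G^\infty$-energy can change it.

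The truncation $\max\{\Phi^{\mathrm{qc}},-m\}$ fails as well. Its upper recession function is $\max\{(\Phi^{\mathrm{qc}})^\infty,0\}$ for every $m$, and this is not dominated by $\Phi^\infty$ on $\SG$. So the last inequality of your chain in (5) breaks on the $\nu^\infty$ term, and letting $m\to\infty$ does not help. Moreover, Lemma~\ref{approximation lemma} presupposes $f\in\mathbf{Q}$, which is what you are trying to prove.

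The missing idea is that step (3) already gives $\Phi^{\mathrm{qc}}(z)\ge\gamma>-\infty$, and finiteness at a single point propagates to all of $\V$. This is exactly the step the paper takes: from finiteness at $z$ it concludes that $\Phi^{\mathrm{qc}}$ is real-valued, $\mathscr{A}$-quasiconvex and of linear growth. The argument runs as follows.

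\begin{enumerate}
\item $\Phi^{\mathrm{qc}}\colon\V\to[-\infty,\infty)$ is $\WC$-convex, via laminates in a direction $w\in\ker\mathscr{A}(\xi)=\operatorname{im}\mathscr{B}(\xi)$ cut off by means of the potential. It is also bounded above by $\|\Phi\|_{\HH}(1+G^\infty)$.
\item A convex $h\colon\R\to[-\infty,\infty)$ that equals $-\infty$ at one point is $-\infty$ everywhere. Indeed, any other point is a proper convex combination of that point and a point where $h<\infty$.
\item Hence $\{\Phi^{\mathrm{qc}}=-\infty\}$ is invariant under translations by elements of $\WC$. By \textup{(A1)} it is either empty or all of $\V$, and it does not contain $z$, so it is empty.
\item Once $\Phi^{\mathrm{qc}}$ is real-valued, convexity along each line $y+\R w$ with $w\in\WC$, together with the linear upper bound, confines its one-sided slopes to $[-C\|\Phi\|_{\HH}|w|,\,C\|\Phi\|_{\HH}|w|]$.
\item So $\Phi^{\mathrm{qc}}$ is Lipschitz by \textup{(A1)} and has two-sided linear growth. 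Together with its $\mathscr{A}$-quasiconvexity this gives $\Phi^{\mathrm{qc}}\in\mathbf{Q}$.
\end{enumerate}

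Your step (5) then concludes with $\Phi$ itself, and no modification of $\Phi$ is needed.
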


\begin{proof}
\emph{Only if.} Fix $\nu \in \mathbf{Y}^{\mathscr{A}}_z$ and a sequence
$(\pphi_j)$ as in \eqref{membership Y condition}. Testing
\eqref{membership Y condition} with the linear integrands $\ell_e$,
$e\in\V$, for which $\ell_e^\infty=\ell_e$, and using
$\int_\XX\mathscr{B}\pphi_j\dd x=0$, we get
$(\overline{\nu^0}+\overline{\nu^\infty})\cdot e=z\cdot e$ for all $e$. Next,
let $f\in\mathbf{Q}$. For every $\Phi \in \HH$ with $\Phi \geq f$, the
$\mathscr{A}$-quasiconvexity of $f$ gives
$$
\nu(\Phi) =\lim_{j \to \infty} \int_\XX \Phi(z+\mathscr{B} \pphi_j) \dd x
\geq \limsup_{j \to \infty} \int_\XX f(z+\mathscr{B} \pphi_j) \dd x \geq f(z).
$$
By Lemma~\ref{approximation lemma} we may take $\Phi=g_\delta$, and letting
$\delta\searrow0$ by dominated convergence, thanks to
\eqref{uniform growth of g delta}, we obtain
$$
\int_\V f \, \dd \nu^0 + \int_{\SG} f^\infty \, \dd \nu^\infty
= \lim_{\delta \searrow 0 } \nu(g_\delta) \geq f(z).
$$

\emph{If.} By Lemmas~\ref{weak star closed Y lemma} and~\ref{convex Y lemma},
$\mathbf{Y}^{\mathscr{A}}_z$ is a weakly$^*$ closed convex subset of
$\HH^*$; hence, by the Hahn--Banach separation theorem, it is the
intersection of the weakly$^*$ closed half-spaces containing it. Such a
half-space is of the form
$$
H=\{\ell \in \HH^*: \ell(\Phi) \geq t\}
$$
for some $\Phi \in \HH$ and $t \in \R$. If $\mathbf{Y}^{\mathscr{A}}_z\subseteq H$,
Lemma~\ref{elementary in Y lemma} gives
$$
t \leq \boldsymbol{\eps}_{z+ \mathscr{B} \pphi}(\Phi) = \int_{\XX} \Phi(z+\mathscr{B} \pphi) \dd x
\qquad\text{for all } \pphi \in \CC_c^\infty(\XX,\V),
$$
and taking the infimum over $\pphi$ we get $t \leq \Phi^{\mathrm{qc}}(z)$.
In particular $\Phi^{\mathrm{qc}}(z)>-\infty$, so that $\Phi^{\mathrm{qc}}$
is a real-valued $\mathscr{A}$-quasiconvex function with
$\Phi^{\mathrm{qc}}\le\Phi$, hence of linear growth
(cf.\ \cite{FonsecaMuller99,KristensenRaita22}), that is,
$\Phi^{\mathrm{qc}}\in\mathbf{Q}$; moreover
$(\Phi^{\mathrm{qc}})^\infty\le\Phi^\infty$. Applying the assumption to
$f=\Phi^{\mathrm{qc}}$,
$$
\nu(\Phi)=\int_\V \Phi \, \dd \nu^0 + \int_{\SG} \Phi^\infty \, \dd \nu^\infty
\geq \int_\V \Phi^{\mathrm{qc}} \, \dd \nu^0 + \int_{\SG} (\Phi^{\mathrm{qc}})^\infty \, \dd \nu^\infty
\geq \Phi^{\mathrm{qc}}(z) \geq t ,
$$
so that $\nu \in H$. As $H$ was arbitrary, $\nu \in \mathbf{Y}^{\mathscr{A}}_z$.
\end{proof}
\section{Sufficiency theorem}
\begin{proposition}\label{sufficiency proposition}
Let $\mathscr{A}$ be a linear homogeneous differential operator of order $k$ on $\mathbb{R}^n$ from $\mathbb{V}$ to $\mathbb{W}$. Assume it has constant rank and satisfies the spanning cone condition. Let $\Omega$ be a bounded open subset of $\mathbb{R}^n$, $v \in \mathscr{M}(\Omega, \mathbb{V})$ and $(v_j)$ a sequence in $\mathscr{M}(\Omega, \mathbb{V})$ satisfying $v_j \wstar v$ in $\mathscr{M}(\Omega, \mathbb{V})$, $\mathscr{A} v_j \to \mathscr{A} v$ in $\WW^{-k,p}_{\mathrm{loc}}(\Omega, \mathbb{W})$ for some $p \in (1,\frac{n}{n-1})$ and $(v_j)$ generates $\nu = (\nu_x, \lambda, \nu_x^\infty)$. Let $\lambda = \lambda^a \Leb \Lcorner \Omega + \lambda^s$ be the Lebesgue--Radon--Nikod\'ym decomposition with respect to $\Leb$. Then there exists an $\Leb$ negligible subset $N^a$ of $\Omega$ such that
\begin{equation}\label{Jensen inequality in sufficiency theorem}
    \int_{\mathbb{V}} f  \dif \nu_x + \lambda^a(x) \int_{\mathbb{S}_G} f^\infty  \dif \nu_x^\infty \geq f\left(\overline{\nu}_x + \lambda^a(x) \overline{\nu}_x^\infty\right)
\end{equation}
holds for all $x \in \Omega \setminus N^a$ and all $f \colon \mathbb{V} \to \mathbb{R}$ that are $\mathscr{A}$-quasiconvex and of linear growth. Furthermore, let $|v^s|_G$ and $p_G=\frac{\dif v^s}{\dif|v^s|_G}$ be the $G$-variation and the $G$-polar of $v^s$ (Section~\ref{notation section} and Lemma~\ref{lem:wellposed}), so that $v^s=p_G\,|v^s|_G$ with $p_G\in\SG$ $|v^s|_G$-almost everywhere, and let $\lambda^s = b_G\,|v^s|_G + \lambda^*$, $\lambda^*\perp|v^s|_G$, be the Lebesgue--Radon--Nikod\'ym decomposition of $\lambda^s$ with respect to $|v^s|_G$. Then

\begin{equation}\label{conditions on nu infty in sufficiency theorem}
    \begin{cases}
        b_G(x)\,\overline{\nu}_x^\infty = p_G(x) \in \Lambda_{\mathscr{A}}\cap\SG & \text{for } |v^s|_G \text{ almost all } x, \\[6pt]
        \overline{\nu}_x^\infty = 0 & \text{for } \lambda^* \Lcorner \Omega \text{ almost all } x.
    \end{cases}
\end{equation}
In particular $b_G\geq 1$ $|v^s|_G$-almost everywhere, that is, $\lambda^s\Lcorner\Omega\geq|v^s|_G$, and $\overline{\nu}^\infty_x=b_G(x)^{-1}p_G(x)$ lies on the segment from $0$ to $p_G(x)$: $G^\infty(\overline{\nu}^\infty_x)=b_G(x)^{-1}\leq1$, and $\overline{\nu}^\infty_x\in\SG$ if and only if $b_G(x)=1$.
\end{proposition}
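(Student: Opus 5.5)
The proof splits into two independent parts: the pointwise Jensen inequality \eqref{Jensen inequality in sufficiency theorem} at Lebesgue points, and the description \eqref{conditions on nu infty in sufficiency theorem} of the singular barycentres.

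\emph{Jensen inequality at regular points.} Fix $x_0\in\Omega$ that is a Lebesgue point of $x\mapsto\nu_x$ (tested against a countable dense family of admissible integrands), of $\lambda^a$ and of $x\mapsto\lambda^a\nu^\infty_x$, and with $\frac{\dif\lambda^s}{\dif\Leb}(x_0)=0$. We blow up: set $v_{j,r}(y):=(T_{x_0,r})_\#v_j$ suitably normalized on $\XX$. A diagonal extraction, legitimate by the metrizability remark, gives a sequence $w_h$ on $\XX$ that generates the homogeneous Young measure $(\nu_{x_0},\lambda^a(x_0)\Leb\Lcorner\XX,\nu^\infty_{x_0})$. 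It also satisfies $\mathscr{A}w_h\to0$ in $\WW^{-k,p}$, because the blow-up of $\mathscr{A}v$ vanishes at $\Leb$-a.e.\ point for $p<\frac n{n-1}$.

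Next we mollify $w_h$, which keeps the generated Young measure up to an area-strictly small error. We then project onto $\mathscr{A}$-free fields via Murat's inequality and Rai\cb{t}\u{a}'s potential $\mathscr{B}$. Concretely, we write $w_h-\overline{w_h}=\mathscr{B}u_h+R_h$ with $R_h\to0$ in $\LL^1$, and cut off $u_h$ near $\partial\XX$. The cut-off creates boundary errors, which are controlled as in \cite{KristensenRaita22} by choosing layers where the concentration is small, using $\lambda(\partial Q)=0$ for almost all cube sizes.

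This produces $\pphi_h\in\CC^\infty_c(\XX,\V)$ with $\boldsymbol{\eps}_{z+\mathscr{B}\pphi_h}\wstar(\nu_{x_0},\lambda^a(x_0)\nu^\infty_{x_0})$, where $z=\overline{\nu}_{x_0}+\lambda^a(x_0)\overline{\nu}^\infty_{x_0}$. Hence the pair lies in $\mathbf{Y}^{\mathscr{A}}_z$, and the ``only if'' part of Proposition~\ref{homogeneous case characterization} yields \eqref{Jensen inequality in sufficiency theorem}. Here Lipschitz continuity of $f\in\mathbf{Q}$ (Lemma~\ref{qc properties lemma}) absorbs the $\LL^1$-small remainders $R_h$. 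Corollary~\ref{special integrands corollary} reduces everything to countably many special integrands, so the exceptional set $N^a$ is independent of $f$. I expect the main obstacle to be the quantitative projection step: estimating the non-$\mathscr{A}$-free part in $\LL^1$ only after mollification, with constants uniform along the diagonal.

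\emph{Singular part.} Since $v_j\toY\nu$ implies $v_j\wstar\overline{\nu}\Lcorner\Omega$, we get $v=\overline{\nu}\Lcorner\Omega$. Taking singular parts gives $v^s=\overline{\nu}^\infty_x\lambda^s\Lcorner\Omega$, because the term $\overline{\nu}_x+\lambda^a\overline{\nu}^\infty_x$ is absolutely continuous. Inserting $\lambda^s=b_G|v^s|_G+\lambda^*$ and using uniqueness of the Radon--Nikod\'ym decomposition together with $v^s=p_G|v^s|_G$ gives $b_G\overline{\nu}^\infty_x=p_G$ $|v^s|_G$-a.e.\ and $\overline{\nu}^\infty_x=0$ $\lambda^*$-a.e. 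Since $v$ is the limit of fields with $\mathscr{A}v_j\to\mathscr{A}v$, the De Philippis--Rindler theorem \cite{DePhilippisRindler16}, applied to $v$ (we have $\mathscr{A}v\in\WW^{-k,p}_{\mathrm{loc}}$, so its theorem in the form for $\mathscr{A}v$ a measure or Sobolev-negative distribution applies), gives $\frac{\dif v^s}{\dif|v^s|}\in\WC$, hence $p_G\in\WC\cap\SG$ by homogeneity of the cone.

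Finally, $\overline{\nu}^\infty_x$ is the barycentre of the probability $\nu^\infty_x$ on $\SG$ and lies in $\WC$ (as a positive multiple of $p_G$). Theorem~\ref{KK theorem} applied to $h=G^\infty$ then gives $G^\infty(\overline{\nu}^\infty_x)\le\langle G^\infty,\nu^\infty_x\rangle=1$. Homogeneity gives $G^\infty(\overline{\nu}^\infty_x)=b_G^{-1}G^\infty(p_G)=b_G^{-1}$, so $b_G\ge1$, with $\overline{\nu}^\infty_x\in\SG$ iff $b_G=1$.
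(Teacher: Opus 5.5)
Your outline has the same structure as the paper's proof. The Jensen inequality comes from the Kristensen--Rai\cb{t}\u{a} localization/blow-up argument, to which the paper simply defers. The identities $b_G\overline{\nu}^\infty_x=p_G$ $|v^s|_G$-a.e.\ and $\overline{\nu}^\infty_x=0$ $\lambda^*$-a.e.\ follow from the Lebesgue decomposition of $v=\overline{\nu}\Lcorner\Omega$. Finally $b_G\ge1$ follows from Theorem~\ref{KK theorem}, exactly as in the paper.

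The gap is in the wave-cone step. Theorem~1.1 of De Philippis--Rindler is a statement about $\mathscr{A}$-\emph{free} measures. Here you only know $\mathscr{A}v\in\WW^{-k,p}_{\mathrm{loc}}$, and for $p<\frac{n}{n-1}$ this is strictly weaker even than $\mathscr{A}v$ being a measure: a typical element is $\partial^\beta g$ with $g\in\LL^p$, $|\beta|=k$. So the ``form for a Sobolev-negative distribution'' you invoke cannot be cited as it stands; it has to be proved.

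The missing idea is the one the paper supplies: reduce to an $\mathscr{A}$-free measure by subtracting an $\LL^p$ field.
\begin{itemize}
\item Localize with a cut-off $\eta$ and set $\tilde v=\eta v$, $w_j=\rho_{\eps_j}\ast\tilde v$.
\item Apply Murat's inequality together with Rai\cb{t}\u{a}'s potentials. The correction they produce is linear in the field, so you get $\mathscr{A}$-free $\psi_j\in\CC^\infty_c$ with $\|(w_i-\psi_i)-(w_j-\psi_j)\|_{\LL^p}\lesssim\|\mathscr{A}w_i-\mathscr{A}w_j\|_{\WW^{-k,p}}$.
\item Since $\mathscr{A}w_j=\rho_{\eps_j}\ast\mathscr{A}\tilde v$ converges in $\WW^{-k,p}$, the sequence $w_j-\psi_j$ converges in $\LL^p$ to some $h$.
\item Then $\tilde v-h$ is an $\mathscr{A}$-free measure. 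Because $h\,\Leb$ has no singular part, $(\tilde v-h)^s=\tilde v^s$.
\item De Philippis--Rindler now applies as stated and gives $\frac{\dif v^s}{\dif|v^s|}\in\WC$ $|v^s|$-a.e.\ on $\{\eta=1\}$.
\end{itemize}
Alternatively, you could re-run their blow-up proof and check that the rescaled $\mathscr{A}$-images vanish in $\WW^{-k,p}_{\mathrm{loc}}$ at $|v^s|$-a.e.\ point. That is an argument to be written, not a citation.

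A minor point: Corollary~\ref{special integrands corollary} does not by itself reduce matters to countably many integrands. In your route this is not needed anyway. The good points $x_0$ are chosen independently of $f$, and membership of the blow-up limit in $\mathbf{Y}^{\mathscr{A}}_z$ gives the Jensen inequality for every $f\in\mathbf{Q}$ at once.
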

\begin{proof}
The proof of (\ref{Jensen inequality in sufficiency theorem}) follows exactly from the same proof of Proposition 1.1 in \cite{kristensen raita}, since the role of $\mathbb{S}_G$, which is $\mathbb{S}_\V$ in the authors' case, is not played in the proof. The initial Lebesgue negligible set $N_f$ depends on the particularly chosen $f$, but then one uses the separability of the class of integrands $\mathbb{H}$ to deduce the validity for a common Lebesgue negligible set and all $f \in \mathbb{H}$, and finally the approximation Lemma \ref{approximation lemma} to upgrade it for all  $\mathscr{A}$-quasiconvex functions of linear growth. \\
To prove (\ref{conditions on nu infty in sufficiency theorem}), we recall that given
$$
\begin{cases}
   v_j \wstar v & \text{in }\mathscr{M}(\Omega,\V),\\   
   \mathscr{A}v_j \to \mathscr{A}v & \text{in }\WW^{-k,p}_{\mathrm{loc}}(\Omega,\mathbb W), \ 1<p<\frac{n}{n-1},
\end{cases}
$$
from the first convergence we can extract a subsequence not relabelled such that
$$
v_j \toY \nu, \quad \nu=((\nu_x),\lambda, (\nu_x^\infty)),
$$
meaning that
\begin{multline*}
\int_\Omega \Phi(x,v_j^a(x))\dif x+\int_{\Omega} \Phi^\infty\left(x, \frac{\dif v_j^s}{\dif|v_j^s|}(x) \right)\dif|v_j^s|(x)    \\
\underset{j \to \infty}{\longrightarrow}\int_{ \Omega}\int_\V \Phi(x,z)\dif \nu_x(z) \dif x + \int_{\overline \Omega}\int_{\mathbb{S}_G} \Phi^\infty(x,z)\dif \nu^\infty_x(z) \dif \lambda (x), \quad \text{for all } \Phi \in \E_G(\Omega,\V).
\end{multline*}
The convergence $\mathscr{A}v_j \to \mathscr{A}v$ in $\WW^{-k,p}_{\mathrm{loc}}(\Omega,\mathbb W)$ gives additional information on the Young measure $\nu$, in the form of the Jensen inequalities (\ref{Jensen inequality in sufficiency theorem}) and the structure (\ref{conditions on nu infty in sufficiency theorem}), the latter to be proved. \\
Fix $\Omega' \Subset \Omega$ with $(\Leb+\lambda)(\partial \Omega')=0$. Write
$$
\nu \Lcorner \Omega' =((\nu_x)_{x \in \Omega'},\lambda, (\nu_x^\infty)_{x \in \Omega'}).
$$
Choose $\eta \in \CC_c^\infty(\Omega)$ such that $\boldsymbol{1}_{\Omega'} \leq \eta \leq \boldsymbol{1}_{\Omega}$; it is clear by localization that $\eta v_j \in \mathscr{M}(\R^n,\V)$,  $\eta v \in \mathscr{M}(\R^n,\V)$ and that $\eta v_j \wstar \eta v \ \ \text{in }\mathscr{M}(\R^n,\V)$. Also, since $(\eta v_j) \Lcorner \Omega' = v_j \Lcorner \Omega' $, we claim
$$
v_j \Lcorner \Omega' \toY \nu \Lcorner \Omega'.
$$
To check the Young measure generation above, take any $\Phi \in \E_G(\Omega',\V)$. Since the transformed $T\Phi$ is bounded and (uniformly) continuous on the closed set $\overline{\Omega '} \times \overline{\B}$, by Tietze extension theorem we can find an extension $\widetilde {T \Phi}$ of $T \Phi$, bounded and uniformly continuos on $\overline{\Omega} \times \overline{\B}$, such that $\widetilde{T\Phi}(x,\hat z)=T\Phi(x,\hat z) $ for $(x,\hat z)\in \overline{\Omega '} \times \overline{\B}$. Transforming back with $T^{-1}$, we get that $\widetilde \Phi:=T^{-1}[\widetilde{T\Phi}]  \in \E_G(\Omega,\V)$ since $T$ is an isomorphism. We check that $\widetilde \Phi$ extends $\Phi$ in the expected way. From the explicit formula (\ref{isomorphism T inhomogeneous setting}) of $T$, since the transformation does not involve the $x$ variable but only $z$, we get that
$$
\widetilde \Phi(x,z)=(1+G^\infty(z))\widetilde{T\Phi}\left(x, \frac{z}{1+G^\infty(z)} \right)= \Phi(x,z) \quad (x,z) \in \overline{\Omega '} \times \V
$$
since $\widetilde {T \Phi}|_{\overline{\Omega'} \times \B}=T\Phi$, and
\begin{multline*}
\widetilde \Phi^\infty (x,z) = \lim_{t \to \infty}\frac{\widetilde \Phi (x,tz)}{t}=\lim_{t \to \infty}(1+G^\infty(tz))\left( 1- \frac{G^\infty(tz)}{1+G^\infty(tz)}\right)\Phi \left(x, \frac{\frac{tz}{1+G^\infty(tz)}}{1-\frac{G^\infty(tz)}{1+G^\infty(tz)}} \right)  \\=\lim_{t \to \infty}\frac{\Phi(x,tz)}{t}=\Phi^\infty(x,z), \quad (x,z) \in \overline{\Omega '} \times \mathbb S_G.
\end{multline*}
The extension given by Tietze extension theorem is not linear, however it is possible to find a \emph{linear} extension operator, see for example Theorem 1.8 (Borsuk--Dugundji) in \cite{brudnyi extension}. It also works for unbounded continuous functions on a closed set, thus one can extends directly $\Phi$ to $\widetilde \Phi$ before transforming and then check that $\widetilde \Phi$ is actually in $\E_G(\Omega,\V)$. \\
We construct now $\phi_\delta \in \CC_c^0(\Omega)$ be a cutoff function such that $\phi_\delta(x)\equiv 1$ in $\overline{\Omega'}$ and such that $\phi_\delta \searrow 0 $ on $\Omega \setminus \overline{\Omega'}$; this is possible since $\overline{\Omega'}$ is closed. Say $\text{supp}(\phi_\delta) \Subset B_\delta(\Omega')$, its closed $\delta$-neighbourhood. First note that for all $\delta >0$
\begin{multline*}
(\nu \Lcorner \Omega')(\Phi)=\int_{\Omega'} \langle \Phi,\nu_x \rangle \dif x + \int_{\Omega'}\langle \Phi^\infty, \nu_x^\infty \rangle \dif \lambda \\= \int_{\Omega}\phi_\delta \langle \widetilde \Phi, \nu_x \rangle \dif x +\int_{\Omegacl}\phi_\delta \langle \widetilde \Phi^\infty, \nu^\infty_x \rangle \dif \lambda.\\
-\int_{B_\delta(\Omega')\setminus\Omega'}\phi_\delta \langle \widetilde\Phi, \nu_x \rangle \dif x - \int_{B_\delta(\Omega')\setminus\Omega'}\phi_\delta \langle \widetilde \Phi^\infty, \nu^\infty_x \rangle \dif \lambda,
\end{multline*}
and likewise for each $j \in \N$
\begin{multline}
\boldsymbol{\eps}_{v_j \Lcorner \Omega'}(\Phi)=\int_{\Omega'} \Phi(x,v_j^a)\dif x + \int_{\Omega'} \Phi^\infty\left(x, \frac{\dif v_j^s}{\dif|v_j^s|} \right)\dif|v_j^s| \\ = \int_{\Omega}\phi_\delta \widetilde \Phi(x,v_j^a)\dif x + \int_{\Omega}\phi_\delta \widetilde\Phi^\infty\left(x, \frac{\dif v_j^s}{\dif|v_j^s|} \right)\dif|v_j^s| 
\\ - \int_{B_\delta(\Omega')\setminus \Omega'}\phi_\delta \widetilde\Phi(x,v_j^a)\dif x - \int_{B_\delta(\Omega')\setminus\Omega'}\phi_\delta \widetilde\Phi^\infty\left(x, \frac{\dif v_j^s}{\dif|v_j^s|} \right)\dif|v_j^s|.
\end{multline}
Then, using the shorthand notation $\int_A \phi_\delta \langle \widetilde \Phi, \nu \rangle:=\int_A \phi_\delta \langle \widetilde \Phi, \nu_x \rangle \dif x + \int_A \phi_\delta \langle \widetilde\Phi^\infty, \nu_x^\infty \rangle \dif \lambda$ and likewise for $\int_A \phi_\delta \langle \widetilde \Phi, \boldsymbol{\eps}_{v_j} \rangle$ for any Borel set $A\subset\Omegacl$,
\begin{multline*}
\left| \boldsymbol{\eps}_{v_j \Lcorner \Omega'}(\Phi) -(\nu \Lcorner \Omega')(\Phi)\right|= \left| \boldsymbol{\eps}_{v_j}(\phi_\delta \widetilde \Phi) - \nu(\phi_\delta \widetilde\Phi) + \int_{B_\delta(\Omega')\setminus\Omega'}\phi_\delta \langle \widetilde \Phi, \nu \rangle - \int_{B_\delta(\Omega')\setminus\Omega'}\phi_\delta \langle \widetilde \Phi,\boldsymbol{\eps}_{v_j} \rangle\right| \\
\leq  \left| \boldsymbol{\eps}_{v_j}(\phi_\delta \widetilde \Phi)- \nu(\phi_\delta \widetilde\Phi)\right| + \left|\int_{B_\delta(\Omega')\setminus\Omega'}\phi_\delta \langle \widetilde \Phi, \nu \rangle \right| + \left| \int_{B_\delta(\Omega')\setminus\Omega'}\phi_\delta \langle \widetilde \Phi,\boldsymbol{\eps}_{v_j} \rangle\right|.
\end{multline*}
Since $\phi_\delta \widetilde\Phi \in \E_G(\Omega,\V)$ for all $\delta>0$, it holds $\lim_{j\to \infty} \boldsymbol{\eps}_{v_j}(\phi_\delta \widetilde\Phi)= \nu(\phi_\delta \widetilde\Phi)$ and therefore
\begin{multline*}
\limsup_{j \to \infty}\left| \boldsymbol{\eps}_{v_j \Lcorner \Omega'}(\Phi) -(\nu \Lcorner \Omega')(\Phi)\right| \leq 
\left|\int_{B_\delta(\Omega')\setminus\Omega'}\phi_\delta \langle \widetilde \Phi, \nu \rangle \right| \\ + \limsup_{j \to \infty}\left| \int_{B_\delta(\Omega')\setminus\Omega'}\phi_\delta \langle \widetilde \Phi,\boldsymbol{\eps}_{v_j} \rangle\right|.
\end{multline*}
We can estimate the first term on the right-hand side by $0\leq \phi_\delta \leq 1$, $|\widetilde \Phi^\infty| \leq \max |T \widetilde \Phi|=:c_\Phi$ and $|\widetilde \Phi| \leq c_\Phi(1+G^\infty)$ (as the Tietze extension can be chosen with the same supremum, $c_\Phi=\max|T\Phi|\leq\|T\Phi\|_{\text{LIP}}$):
\begin{multline*}
\left|\int_{B_\delta(\Omega')\setminus\Omega'}\phi_\delta \langle \widetilde\Phi, \nu_x \rangle \dif x + \int_{B_\delta(\Omega')\setminus\Omega'}\phi_\delta \langle \widetilde \Phi^\infty, \nu^\infty_x \rangle \dif \lambda\right|\\ \leq \int_{B_\delta(\Omega')\setminus\Omega'} \left| \langle \widetilde\Phi, \nu_x \rangle \right| \dif x  + c_\Phi \,\lambda(B_\delta(\Omega')\setminus\Omega') \\
\leq c_\Phi \int_{B_\delta(\Omega')\setminus\Omega'}\langle 1+G^\infty, \nu_x \rangle  \dif x + c_\Phi \,\lambda(B_\delta(\Omega')\setminus\Omega')\underset{\delta \searrow 0}{\longrightarrow 0},
\end{multline*}
the right-hand side tends to $0$ as $\delta \searrow 0$ since $\lambda(\partial \Omega')=0$ and since $\langle 1 + G^\infty, \nu_x \rangle \in \LL^1(\Omega)$ by the moment condition (4) of Lemma \ref{YM measure theoretic definition} and $\Leb(\partial \Omega')=0$. With the same calculations we can estimate the second term with the $\limsup_j$, using now that $|\widetilde \Phi^\infty| \leq c_\Phi \, G^\infty$,
\begin{multline*}
\left| \int_{B_\delta(\Omega')\setminus \Omega'}\phi_\delta \widetilde\Phi(x,v_j^a)\dif x +\int_{B_\delta(\Omega')\setminus\Omega'}\phi_\delta \widetilde\Phi^\infty\left(x, \frac{\dif v_j^s}{\dif|v_j^s|} \right)\dif|v_j^s| \right| \\
\leq c_\Phi \int_{B_\delta(\Omega')\setminus \Omega'} (1+G^\infty(v_j^a))\dif x   + c_\Phi\int_{B_\delta(\Omega')\setminus\Omega'} G^\infty\left( \frac{\dif v_j^s}{\dif|v_j^s|} \right)\dif|v_j^s|   \\
=c_\Phi\int_{B_\delta(\Omega')\setminus\Omega'} (1+G^\infty(\cdot))(v_j).
\end{multline*}
The scalar (positive) measures 
$$
(1+G^\infty(\cdot))(v_j):=(1+G^\infty(v_j^a))\Leb\Lcorner\Omega + G^\infty(v_j^s/|v_j^s|)|v_j^s|
$$
weakly$^*$ converge to $\langle 1+G^\infty,\nu_x\rangle \Leb \Lcorner\Omega + \lambda $, simply by testing the convergence $v_j \toY \nu$ with the test function $\eta(x)(1+G^\infty(z))$, with $\eta \in \CC_0(\Omega)$,
$$
\int_{\Omega}\eta(1+G^\infty(v_j^a))\dif x + \int_{\Omega} \eta G^\infty\left(\frac{\dif v_j^s}{\dif |v_j^s|}\right)\dif|v_j^s| \to \int_\Omega \eta \langle 1+G^\infty, \nu_x \rangle \dif x + \int_{\Omega} \eta \int_{\mathbb S_G}G^\infty \dif \nu_x^\infty \dif \lambda,
$$
and $\int_{\mathbb S_G}G^\infty \dif \nu_x^\infty=1$ for $\lambda$-almost every $x$ since $G^\infty=1$ on $\mathbb S_G$ by its definition. By the upper semicontinuity on compact sets given by the weak$^*$ convergence, applied to the compact set $B_\delta(\Omega')\setminus\Omega'$ (we took the closed $\delta$-neighbourhood of $\Omega'$) we get
$$
\limsup_{j \to \infty}\int_{B_\delta(\Omega')\setminus\Omega'} (1+G^\infty(\cdot))(v_j) \leq \int_{B_\delta(\Omega')\setminus\Omega'} \langle 1+G^\infty,\nu_x\rangle \dif x + \lambda(B_\delta(\Omega')\setminus\Omega').
$$
The right-hand side is the same quantity of above which tends to $0$ as $\delta \searrow 0$. We can recollect the estimates by writing
\begin{multline*}
 \limsup_{j \to \infty}\left| \boldsymbol{\eps}_{v_j \Lcorner \Omega'}(\Phi) -(\nu \Lcorner \Omega')(\Phi)\right| \leq  2c_\Phi \left(\int_{B_\delta(\Omega')\setminus\Omega'} \langle 1+G^\infty,\nu_x\rangle \dif x + \lambda(B_\delta(\Omega')\setminus\Omega') \right)    
\end{multline*}
and since $\delta >0$ is arbitrary, we get the desired convergence for all test $\Phi \in \E_G(\Omega',\V)$ yielding $v_j \Lcorner\Omega' \toY \nu \Lcorner \Omega'$.

Without loss of generality, we can take $\Omega'$ such that simultaneously
$$
(\Leb+\lambda)(\partial \Omega')=0,
$$
$$
(v_j)(\partial \Omega')=0 \quad \text{for all }j \in \N.
$$
This choice is possible since only a countable collection from a family of disjoint boundaries can have positive $v_j$ measure, and repeating this for all $j$ one gets a countable union of countable collections. \\
Since $\eta v_j \in \mathscr{M}(\R^n,\V)$ for each $j \in \N$, we can find sequences of smooth functions $\rho_\e \ast (\eta v_j) $ such that
$$
\rho_\e \ast  (\eta v_j) \overset{\text{$\langle \cdot \rangle$-strictly}}{\longrightarrow} \eta v_j \quad \text{in $\R^n$ as }\eps \to 0.
$$
Since elementary Young measures are closed with respect to $\langle \cdot \rangle$-strict convergence by an application of Reshetnyak continuity theorem, this yields
$$
\rho_\e \ast  (\eta v_j) \toY \eta v_j \quad \text{in $\R^n$ as }\eps \to 0.
$$
By the same strategy of proof for the localization of the Young measure generation in $\Omega'$, which can be done by the assumption $(v_j)(\partial \Omega')=0$ for each $j \in \N$, one has
$$
\big(\rho_\e \ast (\eta v_j)\big)\big|_{\Omega'} \toY \eta v_j \Lcorner \Omega'=v_j \Lcorner \Omega' \quad \text{as }\eps \to 0.
$$
As the notion of convergence for Young measures is metrizable on bounded sets, we can use the diagonal-sequence extraction principle (see for example Proposition 6 in \cite{laszlo}, Lemma 3 in \cite{Kri Rindler}, or \cite{kristensen raita}) to find a null sequence $\eps_j \searrow 0$ such that
$$
\big(\rho_{\e_j} \ast (\eta v_j)\big)\big|_{\Omega'} \toY \nu \Lcorner \Omega' \quad \text{as }j \to \infty. 
$$
Put $\tilde v:= \eta v$ and $\tilde v_j :=\rho_{\e_j} \ast (\eta v_j)$. Define also $w_j:=\rho_{\e_j} \ast \tilde v $. Note that $w_j \to \tilde v$ $\langle \cdot \rangle$-strictly on $\R^n$ by standard convolution results. Also $w_j - \tilde v_j \wstar 0$ in $\mathscr{M}(\R^n,\V)$, since for $\phi \in \CC^0_0(\R^n, \V)$
\begin{multline}
\int_{\R^n} \phi \big((\rho_{\e_j} \ast \eta v_j )- (\rho_{\e_j} \ast \eta v )\big) \dif x = \int_{\R^n} (\rho_{\e_j}\ast \phi ) \dif \, (\eta v_j - \eta v) \\ 
= \int_{\R^n} \big((\rho_{\e_j}\ast \phi) -\phi \big) \dif \, (\eta v_j - \eta v) + \int_{\R^n}\phi \dif \, (\eta v_j - \eta v), 
\end{multline}
the last quantity tends to $0$ as $j \to \infty$ by $\eta v_j \wstar \eta v$ in $\mathscr{M}(\R^n,\V)$, whereas the first quantity can be bounded by
$$
\left|\int_{\R^n} \big((\rho_{\e_j}\ast \phi) -\phi \big) \dif \, (\eta v_j - \eta v)\right| \leq \|(\rho_{\e_j}\ast \phi) -\phi\|_{\infty}|\eta v_j + \eta v|(\R^n) \to 0
$$
since $(\rho_{\e_j}\ast \phi) \to \phi$ uniformly in $\R^n$ and $|\eta v_j + \eta v|(\R^n)$ is uniformly bounded by a constant. With the same strategy it can be proved that both $w_j$ and $\tilde v_j$ converge to $\tilde v$ in $\mathscr{M}(\R^n,\V)$.\\
By the definition of $w_j$ and $\tilde v_j$, and by $\mathscr{A}v_j \to \mathscr{A}v$ in $\WW^{-k,p}_{\mathrm{loc}}(\Omega,\mathbb W)$ we deduce that
$$
\mathscr{A}(w_j - \tilde v_j) \to 0 \text{ in } \WW^{-k,p}(\R^n,\mathbb W).
$$
Indeed, for a given $\phi \in \WW_0^{k,p'}(\R^n,\mathbb{W})$,
we have
\begin{align*}
\langle \mathscr{A}(w_j - \tilde v_j), \phi \rangle &= \sum_{|\alpha|=k}(-1)^{k}\int_{\R^n}A_\alpha^T \partial^\alpha \phi \, \dif \,(w_j -\tilde v_j) \\
&=\sum_{|\alpha|=k}(-1)^{k}\int_{\R^n}A_\alpha^T \partial^\alpha \phi \, \big( \rho_{\eps_j} \ast (\eta v_j - \eta v)\big) \dif x \\ &=\sum_{|\alpha|=k}(-1)^{k}\int_{\R^n}\left( \rho_{\eps_j}\ast A_\alpha^T \partial^\alpha \phi \right) \dif\, (\eta v_j-\eta v) \\
&=\sum_{|\alpha|=k}(-1)^{k}\int_{\R^n}\left(  A_\alpha^T (\rho_{\eps_j}\ast \partial^\alpha \phi) \right) \dif\, (\eta v_j-\eta v)\\
&=\sum_{|\alpha|=k}(-1)^{k}\int_{\R^n}\left(  A_\alpha^T \partial^\alpha (\rho_{\eps_j}\ast  \phi) \right) \dif\, (\eta v_j-\eta v)= \langle \mathscr{A}(\eta v_j - \eta v),\rho_{\eps_j}\ast \phi \rangle
\end{align*}
or in short, $\mathscr{A}(w_j - \tilde v_j)= \rho_{\eps_j} \ast \mathscr{A}(\eta v_j - \eta v) $ since $\mathscr{A}$ has constant coefficients. From this equality,
\begin{align*}
\sup_{ \phi \in \WW_0^{k,p'}, \|\phi\|_{\WW_0^{k,p'}}\leq 1}|\langle \mathscr{A}(w_j - \tilde v_j), \phi \rangle|&= \sup_\phi|\langle \mathscr{A}(\eta v_j - \eta v),\rho_{\eps_j}\ast \phi \rangle| \\
&\leq \|\mathscr{A}(\eta v_j - \eta v)\|_{\WW^{-k,p}}\sup_{\phi}\| \rho_{\eps_j} \ast \phi \|_{\WW_0^{k,p'}} \\
&\leq \|\mathscr{A}(\eta v_j - \eta v)\|_{\WW^{-k,p}}\|\rho_{\eps_j}\|_{\LL^1} \sup_{\phi}\| \phi \|_{\WW_0^{k,p'}}\\ &\leq \|\mathscr{A}(\eta v_j - \eta v)\|_{\WW^{-k,p}} \to 0,
\end{align*}
where we used Young inequality for convolutions, $\|\rho_{\eps_j}\|_{\LL^1}=1$ and the assumption $\mathscr{A}v_j \to \mathscr{A}v$ in $\WW^{-k,p}_{\mathrm{loc}}(\Omega,\mathbb W)$, which we claim to yield $\mathscr{A}(\eta v_j) \to \mathscr{A}(\eta v)$ in $\WW^{-k,p}(\R^n,\mathbb W)$. The claim is proved by Leibniz rule
\begin{align*}
\mathscr{A}(\eta v_j)&=\eta \mathscr{A}v_j+ \sum_{|\alpha|=k}A_\alpha\sum_{\beta < \alpha}\binom{\alpha}{\beta}\partial^{\alpha-\beta}\eta \partial^\beta v_j \\
&\to \eta \mathscr{A}v +\sum_{|\alpha|=k}A_\alpha\sum_{\beta < \alpha}\binom{\alpha}{\beta}\partial^{\alpha-\beta}\eta \partial^\beta v = \mathscr{A}(\eta v),
\end{align*}
where the first convergence holds by definition of $\mathscr{A}v_j \to \mathscr{A}v$ in $\WW^{-k,p}_{\mathrm{loc}}(\Omega,\mathbb W)$, and for the second term we exploit that $v_j \wstar v$ in $\mathscr{M}(\Omega,\V)$ implies $\partial^\beta v_j \to \partial^\beta v $ in $\WW^{-k,p}_{\text{loc}}(\Omega,\V)$ for $|\beta|\leq k-1<k$, by virtue of the compact embedding of $\WW_0^{1,p'}(\Omega,\V) \hookrightarrow \CC_0(\Omega)$. Indeed, for a given $\phi \in \WW_0^{k,p'}(\R^n,\V)$, the assumption $p<\frac{n}{n-1}\leq \frac{n}{n-k}$ implies $p' > n \geq \frac{n}{k}$ with $\lfloor k -\frac{n}{p'} \rfloor=k-1$, so that by Sobolev embedding $\phi\in \CC^{k-1}_0(\R^n,\V)$, implying $\partial^\beta \phi \in \CC_0(\R^n,\V)$. Denoting $\xi= \partial^{\alpha-\beta}\eta \in \CC_c^\infty(\Omega)$, we want to prove for $|\beta|\leq k-1$
$$
\sup_{ \phi \in \WW_0^{k,p'}(\R^n,\V), \|\phi\|_{\WW_0^{k,p'}}\leq 1} | \langle \xi\,  \partial^\beta(v_j-v) , \phi \rangle  | = \sup_{\phi} \left|\int_{\Omega} \partial^\beta(\xi \phi)  \dif \, (v_j-v)   \right| \to 0 \quad \text{as }j \to \infty.
$$
Since $\partial^\beta(\xi \phi)$ involves at most derivatives of order $|\beta|$ of $\phi$, consequently at most $k-1$, and the derivatives of $\xi=\partial^{\alpha-\beta}\eta$ are bounded by a common constant $C=C(\eta)$, we get from the bounded family $\{ \phi \in \WW_0^{k,p'}(\R^n,\mathbb W): \|\phi\|_{\WW_0^{k,p'}}\leq 1\}$ that
$$
\|\partial^\beta(\xi \phi) \|_{\WW^{1,p'}_0(\Omega,\V)}\leq C(\eta)\| \phi\|_{\WW_0^{k,p'}(\R^n,\V)} \leq C(\eta).  
$$
Therefore the family $\{ \partial^\beta(\xi \phi): \phi \in \WW_0^{k,p'}(\R^n,\V), \ \|\phi\|_{\WW_0^{k,p'}}\leq 1\}$ is bounded in $\WW^{1,p'}_0(\Omega,\V)$, and thus it is compact in $\CC_0(\Omega,\V)$ by $p' >n$. The compactness of this family allows to conclude
$$
\sup_{\| \phi\|_{\WW_0^{k,p'}}\leq 1} \left|\int_{\Omega} \partial^\beta(\xi \phi)  \dif \, (v_j-v)   \right| \to 0 \quad \text{as }j \to \infty,
$$
by an $\eps$-net argument in $\CC_0(\Omega,\V)$ or a contradiction argument, contradicting $v_j \wstar v$ in $\mathscr{M}(\Omega,\V)$, i.e. if there were a sequence $(\partial^\beta(\xi \phi_k))_k$ and a subsequence $(v_{j_k})_k$ for which the integral above is always $> \delta$, then by compactness extract $\partial^\beta(\xi \phi_{k_h}) \to g$ in $\CC_0(\Omega,\V)$ and get the contradiction of $\int_\Omega g \, \dif \, (v_{j_{k_h}}-v) > \delta$ for all $h$. 
\\ 
We recall now an inequality due to Murat \cite{murat}. There exists $\text{P}_\mathscr{A}\colon \LL^p(\R^n,\V) \to \LL^p(\R^n,\V)$ continuous orthogonal projection onto $\text{ker}\mathscr{A}$ for each $p\in(1,\infty)$ and there exists a constant $c_p>0$ such that
\begin{equation}\label{lemma Murat}
\|\pphi - \text{P}_{\mathscr{A}}\pphi \|_{\LL^p} \leq c_p \|\mathscr{A}\pphi\|_{\WW^{-k,p}}
\end{equation}
holds for all $\pphi \in \LL^p(\R^n,\V)$. In Lemma 3.8 in \cite{kristensen raita}, the authors combine this inequality with local potentials $\mathscr{B}$, constructed in \cite{raita potentials}, to deduce the existence of a constant $C_p>0$ such that for all vector fields $\pphi \in \LL^p(\R^n,\V)$ it holds
$$
\inf_{\psi \in \CC_c^\infty(\R^n,\V), \mathscr{A}\psi=0} \| \pphi - \psi \|_{\LL^p}\leq C_p \|\mathscr{A}\pphi\|_{\WW^{-k,p}}.
$$
The construction used to find the function $\psi$ is linear in the choice of $\pphi$, as the projection $\text{P}_{\mathscr{A}}\pphi$ is linear in the choice of $\pphi$ as well, since it involves Fourier localization and convolutions with smooth mollifiers.
Since in particular $w_j \in \LL^p(\R^n,\V)$, find $\psi_j \in \CC_c^\infty(\R^n,\V)$ with $\mathscr{A}\psi_j=0$ and 
$$
\|w_j-\psi_j\|_{\LL^p}\leq (C_p+1)\|\mathscr{A}w_j\|_{\WW^{-k,p}}.
$$
Since the choice of $\psi_j$ is linear in $w_j$, we apply it to the difference $w_{j+m}-w_j$ to find $\zeta \in \CC_c^\infty(\R^n,\V)$ which satisfies $\zeta=\psi_{j+m}-\psi_j $ and
$$
\|w_{j+m}-w_j - \zeta\|_{\LL^p}\leq (C_p+1)\|\mathscr{A}(w_{j+m}-w_j)\|_{\WW^{-k,p}}
$$
which becomes
$$
\|(w_{j+m}-\psi_{j+m}) - (w_j - \psi_j)\|_{\LL^p}\leq (C_p+1)\|\mathscr{A}w_{j+m}-\mathscr{A}w_j\|_{\WW^{-k,p}}.
$$
By noticing $\mathscr{A}w_j=\rho_{\eps_j} \ast \mathscr{A}\tilde v$ and exploiting the representation of distributions in $\WW^{-k,p}$ as sum of distributional derivatives of $\LL^p$ maps, $\sum_{|\alpha|\leq k}\partial^\alpha f_\alpha$ with each $f_\alpha \in \LL^p$, it can be proved that $\mathscr{A}w_j$ converges to $\mathscr{A}\tilde v$ in $\WW^{-k,p}(\R^n,\mathbb W)$. In particular, in the inequality above the convergence $\mathscr{A}w_j \to \mathscr{A}\tilde v$ in $\WW^{-k,p}(\R^n,\mathbb W)$ implies that the sequence $(w_j-\psi_j)_j$ is Cauchy in $\LL^p(\R^n,\V)$, so it converges $w_j-\psi_j \to h$ for some $h \in \LL^p(\R^n,\V)$, in particular also in $\mathscr{M}(\R^n,\V)$ since both $w_j$ and $\psi_j$ have compact support to have $\LL^p$ embedded in $\mathscr{M}$. But we know already $w_j \wstar \tilde v$ in $\mathscr{M}(\R^n,\V)$, therefore
$$
\psi_j= w_j+ (\psi_j-w_j) \wstar \tilde v-h \quad \text{in } \mathscr{M}(\R^n,\V). 
$$
Each $\psi_j$ satisfies $\mathscr{A}\psi_j=0$, so this stays true at the limit
$$
\mathscr{A}(\tilde v-h)=0 \quad \text{in }\CC_c^\infty(\R^n,\mathbb W)^*
$$
because for every $\phi \in \CC_c^\infty(\R^n,\mathbb W)$ one has
$$
\langle \mathscr{A}(\tilde v-h), \phi \rangle = \langle \mathscr{A}(\tilde v-h-\psi_j), \phi \rangle = \int_{\R^n} \bigg( \sum_{|\alpha|=k}(-1)^k A_\alpha^T \partial^\alpha \phi\bigg) \, \dif\,(\tilde v-h -\psi_j)\to 0,
$$
as $\tilde v -h-\psi_j$ is tested against a $\CC_c^\infty(\R^n,\V)$ function.
Now we have that $h$ is a $\LL^p$ map so it gives no contribution to the singular part of $\tilde v$, meaning $(\tilde v -h)^s = \tilde v^s$. Therefore by the De Philippis--Rindler result \cite{dephiliprindler} applied to $\tilde v-h$ we get
$$
\frac{\tilde v^s}{|\tilde v|^s} \in \Lambda_{\mathscr{A}} \quad |\tilde v^s|\text{ almost everywhere}.  
$$
Finally, we restrict the measures to $\Omega'$. Since $\tilde v=v$ on $\Omega'$, the polar $\frac{v^s}{|v^s|}$ lies in $\Lambda_{\mathscr{A}}$ $|v^s|$-almost everywhere in $\Omega'$, hence so does the $G$-polar $p_G=\frac{v^s}{|v^s|}\big/G^\infty\big(\frac{v^s}{|v^s|}\big)$, which is a positive multiple of it and takes values in $\SG$ (Lemma~\ref{lem:wellposed}(c)); moreover $|v^s|_G$ and $|v^s|$ have the same null sets (Lemma~\ref{lem:wellposed}(b)). We find
$$
v \Lcorner \Omega'= \tilde v \Lcorner \Omega'= \overline{\nu}_x \Leb \Lcorner \Omega' + \overline{\nu}_x^\infty \lambda \Lcorner\Omega' ,
$$
and, writing $v^s=p_G\,|v^s|_G$ and $\lambda \Lcorner \Omega'= \lambda^a\Leb \Lcorner \Omega' + b_G\, |v^s|_G\Lcorner \Omega' + \lambda^* \Lcorner \Omega'$, where the three measures on the right-hand side are mutually singular, we can identify the terms in the equality of measures
$$
\big(v^a-\overline{\nu}_x-\lambda^a\overline{\nu}_x^\infty\big) \Leb \Lcorner \Omega' + \big( p_G-b_G\,\overline{\nu}_x^\infty \big) |v^s|_G \Lcorner \Omega' - \overline{\nu}_x^\infty \lambda^* \Lcorner \Omega'=0 .
$$
We obtain
\begin{align*}
p_G&= b_G\,\overline{\nu}_x^\infty \qquad |v^s|_G \text{ almost everywhere in }\Omega', \\
\overline{\nu}_x^\infty&=0 \qquad\qquad\ \ \lambda^* \text{ almost everywhere in }\Omega',
\end{align*}
and, since $\Omega$ is the union of an increasing sequence of sets $\Omega'$ as above, \eqref{conditions on nu infty in sufficiency theorem} follows. It remains to prove that $b_G\geq1$. As $p_G\neq0$, the first identity forces $b_G>0$, so that $\overline{\nu}^\infty_x=b_G^{-1}p_G\in\Lambda_{\mathscr{A}}$, $\Lambda_{\mathscr{A}}$ being a cone, and $|v^s|_G\ll\lambda$. Theorem~\ref{KK theorem}, applied to the positively $1$-homogeneous and $\Lambda_{\mathscr{A}}$-convex function $G^\infty$, and the identity $\langle G^\infty,\nu^\infty_x\rangle=1$ then give
$$
1=G^\infty(p_G)=b_G\,G^\infty\big(\overline{\nu}^\infty_x\big)\leq b_G\,\langle G^\infty,\nu^\infty_x\rangle=b_G
\qquad |v^s|_G\text{ almost everywhere}.
$$
Finally, since $\lambda^*$ is also the singular part of $\lambda^s$ with respect to $|v^s|$ and $\frac{\lambda^s}{|v|^s}=b_G\,G^\infty\big(\frac{v^s}{|v^s|}\big)$, the first identity in \eqref{conditions on nu infty in sufficiency theorem} can be equivalently written in terms of the Euclidean polar as
$$
\frac{\lambda^s}{|v|^s}(x)\,\overline{\nu}_x^\infty = \frac{v^s}{|v|^s}(x) \qquad\text{for } |v^s| \text{ almost all } x ,
$$
which is the form used in Section~\ref{convergence of energy section}.
\end{proof}
\section{Inhomogenization via dyadic martingale }\label{inhomogenization section}
Let $(\phi_t)_{t>0}$ be a standard mollifier on $\R^n$, obtained by the $\LL^1$ dilation by $t>0$ of a nonnegative smooth and compactly supported function $\phi:\R^n \to \R$, with $\LL^1$ norm equal to $1$, $\phi_t=t^{-n}\phi(x/t)$. It is convenient to assume that the support of $\phi$ is the closed unit cube $\overline{\XX}=[-\frac{1}{2},\frac{1}{2}]^n$ and that $\phi>0$ on its interior $\XX$. For later reference we put $M= \max |\nabla \phi|$. In the following, we shall also use the $\ell^\infty$ metric on $\R^n$, so for $x\in\R^n$
$$
\|x\|=\|x\|_{\ell^\infty}=\max\{|x_1|,...,|x_n|\}. 
$$
We state here the Propositions on the inhomogenization procedures, where the homogeneous Young measures are merged, or inhomogenized, by an approximation and
gluing procedure. The relative proofs are found later in this section. Although the result is not new, the proofs presented here use a novel procedure based on a dyadic martingale argument, in place of the Lusin-type approximation. 

\begin{proposition}[Singular part]\label{inhomogenization singular part}
Let $\Omega$ be a bounded open set in $\R^n$, and assume that $\nu= (\nu_x,\lambda,\nu_x^\infty )$   is a Young measure as defined in Lemma \ref{YM measure theoretic definition}, with the additional property that $\lambda(\partial \Omega )=0$. Let $v \in \mathscr{M}(\Omega,\V)$ be the barycentre of $\nu$,  i.e.
$$
v=\overline{\nu}= \overline{\nu}_x \Leb \Lcorner \Omega + \overline{\nu}_x^\infty \lambda= \big(\overline{\nu}_x+ \lambda^a(x) \overline{\nu}_x^\infty \big) \Leb \Lcorner \Omega + \overline{\nu}_x^\infty\lambda^s ,
$$
where $\overline{\nu}_x= \int_\V z \dif \nu_x(z)$ and $\overline{\nu}_x^\infty=\int_{\mathbb{S}_G}  z \dif \nu^\infty_x(z)$ and $\lambda=\lambda^a \Leb \Lcorner \Omega + \lambda^s$ is the Lebesgue--Radon--Nikod\'ym decomposition of $\lambda$ with respect to $\Leb$. Suppose that 
$$\mathscr{A} v=0 \quad \text{in the distributional sense},
$$ 
where $\mathscr{A}$ is a linear homogeneous differential operator of order $k$ on $\R^n$ from $\V$ to $\mathbb{W}$, with constant rank and satisfying the spanning cone condition.
Suppose also that the Jensen inequality
$$
\int_\V f \dif \nu_x + \lambda^a(x) \int_{\mathbb{S}_G} f^\infty \dif \nu_x^\infty \geq f(\overline{\nu}_x+ \lambda^a(x)\overline{\nu}_x^\infty) \quad \text{\emph{for $\Leb \Lcorner \Omega$ almost all $x$}}, 
$$
holds for all $\mathscr{A}$-quasiconvex functions $f:\V \to \R$ with linear growth for which one can find $r=r(f)$ so that $f(z)=f^\infty(z)$ for $|z|>r$ (cfr. Section \ref{approximation qc function section} for the approximation with such functions). \\
Given $\eps >0$ we can find $t_\eps>0$ such that for each $t\in (0,t_\eps]$ there exists $\pphi=\pphi_t \in \CC_c^\infty(\Omega,\V)$ with $\|\pphi\|_{\WW^{l-1,1}(\Omega,\V)} < \eps$ so that
$$
\bigg| \int_{\Omega} \eta(x)\Phi(0) \dif x + \int_{\Omega} \eta(x) \langle \Phi^\infty, \nu_x^\infty \rangle \dif \lambda^s  - \int_{\Omega} \eta \Phi\big( \phi_t \ast (\overline{\nu}_{(\cdot)}^\infty \lambda^s \Lcorner \Omega)+ \mathscr{B} \pphi \big) \dif x \bigg|< \eps
$$
holds uniformly in $\eta: \Omegacl \to \R$ and $\Phi: \V \to \R$ of class $\HH$ with
\begin{equation}\label{conditions on eta Phi}
\| \eta \|_{\emph{LIP}} \leq 1 \ \emph{and} \ \|T \Phi\|_{\emph{LIP}}\leq 1.
\end{equation}
where $\| \eta \|_{\emph{LIP}}= \emph{lip}(\eta)+\max_{\Omegacl}|\eta|$.
\end{proposition}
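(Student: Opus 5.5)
The plan is to discretize $\lambda^s$ and the angle field $x\mapsto\nu^\infty_x$ along the dyadic filtration, to realize on each small cube a homogeneous $\mathscr{A}$-free Young measure by Remark~\ref{rescaling remark}, and to glue. I have not checked the Jensen step for averaged angle measures in detail; it is the step I expect to be the main obstacle.

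\emph{Step 1: dyadic discretization and martingale convergence.} Fix a dyadic level $k$ and let $\mathcal{D}_k$ be the half-open dyadic cubes of side $2^{-k}$ whose closure lies in $\Omega$. Since $\lambda(\partial\Omega)=0$, the cubes meeting $\partial\Omega$ carry $\lambda^s$-mass $o(1)$ as $k\to\infty$, and we discard them. For $Q\in\mathcal{D}_k$ with $\lambda^s(Q)>0$ put
$$
m_Q:=\frac{\lambda^s(Q)}{|Q|},\qquad
\sigma_Q:=\frac{1}{\lambda^s(Q)}\int_Q\nu^\infty_x\dif\lambda^s(x)\in\mathscr{M}^+_1(\SG).
$$
The map $x\mapsto\sigma_{Q(x)}$ is the conditional expectation of $x\mapsto\nu^\infty_x$, with respect to $\lambda^s$, on the $\sigma$-algebra generated by $\mathcal{D}_k$. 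Here $\nu^\infty_x$ takes values in the compact convex set $\mathscr{M}^+_1(\SG)$, metrized by the Kantorovich norm. I would test it against a countable dense family in $\mathrm{LIP}(\SG)$, which the Arzel\`a--Ascoli theorem provides. The martingale convergence theorem then gives
$$
\int_\Omega\|\sigma_{Q(x)}-\nu^\infty_x\|_{\mathrm{K}}\dif\lambda^s(x)\to0 \qquad (k\to\infty).
$$
Because $\lambda^s\perp\Leb$, the same filtration shows that, for every $\gamma>0$, the cubes with $m_Q>\gamma^{-1}$ carry all but $o(1)$ of $\lambda^s$ while their total Lebesgue measure is $o(1)$. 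For $t$ small relative to $2^{-k}$, the mollified measure $\phi_t\ast(\overline{\nu}^\infty\lambda^s)$ is then negligible in $\LL^1$ on the union of the remaining cubes, up to an error controlled by the Lipschitz bound $\mathrm{lip}(\Phi)\le L$ of \eqref{lip constant L}. That union is essentially all of $\Omega$ in Lebesgue measure, and there the integral of $\eta\,\Phi(\cdot)$ produces the term $\int\eta\,\Phi(0)\dd x$.

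\emph{Step 2: local $\mathscr{A}$-free Young measures.} On each heavy cube I would consider the pair $(\delta_0,m_Q\sigma_Q)$, with barycentre $m_Q\overline{\sigma_Q}$. This is where Lemma~\ref{integral jensen inequality lemma} enters. Proposition~\ref{sufficiency proposition}, applied to the $\mathscr{A}$-free barycentre $v$, gives that $\overline{\nu}^\infty_x$ is a nonnegative multiple of $p_G(x)\in\WC$ for $\lambda^s$-a.e.\ $x$. Combining this with Theorem~\ref{KK theorem} and Lemma~\ref{qc properties lemma}(c), the lemma should give
$$
f(m_Q\overline{\sigma_Q})\le f(0)+m_Q\langle f^\infty,\sigma_Q\rangle
$$
for every special $f\in\mathbf{Q}$, possibly up to an error vanishing as $k\to\infty$. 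The delicate point is that $\overline{\sigma_Q}$ need not lie in $\WC$, since it is an average of wave-cone vectors. What should save us is that $\mathscr{A}v=0$: the averaged measure is close, in the blow-up sense, to an $\mathscr{A}$-free tangent measure, and Jensen's inequality holds for those. With Corollary~\ref{special integrands corollary} and Proposition~\ref{homogeneous case characterization} we then get $(\delta_0,m_Q\sigma_Q)\in\mathbf{Y}^{\mathscr{A}}_{m_Q\overline{\sigma_Q}}$.

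\emph{Step 3: gluing and error estimate.} Remark~\ref{rescaling remark} provides $\pphi^Q\in\CC^\infty_c(Q,\V)$ with $\|\pphi^Q\|_{\WW^{l-1,1}}<\eps|Q|$ whose elementary Young measure on $Q$ is $\eps$-close in $\|\cdot\|_{\mathrm{K}}$ to $(\delta_0,m_Q\sigma_Q)$. Set $\pphi:=\sum_Q\pphi^Q$, so that $\|\pphi\|_{\WW^{l-1,1}}<\eps|\Omega|$. The mismatch between the local barycentre $m_Q\overline{\sigma_Q}$ and $\phi_t\ast(\overline{\nu}^\infty\lambda^s)$ on $Q$ is handled in $\LL^1$, again through $\mathrm{lip}(\Phi)\le L$. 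Replacing $\eta$ by its value at the centre of each cube costs at most $2^{-k}\,\mathrm{lip}(\eta)$ times the total mass, and the $\Phi$-dependence is uniform because $\|T\Phi\|_{\mathrm{LIP}}\le1$. Summing over cubes, the total error is at most
$$
O(2^{-k})+O(\eps)+\int_\Omega\|\sigma_{Q(x)}-\nu^\infty_x\|_{\mathrm{K}}\dif\lambda^s+o(1),
$$
uniformly over the $(\eta,\Phi)$ in \eqref{conditions on eta Phi}. Choosing $k$ large first and then $t_\eps$ small yields the claim for all $t\in(0,t_\eps]$.

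The hardest part will be Step 2 together with the compatibility between the mollification scale $t$ and the dyadic scale $2^{-k}$: the local barycentres must match $\phi_t\ast(\overline{\nu}^\infty\lambda^s)$ up to small $\LL^1$ error uniformly in $t\le t_\eps$. If Step 2 resists, I would first try to exploit the freedom in $\pphi$ to absorb the non-$\mathscr{A}$-free part of the mollified singular measure via Murat's inequality \eqref{lemma Murat}.
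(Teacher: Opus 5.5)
Your worry about Step~2 is unfounded, but Step~3 has a genuine gap.

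\emph{Step~2 is fine.} Apply Lemma~\ref{integral jensen inequality lemma} with $\m=|Q|^{-1}\lambda^s\Lcorner Q$ and $\sigma_x=\nu^\infty_x$. Their barycentres lie in $\WC$ for $\lambda^s$-a.e.\ $x$: apply De Philippis--Rindler directly to $v$ and compare with $v^s=\overline{\nu}^\infty\lambda^s$. The lemma then gives $f(m_Q\overline{\sigma_Q})\le f(0)+m_Q\langle f^\infty,\sigma_Q\rangle$ \emph{exactly}. The fact that $\overline{\sigma_Q}\notin\WC$ is absorbed inside the lemma by $f(z+w)\le f(z)+f^\infty(w)$ for $w\in\WC$. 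No tangent-measure argument or error term is needed.

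\emph{The gap in Step~3.} You claim the mismatch between your cube-wise constant barycentre and $\phi_t\ast(\overline{\nu}^\infty\lambda^s)$ is small in $\LL^1$. Up to the discarded cubes, your glued map is $E_k(\overline{\nu}^\infty\lambda^s)+\mathscr{B}\pphi$, a dyadic discretization of the singular measure itself. The statement instead prescribes $\phi_t\ast(\overline{\nu}^\infty\lambda^s\Lcorner\Omega)+\mathscr{B}\pphi$; this is what makes the sum with the absolutely continuous part a mollification of the $\mathscr{A}$-free measure $v$. Two regularizations of a singular measure at different scales are not $\LL^1$-close. For $\lambda^s=\delta_{x_0}$ one has $\|2^{kn}\boldsymbol{1}_{Q_k(x_0)}-\phi_t(\cdot-x_0)\|_{\LL^1}\to2$ both as $t2^k\to0$ and as $t2^k\to\infty$, and this distance is never small. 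Summed over the heavy cubes, the mismatch is in general of order $\lambda^s(\Omega)$, so $\mathrm{lip}(\Phi)\le L$ gives nothing; your final error bound silently omits this term. No relation between $k$ and $t$ cures it. The Murat-inequality fallback is beside the point: the obstruction is the nonlinearity of $\Phi$ at mismatched concentration scales, not a lack of $\mathscr{A}$-freeness.

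\emph{The missing idea.} The paper first mollifies the \emph{Young measure} and then discretizes at a \emph{finer} scale, $2^{-k}=2^{-q}t\ll t$.
\begin{itemize}
\item Take $K\subset\Omega$ compact with $\Leb(K)=0$ and $\lambda^s(\Omega\setminus K)<\eps$, and choose $\tfrac32t<\operatorname{dist}(K,\R^n\setminus\Omega)$.
\item Set $\rho_t=\phi_t\ast(\lambda^s\Lcorner K)$, $\mu_t(x)=\int_K\phi_t(x-y)\nu^\infty_y\dif\lambda^s(y)$, $w_t=\phi_t\ast(\overline{\nu}^\infty\lambda^s\Lcorner K)$, and $\nu^s_t=(\delta_0,\rho_t\Leb,\mu_t/\rho_t)$.
\item Because $\eta$ is Lipschitz, $d(\nu^s,\nu^s_t)\le t\lambda^s(\Omega)+\eps$.
\item Passing to $E_k\nu^s_t$ costs $2^{-k}\lambda^s(\Omega)$ by duality on $\eta$; no martingale convergence theorem is needed.
\item The barycentre of $E_k\nu^s_t$ on $Q$ is $(w_t)_Q$. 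Poincar\'e gives $\|w_t-E_kw_t\|_{\LL^1}\lesssim2^{-k}\|\nabla w_t\|_{\LL^1}\lesssim2^{-q}\lambda^s(K)$. This is exactly the $\LL^1$ compatibility your scheme lacks.
\item The Jensen inequality on each cube is again Lemma~\ref{integral jensen inequality lemma}, now with the spread-out measure $\m=\kappa_Q\lambda^s\Lcorner K$, $\kappa_Q(y)=\dashint_Q\phi_t(x-y)\dif x$. For this measure $\int\overline{\nu}^\infty_y\dif\m(y)=(w_t)_Q$ and $\int\langle f^\infty,\nu^\infty_y\rangle\dif\m(y)=\langle f^\infty,(\mu_t)_Q\rangle$.
\item The choice of $t$ keeps every cube with $(\rho_t)_Q>0$ compactly inside $\Omega$, so the $\pphi^Q$ glue into $\pphi\in\CC^\infty_c(\Omega,\V)$.
\end{itemize}
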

\begin{remo}
Proposition \ref{inhomogenization singular part} states that we can generate the singular part $(\delta_0,\lambda^s, \nu_x^\infty)=:\nu^s$ of the Young measure $\nu$ with a generating sequence in $\CC_c^\infty(\Omega,\V)$ given by the mollification of the (singular part of the) barycentre measure $\overline{\nu}_{(\cdot)}^\infty \lambda^s \Lcorner \Omega $ plus a sequence of potentials $\mathscr{B} \pphi$, by letting $\eps \to 0$.
Using the Kantorovich metric, this Young measure generation amounts to
$$
\|\boldsymbol{\eps}_{u_t} -\nu^s\|_\text{K} \equiv \|(T^*)^{-1}(\boldsymbol{\eps}_{u_t} -\nu^s)\|_\text{K} < \eps.
$$
\end{remo}
\begin{proposition}[AC part]\label{inhomogenization ac part}
Under the assumptions of Proposition \ref{inhomogenization singular part}, given $\eps >0$ we can find $t_\eps>0$ such that for each $t\in (0,t_\eps]$ there exists $\pphi=\pphi_t \in \CC_c^\infty(\Omega,\V)$ with $\|\pphi\|_{\WW^{l-1,1}(\Omega,\V)} < \eps$ so that
$$
\bigg| \int_{\Omega} \eta(x)\big( \langle \Phi, \nu_x \rangle +\lambda^a(x)\langle \Phi^\infty, \nu_x^\infty \rangle \big)\dif x  - \int_{\Omega} \eta \Phi\big( \phi_t \ast (\overline{\nu}_{(\cdot)} + \lambda^a \overline{\nu}_{(\cdot)}^\infty)+ \mathscr{B} \pphi \big) \dif x \bigg|< \eps
$$
holds uniformly in $\eta: \Omegacl \to \R$ and $\Phi: \V \to \R$ of class $\HH$ with
\begin{equation}\label{conditions on eta phi on AC inhomogenization proof}
\| \eta \|_{\emph{LIP}} \leq 1 \ \emph{and} \ \|T \Phi\|_{\emph{LIP}}\leq 1.
\end{equation} 
\end{proposition}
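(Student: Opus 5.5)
The proof of Proposition~\ref{inhomogenization ac part} proceeds by a dyadic martingale approximation of the absolutely continuous data, followed by cube-wise gluing of homogeneous $\mathscr{A}$-free Young measures. Write $z(x):=\overline{\nu}_x+\lambda^a(x)\overline{\nu}^\infty_x\in\LL^1(\Omega,\V)$ and $\nu^{\mathrm{h}}_x:=(\nu_x,\lambda^a(x)\nu^\infty_x)$, the homogeneous Young measure at $x$. By the Jensen inequality (for special $f$, upgraded by Corollary~\ref{special integrands corollary}) and Proposition~\ref{homogeneous case characterization}, $\nu^{\mathrm{h}}_x\in\mathbf{Y}^{\mathscr{A}}_{z(x)}$ for $\Leb$-a.e.\ $x$. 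Since the admissible $\Phi$ form a compact family after transformation by $T$ (Arzel\`a--Ascoli), it suffices to work with the Kantorovich norm on $(\HH^*)^+$, uniformly.

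\emph{Step 1 (dyadic averaging).} For the dyadic grid $\mathcal{D}_m$ of half-open cubes of side $2^{-m}$, consider the conditional expectations
\[
\nu^{\mathrm{h},m}_x:=\dashint_{Q}\nu^{\mathrm{h}}_y\dd y,\qquad z_m(x):=\dashint_Q z(y)\dd y,\qquad x\in Q\in\mathcal{D}_m,\ Q\subset\Omega .
\]
These are martingales with respect to the dyadic filtration. The map $x\mapsto T^*$-representative of $\nu^{\mathrm{h}}_x$ is Bochner integrable in the separable Banach space $\mathrm{LIP}(\overline{\BG})^*$ (its norm is bounded by $\langle1+G^\infty,\nu_x\rangle+\lambda^a(x)\in\LL^1$). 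The vector-valued martingale convergence theorem therefore gives
\[
\int_\Omega\|\nu^{\mathrm{h},m}_x-\nu^{\mathrm{h}}_x\|_{\mathrm{K}}\dd x\to0 \qquad\text{and}\qquad z_m\to z \text{ in } \LL^1 .
\]
By convexity (Lemma~\ref{convex Y lemma}) and closedness (Lemma~\ref{weak star closed Y lemma}), the average over $Q$ still satisfies the Jensen inequalities with barycentre $z_Q$. Indeed, averaging the Jensen inequality over $Q$ gives $\langle f,\cdot\rangle\ge\dashint_Q f(z)\ge f(z_Q)$ only if $f$ were convex, which it is not. So here I would instead keep the pointwise measures and average elementary realizations. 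Alternatively, I would use the fact that $\dashint_Q\nu^{\mathrm{h}}_y\dd y$ is the limit of gluings of the $\nu^{\mathrm{h}}_y$ on subcubes, so that it is $\mathscr{A}$-free up to a mean correction. Concretely: choose $m$ large, then on each $Q\in\mathcal{D}_m$ select a point $y_Q$ (a Lebesgue point, chosen by a measurable-selection / averaging argument) with $\|\nu^{\mathrm{h}}_{y_Q}-\nu^{\mathrm{h},m}_Q\|_{\mathrm{K}}$ and $|z(y_Q)-z_Q|$ controlled on average. Cubes near $\partial\Omega$ carry small mass by absolute continuity.

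\emph{Step 2 (gluing).} On each interior cube $Q$, apply Remark~\ref{rescaling remark} to $\nu^{\mathrm{h}}_{y_Q}\in\mathbf{Y}^{\mathscr{A}}_{z(y_Q)}$ to obtain $\pphi^Q\in\CC^\infty_c(Q,\V)$ with small $\WW^{l-1,1}$ norm such that $\boldsymbol{\eps}_{z(y_Q)+\mathscr{B}\pphi^Q}$ is $\eps$-close in Kantorovich norm to $\nu^{\mathrm{h}}_{y_Q}$. Set $\pphi:=\sum_Q\pphi^Q$. Since $\eta$ is $1$-Lipschitz, replacing $\eta$ by its value at the cube centre costs $O(2^{-m})$ times the total mass.

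\emph{Step 3 (replace piecewise constants by $\phi_t\ast z$).} This is the main obstacle. Admissible $\Phi$ are $L$-Lipschitz by \eqref{lip constant L}, so
\[
\Big|\int\eta\Phi(w_1+\mathscr{B}\pphi)-\int\eta\Phi(w_2+\mathscr{B}\pphi)\Big|\le L\|w_1-w_2\|_{\LL^1}.
\]
It therefore suffices to show $\|\phi_t\ast z-\sum_Q z(y_Q)\boldsymbol{1}_Q\|_{\LL^1}$ small. For $t\le t_\eps$ we have $\|\phi_t\ast z-z\|_{\LL^1}$ small, and the cube data approximate $z$ in $\LL^1$ by the martingale step. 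The choice of $m$ must be made first, depending on $\eps$, and then $t_\eps$, uniformly over $t\le t_\eps$.

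The delicate point I expect is choosing $y_Q$ consistently with both $\nu^{\mathrm{h}}$ and $z$, without any Lusin argument. I would handle it by Chebyshev on the $\LL^1$ martingale errors: most cubes contain a large set of good points, and the bad cubes have small total mass $\int_{\mathrm{bad}}(\langle1+G^\infty,\nu_x\rangle+\lambda^a)\dd x$ by equi-integrability. On the bad cubes we set $\pphi^Q=0$, at a cost bounded by that mass.
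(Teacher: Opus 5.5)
Your plan is essentially the paper's proof: dyadic cubes contained in $\Omega$, $\LL^1$ convergence of the Kantorovich-valued dyadic martingale of $x\mapsto\nu^{\mathrm h}_x$, one Jensen sample point per cube, homogeneous generation on each cube via Remark~\ref{rescaling remark}, and the Lipschitz bound for admissible $\Phi$ to pass from piecewise constant barycentres to $\phi_t\ast z$, with the dyadic generation fixed before $t$. The paper picks the sample point by averaging $y\mapsto\int_Q\|\nu^{\mathrm h}_x-\nu^{\mathrm h}_y\|_{\mathrm K}\dd x$ over $Q$, so every cube has a good point and your bad-cube bookkeeping is unnecessary, and barycentre control then comes for free from $|z(x)-z(y)|\le(1+c^{-1})\|\nu^{\mathrm h}_x-\nu^{\mathrm h}_y\|_{\mathrm K}$; it also proves the martingale convergence from a finite $\|T\cdot\|_\infty$-net of admissible integrands plus the scalar Doob theorem, whereas your Bochner route works but must be run in the $\|\cdot\|_{\mathrm K}$-closure of $\mathscr{M}(\overline{\BG})$, which is separable, rather than in $\mathrm{LIP}(\overline{\BG})^*$, which is not, and you were right to discard the claim that cube averages satisfy the Jensen inequality.
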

\subsection{Dyadic martingale and Young measures}\ \\
Work on a fixed half-open cube $Q_0 \supset \Omegacl$, and let $\mathscr{D}_k$ be the collection of dyadic subcubes $Q=Q_k \in \mathscr{D}_k$ of $k$-th generation, with side $l_k=2^{-k}l(Q_0)$. These partition $Q_0$, so each $x$ lies in exactly one cube $Q_k(x)\in \mathscr{D}_k$. 
Define $\mathcal{F}_k$ to be the $\sigma$-algebra generated by these cubes, $\mathcal F_k=\sigma(\mathscr D_k)$. \\
Let $E_k$ be the dyadic conditional expectation at generation $k$. For a measurable function $g$, it is defined by $E_k g:=\E[g| \, \mathcal F_k]$, the (almost everywhere) unique $\mathcal{F}_k$-measurable function which satisfies $\int_A g = \int _A E_k g$ for all $A \in \mathcal{F}_k$. \\
The action of $E_k$ is defined on functions, measures and Young measures in the following way. \\
\indent \textit{On functions.} Let $g\in \LL^1(Q_0,\V)$, set
$$
E_k g:= \sum_{Q \in \mathscr{D}_k} g_Q \boldsymbol{1}_{Q}, \quad g_Q:= \frac{1}{|Q|}\int_Q g \dif x. 
$$
This is exactly $E_k g = \E[g| \, \mathcal F_k]$ for normalized Lebesgue measure on $Q_0$. Two facts are used:
\begin{itemize}
    \item[$\bullet$] $E_k \eta$ is defined at every point, not just a.e., so it can be integrated against singular measures. For Lipschitz $\eta$, $\sup_\Omega|\eta - E_k \eta| \leq 2^{-k} \text{lip}(\eta) $ holds everywhere.
    \item[$\bullet$] For $g \in \WW^{1,1}$, applying Poincaré cube by cube (the inequality scales with their diameter $C(n)2^{-k}$) gives
    $$ 
    \|g-E_kg\|_{\LL^1}= \sum_{Q \in \mathscr{D}_k} \|g-g_Q\|_{\LL^1(Q)} \leq C(n)2^{-k}\sum_Q \|\nabla g\|_{\LL^1(Q)} = C(n)2^{-k}\|\nabla g\|_{\LL^1}. 
    $$
\end{itemize} \ \\
\indent \textit{On measures.} For a finite, possibly vector-valued, Borel measure $\mu$ on $Q_0 $, $E_k \mu$ is defined by duality for bounded Borel $g$  
$$
\int (E_k g) \dif \mu = \int g \dif \, (E_k \mu)
$$
or in other words set by definition
$$
E_k \mu := \sum_{Q \in \mathscr{D}_k} \frac{\mu(Q)}{|Q|}\boldsymbol{1}_Q \Leb,
$$
and $E_k(g \Leb)=(E_k g)\Leb$. The duality formula gives $E_k \mu \wstar \mu$, since
$$
\left|\int \eta \dif\, (E_k\mu-\mu) \right| \leq 2^{-k} \text{lip}(\eta)|\mu|(Q_0).
$$
\indent \textit{On Young measures.} Regard $\nu=(\nu_x,\lambda,\nu_x^\infty)$ as $\HH^*$-valued measures. For all $\Phi \in \HH$
$$
\nu(A)[\Phi]= \int_A \langle \Phi, \nu_x\rangle \dif x + \int_A \langle \Phi^\infty, \nu_x^\infty \rangle \dif \lambda(x).
$$
Hence by approximation the pairing formula $\ddual{\eta\otimes \Phi}{ \nu}= \int \eta \dif \nu[\Phi]$ holds. Now we can define $E_k\nu $ with the same formula for measures
$$
E_k\nu:= \sum_{Q \in \mathscr{D}_k} \frac{\nu(Q)}{|Q|} \boldsymbol{1}_Q \Leb.
$$
To read off the components of the density of $E_k\nu$ on each cube $Q \in \mathscr{D}_k$, again regarding $\nu$ as $\HH^*$-valued measures and by Fubini we can write
\begin{align*}
\frac{1}{|Q|}\nu(Q)[\Phi]= \frac{1}{|Q|}&\int_Q \langle \Phi, \nu_x \rangle \dif x+\frac{1}{|Q|}\int_Q \langle \Phi^\infty , \nu_x^\infty \rangle \dif \lambda(x)\\
&=\langle \Phi, \nu_Q \rangle + \frac{\lambda(Q)}{|Q|}\langle \Phi^\infty, \nu^\infty_Q \rangle
\end{align*}
where (with the meaning of the action on $\Phi$ understood as above)
$$
\nu_Q:=\frac{1}{|Q|}\int_Q \nu_x \dif x, \quad \nu_Q^\infty:= \frac{1}{\lambda(Q)}\int_Q \nu_x^\infty \dif \lambda(x).
$$
\textbf{Note that $\nu_x^\infty$ is averaged against its concentration measure $\lambda$ and not $\Leb$}; if $\lambda(Q)=0$ take any probability measure, since it does not contribute to the calculations. By expanding for every pairing $\eta \otimes \Phi$ we have the expression
$$\ddual{\eta\otimes\Phi}{E_k\nu}= \sum_{Q\in \mathscr{D}_k}\left( \int_Q \eta \langle \Phi, \nu_Q \rangle \dif x + \int_Q \eta \frac{\lambda(Q)}{|Q|} \langle \Phi^\infty, \nu_Q^\infty \rangle \dif x \right).
$$  
Since such pairings determine the Young measure, $E_k\nu$ is the Young measure that equals the homogeneous Young measure $(\nu_Q, \frac{\lambda(Q)}{|Q|}\Leb \Lcorner Q, \nu_Q^\infty )$ on each cube $Q$. The global triple associated to $E_k\nu$, noting that each $x \in \Omega$ belongs to exactly one cube $Q=Q_k(x)$, can be written as
\begin{itemize}
    \item$(E_k\nu)_x= \nu_{Q_k(x)}$, \\
    \item $\lambda^k= E_k \lambda \ll \Leb$, \\
    \item $(E_k\nu)^\infty_x= \nu^\infty_{Q_k(x)}$.
\end{itemize}
Since everything is linear in $\Phi$, the corresponding facts for measures transfer verbatim to Young measures. For $\Phi\in\HH$ denote by $\nu[\Phi]$ the finite signed measure $A\mapsto\nu(A)[\Phi]$, so that $\ddual{\eta\otimes\Phi}{\nu}=\int\eta\dif\nu[\Phi]$ and $(E_k\nu)[\Phi]=E_k(\nu[\Phi])$.
\begin{itemize}
\item \emph{Duality.} For every $\eta$ and every $\Phi\in\HH$
$$
\ddual{\eta\otimes\Phi}{E_k\nu}=\ddual{E_k\eta\otimes\Phi}{\nu}.
$$
This is the identity $\int\eta\dif \, (E_k\mu)=\int E_k\eta\dif\mu$ applied to $\mu=\nu[\Phi]$; explicitly, both sides equal $\sum_{Q\in\mathscr D_k}\eta_Q\,\nu(Q)[\Phi]$, where $\eta_Q=\dashint_Q\eta\dif x$.
\item \emph{Barycentres commute.} $\overline{E_k\nu}=E_k\overline{\nu}$. Indeed, for an orthonormal basis $(e_i)$ of $\V$ the linear functions $z_i=\langle z,e_i\rangle$ belong to $\HH$, and so using the duality above and the one for measures
$$
\int \eta \dif \, ( \overline{E_k \nu}_i) = \ddual{\eta \otimes z_i}{ E_k \nu}= \ddual{E_k\eta \otimes z_i}{\nu}= \int E_k \eta \dif \overline{\nu}_i = \int \eta \dif \,( E_k \overline{\nu} _i).
$$
\item \emph{Tower property.} $E_jE_k=E_{\min\{j,k\}}$. If $j\le k$, every $Q\in\mathscr D_j$ is the disjoint union of the cubes of $\mathscr D_k$ it contains, so $E_k\nu(Q)=\nu(Q)$ and $E_jE_k\nu=E_j\nu$. If $j\ge k$, $E_k\nu$ has constant ($\HH^*$-valued) density with respect to $\Leb$ on each cube of $\mathscr D_k$, hence on each of its dyadic subcubes, so $E_jE_k\nu=E_k\nu$. Thus $(E_k\nu)_k$ is a martingale of piecewise homogeneous Young measures with respect to the dyadic filtration $\mathcal F_k=\sigma(\mathscr D_k)$.
\end{itemize}
\subsection{Jensen's inequality for averages of concentration-angle measures}\ \\
The following lemma is used in the proof of the inhomogenization of the
singular part (Proposition~\ref{inhomogenization singular part}). It
states that any average of the concentration-angle measures
$\nu^\infty_x$ at singular points again satisfies the Jensen inequality.
This is in contrast with the absolutely continuous part, where averages
of measures satisfying the Jensen inequality at different barycentres
need not satisfy it. The proof combines three ingredients: the structure
theorem for $\mathscr{A}$-free measures of De Philippis--Rindler
\cite[Thm.~1.1]{DePhilippisRindler16}, which places the barycentres of
the $\nu^\infty_x$ in the wave cone $\Lambda_{\mathscr{A}}$; the
automatic convexity result of Kirchheim--Kristensen
\cite[Thm.~1.1]{KirchheimKristensen16}, which gives the Jensen inequality
at each singular point; and the inequality $f(z+w)\le f(z)+f^\infty(w)$ for $w\in\Lambda_{\mathscr{A}}$, which passes the inequality to
averages even though their barycentres may leave $\Lambda_{\mathscr{A}}$.
Applied on each dyadic cube $Q$, the lemma shows that the conditional expectation of the mollified Young measure $\nu^s_t$ with respect to the dyadic filtration satisfies the
Jensen inequality on $Q$; this is what makes the dyadic martingale
approach work for the singular part.
\begin{lemma}\label{integral jensen inequality lemma}
Let $\m$ be a finite positive measure in $\Omega$ and $x \mapsto \sigma_x$ a $\m$-measurable family of probability measures on $\SG$ with $\overline{\sigma}_x \in \Lambda$ for $\m$-a.e. $x$. Then for every $f\in\mathbf{Q}$
$$
f \left (\int_\Omega \overline{\sigma}_x \dif \m(x)  \right)\leq f(0) + \int_\Omega \langle f^\infty,\sigma_x\rangle \dif \m(x).
$$
\end{lemma}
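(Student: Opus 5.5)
The plan is to combine, for each $x$, a pointwise Jensen inequality for $f^\infty$ (from automatic convexity) with the sub-additivity $f(z+w)\le f(z)+f^\infty(w)$ on wave-cone directions, and then pass from finite sums to the integral.

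\emph{Pointwise inequality.} Fix $f\in\mathbf{Q}$. By Lemma~\ref{qc properties lemma}(b), $f^\infty$ is positively $1$-homogeneous and $\Lambda$-convex. Theorem~\ref{KK theorem} then applies to $h=f^\infty$ and $\sigma=\sigma_x$, which has compact support in $\SG$ and barycentre $\overline{\sigma}_x\in\Lambda$. It gives, for $\m$-a.e.\ $x$,
$$
f^\infty(\overline{\sigma}_x)\le\langle f^\infty,\sigma_x\rangle .
$$
It therefore suffices to prove
$$
f\Big(\int_\Omega\overline{\sigma}_x\dif\m(x)\Big)\le f(0)+\int_\Omega f^\infty(\overline{\sigma}_x)\dif\m(x).
$$
The map $x\mapsto\overline{\sigma}_x$ is $\m$-measurable and bounded by $c^{-1}$, and it takes values in the closed cone $\Lambda$.

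\emph{Discrete version.} Let $w_1,\dots,w_N\in\Lambda$ and $a_i\ge0$. Then $a_iw_i\in\Lambda$, since $\Lambda$ is balanced and hence a cone. Iterating Lemma~\ref{qc properties lemma}(c) gives
$$
f\Big(\sum_i a_iw_i\Big)\le f\Big(\sum_{i<N}a_iw_i\Big)+f^\infty(a_Nw_N)\le\dots\le f(0)+\sum_i a_if^\infty(w_i),
$$
where the last step uses the positive homogeneity of $f^\infty$. The point is that each increment is a wave-cone vector, even though the partial sums need not be.

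\emph{Approximation.} This is the only mildly delicate step, and I expect it to be the main obstacle. Since $\Lambda\cap\{|z|\le c^{-1}\}$ is compact, I will choose finite Borel partitions of this set into pieces of diameter less than $1/k$, with a representative $w_i\in\Lambda$ in each piece. Let $s_k(x)$ be the representative of the piece containing $\overline{\sigma}_x$. Then $s_k$ is a $\Lambda$-valued simple function with $|s_k-\overline{\sigma}_\cdot|\le1/k$ $\m$-a.e. Applying the discrete inequality with $a_i=\m(\{s_k=w_i\})$ gives
$$
f\Big(\int s_k\dif\m\Big)\le f(0)+\int f^\infty(s_k)\dif\m .
$$
As $k\to\infty$, $\int s_k\dif\m\to\int\overline{\sigma}_x\dif\m$, because $\m$ is finite. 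Moreover $f$ and $f^\infty$ are Lipschitz by Lemma~\ref{qc properties lemma}(a)--(b). Hence both sides converge, the right-hand side by uniform convergence and the finiteness of $\m$. Together with the pointwise inequality, this proves the lemma.
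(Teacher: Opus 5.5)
Your proposal is correct and is essentially the paper's proof. Both arguments iterate $f(z+w)\le f(z)+f^\infty(w)$ along wave-cone increments, approximate $x\mapsto\overline{\sigma}_x$ uniformly by $\Lambda$-valued simple functions, pass to the limit using the Lipschitz continuity of $f$ and $f^\infty$, and close with the pointwise Kirchheim--Kristensen inequality $f^\infty(\overline{\sigma}_x)\le\langle f^\infty,\sigma_x\rangle$.

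The differences are cosmetic. The paper builds the $\Lambda$-valued approximants by moving the values of an arbitrary simple approximation to nearest points of the closed set $\Lambda$, losing a factor $2$ in the error, whereas you partition the compact set $\Lambda\cap\{|z|\le c^{-1}\}$ directly. Also, $a_iw_i\in\Lambda$ holds because $\Lambda$ is a union of linear subspaces $\ker\mathscr{A}(\xi)$, not because it is balanced.
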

\begin{proof}
We divide the proof in steps.
\begin{enumerate}
    \item Iterating $f(z+w)\leq f(z) + f^\infty(w)$ for $w \in \Lambda$ gives
    $$
    f\left(\sum_{i} w_i\right) \leq f(0)+ \sum_{i} f^\infty(w_i),
    $$
    with all $w_i \in \Lambda$.
    \item Approximate $x\mapsto \overline{\sigma}_x$ by $\Lambda$-valued simple functions $\sum_i c_i \boldsymbol{1}_{E_i} $. First of all $\overline{\sigma}_x \in \Lambda$ only $\m$-a.e., so on the exceptional set redefine it  equal to $0 \in \Lambda$ without loss of generality. Then, as $\overline{\sigma}: \Omega \to \V$ is measurable and bounded (all the probability measures $\sigma_x$ are supported on the compact $\SG$), take any simple function $s= \sum_i a_i \boldsymbol{1}_{E_i} $ with $\|s-\overline{\sigma} \|_\infty < \delta$. For each $a_i$ take $c_i \in \Lambda$ such that $|a_i-c_i|=\text{dist}(a_i,\Lambda)$, as $\Lambda$ is a closed set. For all $x \in E_i$ we have $\overline{\sigma}_x \in \Lambda$, so that $\text{dist}(a_i,\Lambda) \leq |a_i-\overline{\sigma}_x|<\delta $. Hence $|c_i-\overline{\sigma}_x|< 2\delta$ and the simple function $\sum_i c_i \boldsymbol{1}_{E_i}$ is $\Lambda$-valued and uniformly $2\delta$-close to $\overline{\sigma}$. This proves the approximation by $\Lambda$-valued simple functions.
    Since $\Lambda$ is a cone, $w_i=\m(E_i)c_i \in \Lambda$ as well, and therefore we can apply (1) to the $w_i=\m(E_i)c_i$ to get
    $$
    f\left(\sum_i \m(E_i)c_i \right)\leq f(0) + \sum_i \m(E_i)f^\infty(c_i).
    $$
    \item Using the Lipschitz continuity of $f$ and $f^\infty$ we can pass to the limit
    $$
    f\left(\int_\Omega \overline{\sigma}_x \dif \m(x) \right) \leq f(0) + \int_\Omega f^\infty (\overline{\sigma}_x)\dif \m(x).
    $$
    Note that changes of $\overline{\sigma}$ on $\m$-null sets affect neither $\int \overline{\sigma}\dif \m$ nor $\int f^\infty(\overline{\sigma})\dif \m$, so the formula above remains unaffected by changes of $\overline{\sigma}$ as done in its approximation by simple functions.
    \item Finish with Kirchheim–Kristensen pointwise: $f^\infty(\overline{\sigma}_x)\leq \langle f^\infty , {\sigma}_x \rangle$ for $\m$-a.e. $x$.
\end{enumerate}    
\end{proof}
\begin{figure}[tbp]
\centering
\resizebox{\textwidth}{!}{%
\begin{tikzpicture}[
  declare function={
    bump(\s) = (abs(\s)<0.5)*(8/3)*cos(180*\s)^4;
    phit(\s) = bump(\s/0.16)/0.16;
    rhoP(\s) = 0.75*phit(\s-0.18)+0.20*phit(\s-0.47)+0.80*phit(\s-0.55)+0.24*phit(\s-0.82);
    rhoM(\s) = 0.25*phit(\s-0.18)+0.60*phit(\s-0.47)+0.20*phit(\s-0.55)+0.36*phit(\s-0.82);
    avgP(\lo,\hi) = (rhoP(\lo+(\hi-\lo)/16)+rhoP(\lo+3*(\hi-\lo)/16)+rhoP(\lo+5*(\hi-\lo)/16)+rhoP(\lo+7*(\hi-\lo)/16)+rhoP(\lo+9*(\hi-\lo)/16)+rhoP(\lo+11*(\hi-\lo)/16)+rhoP(\lo+13*(\hi-\lo)/16)+rhoP(\lo+15*(\hi-\lo)/16))/8;
    avgM(\lo,\hi) = (rhoM(\lo+(\hi-\lo)/16)+rhoM(\lo+3*(\hi-\lo)/16)+rhoM(\lo+5*(\hi-\lo)/16)+rhoM(\lo+7*(\hi-\lo)/16)+rhoM(\lo+9*(\hi-\lo)/16)+rhoM(\lo+11*(\hi-\lo)/16)+rhoM(\lo+13*(\hi-\lo)/16)+rhoM(\lo+15*(\hi-\lo)/16))/8;
  },
  font=\small,
  >={Stealth[length=3pt,width=2.6pt]},
  dim/.style={{Bar[width=4pt]}-{Bar[width=4pt]},line width=0.4pt}
]
\def\Lx{7.2}
\def\Hb{1.0}
\def\Pr{2.45}
\def\kp{0.036}
\def\ka{0.62}
 
\draw[gridc] (0,0) -- (0,\Hb) (\Lx,0) -- (\Lx,\Hb);
\draw[basec] (0,0) -- (\Lx,0) (0,\Hb) -- (\Lx,\Hb);
\draw[samp,line width=2pt] (0,{\Hb/2}) -- (\Lx,{\Hb/2});
\foreach \y/\up/\dn in {0.18/0.75/0.25, 0.47/0.20/0.60, 0.55/0.80/0.20, 0.82/0.24/0.36} {
  \draw[conc!50,densely dotted] ({\Lx*\y},0) -- ({\Lx*\y},\Hb);
  \fill[conc] ({\Lx*\y},{\Hb/2}) circle (1.4pt);
  \draw[conc,line width=1.1pt,-{Stealth[length=2.6pt,width=3.2pt]}] ({\Lx*\y},\Hb) -- ++(0,{\ka*\up});
  \draw[conc,line width=1.1pt,-{Stealth[length=2.6pt,width=3.2pt]}] ({\Lx*\y},0) -- ++(0,{-\ka*\dn});
}
\node[anchor=west,font=\footnotesize] at (\Lx,\Hb) {$+\infty$};
\node[anchor=west,font=\footnotesize] at (\Lx,{\Hb/2}) {$0$};
\node[anchor=west,font=\footnotesize] at (\Lx,0) {$-\infty$};
\node[samp,anchor=south west,font=\footnotesize,inner sep=1.5pt] at (0.05,{\Hb/2+0.05}) {$\delta_0$};
\node[conc,anchor=west,font=\footnotesize,inner sep=1pt] at (0.05,{\Hb+0.25}) {$\lambda^s\,\nu^\infty_y$};
\node[anchor=east] at (-0.35,{\Hb/2}) {$\nu^s$};
 
\pgfmathsetmacro{\yA}{-\Pr}
\fill[conc!25] (0,{\yA+\Hb}) -- plot[domain=0:1,samples=361] ({\Lx*\x},{\yA+\Hb+\kp*rhoP(\x)}) -- (\Lx,{\yA+\Hb}) -- cycle;
\fill[conc!25] (0,\yA) -- plot[domain=0:1,samples=361] ({\Lx*\x},{\yA-\kp*rhoM(\x)}) -- (\Lx,\yA) -- cycle;
\draw[conc,line width=0.6pt] plot[domain=0:1,samples=361] ({\Lx*\x},{\yA+\Hb+\kp*rhoP(\x)});
\draw[conc,line width=0.6pt] plot[domain=0:1,samples=361] ({\Lx*\x},{\yA-\kp*rhoM(\x)});
\draw[gridc] (0,\yA) -- (0,{\yA+\Hb}) (\Lx,\yA) -- (\Lx,{\yA+\Hb});
\draw[basec] (0,\yA) -- (\Lx,\yA) (0,{\yA+\Hb}) -- (\Lx,{\yA+\Hb});
\draw[samp,line width=2pt] (0,{\yA+\Hb/2}) -- (\Lx,{\yA+\Hb/2});
\draw[dim] ({\Lx*0.10},{\yA+0.76}) -- ({\Lx*0.26},{\yA+0.76}) node[right,font=\footnotesize,inner sep=2pt] {$t$};
\node[anchor=east] at (-0.35,{\yA+\Hb/2}) {$\nu^s_t$};
 
\pgfmathsetmacro{\yB}{-2*\Pr}
\foreach \j in {0,...,31} {
  \pgfmathsetmacro{\xl}{\j/32}
  \pgfmathsetmacro{\xr}{(\j+1)/32}
  \pgfmathsetmacro{\rpp}{avgP(\xl,\xr)}
  \pgfmathsetmacro{\rmm}{avgM(\xl,\xr)}
  \fill[conc!72] ({\Lx*\xl+0.012},{\yB+\Hb}) rectangle ({\Lx*\xr-0.012},{\yB+\Hb+\kp*\rpp});
  \fill[conc!72] ({\Lx*\xl+0.012},\yB) rectangle ({\Lx*\xr-0.012},{\yB-\kp*\rmm});
}
\draw[black!70,line width=0.35pt] plot[domain=0:1,samples=361] ({\Lx*\x},{\yB+\Hb+\kp*rhoP(\x)});
\draw[black!70,line width=0.35pt] plot[domain=0:1,samples=361] ({\Lx*\x},{\yB-\kp*rhoM(\x)});
\foreach \i in {0,...,32} \draw[gridc,line width=0.3pt] ({\Lx*\i/32},\yB) -- ({\Lx*\i/32},{\yB+\Hb});
\draw[basec] (0,\yB) -- (\Lx,\yB) (0,{\yB+\Hb}) -- (\Lx,{\yB+\Hb});
\draw[samp,line width=2pt] (0,{\yB+\Hb/2}) -- (\Lx,{\yB+\Hb/2});
\draw[dim] ({\Lx*4/32},{\yB+0.76}) -- ({\Lx*5/32},{\yB+0.76}) node[right,font=\footnotesize,inner sep=2pt] {$2^{-k}$};
\node[anchor=east] at (-0.35,{\yB+\Hb/2}) {$E_k\nu^s_t$};
 
\draw[hlc,line width=0.8pt,rounded corners=2pt] ({\Lx*0.5-0.03},{\yB-0.58}) rectangle ({\Lx*17/32+0.03},{\yB+\Hb+0.58});
\node[hlc,anchor=north,inner sep=2pt] at ({\Lx*33/64},{\yB-0.58}) {$Q$};
 
\pgfmathsetmacro{\rQp}{avgP(0.5,17/32)}
\pgfmathsetmacro{\rQm}{avgM(0.5,17/32)}
\pgfmathsetmacro{\pQ}{\rQp/(\rQp+\rQm)}
\def\xz{1.2}
\def\Wz{3.8}
\def\Hz{1.9}
\pgfmathsetmacro{\zt}{\yB-1.45}
\pgfmathsetmacro{\zb}{\zt-\Hz}
\pgfmathsetmacro{\zm}{(\zt+\zb)/2}
\pgfmathsetmacro{\zQ}{\zm+0.3}
\pgfmathsetmacro{\Xh}{\xz+\Wz+0.65}
 
\draw[hlc,line width=0.8pt,rounded corners=2pt] ({\xz-0.1},{\zb-0.1}) rectangle ({\xz+\Wz+0.1},{\zt+0.1});
\draw[hlc,line width=0.4pt] ({\Lx*0.5-0.03},{\yB-0.58}) -- ({\xz-0.1},{\zt+0.1});
\draw[hlc,line width=0.4pt] ({\Lx*17/32+0.03},{\yB-0.58}) -- ({\xz+\Wz+0.1},{\zt+0.1});
 
\draw[gridc] (\xz,\zb) -- (\xz,\zt) ({\xz+\Wz},\zb) -- ({\xz+\Wz},\zt);
\draw[basec] (\xz,\zb) -- ({\xz+\Wz},\zb) (\xz,\zt) -- ({\xz+\Wz},\zt);
\draw[basec,densely dotted] ({\xz+\Wz},\zt) -- (\Xh,\zt);
\draw[basec,densely dotted] ({\xz+\Wz},\zb) -- (\Xh,\zb);
\draw[samp!60,densely dotted] ({\xz+\Wz},\zm) -- (\Xh,\zm);
\draw[dashed,line width=0.4pt] (\xz,\zQ) -- (\Xh,\zQ);
 
\def\bet{0.28}
\def\w{0.045}
\pgfmathsetmacro{\cz}{\xz+\bet+0.25}
\pgfmathsetmacro{\sp}{(\Wz-2*\bet-0.5)/8}
\draw[line width=0.7pt,line join=round]
  (\xz,\zQ) -- ({\xz+\bet},\zQ) -- ({\xz+\bet+0.08},\zm)
  \foreach \i/\d in {0/1,1/-1,2/1,3/1,4/-1,5/1,6/1,7/-1,8/1} {
    -- ({\cz+\i*\sp-\w},\zm) -- ({\cz+\i*\sp},{\zm+\d*\Hz/2}) -- ({\cz+\i*\sp+\w},\zm)
  }
  -- ({\xz+\Wz-\bet-0.08},\zm) -- ({\xz+\Wz-\bet},\zQ) -- ({\xz+\Wz},\zQ);
\node[anchor=east,font=\footnotesize] at ({\xz-0.25},\zt) {$+\infty$};
\node[anchor=east,font=\footnotesize] at ({\xz-0.25},\zb) {$-\infty$};
\node[anchor=east,font=\footnotesize] at ({\xz-0.25},\zm) {$u_Q=z_Q+\mathscr B\pphi^Q$};
\draw[decorate,decoration={brace,mirror,amplitude=3pt},line width=0.4pt]
  ({\xz+\bet},{\zb-0.22}) -- ({\xz+\Wz-\bet},{\zb-0.22})
  node[midway,below=3pt,font=\footnotesize] {$\operatorname{supp}\pphi^Q\Subset Q$};
 
\draw[basec] (\Xh,\zb) -- (\Xh,\zt);
\fill[samp] (\Xh,{\zm-0.045}) rectangle ({\Xh+0.85},{\zm+0.045});
\fill[conc] (\Xh,{\zt-0.05}) rectangle ({\Xh+1.6*\pQ},{\zt+0.05});
\fill[conc] (\Xh,{\zb-0.05}) rectangle ({\Xh+1.6*(1-\pQ)},{\zb+0.05});
\fill (\Xh,\zQ) -- ++(-0.13,0.065) -- ++(0,-0.13) -- cycle;
\node[right,inner sep=2pt] at (\Xh,\zQ) {$z_Q$};
\node[anchor=south west,inner sep=2pt] at ({\Xh-0.15},{\zt+0.12}) {$\sigma_Q=(\delta_0,r_Q,\nu^\infty_Q)$};
 
\def\xa{8.25}
\begin{scope}[font=\footnotesize,every node/.style={anchor=west,align=left,inner sep=0pt}]
  \draw[black!45,line width=0.6pt,->] (\xa,{\Hb/2}) -- (\xa,{\yA+\Hb/2+0.05});
  \node at ({\xa+0.25},{(\Hb+\yA+\Hb)/2-\Hb/2}) {\textit{mollify} at scale $t$\\[1pt]
    error $\lambda^s(\Omega\setminus K)+Ct\,\lambda^s(\Omega)$};
  \draw[black!45,line width=0.6pt,->] (\xa,{\yA+\Hb/2-0.05}) -- (\xa,{\yB+\Hb/2+0.05});
  \node at ({\xa+0.25},{(\yA+\yB)/2+\Hb/2}) {\textit{average} on $Q\in\mathscr D_k$, $2^{-k}\ll t$\\[1pt]
    error $C\,2^{-k}\lambda^s(\Omega)$\\[1pt]
    $\sigma_Q$ is Jensen (Lemma): no sampling};
  \draw[black!45,line width=0.6pt,->] (\xa,{\yB+\Hb/2-0.05}) -- (\xa,\zm);
  \node at ({\xa+0.25},{(\yB+\Hb/2+\zm)/2}) {\textit{generate} on each $Q$\\[1pt]
    $\|\boldsymbol\varepsilon_{u_Q}-\sigma_Q\|_{\mathrm K}<\varepsilon$\\[1pt]
    barycenter: $\|w_t-E_kw_t\|_{L^1}\le C\frac{2^{-k}}{t}\lambda^s(K)$};
\end{scope}
\end{tikzpicture}}
\caption{Schematic of the proof of
Proposition~\ref{inhomogenization singular part} ($n=1$; the vertical axis is
the compactified fibre). The singular part $\nu^s$ is mollified at scale $t$
and then averaged on the dyadic cubes $Q\in\mathscr{D}_k$, $2^{-k}\ll t$. On
each cube the average $\sigma_Q$ satisfies the Jensen inequality by
Lemma~\ref{integral jensen inequality lemma}, so no sampling is needed, and it
is generated by $u_Q=z_Q+\mathscr{B}\pphi^Q$ with
$\operatorname{supp}\pphi^Q\Subset Q$.}
\label{fig:inhom-singular}
\end{figure}
\begin{proof}[Proof of Proposition \ref{inhomogenization singular part}] We prove the singular part of the inhomogenization, sketched in Figure~\ref{fig:inhom-singular}. Set the notation:
\begin{itemize}
    \item[$\bullet$] $d(\cdot,\cdot)$ is the sup distance over the test class $(\| \eta \|_{\text{LIP}} \leq 1 \ \text{and} \ \|T \Phi\|_{\text{LIP}}\leq 1)$
    \item[$\bullet$] $\nu^s=(\delta_0,\lambda^s,\nu_x^\infty)$ and $\boldsymbol{\eps}_u$ is the elementary Young measure associated to $u$.
    \item[$\bullet$] $g_Q$ is the average of $g$ over $Q\in \mathscr{D}_k$, and $E_k$ is the dyadic conditional expectation at generation $k$. In our case $k=p+q$, with $p,q \in \N$.
\end{itemize}
Fix a cube $Q_0 \supset \Omegacl $ so that cubes of generation $k$ fill exactly $Q_0$; all the Young measures throughout the following proof are extended tacitly to $(\delta_0,0)$ on $Q_0\setminus \Omega$ and the function $\eta$ extended as a Lipschitz function with same Lipschitz constant; the extra term carried $\int_{Q_0 \setminus \Omega} \eta \Phi(0) \dif x$ is the same for every Young measure being compared (with $\psi^s$ extended by 0) so it cancels out in each term $d(\cdot, \cdot)$. Therefore each term $d(\cdot,\cdot)$ is the same whether the pairings are computed on
$Q_0$ or on $\Omega$, and below we compute them on $\Omega$. \\
Take a compact set $K \subset \Omega$ such that $ \lambda^s(\Omega \setminus K) < \eps$ and $\Leb(K)=0$. This is just inner regularity of $\lambda^s$ together with $\lambda^s \perp \Leb$ and $\lambda^s(\partial \Omega)=0$. No Lusin is needed nor continuity of $ x \mapsto \nu_x^\infty$. \\
Fix $p\in\N$ so large that $t:=2^{-p}l(Q_0)=2^{-p}$ (normalizing $l(Q_0)$ to 1 after a dilation) satisfies
\begin{equation}\label{t small wrt K}
\tfrac32\, t<\operatorname{dist}(K,\mathbb{R}^n\setminus\Omega),
\end{equation}
which is possible because $K\subset\Omega$ is compact. Since $k=p+q$, the side of the
cubes of $\mathscr{D}_k$ is $l_k=2^{-q}t\le t$.
Mollify the Young measure:
\begin{equation}\label{definition of mu t, rho t, w t}
\mu_t(x):= \int_K \phi_t(x-y)\nu_y^\infty \dif \lambda^s(y), \quad \rho_t:= \phi_t \ast (\lambda^s \Lcorner K), \quad w_t:=\phi_t \ast (\overline{\nu}^\infty \lambda^s \Lcorner K) 
\end{equation}
and set $\nu_t^s=(\delta_0, \rho_t \Leb,\mu_t/ \rho_t )$. We emphasize the meaning of the parametrized measure $\mu_t(x)$ 
$$
\int \eta(x) \langle \Phi^\infty, \mu_t(x) \rangle \dif x =\int \eta(x) \int_K \left( \phi_t(x-y) \int_{\SG} \Phi^\infty \dif \nu_y^\infty \right)  \dif \lambda^s(y) \dif x,
$$
and the full action of $\nu_t^s$
$$
\ddual{\eta \otimes \Phi}{\nu_t^s}=\int \eta \Phi(0) \dif x + \int \eta(x) \langle \Phi^\infty, \mu_t(x) \rangle \dif x.
$$
Note that $\rho_t$ is the mass of $\mu_t$ (tested with $\Phi^\infty \equiv 1$), and $w_t$ is its first moment (tested with linear $\Phi$). Since $\operatorname{supp}\phi_t=t\overline{\XX}$, by (\ref{t small wrt K}) we have
$\operatorname{supp}\rho_t\cup\operatorname{supp}w_t\subset K+t\overline{\XX}\subset\Omega$;
in particular $\int_{Q_0}\rho_t\dif x=\lambda^s(K)$.\\
On each $Q \in \mathscr{D}_k$, the conditional expectation $E_k\nu_t^s$ equals the homogeneous Young measure 
$$
\sigma_Q=(\delta_0, r_Q \Leb \Lcorner Q, \sigma_Q^\infty ),  
$$ 
where by definition $r_Q:=\dfrac{(\rho_t \Leb)(Q)}{|Q|}=(\rho_t)_Q$ and $\sigma_Q^\infty$ is the average of $(\nu_t^s)_x^\infty= \mu_t(x)/\rho_t(x)$ against its concentration measure $\lambda_t^s=\rho_t \Leb$. Explicitly 
\begin{align*}
  &\bullet \ r_Q=(\rho_t)_Q=\frac{1}{|Q|}\int_Q \rho_t(x) \dif x =  \int_K \left(\frac{1}{|Q|}\int_Q \phi_t(x-y)\dif x \right) \dif \lambda^s(y)=m_Q(K),\\
   &\qquad \text{where}\quad \kappa_Q(y):= \frac{1}{|Q|}\int_Q \phi_t(x-y)\dif x, \quad m_Q:=\kappa_Q \lambda^s\Lcorner K;  \\
   &\bullet \ \sigma_Q^\infty=\frac{1}{\lambda^s_t(Q)}\int_Q (\nu_t^s)_x^\infty \dif \lambda_t^s (x)=\frac{1}{(\rho_t)_Q|Q|}\int_Q \mu_t(x) \dif x =: \frac{(\mu_t)_Q}{(\rho_t)_Q}. 
\end{align*}
The barycenter of $(\delta_0, r_Q, \sigma_Q^\infty)$ on each cube $Q$ is equal to $z_Q:=(w_t)_Q=r_Q \overline{\sigma}_Q^\infty$. 
Globally 
$$
E_k\nu_t^s=(\delta_0, (E_k\rho_t)\Leb, x \mapsto \sigma_{Q(x)}^\infty ). 
$$
Let
$$
\mathscr{D}_k^s:=\{Q\in\mathscr{D}_k:\ r_Q>0\}.
$$
If $Q\notin\mathscr{D}_k^s$, then $\rho_t\equiv0$ on $Q$, since $\rho_t$ is continuous
and nonnegative; as $|\overline{\nu}_y^\infty|\le c^{-1}$ (because $\overline{\nu}_y^\infty\in\overline{\mathrm{co}}(\SG)$) gives $|w_t|\le c^{-1}\rho_t$, also
$w_t\equiv0$ and $\mu_t\equiv0$ on $Q$. Thus on such cubes $z_Q=0$ and
$E_k\nu_t^s=(\delta_0,0)$.\\
We now apply Lemma \ref{integral jensen inequality lemma} (for each cube $Q$) with the positive measure $m_Q(\dif y)$, which satisfies $m_Q \ll \lambda^s$ by its definition. Hence the barycentres $\overline{\nu}_y^\infty \in \Lambda$ for $m_Q$-a.e. $y$, since it is true $\lambda^s$-a.e., and likewise for the $m_Q$-measurability of $y \mapsto \nu_y^\infty$. The quantities involved in the statement of the lemma are
$$
\int_K \overline{\nu}_y^\infty \dif m_Q(y)= \frac{1}{|Q|}\int_Q \left(\int_K\phi_t(x-y) \overline{\nu}_y^\infty \dif \lambda^s(y) \right) \dif x=(w_t)_Q=z_Q,
$$
$$
\int_K \langle f^\infty , \nu_y^\infty \rangle \dif m_Q(y)= \frac{1}{|Q|}\int_Q \langle f^\infty, \mu_t(x) \rangle \dif x = \langle f^\infty, (\mu_t)_Q\rangle = r_Q \langle f^\infty, \sigma_Q^\infty \rangle. 
$$
The Lemma gives
$$
f(z_Q)= f\Big(\int_K \overline{\nu}_y^\infty \dif m_Q(y)\Big)
\leq f(0) + \int_K \langle f^\infty , \nu_y^\infty \rangle \dif m_Q(y)
= f(0) + r_Q \langle f^\infty, \sigma_Q^\infty \rangle.
$$
This is Jensen for $(\delta_0, r_Q, \sigma_Q^\infty)$ whose barycenter is $z_Q$. In other words, $E_k \nu_t^s$ is Jensen on each cube $Q$. For $Q\in\mathscr{D}_k^s$, Proposition \ref{homogeneous case characterization} applied
with the homogeneous Young measure $(\delta_0, r_Q ,\sigma_Q^\infty)$ and barycentre
$z_Q$ gives $\pphi^Q \in \CC_c^\infty(Q,\V)$ with
$\|\pphi^Q\|_{\WW^{l-1,1}(Q,\V)}< \eps\, r_Q|Q|$, a positive number since $r_Q>0$,
such that (realizing the elementary Young measure on $Q$ rather than on $\XX$)
$$
\|(\delta_0, r_Q \sigma_{Q}^\infty) -\boldsymbol{\eps}_{r_Q\overline{\sigma}_{Q}^\infty + \mathscr{B}\pphi^Q} \|_{\text{K}} <\eps,
$$
(denote $(\delta_0, r_Q \sigma_{Q}^\infty)\equiv (\delta_0, r_Q\Leb\Lcorner Q, \sigma_{Q}^\infty)$ ) which amounts in terms of the Kantorovich metric to 
\begin{equation}\label{phi^Q kantorovich distance}
\sup_{\|T\Phi\|_{\text{LIP}}\leq 1} \bigg|\Phi(0)+ r_Q \langle \Phi^\infty, \sigma_Q^\infty \rangle - \dashint_Q \Phi(r_Q \overline{\sigma}_{Q}^\infty + \mathscr{B}\pphi^Q)\dif x \bigg|< \eps. 
\end{equation}
For $Q\notin\mathscr{D}_k^s$ we set $\pphi^Q:=0$; since there $r_Q=0$ and $z_Q=0$, the
left-hand side of (\ref{phi^Q kantorovich distance}) vanishes identically, so
(\ref{phi^Q kantorovich distance}) holds for every $Q\in\mathscr{D}_k$.

Every cube of $\mathscr{D}_k^s$ satisfies $\overline Q\subset\Omega$. Indeed, if $r_Q>0$
there is $x\in Q$ with $\rho_t(x)>0$, hence, since
$\operatorname{supp}\phi_t=t\overline{\XX}$, some $y\in K$ with $\|x-y\|\le t/2$.
For every $x'\in\overline Q$,
$$
\|x'-y\|\le\|x'-x\|+\|x-y\|\le l_k+\tfrac t2\le\tfrac32\,t
<\operatorname{dist}(K,\mathbb{R}^n\setminus\Omega)
$$
by (\ref{t small wrt K}), so that $x'\in\Omega$.

We define
$$
\pphi:=\sum_{Q\in\mathscr{D}_k^s}\pphi^Q .
$$
This is a finite sum of smooth functions supported in the pairwise disjoint cubes
$Q$, whose closures lie in $\Omega$. Hence
$\pphi\in\CC_c^\infty(\Omega,\V)$, $\mathscr{B}\pphi=\mathscr{B}\pphi^Q$ a.e.\ on every
$Q\in\mathscr{D}_k$ (in particular $\mathscr{B}\pphi=0$ on the cubes outside
$\mathscr{D}_k^s$), and
$$
\|\pphi\|_{\WW^{l-1,1}(\Omega,\V)}=\sum_{Q\in\mathscr{D}_k^s}\|\pphi^Q\|_{\WW^{l-1,1}(Q,\V)}
<\eps\sum_{Q\in\mathscr{D}_k}r_Q|Q|=\eps\int_{Q_0}\rho_t\dif x=\eps\,\lambda^s(K)
\le\eps\,\lambda^s(\Omega).
$$ We check that the map
\begin{equation}\label{psi^s map definition Martinglale}
\psi^s:=\phi_t \ast (\overline{\nu}_{(\cdot)}^\infty \lambda^s \Lcorner \Omega)+ \mathscr{B}\pphi
\end{equation}
has the required properties, namely $d(\nu^s,\boldsymbol{\eps}_{\psi^s})< \eps$. 
Let $S:=\bigcup_{Q\in\mathscr{D}_k^s}Q\subset\Omega$. On every cube outside
$\mathscr{D}_k^s$ the functions $\rho_t$, $w_t$ and $\mu_t$ vanish, hence so do $E_k\rho_t$,
$E_kw_t$ and $\pphi$. Since these cubes cover $\Omega\setminus S$, the Young measures
$\nu_t^s$, $E_k\nu_t^s$ and $\boldsymbol{\eps}_{E_kw_t+\mathscr{B}\pphi}$ all equal
$(\delta_0,0)$ on $\Omega\setminus S$, and in the differences of their pairings below the
contributions of $\Omega\setminus S$ cancel. The Young measures $\nu^s$ and
$\boldsymbol{\eps}_{\psi^s}$ need not be trivial on $\Omega\setminus S$, but they only
enter the mollification and barycenter steps, where they are first compared with the
objects above.\\
Write
$$
d(\nu^s,\boldsymbol{\eps}_{\psi^s})\leq \underbrace{d(\nu^s,\nu^s_t)}_{\text{mollification}}+\underbrace{d(\nu_t^s,E_k \nu^s_t)}_{\text{martingale}}+ \underbrace{d(E_k \nu^s_t, \boldsymbol{\eps}_{E_k w_t + \mathscr{B}\pphi})}_{\text{homogeneous generation}}+\underbrace{d(\boldsymbol{\eps}_{E_k w_t + \mathscr{B}\pphi}, \boldsymbol{\eps}_{\psi^s})}_{\text{barycenter}}.
$$
Each term is the error introduced by one step of the construction, and its estimate reflects that step; we therefore treat the four terms in separate paragraphs, named after the corresponding steps. In each case we bound $|\ddual{\eta \otimes \Phi}{\cdot}|$ uniformly in the admissible pair $(\eta,\Phi)$; of $\eta$ we only use $\|\eta\|_\infty\leq 1$ and $\text{lip}(\eta)\leq 1$.  \\
\textbf{Mollification.} Recall the action $\ddual{\eta \, \otimes \, \Phi}{\nu_t^s}=\int \eta \Phi(0) \dif x + \int \eta \langle \Phi^\infty, \mu_t(x) \rangle \dif x$, and $\nu^s=(\delta_0,\lambda^s,\nu_x^\infty)$. The term $\int \eta \Phi(0) \dif x$ is in common between $\nu^s_t$ and $\nu^s$. 
 By Fubini,
$$
\int_\Omega \eta(x)\langle \Phi^\infty, \mu_t(x)\rangle \dif x
=\int_K \Big(\int_\Omega \phi_t(x-y)\eta(x)\dif x\Big)\langle \Phi^\infty,\nu_y^\infty\rangle \dif\lambda^s(y).
$$
For $y\in K$ the support $y+t\overline{\XX}$ of $\phi_t(\cdot-y)$ lies in $\Omega$ by
(\ref{t small wrt K}), so 
\begin{multline*}
\left| \int_\Omega \eta(y) \langle \Phi^\infty,\nu_y^\infty\rangle \dif\lambda^s(y)-\int_\Omega \eta(x)\langle \Phi^\infty, \mu_t(x)\rangle \dif x\right|\\
=\left|\int_{\Omega \setminus K} \eta(y) \langle \Phi^\infty,\nu_y^\infty\rangle \dif\lambda^s(y)+\int_K \left(\int_\Omega \phi_t(x-y)(\eta(y)-\eta(x)) \dif x \right) \langle \Phi^\infty,\nu_y^\infty\rangle \dif\lambda^s(y)\right|, 
\end{multline*}
where we used $\int_\Omega\phi_t(x-y)\dif x=1$ for $y\in K$. 
Since $|\int_\Omega \phi_t(x-y)(\eta(y)-\eta(x)) \dif x|\leq t/2$, $|\langle\Phi^\infty,\nu_y^\infty\rangle|\le1$  and $\lambda^s(\Omega\setminus K)<\eps$,
we obtain
$$
d(\nu^s,\nu_t^s)\le \tfrac t2\,\lambda^s(K)+\lambda^s(\Omega\setminus K)< t\,\lambda^s(\Omega)+\eps .
$$
\textbf{Martingale.} By the duality formula of $E_k$ applied to Young measures its action on $\nu_t^s$ can be transferred to $\eta$, so that
$$
\ddual{\eta \otimes \Phi}{\nu_t^s}-\ddual{\eta \otimes \Phi}{E_k\nu_t^s}=\int_{\Omega}(\eta -E_k \eta) \langle \Phi^\infty, \mu_t(x)\rangle \dif x.
$$
For Lipschitz $\eta$, $\sup_\Omega|\eta - E_k \eta| \leq 2^{-k} \text{lip}(\eta) $ holds everywhere, and $|\langle\Phi^\infty,\mu_t\rangle|\le\rho_t$, so the term is at most
$$
\bigg|\int_{\Omega}(\eta -E_k \eta) \langle \Phi^\infty, \mu_t(x)\rangle \dif x\bigg | \leq 2^{-k} \int_\Omega\rho_t\dif x\le 2^{-k}\lambda^s(\Omega).
$$
\textbf{Homogeneous generation.} Explicitly
$$
\ddual{\eta \otimes \Phi}{E_k\nu_t^s}
= \int_{\Omega\setminus S}\eta\Phi(0)\dif x
+ \sum_{Q \in \mathscr{D}_k^s}\Big(\int_Q \eta\dif x\Big)\big(\Phi(0)+r_Q \langle \Phi^\infty, \sigma_Q^\infty \rangle\big),
$$
$$
\ddual{\eta \otimes \Phi}{\boldsymbol{\eps}_{E_k w_t + \mathscr{B}\pphi}}= \int_\Omega \eta \Phi(E_k w_t + \mathscr{B}\pphi) \dif x = \int_{\Omega\setminus S} \eta \Phi(0)\dif x+ \sum_{Q \in \mathscr{D}^s_k}\int_Q \eta \Phi( (w_t)_Q+ \mathscr{B}\pphi^Q)\dif x.
$$

Recall $z_Q=(w_t)_Q=r_Q \overline{\sigma}_Q^\infty$. Add and subtract $\eta_Q=\dashint_Q \eta$ to the second expression above, 
\begin{multline*}
 \sum_{Q \in \mathscr{D}^s_k}  \int_Q \eta \Phi( r_Q \overline{\sigma}_Q^\infty + \mathscr{B}\pphi^Q)\dif x\\=\sum_{Q \in \mathscr{D}^s_k}\left(\int_Q \left(\eta -\eta_Q \right)\Phi( r_Q \overline{\sigma}_Q^\infty + \mathscr{B}\pphi^Q)\dif  x  + \left(\int_Q \eta\right)\dashint_Q \Phi( r_Q \overline{\sigma}_Q^\infty + \mathscr{B}\pphi^Q)\dif  x \right).
\end{multline*}
Therefore
\begin{multline*}
\ddual{\eta \otimes \Phi}{E_k\nu_t^s}-\ddual{\eta \otimes \Phi}{\boldsymbol{\eps}_{E_k w_t + \mathscr{B}\pphi}} \\=    \sum_{Q \in \mathscr{D}^s_k} \left(\int_Q \eta\right)\left( \Phi(0)+ r_Q \langle \Phi^\infty, \sigma_Q^\infty \rangle - \dashint_Q \Phi(r_Q \overline{\sigma}_{Q}^\infty + \mathscr{B}\pphi^Q)\dif x\right) \\
+\sum_{Q \in \mathscr{D}^s_k}\int_Q \left(\eta_Q -\eta \right)\Phi( r_Q \overline{\sigma}_Q^\infty + \mathscr{B}\pphi^Q)\dif  x=:I_1+I_2.
\end{multline*}
By the homogeneous generation we have $\sup_{\Phi}|\Phi(0)+ r_Q \langle \Phi^\infty, \sigma_Q^\infty \rangle - \dashint_Q \Phi(r_Q \overline{\sigma}_{Q}^\infty + \mathscr{B}\pphi^Q)\dif x|< \eps$, hence
$$
|I_1|\leq \eps \sum_{Q \in \mathscr{D}^s_k} \left|\int_Q \eta \right| \leq \eps \Leb(\Omega).
$$
Moreover, using $\int_Q(\eta-\eta_Q)\dif x=0$, $\text{lip}(\Phi)\leq L$ (see \eqref{lip constant L}) and $\dashint_Q|u|\dif x\leq(1+c^{-1})\|\boldsymbol{\eps}_{u}\|_{\text{K}}$ for $u\in\LL^1(Q,\V)$ (as $(1+c^{-1})^{-1}|\cdot|$ is admissible), with constants that will be absorbed by a constant $C$,
\begin{multline*}
|I_2|\leq \sum_{Q \in \mathscr{D}^s_k} \bigg|\int_Q\left(\eta -\eta_Q \right)\Phi(r_Q \overline{\sigma}_{Q}^\infty + \mathscr{B}\pphi^Q)\dif x \bigg| =\sum_Q \bigg|\int_Q\left(\eta -\eta_Q \right)\big(\Phi(r_Q \overline{\sigma}_{Q}^\infty + \mathscr{B}\pphi^Q)-\Phi(0)\big)\dif x \bigg|\\ \leq \sum_Q  \text{lip}(\eta)2^{-k}L(1+c^{-1})\|\boldsymbol{\eps}_{r_Q\overline{\sigma}_{Q}^\infty + \mathscr{B}\pphi^Q} \|_{\text{K}} \Leb(Q) \leq C2^{-k} \sum_Q (\|(\delta_0, r_{Q},\sigma_{Q}^\infty)\|_{\text{K}}+\eps) \Leb(Q) \\ \leq C2^{-k}\sum_Q(1+r_Q+\eps)\Leb(Q)= C2^{-k} \sum_Q \left((1+\eps)\Leb(Q)+ \int_Q \phi_t \ast (\lambda^s\Lcorner K) \dif x\right) \\
\leq C2^{-k}\left((1+\eps)\Leb(\Omega)+\int_\Omega \phi_t \ast (\lambda^s \Lcorner K) \dif x \right) \leq 2^{-k}C((1+\eps)\Leb(\Omega)+\lambda^s(\Omega)).
\end{multline*}
where in the second inequality we used the triangle inequality with (\ref{phi^Q kantorovich distance}).\\
\textbf{Barycenter.} We estimate $d(\boldsymbol{\eps}_{E_k w_t + \mathscr{B}\pphi}, \boldsymbol{\eps}_{\psi^s})$ where $\psi^s=\phi_t \ast (\overline{\nu}_{(\cdot)}^\infty \lambda^s \Lcorner \Omega)+ \mathscr{B}\pphi$. First of all we can reduce from $\overline{\nu}_{(\cdot)}^\infty \lambda^s\Lcorner \Omega$
to $\overline{\nu}_{(\cdot)}^\infty \lambda^s \Lcorner K$, because by
(\ref{conditions on eta Phi}), $\text{lip}(\Phi)\leq L$ and $|\overline{\nu}_y^\infty|\leq c^{-1}$,
$$
\bigg|\int_\Omega \eta\, \Phi(\psi^s) \dif x
-\int_\Omega \eta\, \Phi\big(w_t+ \mathscr{B}\pphi\big) \dif x\bigg|
\leq \tfrac{L}{c} \int_\Omega \phi_t \ast (\lambda^s \Lcorner \Omega \setminus K)\dif x
\leq \tfrac{L}{c}\lambda^s(\Omega \setminus K) < \tfrac{L}{c}\eps .
$$
Since $w_t$, $E_kw_t$ and $\mathscr{B}\pphi$ vanish on $\Omega\setminus S$, the remaining
comparison only involves the cubes of $\mathscr{D}_k^s$, and the sums below run over
$Q\in\mathscr{D}_k^s$.
 Note that $w_t=\phi_t \ast (\overline{\nu}^\infty \lambda^s \Lcorner K)$, hence by Poincaré inequality on each cube $Q$, with constant $C(n)2^{-k}$, 
\begin{multline*}
\left|\sum_Q \left( \int_Q \eta \Phi(E_kw_t+ \mathscr{B}\pphi^Q )\dif x    -\int_Q \eta \Phi(\phi_t \ast (\overline{\nu}_{(\cdot)}^\infty \lambda^s \Lcorner K)+ \mathscr{B}\pphi^Q  )\dif x\right) \right|\\
\leq \sum_Q \int_Q L|E_k w_t -w_t|\dif x \leq C2^{-k}\sum_Q \int_Q |\nabla w_t|\dif x \\=C2^{-k}\sum_Q \int_Q \left| \int_K \nabla\phi_t(x-y)\overline{\nu}_y^\infty \dif \lambda^s(y)  \right|\dif x \\ \leq C M  \frac{2^{-k}}{t}\sum_Q \int_Q \int_K (\boldsymbol{1}_\XX)_t(x-y) \dif \lambda^s(y) \dif x \\ =   C M  \frac{2^{-k}}{t} \int_\Omega  (\boldsymbol{1}_\XX)_t \ast (\lambda^s\Lcorner K) \dif x\leq C M  \frac{2^{-k}}{t} \lambda^s(K).
\end{multline*}
We recall that $\phi_t(x)= t^{-n}\phi(x/ t)$, and denoting $M=\max|\nabla \phi|$, we used the pointwise bound $|\nabla \phi_t|\leq M t^{-1}(\boldsymbol{1}_\XX)_t$. In this last step lies the use of a two-scale structure $k=p+q$, $2^{-k}/t=2^{-q}$.
\textbf{Conclusion.} Collecting the four estimates and using $2^{-k}=l_k\le t$, we obtain
$$
d(\nu^s,\boldsymbol{\eps}_{\psi^s})\leq C\,(\eps+t+2^{-q}),
$$
with $C$ depending only on $n$, $\phi$, $L$, $c$, $\Leb(\Omega)$ and $\lambda^s(\Omega)$. The
parameters are chosen in this order: $\eps$ and $K$; then $p$, so large that
(\ref{t small wrt K}) holds and $t\le\eps$; then $q$, so large that $2^{-q}\le\eps$;
finally the maps $\pphi^Q$, which depend on $t$ and $k=p+q$. This gives
$d(\nu^s,\boldsymbol{\eps}_{\psi^s})\leq 3C\eps$, and running the construction with
$\eps/(3C)$ in place of $\eps$ yields $d(\nu^s,\boldsymbol{\eps}_{\psi^s})<\eps$, while
the bound on $\|\pphi\|_{\WW^{l-1,1}}$ only improves.
\end{proof}

\begin{figure}[tbp]
\centering
\resizebox{\textwidth}{!}{%
\begin{tikzpicture}[
  declare function={
    fa(\s)  = 0.42 + 0.20*sin(360*(0.85*\s + 0.05));
    fb(\s)  = 1.20 + 0.22*cos(360*(1.15*\s + 0.10));
    fth(\s) = 0.50 + 0.22*sin(360*(0.70*\s + 0.15));
    amin(\lo,\hi) = min(fa(\lo),fa(\lo+(\hi-\lo)/8),fa(\lo+2*(\hi-\lo)/8),fa(\lo+3*(\hi-\lo)/8),fa(\lo+4*(\hi-\lo)/8),fa(\lo+5*(\hi-\lo)/8),fa(\lo+6*(\hi-\lo)/8),fa(\lo+7*(\hi-\lo)/8),fa(\hi));
    amax(\lo,\hi) = max(fa(\lo),fa(\lo+(\hi-\lo)/8),fa(\lo+2*(\hi-\lo)/8),fa(\lo+3*(\hi-\lo)/8),fa(\lo+4*(\hi-\lo)/8),fa(\lo+5*(\hi-\lo)/8),fa(\lo+6*(\hi-\lo)/8),fa(\lo+7*(\hi-\lo)/8),fa(\hi));
    bmin(\lo,\hi) = min(fb(\lo),fb(\lo+(\hi-\lo)/8),fb(\lo+2*(\hi-\lo)/8),fb(\lo+3*(\hi-\lo)/8),fb(\lo+4*(\hi-\lo)/8),fb(\lo+5*(\hi-\lo)/8),fb(\lo+6*(\hi-\lo)/8),fb(\lo+7*(\hi-\lo)/8),fb(\hi));
    bmax(\lo,\hi) = max(fb(\lo),fb(\lo+(\hi-\lo)/8),fb(\lo+2*(\hi-\lo)/8),fb(\lo+3*(\hi-\lo)/8),fb(\lo+4*(\hi-\lo)/8),fb(\lo+5*(\hi-\lo)/8),fb(\lo+6*(\hi-\lo)/8),fb(\lo+7*(\hi-\lo)/8),fb(\hi));
  },
  font=\small,
  >={Stealth[length=3pt,width=2.6pt]}
]
\def\L{7.2}
\def\H{1.7}
\def\G{0.55}
\def\tk{0.2}
\def\ins{0.035}
\def\pad{0.08}

\draw[basec] (0,0) -- (\L,0) node[right,text=black] {$Q_0$};
\draw[basec,->] (0,0) -- (0,{\H+0.2}) node[above,text=black] {$\mathbb V$};
\fill[fieldc] plot[domain=0:1,samples=160] ({\L*\x},{fa(\x)+0.5*\tk*fth(\x)})
  -- plot[domain=1:0,samples=160] ({\L*\x},{fa(\x)-0.5*\tk*fth(\x)}) -- cycle;
\fill[fieldc] plot[domain=0:1,samples=160] ({\L*\x},{fb(\x)+0.5*\tk*(1-fth(\x))})
  -- plot[domain=1:0,samples=160] ({\L*\x},{fb(\x)-0.5*\tk*(1-fth(\x))}) -- cycle;
\node[anchor=east] at (-0.35,{0.5*\H}) {$x\mapsto\nu^a_x$};

\foreach \k/\fr in {1/{0.64,0.28},
                    2/{0.30,0.72,0.55,0.20},
                    3/{0.42,0.75,0.18,0.62,0.33,0.58,0.47,0.25}}{
  \pgfmathsetmacro{\yb}{-\k*(\H+\G)}
  \pgfmathtruncatemacro{\N}{2^\k}
  \pgfmathsetmacro{\sd}{1/\N}
  \foreach \f [count=\j from 0] in \fr {
    \pgfmathsetmacro{\xl}{\j*\sd}
    \pgfmathsetmacro{\xr}{(\j+1)*\sd}
    \fill[bandc] ({\L*\xl+\ins},{\yb+amin(\xl,\xr)-\pad}) rectangle ({\L*\xr-\ins},{\yb+amax(\xl,\xr)+\pad});
    \fill[bandc] ({\L*\xl+\ins},{\yb+bmin(\xl,\xr)-\pad}) rectangle ({\L*\xr-\ins},{\yb+bmax(\xl,\xr)+\pad});
  }
  \foreach \i in {0,...,\N} \draw[gridc] ({\L*\i/\N},\yb) -- ({\L*\i/\N},{\yb+\H});
  \draw[basec] (0,\yb) -- (\L,\yb);
  \draw[fieldthin] plot[domain=0:1,samples=120] ({\L*\x},{\yb+fa(\x)});
  \draw[fieldthin] plot[domain=0:1,samples=120] ({\L*\x},{\yb+fb(\x)});
  \foreach \f [count=\j from 0] in \fr {
    \pgfmathsetmacro{\xl}{\j*\sd}
    \pgfmathsetmacro{\xr}{(\j+1)*\sd}
    \pgfmathsetmacro{\xq}{\xl+\f*\sd}
    \pgfmathsetmacro{\aq}{fa(\xq)}
    \pgfmathsetmacro{\bq}{fb(\xq)}
    \pgfmathsetmacro{\wa}{28.45*\tk*fth(\xq)}
    \pgfmathsetmacro{\wb}{28.45*\tk*(1-fth(\xq))}
    \draw[samp,line width=\wa pt] ({\L*\xl+\ins},{\yb+\aq}) -- ({\L*\xr-\ins},{\yb+\aq});
    \draw[samp,line width=\wb pt] ({\L*\xl+\ins},{\yb+\bq}) -- ({\L*\xr-\ins},{\yb+\bq});
    \draw[samp,densely dotted,line width=0.4pt] ({\L*\xq},\yb) -- ({\L*\xq},{\yb+\bq});
    \fill[samp] ({\L*\xq},\yb) circle (1.3pt);
  }
  \node[anchor=east] at (-0.35,{\yb+0.5*\H}) {$S_{\k}\nu^a$};
}

\pgfmathsetmacro{\yt}{-3*(\H+\G)}
\pgfmathsetmacro{\xQ}{0.875+0.25*0.125}
\draw[hlc,line width=0.8pt,rounded corners=2pt] ({\L*7/8-0.05},{\yt-0.09}) rectangle ({\L+0.05},{\yt+\H+0.07});
\node[hlc,anchor=south,inner sep=1.5pt] at ({\L*15/16},{\yt+\H+0.07}) {$Q$};
\node[samp,anchor=north,inner sep=2pt] at ({\L*\xQ},{\yt-0.09}) {$x_Q$};

\pgfmathsetmacro{\Xz}{\L+1.1}
\def\Wz{3.2}
\def\sz{1.35}
\pgfmathsetmacro{\Hz}{\sz*\H}
\pgfmathsetmacro{\yz}{\yt+0.45}
\pgfmathsetmacro{\Xh}{\Xz+\Wz+0.6}
\pgfmathsetmacro{\Aq}{fa(\xQ)}
\pgfmathsetmacro{\Bq}{fb(\xQ)}
\pgfmathsetmacro{\Tq}{fth(\xQ)}
\pgfmathsetmacro{\lvA}{\yz+\sz*\Aq}
\pgfmathsetmacro{\lvB}{\yz+\sz*\Bq}
\pgfmathsetmacro{\lvZ}{\yz+\sz*(\Tq*\Aq+(1-\Tq)*\Bq)}

\draw[hlc,line width=0.8pt,rounded corners=2pt] ({\Xz-0.1},{\yz-0.12}) rectangle ({\Xz+\Wz+0.1},{\yz+\Hz+0.1});
\draw[hlc,line width=0.4pt] ({\L+0.05},{\yt+\H+0.07}) -- ({\Xz-0.1},{\yz+\Hz+0.1});
\draw[hlc,line width=0.4pt] ({\L+0.05},{\yt-0.09}) -- ({\Xz-0.1},{\yz-0.12});

\draw[basec] (\Xz,\yz) -- ({\Xz+\Wz},\yz);
\draw[gridc] (\Xz,\yz) -- (\Xz,{\yz+\Hz});
\draw[gridc] ({\Xz+\Wz},\yz) -- ({\Xz+\Wz},{\yz+\Hz});

\draw[samp!60,densely dotted] (\Xz,\lvA) -- (\Xh,\lvA);
\draw[samp!60,densely dotted] (\Xz,\lvB) -- (\Xh,\lvB);
\draw[dashed,line width=0.4pt] (\Xz,\lvZ) -- (\Xh,\lvZ);

\pgfmathsetmacro{\bet}{0.28}
\pgfmathsetmacro{\per}{(\Wz-2*\bet)/6}
\def\tw{0.03}
\draw[line width=0.7pt,rounded corners=0.6pt]
  (\Xz,\lvZ) -- ({\Xz+\bet},\lvZ) -- ({\Xz+\bet+2*\tw},\lvA)
  \foreach \i in {0,...,4} {
    -- ({\Xz+\bet+\i*\per+\Tq*\per-\tw},\lvA)
    -- ({\Xz+\bet+\i*\per+\Tq*\per+\tw},\lvB)
    -- ({\Xz+\bet+(\i+1)*\per-\tw},\lvB)
    -- ({\Xz+\bet+(\i+1)*\per+\tw},\lvA)
  }
  -- ({\Xz+\bet+5*\per+\Tq*\per-\tw},\lvA)
  -- ({\Xz+\bet+5*\per+\Tq*\per+\tw},\lvB)
  -- ({\Xz+\Wz-\bet-2*\tw},\lvB)
  -- ({\Xz+\Wz-\bet},\lvZ) -- ({\Xz+\Wz},\lvZ);
\node[anchor=north west,font=\footnotesize,inner sep=2pt] at (\Xz,{\yz+\Hz+0.08}) {$u_Q=z_Q+\mathscr B\pphi^Q$};
\draw[decorate,decoration={brace,mirror,amplitude=3pt},line width=0.4pt]
  ({\Xz+\bet},{\yz-0.2}) -- ({\Xz+\Wz-\bet},{\yz-0.2})
  node[midway,below=3pt,font=\footnotesize] {$\operatorname{supp}\pphi^Q\Subset Q$};

\def\bl{1.3}
\draw[basec,->] (\Xh,\yz) -- (\Xh,{\yz+\Hz+0.15});
\fill[samp] (\Xh,{\lvA-0.05}) rectangle ({\Xh+\bl*\Tq},{\lvA+0.05});
\fill[samp] (\Xh,{\lvB-0.05}) rectangle ({\Xh+\bl*(1-\Tq)},{\lvB+0.05});
\fill (\Xh,\lvZ) -- ++(-0.13,0.065) -- ++(0,-0.13) -- cycle;
\node[right,inner sep=2pt] at (\Xh,\lvZ) {$z_Q$};
\node[above,inner sep=2pt] at ({\Xh+0.45},{\yz+\Hz+0.15}) {$\tau_Q=\nu^a_{x_Q}$};

\begin{scope}[shift={({\Xz},1.35)},font=\footnotesize]
  \draw[samp,line width=2.4pt] (0,0) -- (0.5,0);
  \node[anchor=west] at (0.65,0) {atoms of $\tau_Q$, thickness $\propto$ weight};
  \fill[bandc] (0,-0.62) rectangle (0.5,-0.38);
  \node[anchor=west] at (0.65,-0.5) {support of $E_k\nu^a$ on $Q$};
  \fill[samp] (0.25,-1.0) circle (1.3pt);
  \node[anchor=west] at (0.65,-1.0) {sampled $x_Q\in Q\cap J$};
  \draw[fieldthin] (0,-1.5) -- (0.5,-1.5);
  \node[anchor=west] at (0.65,-1.5) {the field $\nu^a_x$};
  \node[anchor=west,inner sep=0pt] at (0,-2.55)
    {\textit{sampling:}\ \ $\sum_{Q}\int_Q\|\nu^a_x-\nu^a_{x_Q}\|_{\mathrm K}\le 2\delta_k\to0$};
  \node[anchor=west,inner sep=0pt] at (0,-3.2)
    {\textit{generation:}\ \ $\|\boldsymbol\varepsilon_{u_Q}-\tau_Q\|_{\mathrm K}<\varepsilon$ on each $Q$};
\end{scope}
\end{tikzpicture}}
\caption{Schematic of the proof of
Proposition~\ref{inhomogenization ac part} ($n=1$; the vertical axis is the
fibre $\V$). Top: the field $x\mapsto\nu^a_x$, here two atoms whose positions
and weights vary with $x$ (thickness proportional to weight). Rows $k=1,2,3$:
on each $Q\in\mathscr{D}_k$ a point $x_Q\in Q\cap J$ is sampled and the
homogeneous Young measure $\tau_Q=\nu^a_{x_Q}$ is frozen on $Q$
($S_k\nu^a$). The shaded bands are the support of the martingale $E_k\nu^a$
on $Q$, which smears the atoms over the oscillation of the field on $Q$; they
shrink as $k$ grows, and
$\sum_Q\int_Q\|\nu^a_x-\nu^a_{x_Q}\|_{\mathrm{K}}\dif x\le2\delta_k\to0$.
Right: on each $Q$, $\tau_Q$ is generated by $u_Q=z_Q+\mathscr{B}\pphi^Q$,
$\pphi^Q\in\CC^\infty_c(Q)$, whose distribution of values on $Q$ is
$\varepsilon$-close to $\tau_Q$. The concentration part
$\lambda^a\nu^\infty_x$ is not drawn.}
\label{fig:inhom-ac}
\end{figure}
\begin{proof}[Proof of Proposition~\ref{inhomogenization ac part}]
We now turn to the proof of the absolutely continuous part of the inhomogenization, sketched in Figure~\ref{fig:inhom-ac}. Recall that $\Phi$ is admissible if $\|T\Phi\|_{\text{LIP}}\leq 1$, in which case $\text{lip}(\Phi)\leq L$ by \eqref{lip constant L}; we set $L_1:=L(1+c^{-1})$. Throughout, $\Phi,\Phi'$ denote admissible integrands, and for $g\in\LL^1(\Omega)$ and a cube $Q$ we write $(g)_Q:=\dashint_Q g\dif x$. We use the following notation:
\begin{itemize}
    \item $\nu^a:=(\nu_x,\lambda^a\Leb\Lcorner\Omega,\nu_x^\infty)$ is the absolutely continuous part of the Young measure at issue, and $\nu^a_{x}:=(\nu_{x},\lambda^a(x),\nu_{x}^\infty)$ is the homogeneous Young measure at fixed $x$;
    \item $f_\Phi(x):=\langle\Phi,\nu^a_x\rangle=\langle \Phi, \nu_x \rangle +\lambda^a(x) \langle \Phi^\infty, \nu_x^\infty \rangle$ is the action of $\nu^a_x$ on $\Phi$;
    \item $v^a(x):= \overline{\nu}_x + \lambda^a(x) \overline{\nu}_x^\infty\equiv \overline{\nu}^a(x)$ is the barycenter of $\nu^a_x$;
    \item $M^a(x):=\langle 1+G^\infty,\nu_x\rangle + \lambda^a(x)$; this is a function in $\LL^1(\Omega)$ with $M^a\geq 1$, which bounds $f_\Phi$ uniformly in $\Phi$ (see below).
\end{itemize}
For admissible $\Phi,\Phi'$ and a.e.\ $x,y\in\Omega$ the following hold.
\begin{itemize}
    \item Since $\Phi=(1+G^\infty)\,T\Phi(\iota_G)$, $\|T\Phi\|_\infty\leq 1$ and $\nu_x^\infty$ is carried by $\{G^\infty=1\}$, we have $|\Phi|\leq 1+G^\infty$, hence $|f_\Phi|\leq M^a$. In the same way,
    \begin{equation}\label{ac continuity in Phi}
    \begin{split}
    |f_\Phi(x)-f_{\Phi'}(x)|&\leq \|T\Phi-T\Phi'\|_\infty\big(\langle 1+G^\infty,\nu_x\rangle+\lambda^a(x)\langle 1,\nu_x^\infty\rangle\big)\\
    &=\|T\Phi-T\Phi'\|_\infty\,M^a(x).
    \end{split}
    \end{equation}
    Consequently, the supremum defining $\|\cdot\|_{\text{K}}$ may be taken over a countable, $\|T\cdot\|_\infty$-dense family of admissible integrands; in particular, all the functions of $x$ (and $y$) defined below through $\|\cdot\|_{\text{K}}$ are measurable.
    \item $|f_\Phi(x)-f_\Phi(y)|\leq \|\nu_x^a-\nu_y^a\|_\text{K}$, by definition of the Kantorovich norm.
    \item For every unit vector $e\in\V$ the integrand $(1+c^{-1})^{-1}\ell_e$, where $\ell_e(z):=z\cdot e$, is admissible, and $f_{\ell_e}=e\cdot v^a$. Choosing $e$ parallel to $v^a(x)-v^a(y)$, respectively to $v^a(x)$, we obtain
    $$
    |v^a(x)-v^a(y)|\leq (1+c^{-1})\|\nu_x^a-\nu_y^a\|_\text{K}\qquad\text{and}\qquad |v^a|\leq (1+c^{-1})M^a.
    $$
    \item $|\Phi(z)|\leq |\Phi(0)|+\text{lip}(\Phi)|z|\leq 1+L|z|$ for all $z \in \V$.
\end{itemize}
The core of the proof is the construction itself, so we present each choice (of sets, cubes, sample points and maps) and each step in a separate paragraph, in their logical order.

\textbf{Cubes.} Let $\mathscr{D}^{\Omega}_k:=\{Q \in \mathscr{D}_k: Q \subset\Omega\}$ and $B_k := \Omega \setminus \bigcup \mathscr{D}^{\Omega}_k$, and let $r_k:=\operatorname{diam}(Q)$ for $Q\in\mathscr{D}_k$. Since $\Omega$ is open, $B_k \searrow \emptyset$, and therefore $\int_{B_k} M^a \dif x\to 0$ as $k\to\infty$, because $M^a\in \LL^1(\Omega)$.

\textbf{Jensen set.} Since $\HH$ is separable, there is a set $J \subset \Omega$ of full measure such that for every $x\in J$ we have $M^a(x)<\infty$ and the Jensen inequality for $\nu^a_x$, with barycenter $v^a(x)$, holds simultaneously for all integrands in our class.

\textbf{Martingale convergence.} For $Q\in\mathscr{D}^{\Omega}_k$, let $(\nu^a)_Q$ be the conditional expectation of $\nu^a$ on $Q$, defined as in the singular part with $\lambda^a\Leb$ in place of $\lambda$, i.e.\ the homogeneous Young measure
$$
(\nu^a)_Q:=\big(\nu_Q,\,(\lambda^a)_Q,\,\nu_Q^{a,\infty}\big),
$$
where $\nu_Q:=\dashint_Q\nu_x\dif x$ and $\nu_Q^{a,\infty}:=\big(\int_Q\lambda^a\dif x\big)^{-1}\int_Q\lambda^a(x)\,\nu_x^\infty\dif x$ (any probability measure will do for $\nu_Q^{a,\infty}$ if $\int_Q\lambda^a\dif x=0$). By construction,
$$
\langle \Phi,(\nu^a)_Q \rangle= \langle \Phi, \nu_Q\rangle + (\lambda^a)_Q \langle \Phi^\infty, \nu_{Q}^{a,\infty} \rangle= (f_\Phi)_Q.
$$
We claim that
\begin{equation}\label{martingale convergence of delta k}
\delta_k :=\sum_{Q \in \mathscr{D}^{\Omega}_k} \int_Q \|\nu_x^a-(\nu^a)_Q\|_{\text{K}}\dif x \xrightarrow[k \to \infty]{} 0.
\end{equation}
This is a uniform-in-$\Phi$ version of the scalar statement
\begin{equation}\label{ac scalar martingale convergence}
\omega_k(g):=\sum_{Q\in\mathscr{D}^{\Omega}_k}\int_Q|g-(g)_Q|\dif x\xrightarrow[k\to\infty]{}0\qquad\text{for every } g\in\LL^1(\Omega),
\end{equation}
which is Doob's martingale convergence theorem in $\LL^1$ for the dyadic filtration, applied to $g\boldsymbol{1}_\Omega$ (on each $Q\in\mathscr{D}^{\Omega}_k$ its conditional expectation equals $(g)_Q$). Alternatively, \eqref{ac scalar martingale convergence} is immediate for $g\in\CC_c(\Omega)$ by uniform continuity, and it extends to $\LL^1(\Omega)$ by density, since $\omega_k(g)\leq\omega_k(\tilde g)+2\|g-\tilde g\|_{\LL^1(\Omega)}$.

\emph{Proof of \eqref{martingale convergence of delta k}.} By Ascoli--Arzel\`a, the set $\{T\Phi: \|T\Phi\|_{\text{LIP}}\leq 1\}$ is compact in $\CC(\overline{\BG})$. Hence, given $\tau>0$, there are finitely many admissible $\Phi_1,\dots, \Phi_N$ such that every admissible $\Phi$ satisfies $\|T\Phi-T\Phi_i\|_\infty\leq\tau$ for some $i$. For such an $i$, \eqref{ac continuity in Phi} and its average over $Q$ give, for $x\in Q\in\mathscr{D}^{\Omega}_k$,
\begin{align*}
|f_{\Phi}(x)-(f_\Phi)_Q| &\leq |f_\Phi(x)-f_{\Phi_i}(x)|+ |f_{\Phi_i}(x)-(f_{\Phi_i})_Q|+|(f_{\Phi_i})_Q-(f_\Phi)_Q|\\
&\leq \sum_{j=1}^N |f_{\Phi_j}(x)-(f_{\Phi_j})_Q| + \tau\big(M^a(x)+(M^a)_Q\big).
\end{align*}
The right-hand side does not depend on $\Phi$, so taking the supremum over admissible $\Phi$,
$$
\|\nu_x^a-(\nu^a)_Q\|_{\text{K}} \leq \sum_{j=1}^N |f_{\Phi_j}(x)-(f_{\Phi_j})_Q| + \tau\big(M^a(x)+(M^a)_Q\big).
$$
Integrating over $Q$ and summing over $Q \in \mathscr{D}^{\Omega}_k$ gives
$$
\delta_k \leq \sum_{j=1}^N \omega_k(f_{\Phi_j})+ 2 \tau \|M^a\|_{\LL^1(\Omega)}.
$$
Each $f_{\Phi_j}$ belongs to $\LL^1(\Omega)$, as $|f_{\Phi_j}|\leq M^a$, so $\omega_k(f_{\Phi_j})\to 0$ by \eqref{ac scalar martingale convergence}. Hence $\limsup_k \delta_k \leq 2 \tau \|M^a\|_{\LL^1(\Omega)}$, and \eqref{martingale convergence of delta k} follows since $\tau>0$ is arbitrary.

\textbf{Choice of the sample points.} Fix $k$ and $Q\in\mathscr{D}^{\Omega}_k$, and consider the measurable function
$$
h_Q(y) := \int_Q \|\nu_x^a-\nu^a_y\|_{\text{K}} \dif x, \qquad y\in Q.
$$
By the triangle inequality through $(\nu^a)_Q$, we have $h_Q(y)\leq \int_Q \|\nu_x^a-(\nu^a)_Q\|_{\text{K}} \dif x+\Leb(Q)\,\|(\nu^a)_Q-\nu^a_y\|_{\text{K}}$, and averaging over $y\in Q$,
$$
\dashint_Q h_Q(y) \dif y \leq 2\int_Q \|\nu_x^a-(\nu^a)_Q\|_{\text{K}} \dif x.
$$
The set $\{y\in Q: h_Q(y)\leq \dashint_Q h_Q\}$ has positive measure and $J$ has full measure, so we can pick a point $x_Q\in J$ in this set, that is,
$$
\int_Q \|\nu_x^a-\nu^a_{x_Q}\|_{\text{K}} \dif x \leq 2\int_Q \|\nu_x^a-(\nu^a)_Q\|_{\text{K}} \dif x.
$$
Summing over $Q\in\mathscr{D}^{\Omega}_k$,
\begin{equation}\label{ac sampling estimate}
\sum_{Q \in \mathscr{D}^{\Omega}_k } \int_Q \|\nu_x^a-\nu^a_{x_Q}\|_{\text{K}} \dif x \leq 2\delta_k.
\end{equation}

\textbf{Construction.} Fix $\eps\in(0,1]$ and $k$, to be chosen at the end, and let $Q\in\mathscr{D}^{\Omega}_k$. Since $x_Q \in J$, the homogeneous Young measure $\nu^a_{x_Q}$ satisfies the Jensen inequality, with barycenter $v^a(x_Q)$. By Proposition~\ref{homogeneous case characterization} on the homogeneous case (realizing the elementary Young measure on $Q$ rather than on $\XX$), we can select $\pphi^Q \in \CC_c^\infty(Q,\V)$ with $\|\pphi^Q\|_{\WW^{l-1,1}(Q,\V)}< \eps \Leb(Q)/\Leb(\Omega)$ such that, setting $u_Q:=v^a(x_Q)+\mathscr{B}\pphi^Q$,
\begin{equation}\label{ac homogeneous generation}
\Big|f_\Phi(x_Q)-\dashint_Q \Phi(u_Q)\dif y\Big|<\eps\qquad\text{for every admissible } \Phi.
\end{equation}
Set $\pphi:=\sum_{Q\in\mathscr{D}^{\Omega}_k}\pphi^Q$, so that $\|\pphi\|_{\WW^{l-1,1}(\Omega,\V)}<\eps$, $\mathscr{B}\pphi=\mathscr{B}\pphi^Q$ on each $Q\in\mathscr{D}^{\Omega}_k$, and $\mathscr{B}\pphi=0$ on $B_k$. Define:
\begin{itemize}
    \item $S_k \nu^a$, the Young measure equal to $\nu^a_{x_Q}$ on each $Q\in \mathscr{D}^{\Omega}_k$ and to $\boldsymbol{\eps}_{v^a}$ on $B_k$;
    \item $S_k v^a$, the function equal to $v^a(x_Q)$ on each $Q\in \mathscr{D}^{\Omega}_k$ and to $v^a$ on $B_k$;
    \item $\psi:= \phi_t \ast (v^a \Leb \Lcorner \Omega) + \mathscr{B}\pphi$, with $t>0$ to be chosen at the end.
\end{itemize}
Note that $S_k v^a+\mathscr{B}\pphi=u_Q$ on each $Q\in\mathscr{D}^{\Omega}_k$. By the triangle inequality,
\begin{align*}
d(\nu^a,\boldsymbol{\eps}_\psi) \leq{} \underbrace{d(\nu^a, S_k\nu^a)}_{\text{sampling}}+ \underbrace{d(S_k\nu^a, \boldsymbol{\eps}_{S_k v^a + \mathscr{B}\pphi})}_{\text{homogeneous generation}}+ \underbrace{d( \boldsymbol{\eps}_{S_k v^a + \mathscr{B}\pphi}, \boldsymbol{\eps}_{ v^a + \mathscr{B}\pphi})}_{\text{barycenter sampling}}+ \underbrace{d(\boldsymbol{\eps}_{ v^a + \mathscr{B}\pphi},\boldsymbol{\eps}_\psi )}_{\text{barycenter mollification}}.
\end{align*}
As in the proof of the singular part, each term is the error introduced by one step of the construction, and its estimate reflects that step; we therefore treat the four terms in separate paragraphs, named after the corresponding steps. In each case we bound $|\ddual{\eta \otimes \Phi}{\cdot}|$ uniformly in the admissible pair $(\eta,\Phi)$; of $\eta$ we only use $\|\eta\|_\infty\leq 1$ and $\text{lip}(\eta)\leq 1$.

\textbf{Sampling.} On the cubes we use \eqref{ac sampling estimate}; on $B_k$ we use $|f_\Phi|\leq M^a$, $|\Phi(v^a)|\leq 1+L|v^a|$, $|v^a|\leq (1+c^{-1})M^a$ and $M^a\geq 1$. Thus
\begin{align*}
\big|\ddual{\eta \otimes \Phi}{\nu^a-S_k\nu^a}\big|
&=\Big|\sum_{Q\in \mathscr{D}^{\Omega}_k} \int_Q \eta\,\big(f_\Phi-f_\Phi(x_Q)\big) \dif x + \int_{B_k}\eta\,\big(f_{\Phi}-\Phi(v^a)\big) \dif x\Big| \\
&\leq \sum_{Q\in \mathscr{D}^{\Omega}_k} \int_Q \|\nu_x^a-\nu^a_{x_Q}\|_{\text{K}}\dif x+ \int_{B_k} \big(M^a+1+L|v^a|\big)\dif x \\
&\leq 2\delta_k +(2+L_1)\int_{B_k}M^a \dif x.
\end{align*}

\textbf{Homogeneous generation.} On $B_k$ both Young measures are equal to $\boldsymbol{\eps}_{v^a}$, so the corresponding integrals cancel. On the cubes we argue as for the homogeneous generation in the singular part: with $\eta_Q:=\dashint_Q\eta\dif x$,
\begin{align*}
\ddual{\eta \otimes \Phi}{S_k\nu^a-\boldsymbol{\eps}_{S_kv^a + \mathscr{B}\pphi}}
&=\sum_{Q \in \mathscr{D}^{\Omega}_k} \Big(\int_Q \eta\dif x\Big)\Big(f_\Phi(x_Q) - \dashint_Q \Phi(u_Q)\dif y\Big)\\
&\quad+\sum_{Q \in \mathscr{D}^{\Omega}_k}\int_Q (\eta_Q -\eta)\,\Phi(u_Q)\dif x=:I_1+I_2.
\end{align*}
By \eqref{ac homogeneous generation},
$$
|I_1|\leq \eps \sum_{Q \in \mathscr{D}^{\Omega}_k} \int_Q|\eta|\dif x \leq \eps \Leb(\Omega).
$$
For $I_2$ we use $|\eta-\eta_Q|\leq \text{lip}(\eta)\operatorname{diam}(Q)\leq r_k$ on $Q$, then $|\Phi|\leq 1+G^\infty$, and finally \eqref{ac homogeneous generation} for the admissible integrand $1+G^\infty$, for which $f_{1+G^\infty}=M^a$:
$$
|I_2|\leq r_k\sum_{Q \in \mathscr{D}^{\Omega}_k}\int_Q \big(1+G^\infty(u_Q)\big)\dif x \leq r_k\sum_{Q \in \mathscr{D}^{\Omega}_k}\Leb(Q)\big(M^a(x_Q)+\eps\big).
$$
Since $M^a=f_{1+G^\infty}$, we also have $M^a(x_Q)\leq M^a(x)+\|\nu_x^a-\nu^a_{x_Q}\|_{\text{K}}$ for a.e.\ $x\in Q$; integrating over $Q$, summing over $Q\in\mathscr{D}^{\Omega}_k$ and using \eqref{ac sampling estimate} gives $\sum_{Q\in\mathscr{D}^{\Omega}_k}\Leb(Q)\,M^a(x_Q)\leq \|M^a\|_{\LL^1(\Omega)}+2\delta_k$, whence
$$
|I_2|\leq r_k\big(\|M^a\|_{\LL^1(\Omega)}+2\delta_k+\eps\Leb(\Omega)\big).
$$

\textbf{Barycenter sampling.} On $B_k$ there is nothing to prove. On the cubes, using $\text{lip}(\Phi)\leq L$, the bound $|v^a(x)-v^a(y)|\leq (1+c^{-1})\|\nu_x^a-\nu_y^a\|_{\text{K}}$ and \eqref{ac sampling estimate},
\begin{align*}
\big|\ddual{\eta\otimes\Phi}{\boldsymbol{\eps}_{S_k v^a + \mathscr{B}\pphi}-\boldsymbol{\eps}_{ v^a + \mathscr{B}\pphi}}\big|
&\leq L\sum_{Q \in \mathscr{D}^{\Omega}_k} \int_Q |v^a(x_Q)-v^a(x)|\dif x \\
&\leq L_1\sum_{Q \in \mathscr{D}^{\Omega}_k} \int_Q \|\nu_x^a-\nu^a_{x_Q}\|_{\text{K}} \dif x \leq 2L_1\delta_k.
\end{align*}

\textbf{Barycenter mollification.} Again by $\text{lip}(\Phi)\leq L$,
$$
\big|\ddual{\eta\otimes\Phi}{\boldsymbol{\eps}_{ v^a + \mathscr{B}\pphi}-\boldsymbol{\eps}_\psi}\big| \leq L\|\phi_t \ast (v^a \Leb \Lcorner \Omega) -v^a\|_{\LL^1(\Omega)} \xrightarrow[t\to 0]{} 0,
$$
since $v^a\in\LL^1(\Omega)$, as $|v^a|\leq (1+c^{-1})M^a$.

Finally, collecting the four estimates,
\begin{align*}
d(\nu^a,\boldsymbol{\eps}_\psi)\leq{}& (2+2L_1)\delta_k+(2+L_1)\int_{B_k}M^a\dif x+\eps\Leb(\Omega)+r_k\big(\|M^a\|_{\LL^1(\Omega)}+2\delta_k+\eps\Leb(\Omega)\big)\\
&+L\|\phi_t \ast (v^a \Leb \Lcorner \Omega) -v^a\|_{\LL^1(\Omega)}.
\end{align*}
We choose the parameters in the following order. First, by \eqref{martingale convergence of delta k}, $B_k\searrow\emptyset$ and $r_k\to 0$, we take $k$ so large that
$$
\delta_k\leq \eps,\qquad \int_{B_k}M^a\dif x\leq \eps,\qquad r_k\big(\|M^a\|_{\LL^1(\Omega)}+2+\Leb(\Omega)\big)\leq \eps;
$$
then we select the points $x_Q$ and the maps $\pphi^Q$, $Q\in\mathscr{D}^{\Omega}_k$, as above; finally, we take $t>0$ so small that the last term is at most $\eps$. With these choices,
$$
d(\nu^a,\boldsymbol{\eps}_\psi)\leq \big(6+3L_1+\Leb(\Omega)\big)\eps \qquad\text{and}\qquad \|\pphi\|_{\WW^{l-1,1}(\Omega,\V)}<\eps,
$$
and since $\eps\in(0,1]$ is arbitrary, the proof is complete.
\end{proof}
\begin{remark}
The averaged measures $(\nu^a)_Q$ enter only as intermediate objects
in the Kantorovich estimate. The homogeneous measures generated on each
cube are the point values $\nu_{x_Q}$, translated to have barycentre
zero; they satisfy the Jensen inequality because $x_Q$ is a Jensen
point and $\mathscr{A}$-quasiconvexity is invariant under translations.
This is essential, since averages of measures satisfying the Jensen
inequality at different barycentres need not satisfy it.
\end{remark}

\section{Characterization of concentration part of \texorpdfstring{$\mathscr{A}$}{A}-free Young measure}
In this section we want to prove a characterization theorem for the concentration part of $\mathscr{A}$-free Young measures in the new geometric representation of the compactification. The result will be the same of \cite{kristensen raita}, only now with the concentration angle Young measures being defined on the level set $\mathbb{S}_G$, the boundary of the set $\B=\{G^\infty < 1\}$ which is not necessarily convex. \\
Define the \emph{convex deficiency integrand} $\text{CD}_f: \V \to \R$ of a Lipschitz integrand $f:\V \to \R$ as
$$
\text{CD}_f(z) \equiv \sup \left\{\frac{f(w+tz)-f(w)}{t}: w \in \V, t >0 \right\}.
$$
Note that $\text{CD}_f$ is the smallest positively 1-homogeneous integrand such that
$$
f(w+z) \leq f(w) + \text{CD}_f(z)
$$
for all $z,w \in \V$.
\begin{lemma}\label{lemma: CD_f as sup of differentiability point}
Let $f: \V \to \R$ a Lipschitz integrand. Another formula for $\emph{CD}_f(z)$, $z \in \V$ is
$$
\emph{CD}_f(z)= \sup \left\{f'(w) \cdot z: f \emph{ is differentiable at }w \in \V \right\}
$$
\end{lemma}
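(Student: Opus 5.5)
We prove the two inequalities between $\mathrm{CD}_f(z)$ and $S(z):=\sup\{f'(w)\cdot z: f \text{ differentiable at } w\}$ separately. By Rademacher's theorem, the set $D_f$ of differentiability points of the Lipschitz function $f$ has full Lebesgue measure. In particular it is nonempty, so $S(z)$ is well defined, and $|f'(w)|\le \mathrm{lip}(f)$ on $D_f$, so both quantities are finite.

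\emph{Step 1: $S(z)\le\mathrm{CD}_f(z)$.} This is immediate. For $w\in D_f$ we have $f'(w)\cdot z=\lim_{t\searrow0}\frac{f(w+tz)-f(w)}{t}$, and each difference quotient is at most $\mathrm{CD}_f(z)$ by definition.

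\emph{Step 2: $\mathrm{CD}_f(z)\le S(z)$.} Fix $w\in\V$ and $t>0$; we show $f(w+tz)-f(w)\le t\,S(z)$. The natural route is the fundamental theorem of calculus along the segment $s\mapsto w+sz$, but that line may lie entirely outside $D_f$. We therefore average over parallel segments. For $y$ in a small ball $B_r$, the map $s\mapsto f(w+y+sz)$ is Lipschitz, hence absolutely continuous. By Fubini, for a.e.\ $y\in B_r$ the point $w+y+sz$ lies in $D_f$ for a.e.\ $s\in[0,t]$: the set of pairs $(y,s)$ with $w+y+sz\notin D_f$ is null, being the preimage of a null set under a map that is an affine change of variables in $y$ for each fixed $s$. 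At such points the one-variable derivative equals $f'(w+y+sz)\cdot z\le S(z)$. Hence, for a.e.\ $y$,
$$
f(w+y+tz)-f(w+y)=\int_0^t f'(w+y+sz)\cdot z\dif s\le t\,S(z).
$$
Averaging over $y\in B_r$ and letting $r\to0$, the Lipschitz continuity of $f$ gives $f(w+tz)-f(w)\le t\,S(z)$. Dividing by $t$ and taking the supremum over $w$ and $t$ yields $\mathrm{CD}_f(z)\le S(z)$.

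The only delicate point is the measure-theoretic one in Step 2: justifying that almost every parallel segment meets $D_f$ in a set of full one-dimensional measure. This is a standard Fubini argument, after which the result follows. If $z=0$, both sides vanish trivially.
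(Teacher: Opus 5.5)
Your proof is correct and is essentially the argument the paper intends: the paper only says the lemma follows from Rademacher's theorem and defers the details to Lemma~6.1 of Kristensen and Rai\cb{t}\u{a}. Your averaging over parallel segments amounts to mollifying $f$ with the normalized indicator of $B_r$ and using that the a.e.\ pointwise gradient of a Lipschitz function is its distributional gradient, which is the standard way to fill in that step.

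The only detail worth adding is the measurability of the exceptional set of pairs $(y,s)$ before applying Tonelli. This is harmless: the non-differentiability set of a continuous function is Borel, or can be enclosed in a Borel null set.
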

\begin{proof}
It is a consequence of Rademacher's theorem, since $f$ is Lipschitz.   See \cite[Lemma ~6.1]{kristensen raita} for a proof.
\end{proof}
\begin{proposition}
Let $\mathscr{A}$ be a differential operator with constant rank and spanning cone condition. \\
Let $\lambda \in \mathscr{M}^+(\Omegacl)$ with $\lambda(\partial \Omega)=0$ and let $(\mu_x)_{x \in \Omega}$ a family of probability measures $\mu_x \in \mathscr{M}^+_1(\mathbb{S}_G)$ which is $\lambda$ measurable. Then $(\lambda,\mu_x)$ is the concentration part of an $\mathscr{A}$-free Young measure if and only if 
\begin{enumerate}[label=(\roman*)]
    \item $\mathscr{A}(\overline{\mu}_{(\cdot)} \lambda) \in \mathscr{A}(\LL^1(\Omega,\V))$, and \\
    \item  for all $\mathscr{A}$-quasiconvex functions $f:\V \to \R$ with linear growth for which one can find $r=r(f)$ so that $f(z)=f^\infty(z)$ for $|z|>r$ it holds
    $$
    \emph{CD}_f(\overline{\mu}_x) \leq \int_{\mathbb{S}_G} f^\infty \dif \mu_x, \quad \text{for } \lambda^a\Leb \text{ almost all } x \in \Omega. \label{item:CD_f jensen}
    $$
\end{enumerate}
\end{proposition}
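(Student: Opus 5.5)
The plan is to prove the two implications separately. Necessity comes from the pointwise Jensen inequality of Proposition~\ref{sufficiency proposition}, tested on difference quotients of $f$. Sufficiency comes from building an explicit Young measure with trivial oscillation and generating it with Propositions~\ref{inhomogenization singular part} and~\ref{inhomogenization ac part}.

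\emph{Necessity.} Let $\nu=(\nu_x,\lambda,\mu_x)$ be generated by $v_j$ with $\mathscr{A}v_j\to\mathscr{A}v$ (or $\mathscr{A}v_j=0$).
\begin{enumerate}[label=(\alph*)]
\item The barycentre $v=\overline{\nu}_x\Leb\Lcorner\Omega+\overline{\mu}_{(\cdot)}\lambda$ is $\mathscr{A}$-free. Since $\overline{\nu}_{(\cdot)}\in\LL^1$ by the moment condition, we get $\mathscr{A}(\overline{\mu}_{(\cdot)}\lambda)=\mathscr{A}(-\overline{\nu}_{(\cdot)})$, which is (i).
\item For (ii), fix a special $f\in\mathbf{Q}$, a point $w\in\V$ at which $f$ is differentiable, and $t>0$. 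Consider
$$
f_{w,t}(y):=\frac{f(w+ty)-f(w)}{t}.
$$
It is $\mathscr{A}$-quasiconvex, has linear growth uniformly in $t\in(0,1]$ (because $f$ is Lipschitz), and its recession function is $f^\infty$ for every $t$.
\item Apply \eqref{Jensen inequality in sufficiency theorem} to $f_{w,t}$ at a Jensen point $x$. Let $t\searrow0$, using dominated convergence and $f_{w,t}(y)\to f'(w)\cdot y$. This gives
$$
f'(w)\cdot\overline{\nu}_x+\lambda^a(x)\langle f^\infty,\mu_x\rangle\ \ge\ f'(w)\cdot\big(\overline{\nu}_x+\lambda^a(x)\overline{\mu}_x\big).
$$
\item The oscillation barycentre cancels. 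Dividing by $\lambda^a(x)>0$ gives $f'(w)\cdot\overline{\mu}_x\le\langle f^\infty,\mu_x\rangle$.
\item Take the supremum over differentiability points $w$ and use Lemma~\ref{lemma: CD_f as sup of differentiability point} to get (ii).
\item The exceptional null set is made independent of $f$ in two steps. First take the supremum over a countable dense set of $w$, which is enough by Lipschitz continuity and Rademacher. Then restrict $f$ to a countable dense family, as in the proof of Proposition~\ref{sufficiency proposition}.
\end{enumerate}

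\emph{Sufficiency, construction of $\nu$.} By (i), choose $u\in\LL^1(\Omega,\V)$ with $\mathscr{A}(u\Leb+\overline{\mu}_{(\cdot)}\lambda)=0$. Define
$$
\nu:=(\delta_{u(x)},\lambda,\mu_x).
$$
\begin{enumerate}[label=(\alph*)]
\item $\nu$ is a Young measure: the moment condition holds since $u\in\LL^1$. Its barycentre $u\Leb+\overline{\mu}\lambda$ is $\mathscr{A}$-free, and $\lambda(\partial\Omega)=0$.
\item The Jensen inequality holds for $\Leb$-a.e.\ $x$. For special $f$, the defining property of $\mathrm{CD}_f$, its positive homogeneity and (ii) give
$$
f\big(u(x)+\lambda^a(x)\overline{\mu}_x\big)\le f(u(x))+\lambda^a(x)\,\mathrm{CD}_f(\overline{\mu}_x)\le f(u(x))+\lambda^a(x)\langle f^\infty,\mu_x\rangle .
$$
Where $\lambda^a(x)=0$ this is trivial.
\item The singular inhomogenization uses Lemma~\ref{integral jensen inequality lemma}, which needs $\overline{\mu}_x\in\WC$ for $\lambda^s$-a.e.\ $x$. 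This follows from the De Philippis--Rindler theorem applied to the $\mathscr{A}$-free measure $u\Leb+\overline{\mu}\lambda$. The argument is the same as in Proposition~\ref{sufficiency proposition}, and $\lambda^s$-a.e.\ $\overline{\mu}_x\neq0$ is not needed, because $0\in\WC$.
\end{enumerate}

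\emph{Sufficiency, generation.} Apply Proposition~\ref{inhomogenization ac part} and Proposition~\ref{inhomogenization singular part} with the same $t$ and $\eps$. This gives maps $\psi^a=\phi_t\ast(v^a\Leb)+\mathscr{B}\pphi^a$ and $\psi^s=\phi_t\ast(\overline{\mu}\lambda^s)+\mathscr{B}\pphi^s$. Set
$$
\psi:=\psi^a+\psi^s=\phi_t\ast v+\mathscr{B}(\pphi^a+\pphi^s),
$$
which is $\mathscr{A}$-free on a slightly smaller domain and has vanishing $\WW^{l-1,1}$ corrector. It remains to show that $\boldsymbol{\eps}_\psi$ is close to $\nu^a+\nu^s-(\delta_0$ on $\Omega)$. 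This gluing is the step I expect to be the main obstacle, since $\Phi$ is nonlinear and we cannot simply add Young measures. The route I would try:
\begin{enumerate}[label=(\alph*)]
\item $\psi^s$ and $\mathscr{B}\pphi^s$ are supported on cubes of total Lebesgue measure $\lesssim|K+t\overline{\XX}|$. This set can be made arbitrarily small because $\Leb(K)=0$.
\item $\psi^a$ can be taken equi-integrable. It is a mollification plus potentials generating an $\LL^1$-tight family, and if necessary we first truncate the concentration part $\lambda^a$ into the singular-type construction.
\item On that small set, $\Phi(\psi^a+\psi^s)-\Phi(\psi^s)$ is bounded by $L|\psi^a|$, whose integral there is small.
\item Off that set, $\psi=\psi^a$.
\end{enumerate}
A diagonal argument in the metric $d$ then produces an $\mathscr{A}$-free generating sequence, completing the proof.
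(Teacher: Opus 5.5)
Your argument follows the paper's proof. Necessity tests the Jensen inequality of Proposition~\ref{sufficiency proposition} with difference quotients of $f$ at differentiability points and takes the supremum via Lemma~\ref{lemma: CD_f as sup of differentiability point}. Sufficiency checks the Lebesgue-point Jensen inequality of $(\delta_{u(x)},\lambda,\mu_x)$ through $f(w+z)\le f(w)+\mathrm{CD}_f(z)$ and then invokes Section~\ref{inhomogenization section}. Your remark that the singular inhomogenization needs $\overline{\mu}_x\in\WC$ for $\lambda^s$-a.e.\ $x$, which follows from (i) and De Philippis--Rindler, is correct; the paper uses it only tacitly. Step (f) of the necessity part can be dropped, since the null set $N^a$ of Proposition~\ref{sufficiency proposition} already serves all $f$ at once. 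As written it is also unjustified: $f'$ is not continuous, so a countable dense set of differentiability points need not realize $\mathrm{CD}_f$.

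The paper does not spell out how the absolutely continuous and singular constructions are combined, so your gluing sketch goes beyond it. Your mechanism is the right one: the singular part has small support, and $\Phi$ is Lipschitz there. However, step (b) fails as stated. A family generating $(\nu_x,\lambda^a\Leb,\mu_x)$ with $\lambda^a\not\equiv0$ is never equi-integrable (test the generation with $(|z|-R)^+$). Nor can $\lambda^a$ be moved into the singular construction, which needs a compact $K$ with $\Leb(K)=0$ carrying most of the mass.

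Equi-integrability is not what you need. It suffices that $\int_{S_t}|\psi^a|\dif x$ be small on the sets $S_t\subset\{\operatorname{dist}_\infty(\cdot,K)\le\tfrac32 t\}$ where the singular potentials live, and this follows from the order in which the parameters are chosen. In the proof of Proposition~\ref{inhomogenization ac part} the map $\pphi^a$ is chosen before $t$. So fix $\eps$, the absolutely continuous data (generation, points $x_Q$, $\pphi^a$) and $K$ first. Then $\mathscr{B}\pphi^a$ is a fixed bounded function and $\phi_t*v^a\to v^a$ in $\LL^1$, whence
\[
\int_{S_t}|\psi^a|\dif x\le\|\mathscr{B}\pphi^a\|_\infty|S_t|+\int_{S_t}|\phi_t*v^a|\dif x\to0 \quad\text{as } t\to0,
\]
because $|S_t|\to\Leb(K)=0$. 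Only afterwards choose the dyadic $t$, then $q$ and $\pphi^s$. The diffuse concentration appears only in the diagonal limit $\eps\to0$.

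Step (a) needs a similar correction. $\phi_t*(\overline{\mu}\lambda^s\Lcorner\Omega)$ is not supported near $K$, since $\lambda^s\Lcorner(\Omega\setminus K)$ may be spread over all of $\Omega$. Subtract $\phi_t*(\overline{\mu}\lambda^s\Lcorner(\Omega\setminus K))$, whose $\LL^1$ norm is below $c^{-1}\eps$, at a cost $Lc^{-1}\eps$, as in the barycentre step of the proof of Proposition~\ref{inhomogenization singular part}. With $\psi^s$ replaced by $w_t+\mathscr{B}\pphi^s$, your pointwise bound $|\Phi(\psi^a+\psi^s)-\Phi(\psi^a)-\Phi(\psi^s)+\Phi(0)|\le 2L|\psi^a|\boldsymbol{1}_{S_t}$ then closes the gluing.
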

\begin{proof}
We begin with proving that $\ref{item:CD_f jensen}$ holds if $(\lambda,\mu_x)$ is a concentration Young measure, namely we are given the Young measure $\nu=(\nu_x, \lambda, \nu_x^\infty)$ and $\mu_x=\nu_x^\infty$ for $\lambda$- almost all $x \in \Omegacl$. In fact,  $\ref{item:CD_f jensen}$ holds for all $\mathscr{A}$-quasiconvex functions of linear growth. To prove this, using Proposition \ref{sufficiency proposition} one has that for some negligible set $N^a \subset \Omega$ the Jensen inequality 
$$
F(\overline{\nu}_x + \lambda^a(x) \overline{\nu}^\infty_x) \leq \int_\V F \dif \nu_x + \lambda^a(x) \int_{\mathbb{S}_G} F^\infty \dif \nu_x^\infty, \quad \text{for } x \in \Omega \setminus N^a,
$$
holds for all quasiconvex integrands $F$ of linear growth. Suppose now we are at a point $x \in \Omega $ of \emph{diffuse concentration} where $t\equiv \lambda^a(x) >0$. Consider for $\eps >0$ and $\xi \in \V$ the integrand
$$
f_\eps(z)= \frac{F(\xi +\eps z)-F(\xi)}{\eps},
$$
which is itself quasiconvex with same recession integrand $f_\eps^\infty= F^\infty$. As $F$ is quasiconvex it is locally Lipschitz and differentiable $\Leb$ a.e.; if $\xi $ is one of such points of differentiability, then
$$
\lim_{\eps \to 0} f_\eps(z)= F'(\xi) \cdot z \quad \text{locally uniformly in } z.
$$
We can use $f_\eps$ as test function for the Jensen inequality above, and pass to the limit as $\eps \to 0$ to obtain 
\begin{align*}
F'(\xi) \cdot (\overline{\nu}_x + t \overline{\nu}^\infty_x) &\leq  \int_\V F'(\xi) \cdot z \dif \nu_x + t \int_{\mathbb{S}_G} F^\infty  \dif \nu_x^\infty\\
&= F'(\xi) \cdot \overline{\nu}_x +t \int_{\mathbb{S}_G} F^\infty  \dif \nu_x^\infty,
\end{align*}
and since $t>0$ it reads $F'(\xi) \cdot \overline{\nu}_x^\infty \leq \int_{\mathbb{S}_G} F^\infty \dif \nu_x^\infty$. Using now Lemma \ref{lemma: CD_f as sup of differentiability point}, the supremum over $\xi$ on the left-hand side yields the Jensen inequality
$$
\text{CD}_F(\overline{\nu}_x^\infty) \leq \int_{\mathbb{S}_G} F^\infty \dif \nu_x^\infty \quad \text{for }x \in \Omega \setminus N^a \ \text{with } \lambda^a(x)>0.
$$
The inequality thus holds for $\lambda^a \Leb$ almost all $x$, since we care about only of points of positive density. \\
To prove $(i)$, one notices that the barycentre $v= \overline{\nu}_{(\cdot)}\Leb + \overline{\nu}^\infty_{(\cdot)} \lambda$
is $\mathscr{A}$-free, i.e. $\mathscr{A}v=0$, so that
$$
\mathscr{A}(\overline{\nu}^\infty_{(\cdot)} \lambda)=\mathscr{A}(-\overline{\nu}_{(\cdot)} \Leb)
$$
with $-\overline{\nu} \in \LL^1(\Omega,\V)$. \\
To prove sufficiency, by $(i)$ there exists $v \in \LL^1$ such that the Young measure $\mu\equiv(\delta_{v(x)}, \lambda, \mu_x)$ has an $\mathscr{A}$-free barycenter, i.e. $\overline{\mu}= v \Leb \Lcorner \Omega + \overline{\mu}_{(\cdot)} \lambda = u \in \mathscr{M}(\Omega,\V)$ and $\mathscr{A}u=0$. Let $f$ be a special quasiconvex function of linear growth, then by $(ii)$ we have for $\lambda^a(x)>0$
\begin{align*}
f(v(x)+ \lambda^a(x)\overline{\mu}_x) &\leq f(v(x)) + \lambda^a(x)\text{CD}_f(\overline{\mu}_x) \\
&\leq f(v(x)) + \lambda^a(x) \int_{\mathbb{S}_G} f^\infty \dif \mu_x,
\end{align*}
and by the results of Section \ref{inhomogenization section} we have that $\mu$ is an $\mathscr{A}$-free Young measure.
\end{proof}
\section{Convergence of energy principles}\label{convergence of energy section}
Weak$^*$ convergence of a sequence of measures is compatible with both
oscillation and concentration, and the generated Young measure records how
much of each survives in the limit. In this section we ask how much of the
two is ruled out when, in addition, the energies converge. Precisely, we
consider in the gradient case
$$
\nabla u_j \Leb =v_j \Leb \wstar v= Du\in \mathscr{M}(\Omega, \M)
$$
or, in general,
$$
\begin{cases}
   v_j \wstar v & \text{in }\mathscr{M}(\Omega,\V),\\   
   \mathscr{A}v_j \to \mathscr{A}v & \text{in }\WW^{-k,p}_{\mathrm{loc}}(\Omega,\mathbb W), \ 1<p<\frac{n}{n-1},
\end{cases}
$$
and we assume the convergence of energies, with $G$ an
$\mathscr{A}$-quasiconvex integrand of linear growth with regular recession
function, as in \textup{(A3)},
$$
\int_\Omega G(v_j) \to \int_\Omega G(v) := \int_\Omega G(v^a)\dif x + \int_{\Omega} G^\infty (v^s) 
$$
the right-hand side being understood in the sense of Goffman and Serrin
(Section~\ref{notation section}). Up to a subsequence, $(v_j)$ generates a
Young measure $\nu=(\nu_x,\lambda,\nu^\infty_x)$, and since
$G\in\E_G(\Omega,\V)$ is an admissible test integrand, the generation of the
Young measure also gives
$$
\int_\Omega G(v_j) \to \int_\Omega \int_{\V} G(z) \dif \nu_x \dif x+ \int_{\Omegacl} \int_{\mathbb{S}_G} G^\infty(z) \dif \nu_x^\infty \dif \lambda.
$$
The aim is to read off the structure of $\nu$ from the equality of these two
limits. In the model case $G=G^\infty=|\cdot|$ the hypothesis is the strict
convergence of Reshetnyak's continuity theorem: no concentration angles
survive, $\nu$ is elementary on the singular part, with $\lambda^s=|v^s|$
and $\nu^\infty_x=\delta_{\frac{\dif v^s}{\dif|v^s|}(x)}$, and at
$\Leb$-a.e.\ point oscillation and diffuse concentration can only occur
along the half-line through $v^a(x)$ (Example~\ref{ex:reshetnyak}). For a general energy this cannot be expected
(Example~\ref{ex:staircase}), and the point is to find out what replaces it.

To this end we argue on the integrand $G$ itself, through the
compactification of $\V$ adapted to it (Section~\ref{compactification section}),
rather than through the Euclidean sphere. In this compactification the
concentration-angle measures are probability measures on the indicatrix
$\SG=\{G^\infty=1\}$, so that $\langle G^\infty,\nu^\infty_x\rangle=1$, and
the convergence of energies turns the Jensen inequalities satisfied by $\nu$
(Proposition~\ref{sufficiency proposition} and Theorem~\ref{KK theorem}),
tested with $f=G$, into equalities. At singular points this says that the
barycentre $\overline{\nu}^\infty_x$ of a probability measure carried by
$\SG$ lies on $\SG$ itself; the structure of $\nu^\infty_x$ is then dictated
by the convex geometry of $\overline{\mathrm{co}}(\SG)$ at
$\overline{\nu}^\infty_x$: $\nu^\infty_x$ is a Dirac mass at extreme points,
it is supported on a face where $\SG$ is flat, and it is not constrained at
all at points of $\SG$ lying in the interior of $\overline{\mathrm{co}}(\SG)$
(Figure~\ref{support sets for nu_x infty}). 
The discussion in the rest of this section is heuristic. The rigorous
statement, for a general operator $\mathscr{A}$ satisfying \textup{(A1)}, is
Proposition~\ref{prop:rigidity} in Section~\ref{G strict rigidity principle},
where moreover the third case is ruled out by the structure of the wave cone
(Lemma~\ref{lem:dent}). Here, for the sake of simplicity, we restrict to the
gradient case $\V=\M$, $\mathscr{A}=\operatorname{curl}$, $v_j=Du_j$ and
$v=Du$, so that $\nu$ is a $\bv$ gradient Young measure, and we assume that
no concentration occurs at $\partial\Omega$. Nothing essential is lost in
this simplification, since the computations are the same as in the proof of
Proposition~\ref{prop:rigidity}: the convergence of energies is rewritten as
the vanishing of a sum of integrals whose integrands are nonnegative by the
Jensen inequalities satisfied by $\nu$ (Proposition~\ref{sufficiency proposition}), tested with $G$ and
$G^\infty$; hence each integrand vanishes, which gives the pointwise identities
\eqref{equality 1 G infty and nux} and \eqref{equality 2 G infty and nux}.

More specifically, by Proposition \ref{sufficiency proposition} we have the Jensen inequality relations 
\begin{equation}\label{Jensen for G, AC part, cvg of energy principle}
\int_{\M} G(z)\dif \nu_x(z)+ \lambda^a \int_{\mathbb S_G} G^\infty(z) \dif \nu_x^\infty \geq G\left( \overline{\nu}_x+ \lambda^a \overline{\nu}_x^\infty \right) \quad \text{for $\Leb$ almost all }x \in \Omega,
\end{equation}
where $\lambda= \lambda^a \Leb + \lambda^s=\lambda^a \Leb + \dfrac{\lambda^s}{|D^su|}|D^s u|+ \lambda^*$ is the Lebesgue--Radon--Nikod\'ym decomposition of $\lambda$, and
\begin{equation}\label{Jensen for G, singular part, cvg of energy principle}
\int_{\mathbb S_G } G^\infty(z) \dif \nu_x^\infty(z) \geq G^\infty(\overline{\nu}_x^\infty) \quad \text{for $\lambda^s$ almost all }x \in \Omega,  
\end{equation}
or by the first condition of (\ref{conditions on nu infty in sufficiency theorem}), in the equivalent form given at the end of the proof of Proposition~\ref{sufficiency proposition}, we can write
\begin{equation}\label{D^s u and nu_x^infty relation}
\frac{D^s u}{|D^s u|}(x)=\frac{\lambda^s}{|D^s u |}\overline{\nu}_x^\infty \quad \text{for $|D^s u|$ almost all }x \in \Omega.
\end{equation}
On the other side, the convergence of energy assumption 
$$
\int_\Omega G(\nabla u(x))\dif x + \int_{\Omega} G^\infty \left(\frac{D^s u}{|D^s u|}(x) \right)\dif |D^s u| 
=\int_\Omega \int_{\M} G(z) \dif \nu_x \dif x+ \int_{\Omegacl} \int_{\mathbb{S}_G} G^\infty(z) \dif \nu_x^\infty \dif \lambda
$$
can be rewritten, since $\lambda(\partial\Omega)=0$, as
\begin{multline*}
\int_\Omega \left( \langle G,\nu_x \rangle + \lambda^a \langle G^\infty,\nu_x^\infty \rangle -G(\nabla u(x)) \right)\dif x
+ \int_\Omega \left(  \frac{\lambda^s}{|D^s u|}\langle G^\infty,\nu_x^\infty \rangle -G^\infty\left( \frac{D^s u}{|D^s u|}(x) \right) \right) \dif |D^s u| \\
+ \int_\Omega \langle G^\infty,\nu_x^\infty\rangle\dif\lambda^* = 0 .
\end{multline*}
The three integrands are nonnegative: the first by
(\ref{Jensen for G, AC part, cvg of energy principle}), the second by
(\ref{Jensen for G, singular part, cvg of energy principle}) and
(\ref{D^s u and nu_x^infty relation}), and the third because
$\langle G^\infty,\nu_x^\infty\rangle=1$ in the $G$-normalisation. Hence
\begin{align}
\quad &  \langle G,\nu_x \rangle + \lambda^a \langle G^\infty,\nu_x^\infty \rangle = G(\nabla u(x))= G\left( \overline{\nu}_x+ \lambda^a \overline{\nu}_x^\infty \right) \quad \Leb\text{ a.e.} \label{equality 1 G infty and nux}\\
\quad & \langle G^\infty,\nu_x^\infty \rangle= G^\infty\left(\overline{\nu}_x^\infty\right) \quad |D^s u|\text{ a.e.} \label{equality 2 G infty and nux}\\
\quad & \lambda^*=0, \quad\text{that is,}\quad \lambda^s=\frac{\lambda^s}{|D^su|}\,|D^su| .
\end{align}
where for (\ref{equality 2 G infty and nux}) we used $ G^\infty\left( \dfrac{D^s u}{|D^s u|}(x) \right)= \dfrac{\lambda^s}{|D^s u|}G^\infty(\overline{\nu}_x^\infty)$ by (\ref{D^s u and nu_x^infty relation}) and the positive homogeneity of $G^\infty$.\\ \ \\
From (\ref{equality 2 G infty and nux}) we get
$$
G^\infty(\overline{\nu}_x^\infty)= \int_{\mathbb S_G} \underbrace{G^\infty}_{=1 \text{ on }\mathbb S_G} \dif \nu_x^\infty =\nu_x^\infty(\mathbb S_G)=1 
$$
since $\nu_x^\infty$ is a probability measure on $\mathbb S_G$. This means that $|D^s u|$ a.e. 
$$
\overline{\nu}_x^\infty \in \mathbb S_G.
$$
Recall that $\SG=\{G^\infty=1\}$. Applying $G^\infty$ to
\eqref{D^s u and nu_x^infty relation} now gives
$\frac{\lambda^s}{|D^su|}=G^\infty\bigl(\frac{D^su}{|D^su|}\bigr)$, so that
$\overline{\nu}^\infty_x$ is the point where the ray through
$\frac{D^su}{|D^su|}(x)$ meets $\SG$, that is, the $G$-polar of $D^su$
(Definition~\ref{def:Gvar}). On the other hand,
$$
\theta_x:=\overline{\nu}_x^\infty=\int_{\SG}z\dif\nu_x^\infty
\in\overline{\mathrm{co}}(\SG),
$$
so the structure of $\nu^\infty_x$ is governed by the position of $\theta_x$
in the convex body $\overline{\mathrm{co}}(\SG)=\{G^{\infty\ast\ast}\le1\}$.
We use the notation of Section~\ref{wave cone section}: $K_G$ and $N_G$ are
the contact set and the dent (Definition~\ref{def:dent}); $\mathcal{L}_\theta$
is the set of linear $\ell\le G^\infty$ with $\ell(\theta)=1$, each of which
gives a supporting hyperplane $H=\{\ell=1\}$ of $\overline{\mathrm{co}}(\SG)$
through $\theta$; and $F_\theta$ is the trace on $\SG$ of the smallest exposed
face of $\overline{\mathrm{co}}(\SG)$ containing $\theta$
(Definition~\ref{def:face}).
\begin{figure}[H]
  \centering
\begin{tikzpicture}[scale=1.5]
    \draw[thick] (-0.8, 1.8)
        .. controls (-2.5, 1.5) and (-2.5, -1.0) .. (-0.8, -1.0)
        .. controls (-0.2, -1.0) and (-0.5, 0.0) .. (0.5, 0.0)
        .. controls (1.3, 0.0) and (1.2, -1.0) .. (1.8, -0.8)
        .. controls (2.3, -0.4) and (2.25, 0.1) .. (2.3, 0.4)
        -- (2.03, 0.86)
        .. controls (1.4, 0.9) and (1.4, 1.2) .. (1.76, 1.34)
        -- (1.5, 1.8)
        .. controls (1.0, 1.9) and (-0.2, 2.0) .. (-0.8, 1.8);
    \draw[dashed, gray, thin, shorten <=-1.0cm, shorten >=-1.0cm] (2.3, 0.4) -- (1.5, 1.8)
        node[pos=1.5, black, anchor=south west] {$H$};
    \node[rotate=12, anchor=south, inner sep=3pt] at (-1.1, 1.78) {$G^\infty = 1$};
    \draw[line width=1.8pt, samp] (2.3, 0.4) -- (2.03, 0.86);
    \draw[line width=1.8pt, samp] (1.76, 1.34) -- (1.5, 1.8);
    \draw[hlc, thick, fill=white] (2.18, 0.604) circle (1.8pt);
    \draw[hlc, thick, fill=white] (1.557, 1.699) circle (1.8pt);
    \node at (1.8,0.6) {$F_{\theta_2}$};
    \fill (-1.9,1) circle (1pt) node[right]{$\theta_1$};
    \fill (1.695,1.455) circle (1pt) node[left]{$\theta_2$};
    \fill (0.1,-0.07) circle (1pt) node[above]{$\theta_3$};
\end{tikzpicture}
\caption{The three positions of $\theta_x=\overline{\nu}^\infty_x$ on
$\SG=\{G^\infty=1\}$: $\theta_1$ is an extreme point of
$\overline{\mathrm{co}}(\SG)$ (Case~1); $\theta_2$ lies on a face, whose trace
$F_{\theta_2}=\SG\cap H$ on $\SG$ consists of the two thick segments, and the
two circles are the atoms of a possible $\nu^\infty_x$ with barycentre
$\theta_2$ (Case~2); $\theta_3$ lies in the dent $N_G$ (Case~3).}
\label{support sets for nu_x infty}
\end{figure}
At $|D^su|$-a.e.\ $x$ there are three possibilities, depending on where
$\theta_x$ lies.
\begin{enumerate}[label=\emph{Case~\arabic*.}, leftmargin=*]
\item $\theta_x$ is an \emph{extreme point} of $\overline{\mathrm{co}}(\SG)$,
a point of strict convexity of $G^\infty$. An extreme point is the barycentre
of no probability measure on $\overline{\mathrm{co}}(\SG)$ other than its
Dirac mass, so $\nu^\infty_x=\delta_{\theta_x}$.
\item $\theta_x$ lies on a proper face of $\overline{\mathrm{co}}(\SG)$
without being extreme. For every $\ell\in\mathcal{L}_{\theta_x}$, with
supporting hyperplane $H=\{\ell=1\}$,
$$
1=\ell(\theta_x)=\int_{\SG}\ell\dif\nu^\infty_x
\le\int_{\SG}G^\infty\dif\nu^\infty_x=1,
$$
so the continuous function $G^\infty-\ell\ge0$ vanishes on
$\operatorname{supp}\nu^\infty_x$, that is,
$\operatorname{supp}\nu^\infty_x\subseteq\SG\cap H$. Intersecting over
$\ell\in\mathcal{L}_{\theta_x}$ gives
$\operatorname{supp}\nu^\infty_x\subseteq F_{\theta_x}$, and no further
restriction holds: rigidity degrades from a point to a face. Case~1 is
characterized by extremality, whereas $F_{\theta_x}=\{\theta_x\}$ holds
precisely at the (strictly stronger) \emph{exposed} points
(Remark~\ref{rmk:sharp}).
\item $\theta_x$ lies in the interior of $\overline{\mathrm{co}}(\SG)$, that
is, $\theta_x\in N_G$. Then $\mathcal{L}_{\theta_x}=\emptyset$, and the
support of $\nu^\infty_x$ is not restricted: an interior point of
$\overline{\mathrm{co}}(\SG)$ is the barycentre of many probability measures
on $\SG$.
\end{enumerate}
Surprisingly, Case~3 is vacuous, because of the structure of the wave cone
$\WC$: by the De Philippis--Rindler theorem
\cite[Thm.~1.1]{DePhilippisRindler16} the barycentre $\theta_x$ lies in $\WC$,
while by the automatic convexity result of Kirchheim and Kristensen
\cite[Thm.~1.1]{KirchheimKristensen16} the dent does not meet $\WC$
(Lemma~\ref{lem:dent}). The next section, after fixing the notation and the
most general assumptions, makes this precise.

\section{The \texorpdfstring{$G$}{G}-compactification and the wave cone}\label{wave cone section}

As in Section~\ref{notation section}, $\V$ is a finite dimensional real inner product space and
$$
  \mathscr{A} \;=\; \sum_{|\alpha| = k} A_{\alpha}\,\partial^{\alpha},
  \qquad A_{\alpha}\in\operatorname{Lin}(\V,\mathbb{W}),
$$
is a homogeneous linear differential operator of order $k$ with constant
coefficients, with principal symbol
$\mathscr{A}(\xi) := \sum_{|\alpha|=k}\xi^{\alpha}A_{\alpha}$ and wave cone
$$
  \WC \;:=\; \bigcup_{\xi\in\mathbb{R}^{n}\setminus\{0\}} \ker \mathscr{A}(\xi).
$$

\begin{itemize}
  \item[\textbf{(A1)}] $\WC$ spans $\V$.
  \item[\textbf{(A2)}] $G^{\infty}\colon \V \to [0,\infty)$ is positively
    $1$-homogeneous, $\mathscr{A}$-quasiconvex, and coercive: there are
    $0 < c \le C < \infty$ with $c|z| \le G^{\infty}(z) \le C|z|$ on $\V$.
  \item[\textbf{(A3)}] $G\colon \V\to\mathbb{R}$ is continuous,
    $\mathscr{A}$-quasiconvex, of linear growth, and admits $G^{\infty}$ as
    \emph{regular} recession function:
    $$
      G^{\infty}(z)
      = \lim_{\substack{z'\to z\\ t\to\infty}}\frac{G(t z')}{t}
      \qquad\text{for every } z\in\V .
    $$
\end{itemize}

Assumption \textup{(A1)} holds for $\mathscr{A} = \operatorname{curl}$,
$\operatorname{div}$, $\operatorname{curl}\operatorname{curl}$, but fails for
degenerate operators such as $v\mapsto\nabla v_{1}$; it is used in an
essential way in Lemma~\ref{lem:dent}. Assumption \textup{(A2)} guarantees
that the gauge below is a genuine compactification, and \textup{(A3)} says
precisely that $G\in\E_{G}(\Omega,\V)$, so that $G$ is an admissible
test integrand for the pairing introduced below.

\begin{remark}[$G$ cannot be convex in the regime of interest]\label{rmk:noconv}
The recession function of a convex function is convex, so \textup{(A3)} forces
$G$ to be non-convex as soon as $N_{G}\neq\emptyset$
(Definition~\ref{def:dent}). The interesting regime therefore requires a
genuinely $\mathscr{A}$-quasiconvex, non-convex integrand of linear growth;
non-convex positively $1$-homogeneous quasiconvex functions do exist
\cite{Muller92}, and by Lemma~\ref{lem:dent} their non-convexity is confined
to directions off the wave cone.
\end{remark}

\subsection{The compactification}

Recall from Section~\ref{compactification section} that
$$
  \BG = \{G^{\infty}< 1\},
  \qquad
  \overline{\BG} = \{G^{\infty}\le 1\},
  \qquad
  \SG = \{G^{\infty} = 1\},
  \qquad
  \iota_{G}(z) = \frac{z}{1+G^{\infty}(z)} .
$$
By \textup{(A2)}, $\iota_{G}$ is a homeomorphism of $\V$ onto the open set
$\BG$, so $\overline{\BG}$ is a compactification of $\V$ whose boundary at
infinity is the compact set $\SG$. The corresponding class of test integrands
is
$$
  \E_{G}(\Omega,\V)
  = \bigl\{ f\in \CC(\overline{\Omega}\times \V) :
     f(x,z) = \bigl(1+G^{\infty}(z)\bigr)\,g\bigl(x,\iota_{G}(z)\bigr)
     \ \text{ for some } g\in \CC(\overline{\Omega}\times \overline{\BG})\bigr\},
$$
and $f^{\infty}(x,z) := \lim_{t\to\infty}t^{-1}f(x,tz)$ exists for $z\in\SG$,
with $f^{\infty} = g$ on $\overline{\Omega}\times\SG$. Since
$c(1+|z|)\wedge 1 \lesssim 1+G^{\infty}(z)\lesssim C(1+|z|)$, one has
$\E_{G}(\Omega,\V) = \E(\Omega,\V)$ as sets; only the
normalisation of the boundary data differs.

\begin{remark}[Equivalence with the ball compactification]\label{rmk:compact}
Write $\iota(z) := z/(1+|z|)$ for the embedding underlying the usual ball
compactification $\overline{B}$ of $\V$, whose boundary at infinity is
$\mathbb{S}_{\V}$. The two compactifications are \emph{equivalent}, i.e.\ the
identity of $\V$ extends to a homeomorphism $\overline{B}\to\overline{\BG}$: explicitly
$\Theta := \iota_{G}\circ\iota^{-1}$ is given, together with its inverse, by
$$
  \Theta(w) = \frac{w}{1-|w|+G^{\infty}(w)},
  \qquad
  \Theta^{-1}(v) = \frac{v}{1+|v|-G^{\infty}(v)} ,
$$
both denominators being strictly positive on $\overline{B}\setminus\{0\}$ and
on $\overline{\BG}\setminus\{0\}$ respectively, by \textup{(A2)}. Indeed, along the ray
$w = s\theta$ with $\theta\in\mathbb{S}_{\V}$ and $g := G^{\infty}(\theta)$,
one has $\Theta(s\theta) = s\theta/(1+s(g-1))$, whose derivative in $s$ equals
$(1+s(g-1))^{-2}>0$, so $\Theta$ maps $[0,\theta]$ increasingly onto
$[0,\theta/g]$; letting $s\uparrow1$ shows that $\Theta$ restricts on
$\mathbb{S}_{\V}$ to the boundary map
$$
  \Pi(z) := \frac{z}{G^{\infty}(z)} ,
$$
used in Remark~\ref{rmk:transfer} below.

It is worth recording that the naive radial map
$$
  \Psi(z) := |z|\,\frac{z}{G^{\infty}(z)},
  \qquad
  \Psi^{-1}(w) = G^{\infty}(w)\,\frac{w}{|w|},
$$
is a homeomorphism of $\overline{B}$ onto $\overline{\BG}$ restricting to $\Pi$ on
$\mathbb{S}_{\V}$ as well --- it is positively $1$-homogeneous, hence a
homeomorphism of all of $\V$, and the factor $|z|$ is exactly what cancels the
homogeneity of $G^{\infty}$ in the denominator. It is, however, \emph{not} an
equivalence of compactifications: for $z = t\theta$,
$$
  \Psi\bigl(\iota(z)\bigr) = \frac{t\,\theta}{(1+t)\,g}
  \;\neq\;
  \frac{t\,\theta}{1+t g} = \iota_{G}(z)
  \qquad\text{unless } g = 1 ,
$$
so $\Psi$ moves the interior copy of $\V$. Only $\Theta$ fixes it pointwise.
Since all that is used below is the boundary restriction $\Pi$, either map
serves for Remark~\ref{rmk:transfer}; $\Theta$ is the one that justifies calling
$\overline{\BG}$ and $\overline{B}$ the same compactification of $\V$.
\end{remark}

A generalized Young measure in this normalisation is a triple
$\nu = \bigl((\nu_{x})_{x\in\Omega},\lambda,(\nu^{\infty}_{x})_{x\in\overline{\Omega}}\bigr)$
with $\nu_{x}\in\mathscr{M}^{+}_{1}(\V)$, $\lambda\in\Meas^{+}(\overline{\Omega})$
and, crucially,
$$
  \nu^{\infty}_{x}\in\mathscr{M}^{+}_{1}(\SG)
  \qquad\text{for } \lambda\text{-a.e.\ } x,
$$
paired with $f\in\E_{G}(\Omega,\V)$ by
$$
  \ddual{f}{\nu}
  := \int_{\Omega}\!\int_{\V} f(x,z)\,\mathrm{d}\nu_{x}(z)\,\mathrm{d}x
   + \int_{\overline{\Omega}}\!\int_{\SG} f^{\infty}(x,z)\,
     \mathrm{d}\nu^{\infty}_{x}(z)\,\mathrm{d}\lambda(x).
$$

\begin{lemma}[Transfer from the spherical normalisation]\label{rmk:transfer}
Let $\Pi(z) := z/G^{\infty}(z)$ denote the restriction of $\Theta$ to
$\mathbb{S}_{\V}$, a homeomorphism onto $\SG$
(Remark~\ref{rmk:compact}), and let
$(\nu_{x},\lambda^{\mathbb{S}},\nu^{\infty,\mathbb{S}}_{x})$ be the triple representing
$\nu$ in the spherical normalisation of \cite{KristensenRindler10}. Set
$$
  m(x) := \int_{\mathbb{S}_{\V}} G^{\infty}\,\mathrm{d}\nu^{\infty,\mathbb{S}}_{x}
  \;\in\; [c,C] .
$$
Then the triple representing the \emph{same} $\nu$ in the $G$-normalisation is
\begin{equation}\label{eq:transfer}
  \nu^{G}_{x} = \nu_{x},
  \qquad
  \lambda = m\,\lambda^{\mathbb{S}},
  \qquad
  \nu^{\infty}_{x}
  = \Pi_{\#}\Bigl(\frac{G^{\infty}\,\nu^{\infty,\mathbb{S}}_{x}}{m(x)}\Bigr) .
\end{equation}
In particular $c\lambda^{\mathbb{S}}\le\lambda\le C\lambda^{\mathbb{S}}$, so $\lambda^{\mathbb{S}}$ and
$\lambda$ are mutually absolutely continuous with bounded densities and
share their Lebesgue decompositions; moreover
$\overline{\nu}^{\infty}_{x} = m(x)^{-1}\overline{\nu}^{\infty,\mathbb{S}}_{x}$ and
$\overline{\nu}^{\infty}_{x}\lambda = \overline{\nu}^{\infty,\mathbb{S}}_{x}\lambda^{\mathbb{S}}$.
\end{lemma}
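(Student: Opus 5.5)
The plan is to compare the two representations of $\nu$ by testing against integrands whose recession behaviour is prescribed directly on the sphere at infinity. By uniqueness in Lemma~\ref{YM measure theoretic definition}, it is enough to check that for every $\Phi\in\E_G(\Omega,\V)=\E(\Omega,\V)$ the $G$-triple given by \eqref{eq:transfer} reproduces the pairing $\ddual{\Phi}{\nu}$. The oscillation part needs nothing: in both normalisations the $\Leb$-part is $\int_\Omega\int_\V\Phi\,\dif\nu_x\,\dif x$. This is because the class of integrands is the same and the identity of $\V$ extends to $\Theta$ (Remark~\ref{rmk:compact}), so the restriction of $(T^*)^{-1}\nu$ to the interior is determined by $\nu$ in either picture. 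So $\nu^G_x=\nu_x$, and the task reduces to identifying the concentration term.

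For that term I would use the positive $1$-homogeneity of $\Phi^\infty(x,\cdot)$. In the spherical normalisation it reads
\[
\int_{\Omegacl}\int_{\mathbb{S}_\V}\Phi^\infty(x,\omega)\,\dif\nu^{\infty,\mathbb{S}}_x(\omega)\,\dif\lambda^{\mathbb{S}}(x).
\]
Writing $\omega=G^\infty(\omega)\,\Pi(\omega)$ gives $\Phi^\infty(x,\omega)=G^\infty(\omega)\,\Phi^\infty(x,\Pi(\omega))$. The inner integral is then
\[
\int_{\mathbb{S}_\V}\Phi^\infty(x,\Pi\omega)\,G^\infty(\omega)\,\dif\nu^{\infty,\mathbb{S}}_x
= m(x)\int_{\SG}\Phi^\infty(x,\theta)\,\dif\Pi_\#\Bigl(\tfrac{G^\infty\nu^{\infty,\mathbb{S}}_x}{m(x)}\Bigr)(\theta),
\]
by the change of variables for push-forwards. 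Since $\Pi$ is a homeomorphism $\mathbb{S}_\V\to\SG$ and $m(x)>0$, the measure $\nu^\infty_x$ defined in \eqref{eq:transfer} is a probability measure on $\SG$. The concentration term therefore equals $\int\int_{\SG}\Phi^\infty\,\dif\nu^\infty_x\,\dif(m\lambda^{\mathbb{S}})$, which is the $G$-form with $\lambda=m\lambda^{\mathbb{S}}$.

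Three measure-theoretic checks remain.
\begin{enumerate}
\item Measurability. The map $x\mapsto m(x)$ is $\lambda^{\mathbb{S}}$-measurable because $G^\infty$ is continuous. Weak$^*$ measurability of $x\mapsto\nu^\infty_x$ follows by testing with $g\in\CC(\SG)$, since $g\circ\Pi\cdot G^\infty$ is continuous on the sphere.
\item Bounds on $m$. From $c\le G^\infty\le C$ on $\mathbb{S}_\V$ we get $m\in[c,C]$, hence $c\lambda^{\mathbb{S}}\le\lambda\le C\lambda^{\mathbb{S}}$. This gives mutual absolute continuity, and therefore the two measures share their Lebesgue decompositions.
\item Barycentres. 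Using $\int\Pi(\omega)\,G^\infty(\omega)\,\dif\nu^{\infty,\mathbb{S}}_x=\int\omega\,\dif\nu^{\infty,\mathbb{S}}_x$, we obtain $\overline{\nu}^\infty_x=m^{-1}\overline{\nu}^{\infty,\mathbb{S}}_x$. Multiplying by $\lambda=m\lambda^{\mathbb{S}}$ yields the last identity.
\end{enumerate}

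I expect the main obstacle to be justifying cleanly that the spherical and $G$ triples represent the same functional on the same space, so that uniqueness applies. Concretely, I would check on tensor products $\eta\otimes\Phi$ with $\Phi\in\HH$, where $\HH$ is the same set for both gauges, and conclude by density (Stone--Weierstrass) together with the uniqueness clause of Lemma~\ref{YM measure theoretic definition}. The homogeneity computation above then carries the whole argument.
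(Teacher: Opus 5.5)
Your argument is correct, but it takes a different route from the paper.

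\textbf{The paper's route.} The paper works with the representing measures on the compactifications. It writes $\boldsymbol\mu_G=(\mathrm{id}\otimes\Theta)_\#(J\boldsymbol\mu)$ with $J(w)=1-|w|+G^\infty(w)$, and disintegrates both $\boldsymbol\mu=\kappa\otimes\eta_x$ and $\boldsymbol\mu_G=\kappa_G\otimes\eta^G_x$. Base measure and fibres then change by the same factor $j(x)=\int J\,\mathrm{d}\eta_x$. The paper checks that this factor cancels in $\lambda$ and in $\nu^\infty_x$, and, through $(1-G^\infty(\Theta w))J(w)=1-|w|$, also in $\nu_x$.

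\textbf{Your route.} You stay with the triple form of the pairing. The pointwise identity $\Phi^\infty(x,\omega)=G^\infty(\omega)\,\Phi^\infty(x,\Pi\omega)$ on $\mathbb{S}_{\V}$ shows directly that the candidate triple \eqref{eq:transfer} reproduces $\ddual{\Phi}{\nu}$. Uniqueness of the triple in Lemma~\ref{YM measure theoretic definition} then does the rest.

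\textbf{What each buys.} Your argument is shorter and never uses $\Theta$ on the interior, only the boundary map $\Pi$. This matches the paper's own observation that only $\Pi$ matters. It also makes the invariance of $\nu_x$ a tautology rather than a computation. The paper's argument gives more: an explicit relation between the full representing measures on $\Omegacl\times\overline{B}$ and $\Omegacl\times\overline{\BG}$. It also explains why only the product $\lambda^{\mathbb{S}}\otimes\nu^{\infty,\mathbb{S}}_x$, and not the disintegration $(\kappa,\eta_x)$, is canonical (Remark~\ref{rmk:jensen-transfer}).

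\textbf{Small points.} The uniqueness clause you rely on is stated in the lemma but not spelled out in the appendix. For a self-contained argument, add the standard two steps:
\begin{enumerate}
\item Test with $\phi\otimes f$, $f\in\CC_c(\V)$, to identify $\nu_x$.
\item Test with $T^{-1}g$ for arbitrary $g\in\CC(\Omegacl\times\overline{\BG})$. Its recession function on $\Omegacl\times\SG$ is $g$ itself, so this identifies the measure $\lambda\otimes\nu^\infty_x$ on $\Omegacl\times\SG$. Then project and disintegrate.
\end{enumerate}
For your candidate to count as a triple, you should also note the moment condition (4). It follows from $G^\infty\le C|\cdot|$ and the spherical moment bound. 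Finally, the appeal to $\Theta$ in your first paragraph and the Stone--Weierstrass density step can be dropped, since the homogeneity identity holds for every $\Phi\in\E_G(\Omega,\V)$ directly.
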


\begin{proof}
Write $\boldsymbol\mu\in\Meas^{+}(\overline{\Omega}\times\overline{B})$ and
$\boldsymbol\mu_{G}\in\Meas^{+}(\overline{\Omega}\times\overline{\BG})$ for the measures
representing $\nu$ under the isometries
$\E(\Omega,\V)\cong \CC(\overline{\Omega}\times\overline{B})$ and
$\E_{G}(\Omega,\V)\cong \CC(\overline{\Omega}\times\overline{\BG})$, and disintegrate
$$
  \boldsymbol\mu = \kappa\otimes\eta_{x},
  \qquad \kappa := \pi_{\#}\boldsymbol\mu,
  \qquad \eta_{x}\in\mathscr{M}^{+}_{1}(\overline{B}),
$$
$\pi(x,w) := x$. Comparing $\int g\,\mathrm{d}\boldsymbol\mu$ with
$\ddual{f}{\nu}$ and using $1-|w| = (1+|\iota^{-1}(w)|)^{-1}$ on $B$ gives the
standard formulas
\begin{equation}\label{eq:explicit}
  \lambda^{\mathbb{S}} = \eta_{x}(\mathbb{S}_{\V})\,\kappa,
  \quad
  \nu^{\infty,\mathbb{S}}_{x} = \frac{\eta_{x}\llcorner\mathbb{S}_{\V}}
                                     {\eta_{x}(\mathbb{S}_{\V})},
  \quad
  \nu_{x} = \frac{(\iota^{-1})_{\#}\bigl[(1-|w|)\,\eta_{x}\llcorner B\bigr]}
                 {m_{0}(x)},
  \quad
  m_{0}(x) := \int_{B}(1-|w|)\,\mathrm{d}\eta_{x},
\end{equation}
together with the normalisation $\Leb\llcorner\Omega = m_{0}\,\kappa$. The same
formulas, with $\iota_{G}$, $\SG$ and $1-G^{\infty}$ in place of $\iota$,
$\mathbb{S}_{\V}$ and $1-|w|$, produce the $G$-triple from
$\boldsymbol\mu_{G} = \kappa_{G}\otimes\eta^{G}_{x}$.\\ First, we check that both the base measure and the fibre measures differ in the two disintegrations. By Remark~\ref{rmk:compact},
$\boldsymbol\mu_{G} = (\mathrm{id}\otimes\Theta)_{\#}(J\boldsymbol\mu)$ with
$J(w) = 1-|w|+G^{\infty}(w)$, so
$$
  \kappa_{G} = \pi_{\#}\bigl(J\boldsymbol\mu\bigr) = j\,\kappa,
  \qquad
  j(x) := \int_{\overline{B}} J\,\mathrm{d}\eta_{x},
  \qquad
  \eta^{G}_{x} = \frac{\Theta_{\#}\bigl(J\,\eta_{x}\bigr)}{j(x)} ,
$$
the last identity by uniqueness of the disintegration with probability
fibres, $\eta^{G}_{x}$ being a probability measure by construction. Since $J$
is continuous and positive on the compact set $\overline{B}$, one has
$0 < \delta \le j \le 1+C$. Thus \emph{both} the base measure and the fibre
measures change.\\
Now we check how the concentration part changes. As $\Theta$ maps $B$ onto
$\BG$ and $\mathbb{S}_{\V}$ onto $\SG$, and
$J|_{\mathbb{S}_{\V}} = G^{\infty}$,
$$
  \lambda
  = \eta^{G}_{x}(\SG)\,\kappa_{G}
  = \frac{(J\eta_{x})(\mathbb{S}_{\V})}{j(x)}\cdot j(x)\,\kappa
  = \Bigl(\int_{\mathbb{S}_{\V}}G^{\infty}\,\mathrm{d}\eta_{x}\Bigr)\kappa
  = m\,\eta_{x}(\mathbb{S}_{\V})\,\kappa
  = m\,\lambda^{\mathbb{S}} ,
$$
where the factor $j(x)$ introduced in $\kappa_{G}$ cancels against the one
normalising $\eta^{G}_{x}$, and the last two equalities use
\eqref{eq:explicit}. The same cancellation gives
$$
  \nu^{\infty}_{x}
  = \frac{\eta^{G}_{x}\llcorner\SG}{\eta^{G}_{x}(\SG)}
  = \frac{\Pi_{\#}\bigl(G^{\infty}\,\eta_{x}\llcorner\mathbb{S}_{\V}\bigr)}
         {\int_{\mathbb{S}_{\V}}G^{\infty}\,\mathrm{d}\eta_{x}}
  = \Pi_{\#}\Bigl(\frac{G^{\infty}\,\nu^{\infty,\mathbb{S}}_{x}}{m(x)}\Bigr).
$$

Finally, we check the oscillation part actually stays the same. For $w\in B$ and $v = \Theta(w)$ one has
$1-G^{\infty}(v) = (1-|w|)/J(w)$, hence
$\bigl(1-G^{\infty}(v)\bigr)J(w) = 1-|w|$; moreover
$\iota_{G}^{-1}\circ\Theta = \iota^{-1}$ because $\Theta = \iota_{G}\circ\iota^{-1}$.
Therefore
$$
  (\iota_{G}^{-1})_{\#}\Bigl[\bigl(1-G^{\infty}\bigr)\,
     \eta^{G}_{x}\llcorner\BG\Bigr]
  = \frac{1}{j(x)}\,(\iota^{-1})_{\#}\bigl[(1-|w|)\,\eta_{x}\llcorner B\bigr],
$$
and likewise $m^{G}_{0} = m_{0}/j$; normalising, $\nu^{G}_{x} = \nu_{x}$. (The
normalisation $\Leb\llcorner\Omega = m^{G}_{0}\kappa_{G} = m_{0}\kappa$ is
preserved as well.) This is as it must be: $\nu_{x}$ is a measure on $\V$
itself and cannot depend on the chosen compactification.

Finally $c\le m\le C$ by \textup{(A2)}, whence the stated comparison of
$\lambda^{\mathbb{S}}$ and $\lambda$, and the two barycentre identities follow from
\eqref{eq:transfer} and the $1$-homogeneity of the identity map.
\end{proof}

\begin{remark}\label{rmk:jensen-transfer}
The pair $(\kappa,\eta_{x})$ is an auxiliary normalisation: it is the product
$\lambda^{\mathbb{S}}\otimes\nu^{\infty,\mathbb{S}}_{x}$, not its two factors separately,
that is canonically attached to $\nu$. This is the structural reason for the
cancellation of $j(x)$ in Step 2 above: any quantity of the form
``fibre mass $\times$ base measure'' is blind to it.

As a consequence of Lemma~\ref{rmk:transfer} we may use the Jensen-type
conditions of \cite[Thm.~1]{KristensenRindler10}, (i)--(ii) in the
restatement \cite[Thm.~6.1]{KirchheimKristensen16}, and those of
\cite{KristensenRaita22} for general $\mathscr{A}$, directly in the
$G$-normalisation. Indeed, for $1$-homogeneous $f^{\infty}$ both sides of each
condition are divided by $m(x)$ under \eqref{eq:transfer}, and
$\lambda^{\mathbb{S}}\ll\lambda\ll\lambda^{\mathbb{S}}$ with bounded densities, so the two
normalisations have the same singular parts and the same null sets; hence the
conditions hold verbatim, and we use them in that form below.
\end{remark}

The following identity, immediate from $\nu^{\infty}_{x}\in\mathscr{M}^{+}_{1}(\SG)$,
is what makes the $G$-normalisation the natural one here:
\begin{equation}\label{eq:unitmass}
  \dual{G^{\infty}}{\nu^{\infty}_{x}} = 1
  \qquad\text{for }\lambda\text{-a.e.\ } x .
\end{equation}
In words: $\lambda$ \emph{is} the $G$-concentration measure, and the
concentration contribution to the $G$-energy is exactly
$\lambda(\overline{\Omega})$.

\subsection{Contact set, dent, faces}

\begin{definition}[Contact set and dent]\label{def:dent}
Let $G^{\infty\ast\ast}$ be the convex envelope of $G^{\infty}$ and set
$$
  K_{G} := \{ z\in \V : G^{\infty}(z) = G^{\infty\ast\ast}(z)\},
  \qquad
  N_{G} := \SG\setminus K_{G}.
$$
We call $K_{G}$ the \emph{contact set} and $N_{G}$ the \emph{dent}. By
$1$-homogeneity $\{G^{\infty\ast\ast}\le 1\} = \overline{\mathrm{co}}\,\BG
= \overline{\mathrm{co}}(\SG)$, the last equality because every point of $\BG$
lies on a segment joining two points of $\SG$. Thus $N_{G}$ consists exactly
of the points of $\SG$ lying in the interior of $\overline{\mathrm{co}}(\SG)$;
one has $N_{G} = \emptyset$ if
and only if $G^{\infty}$ is convex.
\end{definition}

\begin{definition}[Exposing functionals and faces]\label{def:face}
For $\theta\in\SG$ put
$$
  \mathcal{L}_{\theta}
  := \bigl\{\, \ell \in \V^{\ast} : \ell \le G^{\infty} \text{ on } \V,
     \ \ \ell(\theta) = 1 \,\bigr\},
  \qquad
  F_{\theta} := \bigcap_{\ell\in\mathcal{L}_{\theta}}
                \bigl\{ z\in\SG : \ell(z) = 1 \bigr\} .
$$
Since a linear $\ell$ satisfies $\ell\le G^{\infty}$ if and only if
$\ell\le G^{\infty\ast\ast}$, the set $\mathcal{L}_{\theta}$ is the
subdifferential of the gauge $G^{\infty\ast\ast}$ at $\theta$ whenever
$\theta\in K_{G}$, and $F_{\theta}$ is the trace on $\SG$ of the smallest
exposed face of $\overline{\mathrm{co}}(\SG)$ containing $\theta$. Note
$F_{\theta}\subseteq K_{G}$ automatically: $z\in F_{\theta}$ gives
$1 = \ell(z)\le G^{\infty\ast\ast}(z)\le G^{\infty}(z) = 1$.
\end{definition}

\begin{lemma}[The dent avoids the wave cone]\label{lem:dent}
Assume \textup{(A1)}, \textup{(A2)}. Then $\mathcal{L}_{\theta}\neq\emptyset$
for every $\theta\in\WC\cap\SG$, and consequently
$$
  \WC \subseteq K_{G},
  \qquad\text{in particular}\qquad
  N_{G}\cap\WC = \emptyset .
$$
\end{lemma}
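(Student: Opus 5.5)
The plan is to obtain $\mathcal{L}_\theta\neq\emptyset$ directly from Theorem~\ref{KK theorem}, applied to $h=G^\infty$. By \textup{(A2)} the function $G^\infty$ is positively $1$-homogeneous and $\mathscr{A}$-quasiconvex. By Lemma~\ref{qc properties lemma}(b), or directly since $\mathscr{A}$-quasiconvex functions are $\WC$-convex, it is also $\WC$-convex. Together with \textup{(A1)}, this puts us in the setting of the theorem. So for each $\theta\in\WC\cap\SG$ there is a linear $\ell\colon\V\to\R$ with $\ell\le G^\infty$ on $\V$ and $\ell(\theta)=G^\infty(\theta)=1$, i.e.\ $\ell\in\mathcal{L}_\theta$. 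This is the only substantive input; all the depth lies in the Kirchheim--Kristensen theorem, which we take as given.

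Next I would deduce $\WC\subseteq K_G$. Take $\theta\in\WC\cap\SG$ and $\ell\in\mathcal{L}_\theta$. Since $\ell$ is convex and $\ell\le G^\infty$, the definition of the convex envelope gives $\ell\le G^{\infty\ast\ast}\le G^\infty$. Evaluating at $\theta$ yields $1=\ell(\theta)\le G^{\infty\ast\ast}(\theta)\le G^\infty(\theta)=1$, so $\theta\in K_G$.

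For a general $z\in\WC$, first dispose of $z=0$: both functions vanish there, because $G^{\infty\ast\ast}(0)\le G^\infty(0)=0$ and the linear minorant $\ell$ obtained above (for any $\theta$) gives $G^{\infty\ast\ast}(0)\ge\ell(0)=0$. For $z\ne0$, set $\theta:=z/G^\infty(z)$. Then $\theta\in\SG$, and $\theta\in\WC$ because $\WC$ is a cone. The convex envelope of a positively $1$-homogeneous function is again positively $1$-homogeneous, since $t^{-1}h^{\ast\ast}(t\,\cdot)$ is a convex minorant of $h$ for each $t>0$. Scaling the identity $G^{\infty\ast\ast}(\theta)=G^\infty(\theta)$ therefore gives $z\in K_G$.

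Finally, $N_G\cap\WC=(\SG\setminus K_G)\cap\WC=\emptyset$ follows immediately. No step here looks like a real obstacle beyond checking the hypotheses of Theorem~\ref{KK theorem}. The one point needing care is that $G^\infty$ is genuinely $\WC$-convex (not merely $\mathscr{A}$-quasiconvex) and real-valued; both are supplied by \textup{(A2)} and the cited properties.
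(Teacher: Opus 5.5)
Your proof is correct and is essentially the paper's argument. You apply Theorem~\ref{KK theorem} to $h=G^\infty$ to get a global linear minorant touching at $\theta$, then use the sandwich $\ell\le G^{\infty\ast\ast}\le G^\infty$ at $\theta$, and pass to all of $\WC$ by homogeneity; your separate treatment of $z=0$ and of the positive homogeneity of $G^{\infty\ast\ast}$ simply spells out what the paper compresses into ``the general case follows by homogeneity'' (you could also skip it, since the theorem already gives a touching minorant at every point of $\WC$).
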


\begin{proof}
An $\mathscr{A}$-quasiconvex function is $\WC$-convex \cite[Prop.~3.4]{FonsecaMuller99}.
By \textup{(A1)} the cone $\WC$ is balanced and spanning, so
\cite[Thm.~1.1]{KirchheimKristensen16} applied with $\mathcal{C}=\V$ and
$\mathcal{D}=\WC$ yields, for each $\theta\in\WC$, a linear
$\ell\colon \V\to\mathbb{R}$ with $\ell(\theta)=G^{\infty}(\theta)$ and
$\ell\le G^{\infty}$ \emph{on all of} $\V$. For $\theta\in\SG$ this is exactly
$\ell\in\mathcal{L}_{\theta}$. Since $\ell$ is linear,
$\ell\le G^{\infty\ast\ast}\le G^{\infty}$, so
$G^{\infty}(\theta) = \ell(\theta)\le G^{\infty\ast\ast}(\theta)\le G^{\infty}(\theta)$,
i.e.\ $\theta\in K_{G}$; the general case follows by homogeneity.
\end{proof}

\begin{remark}
The globality of the minorant is what is used below: $\nu^{\infty}_{x}$ is a
probability measure on all of $\SG$ and is not a priori concentrated near
$\theta$, so a purely local convexity statement at $\theta$ would carry no
information about its support.
\end{remark}

\section{A \texorpdfstring{$G$}{G}-strict rigidity principle}\label{G strict rigidity principle}

\begin{definition}[$G$-variation, $G$-polar, $G$-strict convergence]
\label{def:Gvar}
For $\mu\in\Meas(\Omega,\V)$, let $\sigma := \mathrm{d}\mu/\mathrm{d}|\mu|$ be the
polar of $\mu$, so that $\mu = \sigma|\mu|$ with $|\sigma| = 1$ $|\mu|$-a.e.
\cite[Cor.~1.29]{AFP}, and set
$$
  |\mu|_{G} := G^{\infty}(\sigma)\,|\mu| \in\Meas^{+}(\Omega),
  \qquad
  p_{G} := \frac{\mathrm{d}\mu}{\mathrm{d}|\mu|_{G}} .
$$
We say $\mu_{j}\to\mu$ \emph{$G$-strictly} if
$\mu_{j}\overset{\ast}{\rightharpoonup}\mu$ in $\Meas(\Omega,\V)$ and
$\mathcal{G}(\mu_{j})\to\mathcal{G}(\mu)$, where
\begin{equation}\label{eq:energy}
  \mathcal{G}(\mu)
  := \int_{\Omega} G\Bigl(\frac{\mathrm{d}\mu}{\mathrm{d}\Leb}\Bigr)\,\mathrm{d}x
   \;+\; \int_{\Omega} G^{\infty}
        \Bigl(\frac{\mathrm{d}\mu^{s}}{\mathrm{d}|\mu^{s}|}\Bigr)\,\mathrm{d}|\mu^{s}|
   = \int_{\Omega} G\Bigl(\frac{\mathrm{d}\mu}{\mathrm{d}\Leb}\Bigr)\,\mathrm{d}x
   \;+\; |\mu^{s}|_{G}(\Omega).
\end{equation}
\end{definition}

Thus the integrand itself acts on the absolutely continuous part and only its
recession function on the singular part, as usual for functionals of linear
growth; the second equality in \eqref{eq:energy} is the first identity in
Lemma~\ref{lem:wellposed}(c).
Equivalently $\mathcal{G}(\mu) = \ddual{G}{\boldsymbol{\eps}_{\mu}}$, where
$\boldsymbol{\eps}_{\mu}$ denotes the elementary generalized Young measure generated by
the constant sequence $\mu$. Its spherical triple is
$\bigl(\delta_{\mathrm{d}\mu/\mathrm{d}\Leb},\,|\mu^{s}|,\,
\delta_{\mathrm{d}\mu^{s}/\mathrm{d}|\mu^{s}|}\bigr)$, so by
Lemma~\ref{rmk:transfer} and Lemma~\ref{lem:wellposed}(b),(c) its
$G$-normalised triple is
$$
  \nu_{x} = \delta_{\mathrm{d}\mu/\mathrm{d}\Leb},
  \qquad
  \lambda = |\mu^{s}|_{G},
  \qquad
  \nu^{\infty}_{x} = \delta_{p_{G}(x)} .
$$
Taking $G = G^{\infty}$, which is admissible in \textup{(A3)}, the second
identity in Lemma~\ref{lem:wellposed}(c) gives
$\mathcal{G} = |\cdot|_{G}(\Omega)$, i.e.\ the exact $G$-analogue of strict
convergence of measures; a general $G$ gives the analogue of area-strict
convergence.

The next lemma records that Definition~\ref{def:Gvar} is unambiguous and has
the expected properties. We stress that only the positive $1$-homogeneity and
the coercivity of $G^{\infty}$ enter; convexity plays no role whatsoever, so
the non-convex case is on exactly the same footing as the classical one
treated in \cite{goffmanserrin}.

\begin{lemma}[Well-posedness of the $G$-variation]\label{lem:wellposed}
Assume \textup{(A2)} and let $\mu\in\Meas(\Omega,\V)$.
\begin{enumerate}
  \item[\textup{(a)}] For every $\rho\in\Meas^{+}(\Omega)$ with $|\mu|\ll\rho$,
    $$
      G^{\infty}\Bigl(\frac{\mathrm{d}\mu}{\mathrm{d}\rho}\Bigr)\,\rho
      \;=\; G^{\infty}(\sigma)\,|\mu| \;=\; |\mu|_{G} ,
    $$
    so $|\mu|_{G}$ does not depend on the dominating measure used to compute it.
  \item[\textup{(b)}] $c\,|\mu| \le |\mu|_{G}\le C\,|\mu|$; in particular
    $|\mu|_{G}$ and $|\mu|$ are mutually absolutely continuous and
    $(|\mu|_{G})^{s} = |\mu^{s}|_{G}$.
  \item[\textup{(c)}] $p_{G} = \sigma/G^{\infty}(\sigma)$ with
    $G^{\infty}(p_{G}) = 1$, i.e.\ $p_{G}(x)\in\SG$, for $|\mu|_{G}$-a.e.\ $x$,
    and
    $$
      \mu = p_{G}\,|\mu|_{G},
      \qquad
      \mu^{s} = p_{G}\,|\mu^{s}|_{G} .
    $$
    Consequently
    $$
      \int_{\Omega} G^{\infty}\Bigl(\frac{\mathrm{d}\mu^{s}}{\mathrm{d}|\mu^{s}|}\Bigr)
        \,\mathrm{d}|\mu^{s}| = |\mu^{s}|_{G}(\Omega),
      \qquad
      |\mu|_{G}(\Omega)
      = \int_{\Omega} G^{\infty}\Bigl(\frac{\mathrm{d}\mu}{\mathrm{d}\Leb}\Bigr)
        \,\mathrm{d}x \;+\; |\mu^{s}|_{G}(\Omega).
    $$
\end{enumerate}
\end{lemma}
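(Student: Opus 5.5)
The plan is to reduce everything to the polar decomposition $\mu=\sigma|\mu|$ and the positive $1$-homogeneity of $G^\infty$, in the order (a), (b), (c).

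For (a), take $\rho$ with $|\mu|\ll\rho$. By the chain rule for Radon--Nikod\'ym derivatives, $\frac{\mathrm{d}\mu}{\mathrm{d}\rho}=\sigma\,\frac{\mathrm{d}|\mu|}{\mathrm{d}\rho}$ $\rho$-a.e., and $h:=\frac{\mathrm{d}|\mu|}{\mathrm{d}\rho}\ge0$. Positive homogeneity then gives
\[
G^\infty\Bigl(\frac{\mathrm{d}\mu}{\mathrm{d}\rho}\Bigr)=h\,G^\infty(\sigma)\qquad\rho\text{-a.e.}
\]
This holds also where $h=0$, since $G^\infty(0)=0$. Hence $G^\infty(\frac{\mathrm{d}\mu}{\mathrm{d}\rho})\rho=G^\infty(\sigma)h\rho=G^\infty(\sigma)|\mu|$. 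The one point needing care is that $\sigma$ is only defined $|\mu|$-a.e. On the set where $h=0$ its values are irrelevant, so any Borel representative works.

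For (b), (A2) together with $|\sigma|=1$ gives $c\le G^\infty(\sigma)\le C$ $|\mu|$-a.e., hence $c|\mu|\le|\mu|_G\le C|\mu|$. Mutual absolute continuity follows immediately. For the singular parts, I would apply (a) with $\rho=|\mu|$ and split $|\mu|=|\mu^a|\Leb+|\mu^s|$ along a Lebesgue-null Borel set $E$ carrying $\mu^s$. Then
\[
(|\mu|_G)\Lcorner E=G^\infty(\sigma)|\mu^s|.
\]
Also, $\sigma$ coincides $|\mu^s|$-a.e.\ with the polar of $\mu^s$, because $\mu^s=\mu\Lcorner E$. So this equals $|\mu^s|_G$, and since $(|\mu|_G)\Lcorner E$ is singular while the rest is absolutely continuous, it is exactly $(|\mu|_G)^s$.

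For (c), the density $\frac{\mathrm{d}|\mu|_G}{\mathrm{d}|\mu|}=G^\infty(\sigma)$ is bounded below by $c>0$. Therefore $p_G=\frac{\mathrm{d}\mu}{\mathrm{d}|\mu|_G}=\sigma/G^\infty(\sigma)$ $|\mu|_G$-a.e., and homogeneity gives $G^\infty(p_G)=1$. The identity $\mu=p_G|\mu|_G$ is the definition of $p_G$, and restricting to $E$ as in (b) gives $\mu^s=p_G|\mu^s|_G$. The first displayed formula is just $|\mu^s|_G(\Omega)$ by definition. The second follows from (a) with $\rho=\Leb+|\mu^s|$: the density is $\mu^a$ off $E$ and the polar of $\mu^s$ on $E$, which yields $\int G^\infty(\mu^a)\,\mathrm{d}x+|\mu^s|_G(\Omega)$. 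No step is a genuine obstacle; the only thing to watch is the bookkeeping of a.e.\ representatives on the singular set $E$.
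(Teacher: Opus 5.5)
Your proposal is correct and follows the paper's proof essentially step by step: the chain rule plus positive $1$-homogeneity for (a), the bounds $c\le G^\infty(\sigma)\le C$ and restriction to an $\Leb$-null set carrying $\mu^s$ (with uniqueness of the Lebesgue decomposition) for (b), and the chain rule with $\rho=|\mu|_G$ for (c). The differences are cosmetic. For the last identity you apply (a) once to $\mu$ with $\rho=\Leb+|\mu^s|$, whereas the paper splits $|\mu|_G=|\mu|_G\Lcorner N^c+|\mu^s|_G$ and applies (a) to $\mu\Lcorner N^c$ with $\rho=\Leb$; and your remark that $G^\infty(0)=0$ disposes of the set $\{h=0\}$ makes explicit a point the paper leaves implicit.
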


\begin{proof}
(a) By the chain rule for Radon--Nikod\'ym derivatives
\cite[Sect.~1.3]{AFP},
$\mathrm{d}\mu/\mathrm{d}\rho = \sigma\,\mathrm{d}|\mu|/\mathrm{d}\rho$
$\rho$-a.e., and $\mathrm{d}|\mu|/\mathrm{d}\rho\ge 0$. Positive
$1$-homogeneity of $G^{\infty}$ therefore gives
$$
  G^{\infty}\Bigl(\frac{\mathrm{d}\mu}{\mathrm{d}\rho}\Bigr)
  = G^{\infty}(\sigma)\,\frac{\mathrm{d}|\mu|}{\mathrm{d}\rho}
  \qquad \rho\text{-a.e.},
$$
and multiplying by $\rho$ yields (a). Note that subadditivity is never
invoked: the scalar factor pulled out of $G^{\infty}$ is nonnegative.

(b) Being finite and $\mathscr{A}$-quasiconvex, $G^{\infty}$ is $\WC$-convex, hence
Lipschitz by \textup{(A1)} (Lemma~\ref{qc properties lemma}); thus $G^{\infty}\circ\sigma$ is Borel. Since
$|\sigma| = 1$ $|\mu|$-a.e., \textup{(A2)} gives
$c\le G^{\infty}(\sigma)\le C$ $|\mu|$-a.e., whence the two-sided bound, and
mutual absolute continuity follows. For the last claim, let $N$ be an
$\Leb$-null Borel set with $\mu^{s} = \mu\llcorner N$, so that
$\mu - \mu^{s} = \mu\llcorner N^{c}\ll\Leb$. Then
$\mu^{s} = \sigma\,|\mu|\llcorner N$ with $|\sigma| = 1$, hence
$|\mu^{s}| = |\mu|\llcorner N$ and
$$
  \frac{\mathrm{d}\mu^{s}}{\mathrm{d}|\mu^{s}|} = \sigma
  \qquad |\mu^{s}|\text{-a.e.}
$$
Definition~\ref{def:Gvar} applied to $\mu^{s}$ thus gives
$|\mu^{s}|_{G} = G^{\infty}(\sigma)\,|\mu|\llcorner N = |\mu|_{G}\llcorner N$.
Since $|\mu|_{G}\llcorner N$ is carried by the $\Leb$-null set $N$ while
$|\mu|_{G}\llcorner N^{c} = G^{\infty}(\sigma)\,|\mu|\llcorner N^{c}\ll\Leb$,
uniqueness of the Lebesgue decomposition yields
$(|\mu|_{G})^{s} = |\mu|_{G}\llcorner N = |\mu^{s}|_{G}$.

(c) By (b) we may take $\rho = |\mu|_{G}$ in the chain rule, so
$p_{G} = \sigma\,\mathrm{d}|\mu|/\mathrm{d}|\mu|_{G}
       = \sigma/G^{\infty}(\sigma)$, and $G^{\infty}(p_{G}) = 1$ by
$1$-homogeneity. Restricting $\mu = p_{G}|\mu|_{G}$ to the set $N$ of the proof
of (b) gives $\mu^{s} = p_{G}\,|\mu|_{G}\llcorner N = p_{G}\,|\mu^{s}|_{G}$.
The first displayed identity is Definition~\ref{def:Gvar} applied to
$\mu^{s}$, integrated over $\Omega$. For the second, split
$|\mu|_{G} = |\mu|_{G}\llcorner N^{c} + |\mu^{s}|_{G}$ by (b), and apply (a)
to $\mu\llcorner N^{c}$ with $\rho = \Leb$, which gives
$|\mu|_{G}\llcorner N^{c} = G^{\infty}(\mathrm{d}\mu/\mathrm{d}\Leb)\,\Leb$.
\end{proof}

\begin{remark}[Two classically equivalent definitions that decouple]
\label{rmk:nodual}
For non-convex $G^{\infty}$ the functional $|\cdot|_{G}(\Omega)$ must be
defined through the polar as above, and not by either of the two standard
equivalent recipes.
\begin{enumerate}
  \item[\textup{(a)}] \emph{No dual representation.} For $\varphi\in
    \CC_{c}(\Omega,\V^{\ast})$ one has
    $\sup\{\int_{\Omega}\varphi\cdot\mathrm{d}\mu :
      (G^{\infty})^{\circ}(\varphi)\le 1\}
      = |\mu|_{G^{\ast\ast}}(\Omega)$,
    the polar body seeing only $\overline{\mathrm{co}}(\SG)$. In
    particular $|\cdot|_{G}(\Omega)$ is not a dual norm and is not
    sequentially weak\nobreakdash-$\ast$ lower semicontinuous on
    $\Meas(\Omega,\V)$; its relaxation there is $|\cdot|_{G^{\ast\ast}}(\Omega)$.
  \item[\textup{(b)}] \emph{No partition definition.} A positively
    $1$-homogeneous function is subadditive if and only if it is convex, so
    there exist $z_{1},z_{2}\in \V$ with
    $G^{\infty}(z_{1}+z_{2}) > G^{\infty}(z_{1})+G^{\infty}(z_{2})$; refining a
    partition may then \emph{decrease} $\sum_{i}G^{\infty}(\mu(E_{i}))$, and
    the supremum over finite Borel partitions strictly exceeds
    $|\mu|_{G}(\Omega)$ in general.
\end{enumerate}
Neither failure is a defect of Definition~\ref{def:Gvar}: it is precisely the
phenomenon under study. Lower semicontinuity is restored \emph{along
$\mathscr{A}$-free sequences} \cite{ArroyoRabasaDePhilippisRindler20}, which is also
what makes the Jensen inequality with $G^{\infty}$, rather than with
$G^{\infty\ast\ast}$, available in the proof of
Proposition~\ref{prop:rigidity}.
\end{remark}

\begin{proposition}[$G$-strict rigidity of the concentration part]
\label{prop:rigidity}
Assume \textup{(A1)}--\textup{(A3)}, and let $\Omega\subset\mathbb{R}^{n}$ be a
bounded Lipschitz domain. Let $(\mu_{j})\subset\Meas(\Omega,\V)$ satisfy
$\mathscr{A}\mu_{j}=0$ and $\sup_{j}|\mu_{j}|(\Omega)<\infty$, and let $(\mu_{j})$
generate $\nu = \bigl(\nu_{x},\lambda,\nu^{\infty}_{x}\bigr)$ in the
$G$-normalisation, with $\lambda(\partial\Omega)=0$ and barycentre
$\overline{\nu} = \mu$. Decompose
$$
  \lambda \;=\; \lambda^{a}\,\Leb \;+\; b_{G}\,|\mu^{s}|_{G} \;+\; \lambda^{\ast},
  \qquad \lambda^{\ast}\perp\bigl(\Leb + |\mu^{s}|_{G}\bigr).
$$
Comparing the Lebesgue decompositions of the two sides of
$\mu = \overline{\nu}_{x}\Leb + \overline{\nu}^{\infty}_{x}\lambda$ and using
$\mu^{s} = p_{G}|\mu^{s}|_{G}$ (Lemma~\ref{lem:wellposed}(c)), one finds
$\mathrm{d}\mu/\mathrm{d}\Leb = \overline{\nu}_{x}+\lambda^{a}\overline{\nu}^{\infty}_{x}$ $\Leb$-a.e.,
$p_{G} = b_{G}\,\overline{\nu}^{\infty}_{x}$ $|\mu^{s}|_{G}$-a.e., and
$\overline{\nu}^{\infty}_{x} = 0$ $\lambda^{\ast}$-a.e. If
$\mu_{j}\to\mu$ $G$-strictly, then:
\begin{enumerate}
  \item[\textup{(i)}] $\lambda^{\ast} = 0$ and $b_{G}\equiv 1$; equivalently
    $\lambda^{s} = |\mu^{s}|_{G}$ and
    $$
      \theta_{x} := \overline{\nu}^{\infty}_{x} = p_{G}(x)\in\SG
      \qquad\text{for } |\mu^{s}|_{G}\text{-a.e.\ } x ,
    $$
    i.e.\ the barycentre returns to the level set $\SG$;
  \item[\textup{(ii)}] $\theta_{x}\in\WC\cap\SG$ and
    $$
      \operatorname{supp}\nu^{\infty}_{x}\;\subseteq\;F_{\theta_{x}}
      \;\subseteq\;K_{G}\cap\SG
      \qquad\text{for } |\mu^{s}|_{G}\text{-a.e.\ } x ;
    $$
  \item[\textup{(iii)}] if moreover
    \begin{equation}\tag{$\star$}\label{eq:star}
      \bigl\{\eta\in\mathscr{M}^{+}_{1}(F_{\theta}) : \overline\eta = \theta\bigr\}
      = \{\delta_{\theta}\}
      \qquad\text{for every }\theta\in\WC\cap\SG,
    \end{equation}
    (i.e., $\theta$ is an \emph{extreme} point), then $\nu^{\infty}_{x} = \delta_{p_{G}(x)}$ for $|\mu^{s}|_{G}$-a.e.\ $x$:
    the concentration of $\nu$ on the singular part of $\mu$ is elementary,
    and no Euclidean mass is lost there, i.e.
    $$
      \lim_{j\to\infty}|\mu_{j}|(\Omega)
      = \int_{\Omega}\Bigl[\dual{|\cdot|}{\nu_{x}}
        + \lambda^{a}\,\dual{|\cdot|}{\nu^{\infty}_{x}}\Bigr]\,\mathrm{d}x
        + |\mu^{s}|(\Omega) .
    $$
\end{enumerate}
\end{proposition}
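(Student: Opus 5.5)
The plan is to follow the heuristic computation of Section~\ref{convergence of energy section}, now made rigorous with Proposition~\ref{sufficiency proposition} and Lemma~\ref{lem:dent}.

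\textbf{Step 1: energy identity.} Since $\mathscr{A}\mu_j=0$, Proposition~\ref{sufficiency proposition} applies with $v_j=\mu_j$ and $v=\mu$. It gives the Jensen inequality \eqref{Jensen inequality in sufficiency theorem} $\Leb$-a.e. It also gives the barycentre relations $p_G=b_G\overline{\nu}^\infty_x$, $b_G\ge1$ and $G^\infty(\overline{\nu}^\infty_x)=b_G^{-1}$, valid $|\mu^s|_G$-a.e. Because $G\in\E_G(\Omega,\V)$ by (A3), testing the generation with $G$ gives
$$\lim_j\mathcal{G}(\mu_j)=\int_\Omega\big(\langle G,\nu_x\rangle+\lambda^a\langle G^\infty,\nu^\infty_x\rangle\big)\dd x+\int_{\Omegacl\setminus\Omega}\dd\lambda+\int_\Omega\dd\lambda^s .$$
Here we use \eqref{eq:unitmass} and the fact that $\lambda(\partial\Omega)=0$ kills the boundary term. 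Subtract $\mathcal{G}(\mu)=\int G(\overline{\nu}_x+\lambda^a\overline{\nu}^\infty_x)\dd x+|\mu^s|_G(\Omega)$ from this limit. By $G$-strict convergence the difference is zero, and it splits as
$$0=\int_\Omega\big[\text{Jensen defect}\big]\dd x+\int_\Omega(b_G-1)\dd|\mu^s|_G+\lambda^*(\Omega).$$
All three terms are nonnegative, so each one vanishes. This gives $\lambda^*=0$ and $b_G=1$, hence $\theta_x=p_G\in\SG$, which is (i).

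\textbf{Step 2: wave cone and faces.} By Proposition~\ref{sufficiency proposition}, $p_G\in\WC\cap\SG$, and therefore $\theta_x\in\WC\cap\SG$. Lemma~\ref{lem:dent} then gives $\mathcal{L}_{\theta_x}\neq\emptyset$. Take any $\ell\in\mathcal{L}_{\theta_x}$. Since $\nu^\infty_x$ is a probability measure with barycentre $\theta_x$, linearity gives
$$1=\ell(\theta_x)=\int\ell\dd\nu^\infty_x\le\int G^\infty\dd\nu^\infty_x=1 .$$
So the continuous nonnegative function $G^\infty-\ell$ vanishes $\nu^\infty_x$-a.e., and hence on $\operatorname{supp}\nu^\infty_x$.

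One care point: this must hold simultaneously for all $\ell\in\mathcal{L}_{\theta_x}$ at a.e. $x$. At each fixed good $x$ the argument works for every $\ell$ at once, because the only a.e.\ statements used are the identities $\langle G^\infty,\nu^\infty_x\rangle=1$ and $\overline{\nu}^\infty_x=\theta_x$, which do not depend on $\ell$. Intersecting over $\ell$ gives $\operatorname{supp}\nu^\infty_x\subseteq F_{\theta_x}$. Finally $F_{\theta}\subseteq K_G$ by Definition~\ref{def:face}, which is (ii).

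\textbf{Step 3: extreme points and mass.} Under \eqref{eq:star}, $\nu^\infty_x$ is a probability measure on $F_{\theta_x}$ with barycentre $\theta_x$, so it must equal $\delta_{\theta_x}=\delta_{p_G(x)}$. For the Euclidean mass, test the generation with $\Phi=|\cdot|$ (times a cutoff). The function $|\cdot|$ lies in $\HH$, so the total variations converge in $\mathscr{M}(\Omegacl)$ to
$$\langle|\cdot|,\nu_x\rangle\Leb+|\cdot|(\nu^\infty_x)\lambda .$$
Since $\lambda(\partial\Omega)=0$, nothing is lost at the boundary. On the singular part,
$$\int|p_G|\dd|\mu^s|_G=\int\frac{|\sigma|}{G^\infty(\sigma)}\,G^\infty(\sigma)\dd|\mu^s|=|\mu^s|(\Omega),$$
using Lemma~\ref{lem:wellposed}(c).

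\textbf{Main obstacle.} The delicate step is Step 1, and specifically the bookkeeping of the decomposition of $\lambda$ and of $\partial\Omega$. We must check that the mass of $\lambda$ on $\partial\Omega$ does not enter (it does not, since $\lambda(\partial\Omega)=0$). We must also check that the Jensen defect is genuinely integrable and nonnegative, so that a vanishing sum forces each term to vanish; here I would rely on the moment condition and on the Lipschitz bound on $G$. Finally, the Lipschitz-domain assumption should be needed only to quote Proposition~\ref{sufficiency proposition} globally; a local version with cutoffs would suffice.
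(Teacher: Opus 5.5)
Your proposal is correct and follows the paper's proof step by step. For (i) you use the same three-term comparison of $\ddual{G}{\nu}$ with $\mathcal{G}(\mu)$, built from the Jensen inequality for $f=G$, the bound $b_G\ge1$ and $\lambda^{\ast}\ge0$; for (ii) the supporting functionals of Lemma~\ref{lem:dent}; and for (iii) a test with $|\cdot|$. The only cosmetic difference is that you read off $p_G\in\WC$ from Proposition~\ref{sufficiency proposition} instead of citing De Philippis--Rindler directly, but that proposition rests on the same theorem.
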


\begin{proof}
Since $\lambda(\partial\Omega)=0$, generation gives
$\ddual{G}{\nu} = \lim_{j}\ddual{G}{\boldsymbol{\eps}_{\mu_{j}}}
 = \lim_{j}\mathcal{G}(\mu_{j})$, so the hypothesis reads
$\ddual{G}{\nu} = \mathcal{G}(\mu)$. By \eqref{eq:unitmass} the left-hand
side is
$$
  \ddual{G}{\nu}
  = \int_{\Omega}\dual{G}{\nu_{x}}\,\mathrm{d}x + \lambda(\Omega)
  = \int_{\Omega}\bigl[\dual{G}{\nu_{x}} + \lambda^{a}\bigr]\mathrm{d}x
  + \int_{\Omega} b_{G}\,\mathrm{d}|\mu^{s}|_{G}
  + \lambda^{\ast}(\Omega),
$$
while, by \eqref{eq:energy} and $G^{\infty}(p_{G}) = 1$,
$$
  \mathcal{G}(\mu)
  = \int_{\Omega}G\bigl(\overline{\nu}_{x}+\lambda^{a}\overline{\nu}^{\infty}_{x}\bigr)\mathrm{d}x
  + \int_{\Omega} G^{\infty}(p_{G})\,\mathrm{d}|\mu^{s}|_{G}.
$$
We compare the three groups of terms. Proposition~\ref{sufficiency proposition}
gives the Jensen inequality \eqref{Jensen inequality in sufficiency theorem},
which we apply to $f=G$, and, by its last assertion,
$G^{\infty}(\overline{\nu}^{\infty}_{x})=b_{G}(x)^{-1}\le1$ at
$|\mu^{s}|_{G}$-a.e.\ $x$ and $\overline{\nu}^{\infty}_{x}=0$ at
$\lambda^{\ast}$-a.e.\ $x$. Together with \eqref{eq:unitmass}, this gives
$$
  G\bigl(\overline{\nu}_{x}+\lambda^{a}\overline{\nu}^{\infty}_{x}\bigr)
  \le \dual{G}{\nu_{x}} + \lambda^{a}\,\dual{G^{\infty}}{\nu^{\infty}_{x}}
   = \dual{G}{\nu_{x}} + \lambda^{a}
  \quad \Leb\text{-a.e.},
$$
and, at $\lambda^{s}$-a.e.\ $x$,
$$
  G^{\infty}\bigl(\overline{\nu}^{\infty}_{x}\bigr)
  \le \dual{G^{\infty}}{\nu^{\infty}_{x}} = 1 .
$$
Applying $G^{\infty}$ to $p_{G} = b_{G}\overline{\nu}^{\infty}_{x}$ and using
$1$-homogeneity, $1 = b_{G}\,G^{\infty}(\overline{\nu}^{\infty}_{x})\le b_{G}$.
Finally $\lambda^{\ast}\ge 0$ has no counterpart on the right. Hence every
term on the left dominates the corresponding one on the right, and equality
forces
$$
  \lambda^{\ast}(\Omega) = 0,
  \qquad b_{G} = 1 \ \ |\mu^{s}|_{G}\text{-a.e.},
$$
which is (i); note $b_{G}=1$ is equivalent to
$G^{\infty}(\overline{\nu}^{\infty}_{x}) = 1 = \dual{G^{\infty}}{\nu^{\infty}_{x}}$,
i.e.\ to equality in the second Jensen inequality.

For (ii), by the structure theorem for $\mathscr{A}$-free measures
\cite[Thm.~1.1]{DePhilippisRindler16} the polar
$\mathrm{d}\mu^{s}/\mathrm{d}|\mu^{s}|$ lies in $\WC$ $|\mu^{s}|$-a.e.; as
$p_{G}$ is a positive multiple of it and $|\mu^{s}|_{G}$ and $|\mu^{s}|$ are
mutually absolutely continuous by \textup{(A2)}, we get
$\theta_{x} = p_{G}(x)\in\WC\cap\SG$. Lemma~\ref{lem:dent} provides
$\ell\in\mathcal{L}_{\theta_{x}}$, and then
$$
  1 = \ell(\theta_{x}) = \int_{\SG}\ell\,\mathrm{d}\nu^{\infty}_{x}
    \le \int_{\SG} G^{\infty}\,\mathrm{d}\nu^{\infty}_{x} = 1 ,
$$
so $\int_{\SG}(G^{\infty}-\ell)\,\mathrm{d}\nu^{\infty}_{x} = 0$ with a
continuous nonnegative integrand; hence
$\operatorname{supp}\nu^{\infty}_{x}\subseteq\{z\in\SG:\ell(z)=1\}$.
Intersecting over $\ell\in\mathcal{L}_{\theta_{x}}$ gives
$\operatorname{supp}\nu^{\infty}_{x}\subseteq F_{\theta_{x}}$, and
$F_{\theta_{x}}\subseteq K_{G}$ by Definition~\ref{def:face}.

For (iii), $\nu^{\infty}_{x}$ is a probability measure on $F_{\theta_{x}}$ with
barycentre $\theta_{x}$, hence equals $\delta_{\theta_{x}} = \delta_{p_{G}(x)}$
by \eqref{eq:star}. For the mass identity, test the generation with
$|\cdot|\in\E_{G}(\Omega,\V)$: by (i), $\lambda = \lambda^{a}\Leb +
|\mu^{s}|_{G}$, and on the singular part
$\dual{|\cdot|}{\delta_{p_{G}}}\,|\mu^{s}|_{G} = |p_{G}|\,|\mu^{s}|_{G}
= |\mu^{s}|$ by Lemma~\ref{lem:wellposed}(c).
\end{proof}

\begin{remark}[What is \emph{not} controlled]
$G^{\infty}$ is affine along rays, so in the purely $1$-homogeneous case
$G = G^{\infty}$ the first Jensen inequality is saturated by
$\nu_{x}=\tfrac12\delta_{0}+\tfrac12\delta_{2z_{0}}$, and no rigidity of
$\nu_{x}$ can be expected. For a general integrand $G$ as in \textup{(A3)}
this is no longer so: equality in the first Jensen inequality,
$$
  G\bigl(\overline{\nu}_{x}+\lambda^{a}\overline{\nu}^{\infty}_{x}\bigr)
  = \dual{G}{\nu_{x}} + \lambda^{a} ,
$$
which the proof of Proposition~\ref{prop:rigidity} also delivers, is a genuine
constraint. If $G$ is \emph{strictly $\mathscr{A}$-quasiconvex} at
$\mathrm{d}\mu/\mathrm{d}\Leb(x)$, in the sense that this equality forces
$\nu_{x} = \delta_{\mathrm{d}\mu/\mathrm{d}\Leb(x)}$ and
$\lambda^{a}\overline{\nu}^{\infty}_{x} = 0$, then the whole generated Young measure is
elementary and $\mu_{j}\to\mu$ area-strictly. By Remark~\ref{rmk:noconv} this
hypothesis cannot be supplied by strict convexity of $G$ in the regime
$N_{G}\neq\emptyset$; identifying a workable substitute is the remaining gap
in the oscillation half of the problem, and Proposition~\ref{prop:rigidity} as
stated concerns the concentration part only.
\end{remark}
\begin{remark}[Diffuse concentration]\label{rmk:diffuse}
Proposition~\ref{prop:rigidity} describes the concentration of $\nu$ on the
singular part of $\mu$ only, and its proof uses that there the barycentre
$\theta_x$ lies in the wave cone, where $G^\infty$ is convex by
Theorem~\ref{KK theorem}. At a point of diffuse concentration,
$\lambda^a(x)>0$, the barycentre
$\frac{\mathrm{d}\mu}{\mathrm{d}\Leb}(x)
=\overline{\nu}_x+\lambda^a\overline{\nu}^\infty_x$ need not lie in $\WC$,
and away from the wave cone $\WC$-convexity gives no automatic convexity,
not even automatic $\mathscr{A}$-quasiconvexity. Indeed, for
$\mathscr{A}=\operatorname{curl}$, Cassese \cite[Cor.~1.6]{Cassese26}
constructs a positively $1$-homogeneous rank-one
convex integrand $f\colon\R^{4\times4}\to[0,\infty)$ that is not quasiconvex
at a matrix of rank two, so Theorem~\ref{KK theorem}, which gives convexity
at the matrices of rank at most one, cannot be extended even to
quasiconvexity at matrices of rank two. As observed in
\cite[\S1.4]{Cassese26}, the obstruction comes from oscillation: among the
generalized gradient Young measures with that rank-two barycentre, some with
nontrivial oscillation lower the energy of $f$ strictly below its value at
the barycentre, whereas, by \cite[Thm.~1.10]{GennaioliRindler26}, purely
concentrating sequences of gradients cannot lower the energy of a rank-one
convex integrand that is nonnegative on the rank-one cone and has a strong
recession function, such as $f$. The latter result rests on the
decomposition of concentration Young measures into probability measures
with barycentres in $\WC$ \cite[Thms.~1.6 and~1.7]{GennaioliRindler26}, to
which Theorem~\ref{KK theorem} applies. In the absence of oscillation this
decomposition suffices: at points of diffuse concentration where $\nu_x$ is a
Dirac mass, the concentration-angle measure does not charge the dent either
(Proposition~\ref{prop:diffuse} below). With oscillation, the equality
\eqref{equality 1 G infty and nux} couples $\nu_x$ and $\nu^\infty_x$ at a
barycentre outside the wave cone. Beyond that equality, the proof of
Proposition~\ref{prop:diffuse} uses only the $\WC$-convexity of $G$, which by
Cassese's example cannot control oscillation, so the structure of
$\nu^\infty_x$ there remains tied to the oscillation problem left open in the
previous remark.
\end{remark}
\begin{proposition}[Diffuse concentration without oscillation]\label{prop:diffuse} Assume \textup{(A1)}--\textup{(A3)} and that $\mathscr{A}$ is of first order,
and let $\Omega$, $(\mu_j)$ and $\nu$ be as in
Proposition~\ref{prop:rigidity}, with $\mu_j\to\mu$ $G$-strictly. Then for
$\Leb$-a.e.\ $x\in\Omega$ such that $\lambda^a(x)>0$ and $\nu_x$ is a Dirac
mass:
\begin{enumerate}
  \item[\textup{(i)}] there is a Borel probability measure $\pi_x$ on
    $\mathscr{M}^+_1(\SG)$ such that
    $$
      \nu^\infty_x=\int\tau\,\mathrm{d}\pi_x(\tau),
      \quad\text{i.e.}\quad
      \dual{\varphi}{\nu^\infty_x}=\int\dual{\varphi}{\tau}\,\mathrm{d}\pi_x(\tau)
      \ \text{ for all }\varphi\in\CC(\SG),
    $$
    and, for $\pi_x$-a.e.\ $\tau$, $\overline{\tau}\in\WC\cap\SG$ and
    $\operatorname{supp}\tau\subseteq F_{\overline{\tau}}$. In particular
    $\operatorname{supp}\nu^\infty_x\subseteq K_G\cap\SG$: the
    concentration-angle measure does not charge the dent;
  \item[\textup{(ii)}] if moreover \eqref{eq:star} holds, then
    $\tau=\delta_{\overline{\tau}}$ for $\pi_x$-a.e.\ $\tau$, and
    $\operatorname{supp}\nu^\infty_x\subseteq\WC\cap\SG$.
\end{enumerate}
\end{proposition}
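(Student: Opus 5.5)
The argument has three stages. First we extract a pointwise equality from $G$-strict convergence. Then we decompose $\nu^\infty_x$ via Gennaioli--Rindler. Finally we run the face argument of Proposition~\ref{prop:rigidity}(ii) on each piece of the decomposition.

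\emph{Step 1: equality at diffuse points.} The proof of Proposition~\ref{prop:rigidity} already delivers
\[
\dual{G}{\nu_x}+\lambda^a(x)=G\bigl(\overline{\nu}_x+\lambda^a(x)\overline{\nu}^\infty_x\bigr)
\]
for $\Leb$-a.e.\ $x$. Fix such an $x$ with $\lambda^a(x)>0$ and $\nu_x=\delta_{a}$, and set $t:=\lambda^a(x)$. The identity then reads
\[
G(a+t\theta)=G(a)+t\dual{G^\infty}{\nu^\infty_x}=G(a)+t,
\qquad \theta:=\overline{\nu}^\infty_x .
\]
Lemma~\ref{qc properties lemma}(c) only gives $G(a+tw)\le G(a)+tG^\infty(w)$ for $w\in\WC$, and $\theta$ need not lie in $\WC$. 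So we need a decomposition of $\nu^\infty_x$ into pieces whose barycentres are in $\WC$.

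\emph{Step 2: decomposition.} At such $x$, the homogeneous Young measure $(\delta_a,t\nu^\infty_x)$ is, after blow-up at $x$, generated by a purely concentrating $\mathscr{A}$-free sequence. For first order $\mathscr{A}$ we invoke \cite[Thms.~1.6 and~1.7]{GennaioliRindler26}. They give a probability $\pi_x$ on $\mathscr{M}^+_1(\SG)$ with $\nu^\infty_x=\int\tau\,\mathrm{d}\pi_x$ and $\overline{\tau}\in\WC$ for $\pi_x$-a.e.\ $\tau$.

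The cited result is stated in the spherical normalisation. We transport it with Lemma~\ref{rmk:transfer}: the map $\Pi_\#$ together with the reweighting by $G^\infty/m$ sends a spherical decomposition to a $G$-one. Wave-cone barycentres are preserved because $\WC$ is a cone.

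This step is the main obstacle. Here one has to check that the blow-up at a Lebesgue point is legitimate and that the hypotheses of \cite{GennaioliRindler26} are met in our setting. I would first try to apply their statement directly to tangent Young measures at $\Leb$-a.e.\ point.

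\emph{Step 3: each piece lands on the indicatrix.} By Theorem~\ref{KK theorem}, $G^\infty(\overline{\tau})\le\dual{G^\infty}{\tau}=1$. The map $s\mapsto G(a+s w)$ is convex for $w\in\WC$. Iterating Lemma~\ref{qc properties lemma}(c) over a fine discretization of $\pi_x$, as in Lemma~\ref{integral jensen inequality lemma} with base point $a$ in place of $0$, gives
\[
G(a+t\theta)\le G(a)+t\int G^\infty(\overline{\tau})\,\mathrm{d}\pi_x(\tau)\le G(a)+t .
\]
Step 1 forces both inequalities to be equalities. Hence $G^\infty(\overline{\tau})=1$, that is $\overline{\tau}\in\WC\cap\SG$, for $\pi_x$-a.e.\ $\tau$.

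\emph{Step 4: supports.} Now repeat the argument of Proposition~\ref{prop:rigidity}(ii) for each such $\tau$. Lemma~\ref{lem:dent} gives $\ell\in\mathcal{L}_{\overline{\tau}}$, and
\[
1=\ell(\overline{\tau})=\dual{\ell}{\tau}\le\dual{G^\infty}{\tau}=1 .
\]
Intersecting over $\ell$ yields $\operatorname{supp}\tau\subseteq F_{\overline{\tau}}\subseteq K_G\cap\SG$. Since $K_G\cap\SG$ is closed, integrating over $\pi_x$ gives $\operatorname{supp}\nu^\infty_x\subseteq K_G\cap\SG$, which proves (i).

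For (ii), condition \eqref{eq:star} applied to $\tau\in\mathscr{M}^+_1(F_{\overline{\tau}})$ gives $\tau=\delta_{\overline{\tau}}$. Then $\nu^\infty_x$ is carried by the closed set $\WC\cap\SG$.
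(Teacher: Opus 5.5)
Your proposal is correct and follows the paper's proof in Steps~1, 2 and~4: the Lebesgue-part equality extracted from the proof of Proposition~\ref{prop:rigidity}, the Gennaioli--Rindler decomposition transported to the $G$-normalisation, and the face argument of Proposition~\ref{prop:rigidity}(ii) applied to each $\tau$.

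\textbf{Step 3.} Here you take a slightly different route. You iterate $G(z+w)\le G(z)+G^\infty(w)$, $w\in\WC$, over a $\WC$-valued simple approximation of $\tau\mapsto t\overline{\tau}$; in effect you rerun the proof of Lemma~\ref{integral jensen inequality lemma} with base point $a$ and the measure $\pi_x$. The paper instead packages this subadditivity into the convex deficiency integrand $\mathrm{CD}_G$. That integrand is finite, positively $1$-homogeneous and subadditive, hence convex, and it equals $G^\infty$ on $\WC$. The chain
$1=(G(a+t\theta)-G(a))/t\le\mathrm{CD}_G(\theta)\le\int\mathrm{CD}_G(\overline{\tau})\,\mathrm{d}\pi_x=\int G^\infty(\overline{\tau})\,\mathrm{d}\pi_x\le1$
then follows from ordinary Jensen for a convex function, with no discretization and no use of the closedness of $\WC$. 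Your version uses only tools already in the paper and yields the same inequality, so nothing is lost. In both versions you need $\theta=\int\overline{\tau}\,\mathrm{d}\pi_x$, which follows by testing the decomposition with coordinate functions.

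\textbf{Step 2.} This is the step you left as a plan, and the paper fills it as follows. The localization principle at regular points \cite[Prop.~2.24]{ArroyoRabasaDePhilippisRindler20}, in the spherical normalisation, gives for $\Leb$-a.e.\ $x$ periodic $\mathscr{A}$-free maps $w_j$ generating $(\delta_a,s(x)\Leb\Lcorner Q,\nu^{\infty,\mathbb S}_x)$ on the unit cube. Then $u_j:=(w_j-a)/c_j$, with $c_j:=\norm{w_j-a}_{\LL^1(Q)}\to s(x)>0$, has the following properties:
\begin{enumerate}
\item it is $\mathscr{A}$-free and $\LL^1$-normalised;
\item it tends to $0$ in measure;
\item its weak$^*$ limit does not charge $\partial Q$.
\end{enumerate}
So $(u_j)$ is a concentrating sequence in the sense of \cite{GennaioliRindler26}, with concentration-angle measure $\nu^{\infty,\mathbb S}_x$. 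The shift by $a$ and the normalisation by $c_j$ are needed: the sequence generating $(\delta_a,t\nu^\infty_x)$ is not itself purely concentrating.

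\textbf{Transfer.} When passing to the $G$-normalisation, the mixing measure must also be reweighted. Set $\pi_x:=(\sigma\mapsto\sigma^G)_\#(\tfrac{m_\sigma}{m(x)}\pi)$ with $\sigma^G:=\Pi_\#(G^\infty\sigma)/m_\sigma$. This $\pi_x$ is a probability measure because $\int m_\sigma\,\mathrm{d}\pi(\sigma)=m(x)$. Applying $\sigma\mapsto\Pi_\#(G^\infty\sigma)/m(x)$ alone would give pieces of mass $m_\sigma/m(x)\neq1$.
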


\begin{proof}
As in the proof of
Proposition~\ref{prop:rigidity}, $G$-strict convergence turns each of the
three groups of terms compared there into an equality. The first group (the Lebesgue part) is the
integral of a nonnegative function, so the equality at $x$ reads
\begin{equation}\label{eq:diffuse equality}
  G\bigl(\overline{\nu}_x+\lambda^a\overline{\nu}^\infty_x\bigr)
  =\dual{G}{\nu_x}+\lambda^a
  \qquad\text{for }\Leb\text{-a.e.\ }x\in\Omega .
\end{equation}

Now we want to apply the result in \cite[Thm.~1.6]{GennaioliRindler26} in the spherical normalisation. Let
$(\nu_x,\lambda^{\mathbb S},\nu^{\infty,\mathbb S}_x)$ be the spherical triple
of Lemma~\ref{rmk:transfer}, and put
$s(x):=\frac{\mathrm{d}\lambda^{\mathbb S}}{\mathrm{d}\Leb}(x)$, so that
$s(x)>0$ exactly when $\lambda^a(x)>0$. By the localization principle at regular points
\cite[Prop.~2.24]{ArroyoRabasaDePhilippisRindler20}, which is stated in the
spherical normalisation and applies since the $\mu_j$ are $\mathscr{A}$-free
and $\mathscr{A}$ has constant rank, for $\Leb$-a.e.\ $x$ the homogeneous
Young measure
$\bigl(\nu_x,\,s(x)\Leb\Lcorner Q,\,\nu^{\infty,\mathbb S}_x\bigr)$ on the
unit cube $Q$ is generated by a sequence
$(w_j)\subset\CC^\infty_{\mathrm{per}}(Q,\V)$ with $\mathscr{A}w_j=0$.
Now fix such an $x$ with $s(x)>0$ and $\nu_x=\delta_\xi$. Testing the generation with $\min\{|z-\xi|,1\}$, with
$|z-\xi|$, and with $|z-\xi|\,\psi\bigl(\frac{z-\xi}{|z-\xi|}\bigr)$ for
$\psi\in\CC(\mathbb S_{\V})$, whose recession functions are $0$, $|z|$ and
$|z|\,\psi(z/|z|)$, we find that $w_j\to\xi$ in measure, that
$c_j:=\norm{w_j-\xi}_{\LL^1(Q)}\to s(x)$, and that
$$
  \int_Q\psi\Bigl(\frac{w_j-\xi}{|w_j-\xi|}\Bigr)\frac{|w_j-\xi|}{c_j}\,\mathrm{d}y
  \;\longrightarrow\;\int_{\mathbb S_{\V}}\psi\,\mathrm{d}\nu^{\infty,\mathbb S}_x
  \qquad\text{for all }\psi\in\CC(\mathbb S_{\V}).
$$
Since $\mathscr{A}$ has constant coefficients, $u_j:=(w_j-\xi)/c_j$ is
$\mathscr{A}$-free, with $\norm{u_j}_{\LL^1(Q)}=1$, $u_j\to0$ in measure and
$u_j\wstar\overline{\nu}^{\infty,\mathbb S}_x\Leb\Lcorner Q$, which does not
charge $\partial Q$. Thus $(u_j)$ is a concentrating sequence in the sense of
\cite{GennaioliRindler26}, and the display says that
$\nu^{\infty,\mathbb S}_x$ is its concentration Young measure. By
\cite[Thm.~1.6]{GennaioliRindler26} there is a Borel probability measure $\pi$
on $\mathscr{M}^+_1(\mathbb S_{\V})$, concentrated on
$\{\sigma:\overline{\sigma}\in\WC\}$, such that
$\nu^{\infty,\mathbb S}_x=\int\sigma\,\mathrm{d}\pi(\sigma)$.

In order to get back back to the $G$-normalisation, for
$\sigma\in\mathscr{M}^+_1(\mathbb S_{\V})$ put
$m_\sigma:=\dual{G^\infty}{\sigma}\in[c,C]$ and
$\sigma^G:=\Pi_\#(G^\infty\sigma)/m_\sigma\in\mathscr{M}^+_1(\SG)$, with $\Pi$
as in Lemma~\ref{rmk:transfer}. Then
$\overline{\sigma^G}=\overline{\sigma}/m_\sigma$, which lies in $\WC$ whenever
$\overline{\sigma}$ does, $\WC$ being a cone. By \eqref{eq:transfer} and
linearity,
$$
  \nu^\infty_x=\frac{\Pi_\#\bigl(G^\infty\nu^{\infty,\mathbb S}_x\bigr)}{m(x)}
  =\int\sigma^G\,\frac{m_\sigma}{m(x)}\,\mathrm{d}\pi(\sigma)
  =\int\tau\,\mathrm{d}\pi_x(\tau),
  \qquad
  \pi_x:=(\sigma\mapsto\sigma^G)_\#\Bigl(\frac{m_\sigma}{m(x)}\,\pi\Bigr),
$$
and $\pi_x$ is a probability measure because
$\int m_\sigma\,\mathrm{d}\pi(\sigma)=\dual{G^\infty}{\nu^{\infty,\mathbb S}_x}=m(x)$.
Hence $\overline{\tau}\in\WC$ for $\pi_x$-a.e.\ $\tau$, and, testing with the
coordinate functions, $\theta:=\overline{\nu}^\infty_x=\int\overline{\tau}\,\mathrm{d}\pi_x(\tau)$.

Recall the convex deficiency
integrand (cf.\ Lemma~\ref{lemma: CD_f as sup of differentiability point})
$$
  \mathrm{CD}_G(z)=\sup\Bigl\{\frac{G(w+rz)-G(w)}{r}:w\in\V,\ r>0\Bigr\}.
$$
Since $G$ is Lipschitz (Lemma~\ref{qc properties lemma}(a)), $\mathrm{CD}_G$ is
finite, positively $1$-homogeneous and subadditive, hence convex and
continuous. Moreover $\mathrm{CD}_G= G^\infty$ on $\WC$, since
$G(w+rz)\leq G(w)+rG^\infty(z)$ for $w\in\V$, $z\in\WC$ and $r>0$ by
Lemma~\ref{qc properties lemma}(c). Put $t:=\lambda^a(x)>0$. As $\nu_x=\delta_\xi$,
\eqref{eq:diffuse equality} reads $G(\xi+t\theta)-G(\xi)=t$. Therefore, by
Jensen's inequality for $\mathrm{CD}_G$ and by Theorem~\ref{KK theorem}
applied to each $\tau$,
\begin{multline*}
  1=\frac{G(\xi+t\theta)-G(\xi)}{t}
  \leq\mathrm{CD}_G(\theta)
  \leq\int\mathrm{CD}_G(\overline{\tau})\,\mathrm{d}\pi_x(\tau)\\
  =\int G^\infty(\overline{\tau})\,\mathrm{d}\pi_x(\tau)
  \leq\int\dual{G^\infty}{\tau}\,\mathrm{d}\pi_x(\tau)=1 .
\end{multline*}
All the inequalities are equalities. Since
$\dual{G^\infty}{\tau}-G^\infty(\overline{\tau})\geq0$ for $\pi_x$-a.e.\ $\tau$,
it follows that $G^\infty(\overline{\tau})=\dual{G^\infty}{\tau}=1$, that is
$\overline{\tau}\in\WC\cap\SG$, for $\pi_x$-a.e.\ $\tau$.

Let $\tau$ be as above, and let
$\ell\in\mathcal{L}_{\overline{\tau}}$, which exists by Lemma~\ref{lem:dent}.
As in the proof of Proposition~\ref{prop:rigidity}(ii),
$\int_{\SG}(G^\infty-\ell)\,\mathrm{d}\tau=1-\ell(\overline{\tau})=0$ with a
continuous nonnegative integrand, so
$\operatorname{supp}\tau\subseteq\{z\in\SG:\ell(z)=1\}$; intersecting over
$\ell\in\mathcal{L}_{\overline{\tau}}$ gives
$\operatorname{supp}\tau\subseteq F_{\overline{\tau}}\subseteq K_G$. As
$N_G=\SG\setminus K_G$ is relatively open in $\SG$, approximating its
indicator from below by continuous functions gives
$$
  \nu^\infty_x(N_G)=\int\tau(N_G)\,\mathrm{d}\pi_x(\tau)=0 ,
$$
so $\operatorname{supp}\nu^\infty_x$ lies in the closed set $K_G\cap\SG$. This
proves (i). Under \eqref{eq:star}, each such $\tau$ is a probability measure
on $F_{\overline{\tau}}$ with barycentre $\overline{\tau}\in\WC\cap\SG$, hence
$\tau=\delta_{\overline{\tau}}$; then
$\nu^\infty_x=\int\delta_{\overline{\tau}}\,\mathrm{d}\pi_x(\tau)$ is carried
by the closed set $\WC\cap\SG$, which proves (ii).
\end{proof}
\begin{remark}[Sharpness of \eqref{eq:star}]\label{rmk:sharp}
Since $F_{\theta}$ is compact, \eqref{eq:star} holds at $\theta$ if and only
if $\theta$ is an extreme point of $\operatorname{co}F_{\theta}$: a
non-extreme $\theta$ is, by Carath\'eodory's theorem, a non-trivial finite
convex combination of points of $F_{\theta}$, while an extreme point of a
compact convex set is represented only by its Dirac mass. This is in turn
equivalent to $\theta$ being an extreme point of
$C:=\overline{\mathrm{co}}(\SG)=\{G^{\infty\ast\ast}\le1\}$. Indeed, an extreme point of $C$ is extreme in
$\operatorname{co}F_{\theta}\subseteq C$. Conversely, if $\theta$ is not
extreme in $C$, it lies in the relative interior of its minimal face
$F_{\min}\neq\{\theta\}$ and is a non-trivial convex combination of extreme
points of $F_{\min}$; these are extreme points of $C$, hence lie in
$\{G^{\infty}\le1\}\cap\{G^{\infty\ast\ast}=1\}\subseteq\SG\cap K_{G}$, and they
satisfy $\ell=1$ for every $\ell\in\mathcal{L}_{\theta}$, because each exposed
face $\{\ell=1\}\cap C$ through $\theta$ contains $F_{\min}$; so they belong to
$F_{\theta}$. In the same way, choosing $\ell$ in the relative interior of the
convex set $\mathcal{L}_{\theta}$, one sees that $F_{\theta}=\{\theta\}$ if and
only if $\theta$ is an \emph{exposed} point of $C$. Thus \eqref{eq:star}
requires each $\theta\in\WC\cap\SG$ to be extreme in $C$, whereas
$F_{\theta}=\{\theta\}$ requires it to be exposed; the two differ, for
instance, where a flat side of $\partial C$ meets a strictly convex arc, and
there $F_{\theta}$ is a segment with endpoint $\theta$.

Condition \eqref{eq:star} is also \emph{necessary}: if it fails, the
conclusion of Proposition~\ref{prop:rigidity}\,(iii) fails for some
$G$-strictly convergent sequence. We give the argument for
$\mathscr{A}=\operatorname{curl}$ on $\V=\mathbb{R}^{N\times n}$, where $\WC$
is the rank-one cone. Let $\theta\in\WC\cap\SG$ and
$\eta\in\mathscr{M}^{+}_{1}(F_{\theta})$ with $\overline\eta=\theta$ and
$\eta\neq\delta_{\theta}$. Write $\theta=a\otimes\xi$ with $|\xi|=1$, let
$\Omega$ be a bounded Lipschitz domain with
$\mathcal{H}^{n-1}(\Omega\cap\xi^{\perp})>0$, and set
$u:=a\,\boldsymbol{1}_{\{x\cdot\xi>0\}}$. Then
$Du=\theta\,\mathcal{H}^{n-1}\llcorner(\Omega\cap\xi^{\perp})$ is purely
singular and, since $G^{\infty}(\theta)=1$,
$|Du|_{G}=\mathcal{H}^{n-1}\llcorner(\Omega\cap\xi^{\perp})$ and
$p_{G}=\theta$. In the $G$-normalisation consider the triple
$$
  \nu_{x}=\delta_{0},
  \qquad
  \lambda=|Du|_{G},
  \qquad
  \nu^{\infty}_{x}=\eta ,
$$
whose barycentre is $\overline\eta\,|Du|_{G}=Du$. We check the conditions of
\cite[Thm.~1]{KristensenRindler10}, in the form of Lemma~\ref{rmk:transfer}
and Remark~\ref{rmk:jensen-transfer}, numbered as in
\cite[Thm.~6.1]{KirchheimKristensen16}. Condition~(i), at Lebesgue points, is
trivial, as $\lambda\perp\Leb$ and $\nu_{x}=\delta_{0}$. For condition~(ii), at
singular points, if $h$ is quasiconvex then $h^{\infty}$
is positively $1$-homogeneous and rank-one convex, being the pointwise limit of
$t^{-1}h(t\,\cdot\,)$, hence convex at the rank-one matrix $\theta$ by
\cite[Cor.~1.2]{KirchheimKristensen16}; integrating a supporting linear
function at $\theta$ against $\eta$ gives
$h^{\infty}(\theta)\le\dual{h^{\infty}}{\eta}$. In other words, at singular
points condition~(ii) holds for \emph{every} probability measure
with barycentre in the wave cone; the same observation is used in
\cite[proof of Thm.~1.7]{GennaioliRindler26}. Hence, by \cite[Thm.~1]{KristensenRindler10}, $\nu$ is generated by $(Du_{j})_{j}$
for some $u_{j}\in\mathrm{BV}(\Omega,\mathbb{R}^{N})$. Since
$\dual{G^{\infty}}{\eta}=1$,
$$
  \mathcal{G}(Du_{j})\;\longrightarrow\;\ddual{G}{\nu}
  = G(0)\,|\Omega| + |Du|_{G}(\Omega) = \mathcal{G}(Du),
$$
so $Du_{j}\to Du$ $G$-strictly, whereas
$$
  |Du_{j}|(\Omega)\;\longrightarrow\;\dual{|\cdot|}{\eta}\,
  \mathcal{H}^{n-1}(\Omega\cap\xi^{\perp})
  \;>\;|\theta|\,\mathcal{H}^{n-1}(\Omega\cap\xi^{\perp}) = |Du|(\Omega).
$$
The inequality is strict because the Euclidean norm is strictly convex and
every ray meets $\SG$ exactly once, so $\eta\neq\delta_{\theta}$ is not carried
by a single ray. Hence both conclusions of
Proposition~\ref{prop:rigidity}\,(iii) fail, and strict, a fortiori
area-strict, convergence fails.

No relation between the atoms of $\eta$ is needed: the obstruction to
rigidity is not a $\WC$-compatible splitting of $\theta$ inside $F_{\theta}$,
and in particular $\operatorname{span}(F_{\theta}-\theta)\cap\WC=\{0\}$ does
not suffice for rigidity (Example~\ref{ex:transversal}). The argument extends
to constant-rank operators satisfying \textup{(A1)}, replacing the jump by the
planar wave $\theta\,\mathcal{H}^{n-1}\llcorner\xi^{\perp}$ with
$\theta\in\ker\mathscr{A}(\xi)$, \cite[Thm.~1]{KristensenRindler10} by the
characterisations of \cite{KristensenRaita22,ArroyoRabasa21}, and
\cite[Cor.~1.2]{KirchheimKristensen16} by
\cite[Thm.~1.1]{KirchheimKristensen16}. In these settings \eqref{eq:star} is
therefore necessary and sufficient for the rigidity of the concentration on
the singular part.
\end{remark}

\begin{example}[Failure of the upgrade on a flat face]\label{ex:staircase}
Let $n=2$, $\V=\mathbb{R}^{N\times2}$, $\mathscr{A}=\operatorname{curl}$, so
$\WC = \{\eta\otimes\xi\}$ is the rank-one cone, and take the (convex,
coercive) gauge
$$
  G^{\infty}(A) := |Ae_{1}| + |Ae_{2}|,
  \qquad\text{so } K_{G}=\V,\quad N_{G}=\emptyset .
$$
Let $\Omega := (0,1)\times(-1,1)$, fix $a\in\mathbb{R}^{N}$ with $|a|=1$, let
$w(t) := \lfloor t-\tfrac12\rfloor$ and $v_{j}(t) := \tfrac1j\,w(jt)$, and set
$$
  E := \{x_{2}<x_{1}\}\cap\Omega,
  \quad
  E_{j} := \{x_{2}<v_{j}(x_{1})\}\cap\Omega,
  \quad
  u_{j} := a\boldsymbol{1}_{E_{j}},\quad u := a\boldsymbol{1}_{E}.
$$
Since $x_{1}-\tfrac{3}{2j} < v_{j}(x_{1}) \le x_{1}-\tfrac{1}{2j}$, one has
$|E\triangle E_{j}| = 1/j$, so $u_{j}\to u$ in $\LL^{1}(\Omega,\mathbb{R}^{N})$ and
$Du_{j}\overset{\ast}{\rightharpoonup}Du$. The jumps of $v_{j}$ in $(0,1)$ sit at
$t=(k+\tfrac12)/j$, $k=0,\dots,j-1$, and $-\tfrac1j\le v_{j}\le 1-\tfrac1j$, so
the whole graph of $v_{j}$ lies in $\Omega$. Recall that
$D\boldsymbol{1}_{F} = \nu_{F}\,\Hm\llcorner\partial^{\ast}F$ with $\nu_{F}$ the
generalized \emph{inner} normal \cite{AFP}. Since $E_{j}$ lies
below each horizontal piece of its boundary and to the right of each vertical
one, the jump set of $u_{j}$ consists of horizontal segments of total length $1$
with inner normal $-e_{2}$ and of $j$ vertical segments of height $1/j$, of
total length $1$, with inner normal $e_{1}$. Hence $Du_{j}$ is purely singular
with $G$-polars $-a\otimes e_{2},\,a\otimes e_{1}\in\SG$, while $Du$ is carried
by the diagonal $\Gamma$, $\Hm(\Gamma)=\sqrt2$, with inner normal
$\tfrac{1}{\sqrt2}(1,-1)$ and $G$-polar
$p_{G} = \tfrac12\,a\otimes(1,-1)\in\SG$. Hence
$$
  |Du_{j}|_{G}(\Omega) = 1+1 = 2,
  \qquad
  |Du|_{G}(\Omega) = \sqrt2\cdot\sqrt2 = 2,
$$
and, since $u_{j}$ and $u$ are piecewise constant, the absolutely continuous
parts of $Du_{j}$ and $Du$ vanish identically, so that
$\mathcal{G}(Du_{j}) = G(0)\,|\Omega| + 2 = \mathcal{G}(Du)$ for \emph{every}
integrand $G$ admissible in \textup{(A3)} with this recession function. Thus
$Du_{j}\to Du$ $G$-strictly, whereas $|Du_{j}|(\Omega)=2$ for every $j$ and
$|Du|(\Omega)=\sqrt2$: strict, hence also area-strict, convergence fails. The
generated generalized Young measure is
$$
  \nu_{x}=\delta_{0},
  \qquad
  \lambda = \sqrt2\,\Hm\llcorner\Gamma = |D^{s}u|_{G},
  \qquad
  \nu^{\infty}_{x} = \tfrac12\delta_{a\otimes e_{1}}
                   + \tfrac12\delta_{-a\otimes e_{2}} ,
$$
so that $\theta_{x} = \overline{\nu}^{\infty}_{x} = p_{G}(x)\in\SG$ and $b_{G}=1$: all
the conclusions of Proposition~\ref{prop:rigidity}\,(i)--(ii) hold, and only
\eqref{eq:star} fails. Indeed $G^{\infty}$ is affine on the cone over
$[\,a\otimes e_{1},\,-a\otimes e_{2}\,]\subseteq F_{\theta_{x}}$, and
$\theta_{x}$ is the midpoint of this segment. Here the generating sequence is
explicit.
\end{example}
\begin{figure}[ht!]
\centering
\begin{tikzpicture}[>={Stealth[length=2.2mm]}, line cap=round, line join=round,scale=0.8]

\begin{scope}[x=4.2cm, y=4.2cm]
  \fill[gray!18]
    (0,-0.35) -- (0,-0.2) -- (0.1,-0.2) -- (0.1,0) -- (0.3,0) -- (0.3,0.2)
    -- (0.5,0.2) -- (0.5,0.4) -- (0.7,0.4) -- (0.7,0.6) -- (0.9,0.6)
    -- (0.9,0.8) -- (1,0.8) -- (1,-0.35) -- cycle;
  \draw[thin] (0,-0.35) -- (0,1) -- (1,1) -- (1,-0.35);
  \draw[thin, densely dashed] (0,-0.35) -- (1,-0.35);
  \draw[thick, dashed] (0,0) -- (1,1);
  \draw[very thick]
    (0,-0.2) -- (0.1,-0.2) -- (0.1,0) -- (0.3,0) -- (0.3,0.2) -- (0.5,0.2)
    -- (0.5,0.4) -- (0.7,0.4) -- (0.7,0.6) -- (0.9,0.6) -- (0.9,0.8) -- (1,0.8);
  \draw[->, thick] (0.2,0)     -- (0.2,-0.13) node[right] {$-e_2$};
  \draw[->, thick] (0.7,0.5)   -- (0.83,0.5)  node[below] {$e_1$};
  \draw[->, thick] (0.35,0.35) -- ++(0.092,-0.092);
  \node[above left] at (0.35,0.35) {$\nu_E$};
  \node at (0.12,0.88)                {$\Omega$};
  \node[above left] at (0.8,0.8)      {$\Gamma$};
  \node at (0.78,0.08)                {$E_j$};
  \node at (0.5,-0.48)                {(a)};
\end{scope}

\begin{scope}[shift={(7.1cm,1.45cm)}, x=1.9cm, y=1.9cm]
  \fill[gray!18] (1,0) -- (0,1) -- (-1,0) -- (0,-1) -- cycle;
  \draw[->, gray] (-1.3,0) -- (1.35,0) node[above] {$\xi_1$};
  \draw[->, gray] (0,-1.3) -- (0,1.35) node[above] {$\xi_2$};
  \draw[gray, dashed] (0,0) circle (1);
  \draw[thick] (1,0) -- (0,1) -- (-1,0) -- (0,-1) -- cycle;
  \draw[line width=2.2pt] (1,0) -- (0,-1);
  \draw[gray, densely dotted, thick] (0.5,-0.5) -- (0.7071,-0.7071);
  \fill (1,0)  circle (0.055) node[below right] {$a\otimes e_1$};
  \fill (0,-1) circle (0.055) node[below left]  {$-a\otimes e_2$};
  \draw[thick, fill=white] (0.5,-0.5) circle (0.055);
  \node[above left, inner sep=2pt] at (0.5,-0.5) {$\theta_x=p_G$};
  \node at (-0.36,0.36)         {$\SG$};
  \node[gray] at (-0.88,0.88)   {$\mathbb{S}_{\V}$};
  \node at (0,-1.826)           {(b)};
\end{scope}

\end{tikzpicture}
\caption{The staircase example for $j=5$. (a) The set $E_j$ (shaded) lies
below the graph of $v_j$; $\Gamma$ is the limit interface, and the arrows are
inner normals. (b) The section of $\SG$ (solid) and of the Euclidean sphere
$\mathbb{S}_{\V}$ (dashed) by the plane $\{a\otimes\xi:\xi\in\R^2\}$, in the
coordinates $\xi$. The concentration-angle measure
$\nu^\infty_x=\tfrac12\delta_{a\otimes e_1}+\tfrac12\delta_{-a\otimes e_2}$ is
carried by the face $F_{\theta_x}$ (thick), and its barycentre
$\theta_x=p_G$ lies on $\SG$ but inside $\mathbb{S}_{\V}$, since
$|p_G|=1/\sqrt2$ (dotted): this is the Euclidean mass lost in the limit.}
\label{fig:staircase}
\end{figure}
\begin{example}[A flat face transversal to the wave cone]\label{ex:transversal}
Let $n=N=2$, $\mathscr{A}=\operatorname{curl}$, $0<\eps<1$, and take the
norm
$$
  G^{\infty}(A) := \max\bigl\{|A_{11}-A_{22}|,\ \eps^{-1}|A_{22}|\bigr\}
                 + |A_{12}| + |A_{21}|
  \qquad (A\in\mathbb{R}^{2\times2}),
$$
which is convex, hence admissible in \textup{(A2)}, with $G:=G^{\infty}$ in
\textup{(A3)}. The rank-one matrix $\theta:=e_{1}\otimes e_{1}$ lies on $\SG$,
and near $\theta$ the first term of the maximum is the active one, so
$\mathcal{L}_{\theta}$ consists of the functionals
$\ell_{s,t}(A)=A_{11}-A_{22}+sA_{12}+tA_{21}$ with $|s|,|t|\le1$. Imposing
$\ell_{s,t}(z)=1$ for all such $s,t$ together with $G^{\infty}(z)=1$ gives
$$
  F_{\theta}=\bigl\{\,e_{1}\otimes e_{1}+\tau\,\mathrm{Id} \;:\;
               |\tau|\le\eps\,\bigr\},
$$
a segment with midpoint $\theta$, so \eqref{eq:star} fails at $\theta$.
Remark~\ref{rmk:sharp}, with $u=e_{1}\boldsymbol{1}_{\{x_{1}>0\}}$ and
$\eta=\tfrac12\delta_{\theta+\eps\mathrm{Id}}
     +\tfrac12\delta_{\theta-\eps\mathrm{Id}}$,
produces gradients converging $G$-strictly but not strictly to
$Du=e_{1}\otimes e_{1}\,\Hm\llcorner\{x_{1}=0\}$.

Two features set this apart from Example~\ref{ex:staircase}. First,
$\det(\theta\pm\eps\,\mathrm{Id})=\pm\eps(1\pm\eps)\neq0$: the
concentration-angle measure is supported entirely \emph{off} the wave cone,
although the singular polar of the limit is rank one, as it must be. Second,
the atoms differ by $2\eps\,\mathrm{Id}\notin\WC$ and
$\operatorname{span}(F_{\theta}-\theta)\cap\WC=\{0\}$: no rank-one-compatible
splitting of $\theta$ exists inside $F_{\theta}$, and rigidity fails
nonetheless. The generating sequence is provided by the characterisation
theorem and is not explicit; compare \cite[Appendix]{GennaioliRindler26},
where no explicit generating sequence is known even for
$\tfrac12\delta_{\mathrm{Id}/\sqrt2}+\tfrac12\delta_{-\mathrm{Id}/\sqrt2}$.
\end{example}

\begin{example}[Reduction to Reshetnyak's theorem]\label{ex:reshetnyak}
In the setting of Section~\ref{convergence of energy section}, let
$G(z)=G^\infty(z)=|z|$, so that $\SG=\mathbb S^{N\times n-1}$,
$G$-strict convergence is strict convergence, and
$\int_{\SG}|z|\dif\nu^\infty_x=1$. At $\Leb$-a.e.\ $x$,
\eqref{equality 1 G infty and nux}, the triangle inequality and Jensen's
inequality give
$$
|\nabla u(x)|=\int_{\M}|z|\dif\nu_x+\lambda^a
=\big|\overline{\nu}_x+\lambda^a\overline{\nu}^\infty_x\big|
\leq|\overline{\nu}_x|+\lambda^a\,|\overline{\nu}^\infty_x|
\leq\int_{\M}|z|\dif\nu_x+\lambda^a ,
$$
so that all these quantities are equal. Since the Euclidean norm is strictly
convex, the equality cases give, where $\nabla u(x)\neq0$ and with
$e(x):=\nabla u(x)/|\nabla u(x)|$:
\begin{enumerate}[label=\textup{(\alph*)}]
  \item $\overline{\nu}_x$ and $\lambda^a\overline{\nu}^\infty_x$ are
    nonnegative multiples of $e(x)$, and
    $|\nabla u(x)|=|\overline{\nu}_x|+\lambda^a(x)$;
  \item $\nu_x$ is carried by the half-line $\{t\,e(x):t\geq0\}$;
  \item if $\lambda^a(x)>0$, then $\nu^\infty_x=\delta_{e(x)}$;
\end{enumerate}
while $\nu_x=\delta_0$ and $\lambda^a(x)=0$ where $\nabla u(x)=0$. On the
singular part, every point of $\mathbb S^{N\times n-1}$ is an extreme point
of the unit ball, so \eqref{eq:star} holds, and
Proposition~\ref{prop:rigidity}\,(i) and~(iii) give $\lambda^s=|D^su|$ and
$\nu^\infty_x=\delta_{\frac{D^su}{|D^su|}(x)}$ for $|D^su|$-a.e.\ $x$.

Let now $f\colon\M\to\R$ be continuous and positively $1$-homogeneous.
Since $f$ is linear along half-lines, (a)--(c) give
$\langle f,\nu_x\rangle+\lambda^a\langle f,\nu^\infty_x\rangle
=f(e(x))\big(|\overline{\nu}_x|+\lambda^a(x)\big)=f(\nabla u(x))$ for
$\Leb$-a.e.\ $x$, and testing the generation of $\nu$ with $f$ yields
$$
\lim_{j\to\infty}\int_\Omega f(\nabla u_j)\dif x
=\int_\Omega f(\nabla u)\dif x
+\int_\Omega f\Big(\frac{D^su}{|D^su|}\Big)\dif|D^su| ,
$$
which is Reshetnyak's continuity theorem. The Young measure $\nu$ itself
need not be elementary. For $n=N=1$, $\Omega=(0,1)$ and $u(x)=x$, let
$u_j(x)=\int_0^x g_j\dif t$ with either
$$
g_j=j\,\boldsymbol{1}_{E_j},\quad
E_j=\bigcup_{k=0}^{j-1}\Big[\frac kj,\frac kj+\frac1{j^2}\Big),
\qquad\text{or}\qquad
g_j=2\,\boldsymbol{1}_{F_j},\quad
F_j=\bigcup_{k=0}^{j-1}\Big[\frac kj,\frac kj+\frac1{2j}\Big).
$$
In both cases $u_j\to u$ strictly, and the generated Young measures are
$(\delta_0,\Lebo\Lcorner\Omega,\delta_1)$, with diffuse concentration, and
$(\tfrac12\delta_0+\tfrac12\delta_2,0,\cdot\,)$, with oscillation. Neither
convergence is area-strict: $\langle Du_j\rangle(\Omega)$ tends to $2$ and to
$\tfrac{1+\sqrt5}{2}$ respectively, while $\langle Du\rangle(\Omega)=\sqrt2$.
Area-strict convergence, by contrast, forces the generated Young measure to
be elementary, see \cite{KristensenRindler10,rindlerbook}.
\end{example}
The next proposition reflects \cite[Thm.~2.5 (i)]{alibertbouchitte} in our compactification. Since the differential constraint plays no role, we state it for arbitrary generating sequences, $\mathscr{A}$-free or not.
\begin{proposition}\label{C closed set and supp(nu_x^infty) subset C_G cap S_G}
Let $C \subset \V$ be a closed set and let $(v_j) \subset \mathrm{L}^1(\Omega,\V)$ generate the Young measure $v_j \toY \nu=(\nu_x,\lambda, \nu_x^\infty)$, with compactification $\mathscr{M}_1^+(\mathbb{S}_G)$. Suppose
$$
v_j(x) \in C \quad \Leb \emph{ a.e. }x \in \Omega, \ \emph{for all }j.
$$
Then $\emph{supp}(\nu_x) \subset C$ for $\Leb$ a.e. $x \in \Omega$ and, setting
$$
\widetilde C_G=\overline{ \left\{ \frac{z}{1+G^\infty(z)} : z \in C\right\} } \subset \overline{\B},
$$
we have
$$
\emph{supp}(\nu_x^\infty) \subset \widetilde C_G \cap \mathbb S_G \quad \lambda \emph{ a.e. }x \in \Omega.
$$
\end{proposition}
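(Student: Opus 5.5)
The plan is to work entirely with the measure $\boldsymbol\mu:=(T^*)^{-1}\nu\in\mathscr{M}^+(\Omegacl\times\overline{\BG})$ and to show that it is carried by $\Omegacl\times\widetilde C_G$. Both support statements will then be read off from the disintegration formulas of Lemma~\ref{YM measure theoretic definition}. The key observation is that for each $j$ the elementary Young measure $\boldsymbol{\eps}_{v_j}$ (here $v_j\in\LL^1$, so there is no singular part) corresponds to
$$
(T^*)^{-1}\boldsymbol{\eps}_{v_j}=(\mathrm{id}\times\iota_G)_\#\bigl[(1+G^\infty(v_j(x)))\,\Leb\Lcorner\Omega\bigr].
$$
Indeed, for $\Phi\in\E_G$ we have $\int\Phi(x,v_j)\dd x=\int(1+G^\infty(v_j))\,T\Phi(x,\iota_G(v_j))\dd x$, since $\Phi=(1+G^\infty)\,T\Phi\circ(\mathrm{id}\times\iota_G)$. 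Because $v_j(x)\in C$ a.e., this measure is carried by $\Omegacl\times\iota_G(C)\subseteq\Omegacl\times\widetilde C_G$, and $\Omegacl\times\widetilde C_G$ is compact.

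\textbf{Passing to the limit.} Next I will take $g\in\CC(\Omegacl\times\overline{\BG})$ with $g\ge0$ and $g=0$ on $\Omegacl\times\widetilde C_G$. Then $T^{-1}g\in\E_G$ and $\boldsymbol{\eps}_{v_j}(T^{-1}g)=0$ for every $j$, so $\int g\dif\boldsymbol\mu=\nu(T^{-1}g)=0$. Such functions exist in abundance: for instance $g(x,\hat z)=\min\{1,k\operatorname{dist}(\hat z,\widetilde C_G)\}$ increases to $\boldsymbol{1}$ of the open complement. By monotone convergence this gives $\boldsymbol\mu\bigl(\Omegacl\times(\overline{\BG}\setminus\widetilde C_G)\bigr)=0$.

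\textbf{Reading off the supports.} Splitting $\boldsymbol\mu$ into its restrictions to $\Omegacl\times\BG$ and $\Omegacl\times\SG$:
\begin{itemize}
\item On $\SG$: $\boldsymbol\mu\Lcorner(\Omegacl\times\SG)=\lambda\otimes\nu^\infty_x$ gives $\int\nu^\infty_x(\SG\setminus\widetilde C_G)\dif\lambda=0$. Hence $\nu^\infty_x$ is concentrated on the closed set $\widetilde C_G\cap\SG$ for $\lambda$-a.e.\ $x$, and so its support lies there.
\item On $\BG$: $(\mathrm{id}\times\iota_G^{-1})_\#\bigl[(1-G^\infty)\boldsymbol\mu\Lcorner\Omegacl\times\BG\bigr]=\nu_x\otimes\Leb\Lcorner\Omega$ gives $\nu_x\bigl(\iota_G^{-1}(\BG\setminus\widetilde C_G)\bigr)=0$ for a.e.\ $x$. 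This uses that $1-G^\infty>0$ on $\BG$, so the density does not destroy null sets.
\end{itemize}

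\textbf{Main obstacle.} The one genuinely delicate point is identifying $\iota_G^{-1}(\widetilde C_G\cap\BG)$ with $C$, i.e.\ showing that the closure adds no interior points. Suppose $\hat z\in\BG$ is a limit of $\iota_G(z_k)$ with $z_k\in C$. Since $\iota_G$ is a homeomorphism of $\V$ onto the open set $\BG$, we get $z_k=\iota_G^{-1}(\iota_G(z_k))\to\iota_G^{-1}(\hat z)$, and this limit lies in $C$ because $C$ is closed. Hence $\widetilde C_G\cap\BG=\iota_G(C)$, so $\nu_x(\V\setminus C)=0$, i.e.\ $\operatorname{supp}\nu_x\subset C$. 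The new points of the closure lie only on $\SG$, which is exactly why the concentration angles are confined to $\widetilde C_G\cap\SG$ rather than to anything intrinsic to $C$.
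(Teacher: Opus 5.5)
Your proof is correct, but it takes a different route from the paper's.

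\emph{The paper's route.} The paper stays in the triple language and argues in two dependent steps.
\begin{enumerate}
\item It tests the generation with $\phi\otimes h_k$, $h_k=(1-k\,\mathrm{dist}(\cdot,C))^+$, whose recession function vanishes, and obtains $\nu_x(C)=1$.
\item It tests with $\phi\otimes T^{-1}f_k$, $f_k\searrow\boldsymbol{1}_{\widetilde C_G}$. It notes that the left-hand side equals $\int\phi\,(1+G^\infty(v_j))\dd x$ and computes that limit a second time using the $G$-normalisation $\langle G^\infty,\nu^\infty_x\rangle=1$. It then invokes step 1 to cancel the oscillation terms, which leaves $\langle f_k,\nu^\infty_x\rangle=1$.
\end{enumerate}

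\emph{Your route.} You lift everything to $\boldsymbol\mu=(T^*)^{-1}\nu$ on $\Omegacl\times\overline{\BG}$. Each $(T^*)^{-1}\boldsymbol{\eps}_{v_j}$ is a positive measure carried by the compact set $\Omegacl\times\widetilde C_G$, and this passes to the weak$^*$ limit. Both conclusions are then read off from $\mu^0=\Leb\Lcorner\Omega\otimes\nu_x$ and $\mu^\infty=\lambda\otimes\nu^\infty_x$, the identifications recorded after the proof of Lemma~\ref{YM measure theoretic definition}. The one extra ingredient is the topological fact $\widetilde C_G\cap\BG=\iota_G(C)$, which you prove correctly. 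The paper avoids this fact by treating the oscillation part directly on $\V$.

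\emph{What each buys.} Your version handles oscillation and concentration in one stroke and needs neither the $G$-normalisation nor the ordering of the two steps. It also makes transparent that $\widetilde C_G$ is simply the closure of $C$ in the compactification, with new points only on $\SG$. Because your test functions range over all of $\CC(\Omegacl\times\overline{\BG})$, you also get the statement on $\nu^\infty_x$ for $\lambda$-a.e.\ $x\in\Omegacl$, boundary included. The paper's version uses only explicit integrands on $\V$ and does not need the disintegration identities.

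\emph{A small correction.} Your remark that $1-G^\infty>0$ on $\BG$ is not needed. The direction you use, $\boldsymbol\mu(A)=0\Rightarrow\bigl((1-G^\infty)\boldsymbol\mu\bigr)(A)=0$, holds for any nonnegative density.
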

\begin{proof}
\emph{Step 1.} Take $h_k \in \CC(\V)$ such that
$$
h_k(z)=1 \text{ on } C, \quad h_k(z) \searrow 0 \text{ on } \V \setminus C,
$$
for example
$$
h_k(z)=(1-k\,\text{dist}(z,C))^+.
$$
Note that $h_k^\infty\equiv 0$. Testing the convergence with $\phi \in \CC_0(\Omega)$, we get
$$
\int_\Omega \phi(x)h_k(v_j(x))\dif x \underset{j \to \infty}{\longrightarrow} \int_\Omega \phi(x) \langle h_k, \nu_x \rangle \dif x.
$$
Since $v_j(x) \in C$ for $\Leb$ a.e. $x$, the left-hand side is equal to $\int_\Omega \phi(x) \dif x$ for all $j$, thus
$$
\langle h_k, \nu_x \rangle=1
$$
for $\Leb$ a.e. $x$. Taking $k \nearrow \infty$, we find
$$
\nu_x(C)=1 \quad \Leb \text{ a.e.},
$$
meaning that $\text{supp}(\nu_x)\subset C$ for $\Leb$ a.e. $x$. \\
\emph{Step 2.} We test the convergence with $\phi \otimes T^{-1}f_k$:
\begin{multline*}
\int_\Omega \phi(x) f_k\left(\frac{v_j(x)}{1+G^\infty(v_j(x))}\right)(1+G^\infty(v_j(x)))\dif x = \int_\Omega \phi(x)T^{-1}f_k(v_j(x)) \dif x \\
\underset{j \to \infty}{\longrightarrow} \nu(\phi \otimes T^{-1}f_k)=\int_\Omega \phi(x) \langle T^{-1}f_k, \nu_x \rangle \dif x + \int_{\Omegacl}\phi(x) \langle (T^{-1}f_k)^\infty, \nu_x^\infty \rangle \dif \lambda,
\end{multline*}
where $f_k \in \CC(\overline{\B})$ is such that
$$
f_k(\hat z)=1 \text{ on }\widetilde C_G, \quad f_k(\hat z) \searrow 0 \text{ on } \overline{\B} \setminus \widetilde C_G
$$
as $k \to \infty$; this approximation from above is possible because $\widetilde C_G$ is closed. Since $v_j(x) \in C$ for $\Leb$ a.e. $x$, we have
$$
f_k\left(\frac{v_j(x)}{1+G^\infty(v_j(x))}\right)=1
$$
for every $j$ and $k$. Thus the left-hand side is also equal to
\begin{multline*}
\int_\Omega \phi(x) (1+G^\infty(v_j(x)))\dif x \underset{j \to \infty}{\longrightarrow} \int_\Omega \phi(x) \langle 1+G^\infty , \nu_x \rangle \dif x + \int_{\Omegacl} \phi(x) \langle \underbrace{G^\infty}_{=1 \text{ on } \mathbb S_G}, \nu_x^\infty \rangle \dif \lambda \\
=\int_\Omega \phi(x) \langle 1+G^\infty , \nu_x \rangle \dif x + \int_{\Omegacl} \phi(x) \dif \lambda.
\end{multline*}
Since $\text{supp}(\nu_x)\subset C$ for $\Leb$ a.e. $x$, we have
\begin{multline*}
\langle T^{-1}f_k, \nu_x \rangle =\int_{\V} f_k \left(\frac{z}{1+G^\infty(z)} \right)(1+G^\infty(z))\dif \nu_x(z) \\
= \int_C f_k \left(\frac{z}{1+G^\infty(z)} \right)(1+G^\infty(z))\dif \nu_x(z) = \int_C (1+G^\infty(z))\dif \nu_x(z) = \langle 1+G^\infty , \nu_x \rangle,
\end{multline*}
since $z \in C$ implies $\dfrac{z}{1+G^\infty(z)} \in \widetilde C_G$. Therefore, equating the two limits, we are left with
$$
\int_{\Omegacl}\phi(x) \langle (T^{-1}f_k)^\infty, \nu_x^\infty \rangle \dif \lambda = \int_{\Omegacl} \phi(x) \dif \lambda.
$$
Now, for $z \in \mathbb S_G$ we have $G^\infty(tz)=t$ by 1-homogeneity, so the continuity of $f_k$ gives
$$
(T^{-1}f_k)^\infty(z)= \lim_{t \to \infty} f_k \left(\frac{tz}{1+t} \right)\left(\frac{1}{t} + 1\right) = f_k(z).
$$
Hence
$$
\int_{\Omegacl} \phi(x) \int_{\mathbb S_G} f_k(z) \dif \nu_x^\infty(z) \dif \lambda =\int_{\Omegacl} \phi(x) \dif \lambda,
$$
so that $\langle f_k, \nu_x^\infty \rangle=1$ for $\lambda$ a.e. $x\in\Omega$. Since $f_k \searrow \mathbf{1}_{\widetilde C_G}$, taking $k \nearrow \infty$ we get
$$
\nu_x^\infty(\mathbb S_G \cap \widetilde C_G)=1 \quad \lambda \text{ a.e. } x \in \Omega.
$$
Finally, we note that $\widetilde C_G \cap \mathbb S_G = C_\infty \cap \mathbb S_G$, where $C_\infty=\{\lim_n t_n w_n : t_n \searrow 0,\ w_n \in C\}$ is the asymptotic cone of $C$, since $\frac{w}{1+G^\infty(w)}-\frac{w}{G^\infty(w)} \to 0$ as $|w| \to \infty$.
\end{proof}

\section{Transfer principle}\label{transfer principle section}
This section is devoted to a \emph{transfer} principle, which is the
linear-growth analogue of the Evans--Gariepy splitting, as described in Remark \ref{evans gariepy connection} below.
\begin{proposition}\label{transfer principle proposition}
Let $G: \M \to \R$ be a quasiconvex function of linear growth with regular recession function, so that
$$
G^\infty(x)= \lim_{t \to + \infty} \frac{G(tx)}{t}
$$
exists locally uniformly in $x$. Let $F: \M \to \R$ be continuous of linear growth, with generalized recession function (\cite{fonsecamueller-bv, Muller92})
$$
F^{\#}(x):= \limsup_{\substack{x' \to x \\  t \to +\infty}} \frac{F(tx')}{t}
$$
and we assume that $F-G$ is quasiconvex. \\
Additionally, we assume the nondegeneracy condition
$$
F^\# >0 \quad \text{on } \M \setminus \{0\}.
$$ 
Let $\Omega \subset \R^n$ be a bounded open set, with no regularity assumption imposed on $\partial \Omega$. Let $g \in \bv(\R^n, \R^N)$ with $|D^sg|(\partial \Omega)=0$ (for instance $g \in W^{1,1}(\R^n,\R^N)$), $u_j \in \bv_g(\Omega,\R^N)$ for each $j \in \mathbb{N}$, meaning that $u_j \in \bv(\Omega,\R^N)$ and $u_j=g$ on $\partial \Omega$ in the sense of rough traces (see Proposition \ref{Prop bv_0} below for its definition). Assume $u_j \overset{\ast}{\rightharpoonup} u$ in $\bv(\Omega, \R^N)$ for some $u \in \bv(\Omega, \R^N)$, where $u$ is not necessarily in $\bv_g(\Omega,\R^N)$. We denote for $F$
$$
\int_\Omega F(Du):= \int_\Omega F(\nabla u)\dd x +  \int_\Omega F^\# \left( \frac{D^su}{|D^su|} \right) |D^su|,
$$
and similarly for $G$
$$
\int_\Omega G(Du):= \int_\Omega G(\nabla u)\dd x +  \int_\Omega G^\infty \left( \frac{D^su}{|D^su|} \right) |D^su|.
$$
Then, in this set of assumptions,
$$
\lim_{j\to\infty}\int_\Omega F(Du_j)=\int_\Omega F(Du)
\qquad\Longrightarrow\qquad
\lim_{j\to\infty}\int_\Omega G(Du_j)=\int_\Omega G(Du).
$$
Moreover, the extension of $u-g$ by zero outside $\Omega$ has derivative $D(u-g)\Lcorner\Omega$; in particular, the limit $u$ does not detach from $g$ at $\partial \Omega$.
\end{proposition}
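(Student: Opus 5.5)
We follow the strategy announced in the introduction. Fix a bounded Lipschitz domain $\Omega'\Supset\Omega$ and extend $u_j$ and $u$ by $g$ on $\Omega'\setminus\Omega$, writing $\tilde u_j,\tilde u$. Since $u_j\in\bv_g(\Omega,\R^N)$, the rough traces match and
$$
D\tilde u_j=Du_j\Lcorner\Omega+Dg\Lcorner(\Omega'\setminus\overline\Omega),
$$
with no jump on $\partial\Omega$; here we use $|D^sg|(\partial\Omega)=0$ and $\Leb(\partial\Omega)=0$ or, if $\partial\Omega$ has positive measure, the fact that $Dg$ restricted to $\partial\Omega$ is absolutely continuous and common to all terms. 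The sequence $\tilde u_j$ is bounded in $\bv(\Omega',\R^N)$ and, since it agrees with $u_j$ on $\Omega$ and with $g$ outside, it converges weakly$^*$ to $\tilde u$. Hence
$$
D\tilde u=Du\Lcorner\Omega+Dg\Lcorner(\Omega'\setminus\overline\Omega)+\mu,
$$
where $\mu:=D\tilde u\Lcorner\partial\Omega$ collects whatever mass of $D\tilde u$ appears on $\partial\Omega$ beyond $Dg\Lcorner\partial\Omega$. The final assertion of the proposition is precisely $\mu=0$, or more exactly that $D(u-g)$ extended by zero equals $D(u-g)\Lcorner\Omega$.

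The first step is to show $\mu\perp\Leb$ and $\int_{\partial\Omega}F^\#(\mu)\le0$. Singularity comes from a slicing/BV argument: $\mu$ is concentrated on $\partial\Omega\cap J_{\tilde u}$ plus a part carried by $\partial\Omega$ that vanishes in the $\Leb$-absolutely continuous component. The latter holds because $\tilde u$ coincides with $u$ on $\Omega$ and with $g$ outside, and $D\tilde u^a$ is determined $\Leb$-a.e. by the values on either side. To obtain the inequality, generate a Young measure $D\tilde u_j\toY\nu$ on $\overline{\Omega'}$. On $\Omega'\setminus\overline\Omega$ everything is fixed; the mass $\mu$ equals the barycentre $\overline{\nu}^\infty_x\lambda\Lcorner\partial\Omega$ of the concentration escaping to $\partial\Omega$. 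Lower semicontinuity of $F$ on $\Omega'$ with fixed boundary values, in the signed form of Kristensen--Rindler \cite{kristensenrindler-relax} applied to $\tilde u_j$ (which equal $g$ near $\partial\Omega'$), gives
$$
\int_{\Omega'}F(D\tilde u)\le\liminf_j\int_{\Omega'}F(D\tilde u_j)=\lim_j\int_\Omega F(Du_j)+\int_{\Omega'\setminus\overline\Omega}F(Dg)+\int_{\partial\Omega}F(Dg).
$$
Using the hypothesis $\int_\Omega F(Du_j)\to\int_\Omega F(Du)$, we obtain $\int_{\partial\Omega}F(D\tilde u\Lcorner\partial\Omega)\le\int_{\partial\Omega}F(Dg)$. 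Since both $\mu$ and $D^sg\Lcorner\partial\Omega=0$ are singular, the Goffman--Serrin functional is additive on singular parts, and this reduces to $\int_{\partial\Omega}F^\#(\mu/|\mu|)\,\dif|\mu|\le0$. Note that $F$ is quasiconvex, being the sum $G+(F-G)$ of two quasiconvex functions. The nondegeneracy $F^\#>0$ off the origin then forces $|\mu|=0$, which proves the last assertion.

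With $\mu=0$, the function $\tilde u$ lies in $\bv_g$, and lower semicontinuity of $\int G$ and of $\int(F-G)$ (both quasiconvex) on $\Omega'$ with fixed boundary datum gives two liminf inequalities. Their sum is an equality, by the $F$-convergence together with the fixed contribution outside $\Omega$. Hence each inequality is an equality along every subsequence, and $\int_\Omega G(Du_j)\to\int_\Omega G(Du)$ follows.

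The main obstacle is the rigorous identification of $\mu$ as a purely singular measure on $\partial\Omega$ with no regularity of $\partial\Omega$. The delicate points are handling a possibly positive-measure boundary, and justifying that the signed lower semicontinuity result of \cite{kristensenrindler-relax} applies with $F^\#$ only an upper recession function. To handle the first, we would try working with the rough-trace characterization of $\bv_g$ (strict approximation by $g+\CC^\infty_c$) together with a diagonal argument.
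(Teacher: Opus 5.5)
Your proposal follows essentially the same route as the paper. You extend by $g$ to a Lipschitz $\Omega'\Supset\Omega$ and apply the signed lower semicontinuity of Kristensen--Rindler to $F=G+(F-G)$, which yields $\int_{\partial\Omega}F(\mu)\le0$. You then show $\mu\perp\Leb$, conclude $\mu=0$ from $F^\#>0$, and split the lower semicontinuity between $G$ and $F-G$; your tight sum of two $\liminf$ inequalities is equivalent to the paper's $\limsup$ bound, and both rest on $(F-G)^\#=F^\#-G^\infty$.

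The step you flag as the main obstacle needs no diagonal argument, because it is exactly the locality you already invoke. Since $\tilde u=g$ on $\Omega'\setminus\Omega\supset\partial\Omega$, approximate differentiability (Calder\'on--Zygmund) gives $\nabla\tilde u=\nabla g$ $\Leb$-a.e.\ on $\partial\Omega$, so $\mu$ has no absolutely continuous part. Also, $\mu=0$ does not in general put $u$ in $\bv_g$, since the converse of Proposition~\ref{Prop bv_0} fails. You never actually use this claim, though, because the fixed boundary values in the lower semicontinuity are those on $\partial\Omega'$.
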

Before proving the transfer principle, we discuss the possibility of loss of boundary datum $g$ for the sequence $u_j$ tending to $u$.
To this aim, first consider a bounded Lipschitz domain $\Omega'$ compactly containing $\Omega$, $\Omega \Subset \Omega'$, and extend both $u_j, u$ to $\Omega' \setminus \Omega$ equal to $g$: 
$$
\tilde u_j= \begin{cases}
   u_j \quad \text{in }\Omega,
   \\
   g \quad \, \, \, \text{in }\Omega' \setminus \Omega,
\end{cases} \qquad \tilde u= \begin{cases}
   u \quad \quad \quad \text{in }\Omega,
   \\
   g \quad \, \, \, \text{in }\Omega' \setminus \Omega.
\end{cases}
$$
Note that, by Proposition \ref{Prop bv_0}, $u_j \in \bv_g(\Omega,\R^N)$ implies that the function
$$
\tilde u_j-g= \begin{cases}
    u_j-g \quad \text{in }\Omega,
   \\
   0 \quad \, \, \, \text{in }\Omega' \setminus \overline{\Omega},
\end{cases} 
$$
is in $\bv(\Omega',\R^N)$ and $|D(\tilde u_j-g)|(\partial \Omega)=0$ (the converse fails in general, e.g.\ on domains with slits), but not necessarily for $u$ (see Example \ref{example u limit not in bv_g}).  This property is not preserved under weak$^*$ convergence, since mass may concentrate on $\partial \Omega$. Note also that $\partial \Omega$ might not have zero Lebesgue measure. If $\partial \Omega$ is Lipschitz, this definition of the Dirichlet class coincides with the one using traces.
\begin{proposition}\label{Prop bv_0}
We say $u \in \bv_0(\Omega)$ if there exists a sequence $(\phi_j) \subset \CC_c^\infty(\Omega)$ converging strictly to $u$, which means 
$$
\phi_j \overset{\LL^1(\Omega)}{\longrightarrow}  u, \quad D\phi_j \wstar Du, \quad \text{and}\quad  |D\phi_j|(\Omega)=\int_{\Omega} |\nabla \phi_j| \dd x \to |Du|(\Omega).  
$$
Then the function
$$
v:= \begin{cases}
    u \quad \text{in }\Omega,
   \\
   0 \quad \, \, \, \text{in }\R^n\setminus \Omega,
\end{cases} 
$$
is in $\bv(\R^n)$ and
$$
Dv=Du \Lcorner \Omega.
$$
In particular $|Dv|(\partial \Omega)=0$.
\end{proposition}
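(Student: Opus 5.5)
The idea is to test the extension $v$ against compactly supported vector fields on $\R^n$ and to pass to the limit along the approximating sequence $(\phi_j)$. Extend each $\phi_j$ by zero to $\tilde\phi_j\in\CC_c^\infty(\R^n)$, so that $D\tilde\phi_j=\nabla\phi_j\Leb$ is carried by $\Omega$. Since $\phi_j\to u$ in $\LL^1(\Omega)$, we get $\tilde\phi_j\to v$ in $\LL^1(\R^n)$. The total variations $|D\tilde\phi_j|(\R^n)=|D\phi_j|(\Omega)$ are bounded. Lower semicontinuity of the total variation under $\LL^1$ convergence therefore gives $v\in\bv(\R^n)$, and $D\tilde\phi_j\wstar Dv$ in $\mathscr{M}(\R^n)$ with $|Dv|(\R^n)\le\liminf_j|D\phi_j|(\Omega)=|Du|(\Omega)$.

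Next I identify $Dv$ on $\Omega$. For any test field $\psi\in\CC_c^\infty(\Omega)$ we have $\int v\,\di\psi=\int_\Omega u\,\di\psi=-\int_\Omega\psi\cdot\dif Du$. Hence $Dv\Lcorner\Omega=Du$ as measures on $\Omega$, and in particular $|Dv|(\Omega)=|Du|(\Omega)$. Combining this with the upper bound from the previous step,
$$
|Du|(\Omega)+|Dv|(\R^n\setminus\Omega)=|Dv|(\R^n)\le|Du|(\Omega),
$$
so $|Dv|(\R^n\setminus\Omega)=0$. This yields $Dv=Du\Lcorner\Omega$, where $Du\Lcorner\Omega$ is regarded as a measure on $\R^n$, and in particular $|Dv|(\partial\Omega)=0$.

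The only point that needs care is the inequality $|Dv|(\R^n)\le\liminf_j|D\tilde\phi_j|(\R^n)$. The open set here is the whole of $\R^n$, so it follows from the standard lower semicontinuity of the total variation with respect to $\LL^1_{\mathrm{loc}}$ convergence \cite{AFP}. It does not require the weak$^*$ convergence in $\Omega$, and it does not depend on any regularity of $\partial\Omega$. This is also why no hypothesis on the boundary is needed. Note that the strict convergence $|D\phi_j|(\Omega)\to|Du|(\Omega)$ is used exactly once, to rule out mass on $\R^n\setminus\Omega$, which includes $\partial\Omega$ even when $\Leb(\partial\Omega)>0$.
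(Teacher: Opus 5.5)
Your proof is correct, and it takes a different route from the paper's.

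\textbf{The paper's route.} The paper tests $Dv$ against an arbitrary $\eta\in\CC_c^\infty(\R^n,\R^n)$ and rewrites $\langle Dv,\eta\rangle$ as $\lim_j\int_\Omega\nabla\phi_j\cdot\eta\,\dd x$, via Gauss--Green on a smooth set $\Omega_{i(j)}$ containing $\operatorname{supp}\phi_j$. It then proves, as a separate lemma, that strict convergence upgrades $D\phi_j\wstar Du$ to convergence in duality with $\CC_b(\Omega,\R^n)$. That tightness lemma uses an exhaustion $\Omega_i\Subset\Omega$, cut-offs $\psi_i\equiv1$ on $\Omega_i$, and lower semicontinuity of the total variation on each $\Omega_i$, to show that no mass of $D\phi_j$ piles up near $\partial\Omega$.

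\textbf{Your route.} You never test with fields that are not compactly supported in $\Omega$. Lower semicontinuity on all of $\R^n$ gives $v\in\bv(\R^n)$ with $|Dv|(\R^n)\le|Du|(\Omega)$. Testing with $\CC_c^\infty(\Omega)$ fields identifies $Dv\Lcorner\Omega=Du$. The mass balance $|Du|(\Omega)+|Dv|(\R^n\setminus\Omega)\le|Du|(\Omega)$ then disposes of $\R^n\setminus\Omega$, including $\partial\Omega$, in one line.

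\textbf{What each buys.} Your version is shorter. It needs no exhaustion, no Gauss--Green step and no $\CC_b$-duality lemma, and it only uses $\liminf_j|D\phi_j|(\Omega)\le|Du|(\Omega)$ rather than the full limit. The paper's version isolates a statement of independent interest, namely that strictly convergent measures converge against bounded continuous test functions on $\Omega$. It also obtains $Dv=Du\Lcorner\Omega$ directly as an identity of distributions, rather than through additivity of $|Dv|$. Both rest on the same mechanism: lower semicontinuity of the total variation on open sets, combined with conservation of total mass.
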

The proof of the Proposition above is postponed in the Appendix, Section \ref{appendix section}. With this definition, for $u_j$ to be equal to $g$ on $\partial\Omega$ in the sense of rough traces means precisely $u_j -g \in \bv_0(\Omega,\R^N)$.

\hfill \break
As already mentioned, the weak$^*$ limit $u$ is not necessarily equal to $g$ at the boundary, as the next example shows.
\begin{example}\label{example u limit not in bv_g}
Let in $\bv(0,1)$ the sequence $u_j(x)=\boldsymbol{1}_{(1/j,1-1/j)}(x)$. Then $u_j \to 1$ in $\LL^1(0,1)$ and $Du_j= \delta_{1/j}-\delta_{1-1/j} \wstar 0$, so $u_j \wstar 1$ in $\bv(0,1)$. For all $j$, close to the boundary of $(0,1)$ $u_j$ is equal to $g\equiv 0 \in \bv(\R)$, whereas $u=\boldsymbol{1}_{(0,1)}$ has trace equal to $1$ at the endpoints of its domain.       
\end{example}
\begin{remo}
Let $\Omega$ be open and bounded with infinite perimeter, and let (in the scalar case for simplicity) $u=1$ in $\Omega$ and $g=0$ in $\Omega'$, $\Omega \Subset \Omega'$. Then the extension $u$ is not in $\bv(\Omega')$ because $|Du|(\partial \Omega)=\operatorname{Per}( \Omega)= + \infty$.  Any $u_j= \boldsymbol{1}_{\Omega_j}$ where $\Omega_j \Subset \Omega$ is an increasing sequence of smooth relatively compact sets with $\Omega_j \nearrow \Omega$, is in $\bv_g(\Omega)$ and converges only w$^*$-$\bv_{\mathrm{loc}}(\Omega)$ to $u=\boldsymbol{1}_\Omega$, and not w$^*$-$\bv(\Omega)$, since the sequence of perimeters $\operatorname{Per}( \Omega_j)$ is unbounded by lower semicontinuity of the perimeter (so it converges in duality with compactly supported functions but not with their closure of functions vanishing at the boundary). The extension of $u$ to $\Omega'$ is not in $\bv(\Omega')$, however this does not contradict the assumptions because the convergence is not in w$^*$-$\bv(\Omega)$ but only local.
\end{remo}
Applying Proposition \ref{Prop bv_0} with $g \in \bv(\Omega',\R^N)$ in place of $0$, so to $u_j-g \in \bv_0(\Omega,\R^N)$, we get
$$
v= \begin{cases}
    u_j-g \quad \text{in }\Omega,
   \\
   0 \quad \, \, \, \text{in }\Omega' \setminus \Omega,
\end{cases} 
$$
with $v \in \bv(\Omega',\R^N)$ and $Dv=D(u_j-g) \Lcorner \Omega $. Therefore the extension $\tilde u_j = v+g$ satisfies 
\begin{equation}\label{Duj decomposition}
D \tilde u_j = Dv+Dg =D(u_j-g) \Lcorner \Omega + Dg =Du_j \Lcorner \Omega + Dg \Lcorner \Omega' \setminus\Omega.
\end{equation}
Instead, for $\tilde u$ (which belongs to $\bv(\Omega',\R^N)$, as shown below) the same decomposition does not hold.  Setting $\mu:=D\tilde u\Lcorner\partial \Omega-Dg\Lcorner\partial \Omega$, a finite Radon measure supported in $\partial \Omega$, we have
\begin{equation}\label{Du decomposition}
D \tilde u = Du \Lcorner \Omega + Dg \Lcorner \Omega' \setminus \Omega + \mu.   
\end{equation}
In the case of $\Omega$ Lipschitz, and $\partial^* \Omega$ being its reduced boundary, $|\mu| \ll \Ha \Lcorner \partial^* \Omega$. \\
The weak$^*$ convergence is preserved in the bigger set $\Omega'$, indeed $\tilde u_j$ trivially converges to $\tilde u$ in $\LL^1(\Omega',\R^N)$ since it does in $\Omega$ and out of it is identically $g$, and the derivatives have uniformly bounded mass  since by \eqref{Duj decomposition} 
\begin{align*}
|D\tilde u_j|(\Omega')=|D u_j|(\Omega) + |D g|(\Omega' \setminus \Omega)
\end{align*}
which is uniformly bounded in $j \in \N$. Thus every subsequence of $D\tilde u_j$ has a further subsequence weakly$^*$ converging as measures to some $D v$, with $v= \tilde u$ by closedness with the $\LL^1$ convergence. 

Therefore we can assume $\tilde u \in \bv(\Omega', \R^N)$ and $\tilde u_j \overset{\ast}{\rightharpoonup} \tilde u$ in $\bv(\Omega', \R^N)$.
Since  $\tilde u_j=\tilde u=g$ on $\Omega'\setminus\overline{\Omega}$, which contains a neighbourhood of $\partial\Omega'$ in $\Omega'$, all these functions have the same trace on the Lipschitz boundary $\partial\Omega'$, we have the following lower semicontinuity result for \emph{signed} quasiconvex functions of linear growth with fixed boundary values (see \cite{kristensenrindler-relax}) :
$$
\int_{\Omega'} G(D\tilde u) \leq \liminf_{j \to + \infty} \int_{\Omega'}G(D\tilde u_j),
$$
and since also $F-G$ is quasiconvex
\begin{equation}\label{F-G qc lsc}
\int_{\Omega'} (F-G)(D\tilde u) \leq \liminf_{j \to + \infty} \int_{\Omega'}(F-G)(D\tilde u_j).
\end{equation}
\begin{remo}\label{remark F(Duj)}
For $u_j \in \bv_g(\Omega)$ it is clear by (\ref{Duj decomposition}) that in the bigger set $\Omega'$ the functional with integrand $F$ can be written as
\begin{multline*}
\int_{\Omega'} F(D \tilde u_j)= \int_\Omega F(Du_j) + \int_{\Omega'\setminus \Omega} F(Dg) =\\
= \int_\Omega F(\nabla u_j) \dd x + \int_\Omega F^{\#} \left(\frac{D^s u_j}{|D^s u_j|}\right)\dd |D^s u_j|+  \int_{\Omega'\setminus \Omega} F(\nabla g) \dd x + \int_{\Omega' \setminus \Omega} F^{\#} \left(\frac{D^s g}{|D^s g|}\right)\dd |D^s g|,
\end{multline*}
whereas for $u$ and its extension $\tilde u$ by (\ref{Du decomposition}) we only have that
$$
\int_{\Omega'} F(D \tilde u)= \int_\Omega F(Du) + \int_{\Omega'\setminus \Omega} F(Dg)+\int_{\partial \Omega} F(\mu).
$$
Here, since $|D^sg|(\partial \Omega)=0$, $\int_{\partial \Omega} F(\mu)$ denotes $\int_{\partial \Omega} F(D\tilde u)-\int_{\partial \Omega} F(\nabla g)\dd x$, the energy carried by $\mu$, so that the identity above holds by definition. The goal is to find a way to represent $F(\mu)$.
\end{remo}

By the equality of the limit energy for $F$ and the extension of both $u_j, u$ to $g$ in $\Omega' \setminus \Omega$, we can add to both sides of the limit energy for $F$ the quantity $\int_{\Omega' \setminus \Omega} F(Dg)$ and we get 
$$
   \lim_{j \to +\infty }\int_{\Omega'} F(D\tilde u_j)=   \lim_{j \to +\infty }\int_{\Omega} F(Du_j)+ \int_{\Omega' \setminus \Omega} F(Dg) = \int_{\Omega} F(Du) + \int_{\Omega' \setminus \Omega} F(Dg),
$$
where on the left-hand side the first equality holds by Remark \ref{remark F(Duj)},  since $u_j-g\in\bv_0(\Omega,\R^N)$ for every $j$. The right-hand side is not exactly equal to $\int_{\Omega'} F(D\tilde u)$ again by Remark \ref{remark F(Duj)}, since  $u-g$ need not belong to $\bv_0(\Omega,\R^N)$, but
$$
 \int_{\Omega} F(Du) + \int_{\Omega' \setminus \Omega} F(Dg)=  \int_{\Omega'} F(D\tilde u) - \int_{\partial \Omega} F(\mu),
$$
and if we apply lower semicontinuity to $F(D\tilde u_j)$ in $\Omega'$ ($F$ is quasiconvex because $G$ and $F-G$ are) we get 
$$
 \lim_{j \to +\infty }\int_{\Omega'} F(D\tilde u_j) = \int_{\Omega'} F(D\tilde u) - \int_{\partial \Omega} F(\mu) \leq \liminf_{j \to +\infty }\int_{\Omega'} F(D\tilde u_j) - \int_{\partial \Omega} F(\mu),
$$
i.e. 
\begin{equation}\label{F(mu) leq 0}
 \int_{\partial \Omega} F(\mu) \leq 0.
\end{equation}
\begin{remo}
If $\Omega$ is  Lipschitz, then $(\ref{F(mu) leq 0})$ says that
$$
\int_{\partial \Omega} F^{\#}((g-u) \otimes \nu_\Omega)\, \dd \Ha \leq 0.
$$
and by the assumption of $F^{\#}>0$ away from $0$, $g-u=0$ on $\partial \Omega$. \\
We want to prove that also when $\Omega$ is not necessarily  Lipschitz, then we have the representation
$$
\int_{\partial \Omega} F(\mu) = \int_{\partial \Omega} F^{\#}(\mu),
$$
and this happens  because $\mu$ is the barycenter of the part of the concentration of $(Du_j)$ that escapes to $\partial \Omega$, and it is singular with respect to $\Leb$ (Proposition \ref{Prop mu singular}); the latter is not automatic for a concentration, which may have an absolutely continuous part when $\Leb(\partial \Omega)>0$.  Since $F^{\#}>0$ away from 0, the condition \eqref{F(mu) leq 0}  then leads to $\mu=0$.

\end{remo}
\begin{proposition}\label{Prop mu singular}
The measure $\mu$ is  the barycenter of the concentration of $(Du_j)$ at $\partial \Omega$: if $D u_j \toY \nu$ in $\Omega$ along a subsequence, then $\mu=\overline{\nu}^\infty_x \lambda \Lcorner \partial \Omega$. Moreover, $\mu$ is singular with respect to $\Leb$, so that $\int_{\partial \Omega} F(\mu)=\int_{\partial \Omega} F^{\#}\left(\frac{\mu}{|\mu|}\right)\dd |\mu|$.
\end{proposition}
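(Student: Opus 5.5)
Two claims need proof: the identification of $\mu$ as the concentration barycentre at $\partial\Omega$, and its singularity with respect to $\Leb$. I would work on $\Omega'$ with the extended sequence $(D\tilde u_j)$. By \eqref{Duj decomposition}, $D\tilde u_j=Du_j\Lcorner\Omega+Dg\Lcorner(\Omega'\setminus\Omega)$. Along a subsequence, $Du_j\toY\nu=(\nu_x,\lambda,\nu^\infty_x)$ in $\Omega$, where $\lambda\in\mathscr{M}^+(\Omegacl)$ may charge $\partial\Omega$. I would also regard $Du_j\Lcorner\Omega$ as measures on $\Omega'$ and pass to their weak$^*$ limit.

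\emph{Step 1.} For this limit I would test against $\phi\in\CC_0(\Omega')$, using $\phi|_{\Omegacl}\otimes\ell_e\in\E_G(\Omega,\V)$, for which $\ell_e^\infty=\ell_e$. This gives
$$Du_j\Lcorner\Omega\wstar \overline{\nu}_x\Leb\Lcorner\Omega+\overline{\nu}^\infty_x\lambda\quad\text{in }\mathscr{M}(\Omega',\M).$$
Restricted to $\Omega$, the right-hand side is $Du$, because $v_j\toY\nu$ implies $v_j\wstar\overline{\nu}\Lcorner\Omega$. Comparing with \eqref{Du decomposition} and using $D\tilde u_j\wstar D\tilde u$ shows that the part on $\partial\Omega$ is $D\tilde u\Lcorner\partial\Omega-Dg\Lcorner\partial\Omega$, that is, $\mu=\overline{\nu}^\infty_x\lambda\Lcorner\partial\Omega$. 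The only care needed is the Goffman--Serrin convention: the bounded, $x$-continuous test functions in $\E_G$ extend from $\Omegacl$, and on $\partial\Omega$ only the $\lambda$-term appears, since $\nu_x$ lives on $\Omega$ and $\Leb\Lcorner\Omega$ does not charge $\partial\Omega$. I take $Dg\Lcorner\partial\Omega$ to be absolutely continuous with respect to $\Leb$, by $|D^sg|(\partial\Omega)=0$.

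\emph{Step 2 (main obstacle).} Here I must show $\mu\perp\Leb$ even when $\Leb(\partial\Omega)>0$. The natural decomposition is $\lambda\Lcorner\partial\Omega=\lambda^a\Leb\Lcorner\partial\Omega+\lambda^s\Lcorner\partial\Omega$, and I need $\overline{\nu}^\infty_x\lambda^a=0$ $\Leb$-a.e.\ on $\partial\Omega$. I would argue through the structure of the limit $\tilde u$. At $\Leb$-a.e.\ point $x\in\partial\Omega$ we have $\tilde u=g$ a.e.\ on $\Omega'\setminus\Omega\supseteq\partial\Omega$, so the approximate gradients agree: $\nabla\tilde u=\nabla g$ $\Leb$-a.e.\ on $\partial\Omega$. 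This uses the locality of approximate differentials of $\bv$ functions on the set $\{\tilde u=g\}$, applied to $\tilde u-g$, which vanishes a.e.\ on $\partial\Omega$, so its absolutely continuous derivative vanishes there.

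Hence $(D\tilde u)^a\Lcorner\partial\Omega=(Dg)^a\Lcorner\partial\Omega$. This forces $\mu^a=0$, because $\mu=D\tilde u\Lcorner\partial\Omega-Dg\Lcorner\partial\Omega$ and the absolutely continuous parts cancel. Thus $\mu=\mu^s$ is singular, and the energy carried by $\mu$ is $\int_{\partial\Omega}F(D\tilde u)-\int_{\partial\Omega}F(\nabla g)\dd x=\int_{\partial\Omega}F^\#\bigl(\frac{\mu}{|\mu|}\bigr)\dd|\mu|$. Here the $\Leb$-parts cancel by $\nabla\tilde u=\nabla g$, and the singular part of $D\tilde u$ on $\partial\Omega$ is exactly $\mu$, using $|D^sg|(\partial\Omega)=0$.

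The delicate point I expect is exactly this locality statement, which says that the diffuse concentration $\lambda^a\overline{\nu}^\infty_x$ on $\partial\Omega$ is invisible in the barycentre. It has the same flavour as Proposition~\ref{sufficiency proposition}'s identification $\mathrm{d}\mu/\mathrm{d}\Leb=\overline{\nu}_x+\lambda^a\overline{\nu}^\infty_x$. I would prove it via $D(\tilde u-g)\Lcorner\{\tilde u=g\}$ having zero absolutely continuous part, citing \cite{AFP}.
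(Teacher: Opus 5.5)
Your proof is correct and follows the paper's argument: as there, singularity comes from the approximate differentials of $\tilde u$ and $g$ agreeing at $\Leb$-a.e.\ point of $\partial\Omega\subseteq\Omega'\setminus\Omega$, so the absolutely continuous parts of $D\tilde u\Lcorner\partial\Omega$ and $Dg\Lcorner\partial\Omega$ cancel. The one difference is in the identification step, where you test the generation only with $\phi\otimes\ell_e$ to get the weak$^*$ limit of $Du_j\Lcorner\Omega$ in $\mathscr{M}(\Omega',\M)$, whereas the paper computes the whole Young measure $\tilde\nu$ generated by $(D\tilde u_j)$ in $\Omega'$; since only the barycentre is needed, your shortcut is a legitimate streamlining.
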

\begin{proof}
Take any countable collection of functions $\phi \in \CC(\overline{\Omega'})$, $f \in \text{LIP}(\M) $ which uniquely determine the Young measures $\textbf{Y}(\Omega', \M)$.
In $\Omega$ we have, along a subsequence, $D u_j \toY \nu$ for some YM, i.e.
\begin{equation}\label{Duj toY nu}
\int_{\Omega} \phi(x)f(D u_j) \to \int_{\Omega} \phi(x) \langle f, \nu_x \rangle \dif x + \int_{\overline{\Omega}} \phi(x) \langle f^\infty, \nu_x^\infty \rangle \dif \lambda(x).
\end{equation}
Also in $\Omega'$, $D \tilde u_j \toY \tilde \nu$, 
$$
\int_{\Omega'} \phi(x)f(D \tilde u_j)  \to \int_{\Omega'} \phi(x) \langle f, \tilde \nu_x \rangle \dif x + \int_{\overline{\Omega'}} \phi(x) \langle f^\infty, \tilde \nu_x^\infty \rangle \dif \tilde \lambda(x).
$$
But the left-hand side, by  \eqref{Duj decomposition}, is split in
$$
\int_{\Omega'} \phi(x)f(D \tilde u_j)  = \int_{\Omega} \phi(x)f(D u_j) + \int_{\Omega' \setminus \Omega} \phi(x)f(D g) 
$$
and we know that each term singularly converges, the first term to (\ref{Duj toY nu}) whereas the second is constantly equal to
$$
\int_{\Omega' \setminus \Omega} \phi(x)f(D g)= \int_{\Omega' \setminus \Omega} \phi(x)f(\nabla g)\dif x +  \int_{\Omega' \setminus \Omega} \phi(x)f^\infty\left(\frac{D^sg}{|D^sg|}\right)\dif |D^sg|(x),
$$
thus giving an expression for $D \tilde u_j \toY \tilde \nu$,
\begin{multline*}
\int_{\Omega'} \phi(x) \langle f, \tilde \nu_x \rangle \dif x + \int_{\overline{\Omega'}} \phi(x) \langle f^\infty, \tilde \nu_x^\infty \rangle \dif \tilde \lambda(x) \\ = \int_{\Omega} \phi(x) \langle f, \nu_x \rangle \dif x + \int_{\overline{\Omega}} \phi(x) \langle f^\infty, \nu_x^\infty \rangle \dif \lambda(x) + \int_{\Omega' \setminus \Omega} \phi(x)f(D g), \\
 = \int_{\Omega} \phi(x) \langle f, \nu_x \rangle \dif x + \int_{\overline{\Omega}} \phi(x) \langle f^\infty, \nu_x^\infty \rangle \dif \lambda(x) + \int_{\Omega' \setminus \Omega} \phi(x)f(\nabla g)\dif x \\ +  \int_{\Omega' \setminus \Omega} \phi(x)f^\infty\left(\frac{D^sg}{|D^sg|}\right)\dif |D^sg|(x),
\end{multline*}
which gives
\begin{gather*}
\tilde \nu_x=\begin{cases}
    \nu_x \quad \text{if }x \in \Omega, \\
    \delta_{\nabla g(x)}\quad \text{if }x \in \Omega'\setminus\Omega,
\end{cases}    \\
\tilde \nu^\infty_x=\begin{cases}
    \nu^\infty_x \quad \text{if }x \in \Omega, \\
    \mu_x^\infty \quad \text{if }x \in \partial \Omega,
    \\
    \delta_{\frac{D^sg}{|D^sg|}(x)}\quad \text{if }x \in \Omega'\setminus\overline{\Omega},
\end{cases} \quad
 \tilde \lambda= \begin{cases}
     \lambda \quad \text{if }x \in \Omega, \\
    \lambda + |D^sg| \quad \text{if }x \in \partial \Omega,
     \\
    |D^sg|\quad \text{if }x \in \Omega'\setminus\overline{\Omega}.
\end{cases}
\end{gather*}
with 
$$
\mu_x^\infty= \begin{cases}
   \frac{a(x)}{a(x)+1}\nu_x^\infty + \frac{1}{a(x)+1}\delta_{\frac{D^sg}{|D^sg|}(x)} \quad \text{for } |D^sg|\text{-almost every }x \in \partial \Omega, \\
   \nu_x^\infty \quad \text{for } \lambda^*\text{-almost every }x \in \partial \Omega,
\end{cases}
$$
where $a(x)$ comes from the Lebesgue--Radon--Nikod\'ym decomposition of $\lambda$ with respect to $|D^sg|$, $\lambda=a|D^sg|+\lambda^* $. The exact expression is not relevant but can be found using Proposition 6 (\emph{Shifts}) in \cite{Kri Rindler}. Using that $D\tilde u= \overline{\tilde \nu} \Lcorner \Omega'$, and
$$
D\tilde u \Lcorner \partial \Omega = Dg \Lcorner \partial \Omega + \mu= \nabla g \Leb \Lcorner \partial \Omega + D^sg \Lcorner \partial \Omega + \mu
$$
$$
\overline{\tilde \nu} \Lcorner \partial \Omega = \overline{\tilde \nu}_x \Leb \Lcorner \partial \Omega +  \overline{\tilde \nu}_x^\infty \tilde \lambda \Lcorner \partial \Omega = \nabla g \Leb \Lcorner \partial \Omega +  \overline{\tilde \nu}_x^\infty \tilde \lambda \Lcorner \partial \Omega
$$
we see by identification (note that the Lebesgue part $\nabla g \Leb \Lcorner \partial \Omega$ cancels out regardless of its Lebesgue measure) that the third component $\mu$ of $D\tilde u$ is 
$$
\mu= \overline{\tilde \nu}_x^\infty \tilde \lambda \Lcorner \partial \Omega - D^sg \Lcorner \partial \Omega
$$
 and, since on $\partial \Omega$ the concentration part of $\tilde \nu$ is the sum of that of $\nu$ and that of $Dg$, we have $\overline{\tilde \nu}_x^\infty \tilde \lambda \Lcorner \partial \Omega=\overline{\nu}^\infty_x \lambda \Lcorner \partial \Omega+D^sg \Lcorner \partial \Omega$, that is,
$$
\mu=\overline{\nu}^\infty_x \lambda \Lcorner \partial \Omega.
$$
It remains to show that $\mu \perp \Leb$. This is immediate if $\Leb(\partial \Omega)=0$, but not in general, since a concentration may have an absolutely continuous part. By the Calder\'on--Zygmund theorem (see \cite[Theorem 3.83]{AFP}), $\tilde u$ and $g$ are approximately differentiable $\Leb$-almost everywhere, with approximate differentials equal to the densities of $(D\tilde u)^a$ and $(Dg)^a$; since $\tilde u=g$ on $\Omega' \setminus \Omega \supset \partial \Omega$, they coincide $\Leb$-almost everywhere on $\partial \Omega$. Hence $(D\tilde u)^a \Lcorner \partial \Omega=(Dg)^a \Lcorner \partial \Omega$, and $\mu$ has no absolutely continuous part. In terms of the Young measure, if $\lambda=\lambda^a+\lambda^s$ is the Lebesgue decomposition of $\lambda$, this means that $\overline{\nu}^\infty_x=0$ for $\lambda^a$-almost every $x\in\partial \Omega$: the concentration may well have an absolutely continuous part on $\partial \Omega$, but there its barycenter vanishes, and $\mu=\overline{\nu}^\infty_x \lambda^s \Lcorner \partial \Omega$. Finally, as $|D^sg|(\partial \Omega)=0$, we have $D\tilde u \Lcorner \partial \Omega=\nabla g \Leb \Lcorner \partial \Omega+\mu$ with $\mu \perp \Leb$, so $\mu$ is the singular part of $D\tilde u$ on $\partial \Omega$ and
$$
\int_{\partial \Omega} F(\mu)= \int_{\partial \Omega} F(D\tilde u)-\int_{\partial \Omega} F(\nabla g) \dd x=\int_{\partial \Omega} F^{\#}\left(\frac{\mu}{|\mu|}\right)\dd |\mu|.
$$
\end{proof}
By Proposition \ref{Prop mu singular}, \eqref{F(mu) leq 0} reads
$$
\int_{\partial \Omega} F^{\#}\left(\frac{\mu}{|\mu|}\right)\dd |\mu| \leq 0,
$$
and since $F^{\#}>0$ on $\M \setminus \{0\}$, we conclude $\mu=0$.

\hfill \break
\hfill \break
Getting back to the lower semicontinuity inequality (\ref{F-G qc lsc}), and recalling that $\int (F-G)(D\,\cdot)=\int F(D\,\cdot)-\int G(D\,\cdot)$ because $(F-G)^{\#}=F^{\#}-G^\infty$, the recession function of $G$ being a limit, it holds
\begin{multline*}
\int_{\Omega'} F(D\tilde u)- \int_{\Omega'} G(D\tilde u) \leq \liminf_{j \to + \infty} \int_{\Omega'}(F-G)(D\tilde u_j) = \lim_{j \to + \infty} \int_{\Omega'}F(D\tilde u_j) - \limsup_{j \to + \infty} \int_{\Omega'}G(D\tilde u_j) \\ = \int_{\Omega'} F(D\tilde u) - \int_{\partial \Omega} F(\mu) - \limsup_{j \to + \infty} \int_{\Omega'}G(D\tilde u_j) 
\end{multline*}
thus
\begin{equation}\label{limsup G}
\limsup_{j \to + \infty} \int_{\Omega'}G(D\tilde u_j)  \leq \int_{\Omega'} G(D\tilde u) - \int_{\partial \Omega} F(\mu). 
\end{equation}
It would be enough to have the condition $- \int_{\partial \Omega} F(\mu) \leq 0$, in order to get
$$
\int_{\Omega'} G(D\tilde u) \leq \liminf_{j \to + \infty} \int_{\Omega'}G(D\tilde u_j) \leq \limsup_{j \to + \infty} \int_{\Omega'}G(D\tilde u_j) \leq \int_{\Omega'} G(D\tilde u).
$$ 
i.e.
\begin{equation}\label{energy equality for G}
\lim_{j \to \infty} \int_{\Omega'}G(D\tilde u_j)= \int_{\Omega'}G(D\tilde u).
\end{equation}
Since we proved the stronger condition $\mu=0$ (which is also the last assertion of Proposition \ref{transfer principle proposition}, because $D\big((u-g)\boldsymbol{1}_\Omega\big)=D(u-g)\Lcorner\Omega+\mu$), which in fact gives $\int_{\partial \Omega} F(\mu)=0$, then on one side
$$
\lim_{j \to \infty} \int_{\Omega'}G(D\tilde u_j)= \lim_{j \to \infty} \int_{\Omega}G(D u_j) + \int_{\Omega' \setminus \Omega} G(Dg),
$$
and on the other side 
$$
\int_{\Omega'}G(D\tilde u)= \int_{\Omega}G(D u) + \int_{\Omega' \setminus \Omega} G(Dg)
$$
therefore \eqref{energy equality for G}.

\begin{remo}\label{remark regular recession F}
If, in addition, $F$ has a regular recession function $F^\infty$, the assumption $|D^sg|(\partial \Omega)=0$ can be dropped. Let $D u_j \toY \nu$ in $\Omega$ along a subsequence. Then
$$
\lim_{j\to\infty}\int_\Omega F(Du_j)=\int_\Omega \langle F,\nu_x\rangle \dd x
+\int_{\Omega}\langle F^\infty,\nu_x^\infty\rangle \dd\lambda
+\int_{\partial \Omega}\langle F^\infty,\nu_x^\infty\rangle \dd\lambda .
$$
By the Jensen inequalities for gradient Young measures (see \cite{Kri Rindler}), the first two terms are bounded below by $\int_\Omega F(Du)$. The zero extensions of $u_j-g$ generate in $\Omega'$ a Young measure with the same concentration on $\partial \Omega$, since $Dg\Lcorner\Omega$ does not concentrate at $\partial \Omega$, and with barycenter $\mu$ there, while $\partial \Omega$ now lies inside $\Omega'$. Hence the singular Jensen inequality bounds the last term below by $\int_{\partial \Omega}F^\infty\big(\frac{\mu}{|\mu|}\big)\dd|\mu|$, the absolutely continuous part of $\lambda$ on $\partial \Omega$ contributing a nonnegative amount. The hypothesis then gives directly $\int_{\partial \Omega}F^\infty\big(\frac{\mu}{|\mu|}\big)\dd|\mu|\le 0$, hence $\mu=0$, and the rest of the proof applies to any $g\in\bv(\R^n,\R^N)$, since then $D\tilde u\Lcorner\partial \Omega=Dg\Lcorner\partial \Omega$.
\end{remo}

\begin{remo}[Connection to Evans--Gariepy article]\label{evans gariepy connection} The assumptions on $F,G$ are reminiscent of those made in \cite{evans gariepy some remarks}; we briefly summarize the result the authors find.
Suppose $F$ has the form
$$
F(P) = G(P) + \lambda |P|^q \qquad (P \in \M)
$$
where $\lambda > 0$ and $1 < q < \infty$, and
$$
G \text{ is quasiconvex}, \qquad 0 \leq G(P) \leq \Lambda (1 + |P|^q)
$$
for some constant $\Lambda$. If
$$
\begin{cases}
u_k \rightharpoonup u \quad \text{weakly in } \WW^{1,q}(\Omega, \mathbb{R}^N), \\
I[u_k] \to I[u],
\end{cases}
$$
where $I[u]= \int_{\Omega} F(Du) \dif x$, then
$$
u_k \to u \quad \text{strongly in } \WW^{1,q}(\Omega, \mathbb{R}^N).
$$
To see this, note simply that by the Morrey--Acerbi--Fusco lower semicontinuity theorem
\begin{align*}
I[u]-\lambda \int_{\Omega}|Du|^q &= \int_\Omega G(Du) \dif x \\
&\leq \liminf_{k \to \infty} \int_\Omega G(Du_k) \dif x \\
&= \liminf_{k \to \infty} \left( I[u_k] - \lambda \int_\Omega |Du_k|^q \dif x \right)
\end{align*}
and thus since $I[u_k] \to I[u]$,
$$
\limsup_{k \to \infty} \int_\Omega |Du_k|^q \dif x \leq \int_\Omega |Du|^q \dif x.
$$
This estimate together with $\int_\Omega |Du|^q \dif x \leq \liminf_{k \to \infty} \int_\Omega |Du_k|^q \dif x$ (given by the weak convergence) and the uniform convexity of $\LL^q$ spaces with $1 < q < \infty$, yields the strong convergence. \\
A more general hypothesis is the following. We call $F$ \emph{uniformly strictly quasiconvex} if for some positive $\gamma>0$ it holds
$$
\int_Q F(A+D\phi)\dif x \geq F(A) + \gamma \int_Q |D\phi|^q \dif x, 
$$
for all $A \in \R^{n \times N}$ and $\phi \in \WW_0^{1,q}(Q,\R^N)$. The case $\gamma=0$ corresponds to being quasiconvex.
\end{remo}

\section{Appendix}\label{appendix section}
\subsection{Elementary Young measures} We prove here that the usual representation of Young measures through the triple $\nu=(\nu_x,\lambda,\nu_x^\infty)$, by an application of the Disintegration theorem, works the same with our new compactification by $G^\infty$. 
\begin{proof}[Proof of Lemma \ref{YM measure theoretic definition}]
Assume \emph{(1),...,(4)} and the formula below. Let $\Phi(x,z)= \phi(x) \boldsymbol{1}_\V(z)$, so that $\Phi^\infty=0$. By the formula of $\nu(\Phi)$ above and $\nu_x(\V)=1$ for $\Leb$-a.e. $x$, it holds
$$
\nu(\Phi)= \int_{ \Omega} \phi(x) \dif \nu_x(z) \dif x + 0 = \int_{\Omega} \phi(x) \dif x,
$$
and on the other hand by the characterization (\ref{expression of ell}) (which makes use of $(T^*)^{-1}$) it is also equal to 
$$
\nu(\Phi)= \int_{\overline{\Omega}\times \V} \Phi \dif \mu^0 +0 = \int_{\overline{\Omega} \times \B} \big(1-G^\infty(\hat{z})\big) \phi(x) \dif \mu(x,\hat z) 
$$
and therefore (we can integrate on $\overline{\B}$ since on $\mathbb{S}_G$ it holds $\big(1-G^\infty(\hat{z})\big)=0$)
$$
\int_{\overline{\Omega \times \B}} \big(1-G^\infty(\hat{z})\big) \phi(x) \dif \mu(x,\hat z) = \int_{\Omega} \phi(x) \dif x.
$$
\\
Now assume that $(T^*)^{-1}\nu$ belongs to the set (\ref{T*-1 of YM}). Then by the Disintegration Theorem, there exists a measure $\kappa \in \mathscr{M}^+(\overline{\Omega})$ and a weakly$^*$ $\kappa$-measurable family of probability measures $(\eta_x)_{x \in \Omega} \subset \mathscr{M}_1^+(\overline{\B})$ such that 
$$
\mu= \kappa (\dif x)\otimes \eta_x.
$$
Let also $\kappa=\kappa^a \Leb \Lcorner \Omega + \kappa^s$ the Lebesgue--Radon--Nikod\'ym decomposition of $\kappa$, with $\kappa^a \in \LL^1(\Omega)$. Then rewriting (\ref{T*-1 of YM}) in terms of the decomposition of $\mu$
\begin{multline*}
\int_{\Omega} \phi(x) \left(\kappa^a(x) \cdot \int_{\overline{\B}} \big(1-G^\infty(\hat{z})\big) \dif \eta_x(\hat z) -1\right)\dif x \\ + \int_{\overline{\Omega}} \phi(x) \left(\int_{\overline{\B}} \big(1-G^\infty(\hat{z})\big) \dif \eta_x(\hat z)\right)\dif \kappa^s(x)=0.
\end{multline*}
By the uniqueness of the Radon--Nikod\'ym decomposition, and playing over all $\phi$, we get 
$$
\int_{\overline{\B}} \big(1-G^\infty(\hat{z})\big) \dif \eta_x(\hat z)=0
$$
for $\kappa^s$-almost every $x \in \overline{\Omega}$, whereby $\eta_x$ is concentrated on $\mathbb{S}_G$ for $\kappa^s$-almost every $x \in \overline{\Omega}$ (since $1-G^\infty(\hat{z})>0$ on $\B$). Moreover looking at the first term
$$
\kappa^a(x)= \dfrac{1}{\int_{\overline{\B}} \big(1-G^\infty(\hat{z})\big)\dif \eta_x(\hat z)} \quad \Leb\text{-a.e. }x \in \Omega.
$$
We now find the triple $((\nu_x)_{x \in \Omega}, \lambda, (\nu_x^\infty)_{x \in \overline{\Omega}})$. For $\Phi(x,z)=\phi(x)f(z)$, $\phi \in \CC(\overline{\Omega}), f \in \HH$, and $Tf(\hat z)= \big(1-G^\infty(\hat{z})\big) f\left(\frac{\hat{z}}{1- G^\infty(\hat{z}) }\right) $, by $\mu=\kappa (\dif x) \otimes \eta_x$ and $\kappa=\kappa^a \Leb + \kappa^s$, it holds 
\begin{multline*}
\nu(\Phi)= \int_{\overline{\Omega \times \B}} T\Phi \, \dif \mu = \int_{\overline{\Omega}}\phi(x) \int_{\overline{\B}} Tf(\hat z) \dif \eta_x(\hat z) \dif \kappa(x) 
= \int_{\Omega}\phi(x) \left( \kappa^a(x) \int_{\B} Tf(\hat z) \dif \eta_x(\hat z) \right) \dif x \\ + \int_{\overline{\Omega}}\phi(x) \underbrace{\int_{\B} Tf(\hat z)\dif \eta_x(\hat z)}_{=0 \text{ for }\kappa^s \text{-a.e. }x} \dif \kappa^s(x)   + \int_{\overline{\Omega}}\phi(x) \left(  \int_{\mathbb{S}_G } f^\infty(\hat z) \dif \eta_x(\hat z) \right) \dif \kappa(x)= \\ =  \int_{\Omega}\phi(x) \left( \kappa^a(x) \int_{\B} Tf(\hat z) \dif \eta_x(\hat z) \right) \dif x+ \int_{\overline{\Omega}}\phi(x) \left(  \int_{\mathbb{S}_G } f^\infty(\hat z) \dif \eta_x(\hat z) \right) \dif \kappa(x) = \\=  \int_{\Omega}\phi(x) \left( \kappa^a(x) \int_{\B} Tf(\hat z) \dif \eta_x(\hat z) \right) \dif x+ \int_{\overline{\Omega}}\phi(x) \left(  \frac{1}{\eta_x(\mathbb{S}_G)}\int_{\mathbb{S}_G } f^\infty(\hat z) \dif \eta_x(\hat z) \right) \eta_x(\mathbb{S}_G) \dif \kappa(x)
\end{multline*}
It makes sense to define the measures $\nu_x \in \mathscr{M}^+(\V)$, $x \in \Omega$ as
$$
\langle \nu_x,f \rangle := \kappa^a(x)\int_{\B} Tf(\hat z) \dif \eta_x(\hat z),
$$
the measures $\nu_x^\infty \in \mathscr{M}^+_1(\mathbb{S}_G)$, $x \in \overline{\Omega}$ as
$$
\langle \nu_x^\infty,f^\infty \rangle := \frac{1}{\eta_x(\mathbb{S}_G)}\int_{\mathbb{S}_G } f^\infty(\hat z) \dif \eta_x(\hat z)
$$
and 
$$
\lambda(\dif x) := \eta_x(\mathbb{S}_G) \kappa=  \eta_x(\mathbb{S}_G) [\kappa^a \Leb \Lcorner \Omega + \kappa^s ] \in \mathscr{M}^+(\overline{\Omega}).
$$
With these definitions, the last line of equalities of $\nu(\Phi)$ can be written as
$$
\nu(\Phi)= \int_\Omega \phi(x) \int_\V f(z) \dif \nu_x(z) \dif x + \int_{\overline{\Omega}} \phi(x) \int_{\mathbb{S}_G} f^\infty(z) \dif \nu_x^{\infty}(z) \dif \lambda(x).
$$
It is clear that, the way they are defined, $\nu_x^\infty$ are probability measures being defined as averages on $\mathbb{S}_G= \{ G^\infty=1\}$, the set on which they are supported for $\lambda$-a.e. $x\in \overline{\Omega}$. These are called the \textbf{concentration-angle} measures, whereas $\lambda$ is called the \textbf{concentration} measure. \\
We prove now that also $\nu_x$, called \textbf{oscillation} measures, are probability measures, so $\nu_x \in \mathscr{M}_1^+(\V)$. Indeed by taking $f\equiv 1$ on $\V$, 
$$
\langle \nu_x,1 \rangle := \kappa^a(x)\int_{\B}\big(1-G^\infty(\hat{z})\big) \dif \eta_x(\hat z)=1 
$$
for $\Leb$-a.e. $x \in \Omega$ since as already found 
$$
\kappa^a(x)= \dfrac{1}{\int_{\overline{\B}} \big(1-G^\infty(\hat{z})\big)\dif \eta_x(\hat z)}= \dfrac{1}{\int_{\B} \big(1-G^\infty(\hat{z})\big)\dif \eta_x(\hat z)} \quad \Leb\text{-a.e. }x \in \Omega.
$$
Finally we prove the moment condition \emph{(4)}. Using $\Phi(x,z)= 1+ G^\infty(z) \in \E_G(\Omega,\V)$, we have $\Phi^\infty(x,z)=G^\infty(z)=1$ on $\Omegacl \times \mathbb S_G$ and so
$$
\int_\Omega \int_\V (1+G^\infty(z))\dif \nu_x(z) \dif x + \lambda(\overline{\Omega}) = \langle \mu, T\Phi \rangle = \mu(\overline{\Omega \times \B}) < \infty,
$$
since $T\Phi(x,\hat z) = \big(1-G^\infty(\hat{z})\big)\left( 1+ \frac{G^\infty(\hat{z})}{1- G^\infty(\hat{z}) } \right) \equiv 1$.
\end{proof}

\begin{lemma}
A Young measure $\nu$ generated by a sequence of elementary Young measures, i.e.  $(v_j)$ is a sequence in $ \mathscr{M}(\Omega,\V)$ and $\boldsymbol{\eps}_{v_j} \wstar \nu$ in $\E_G(\Omega,\V)^*$, cannot have oscillation at the boundary $\partial \Omega$.
\end{lemma}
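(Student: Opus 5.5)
The plan is to read ``no oscillation at $\partial\Omega$'' as the statement that the measure $\mu:=(T^*)^{-1}\nu\in\mathscr{M}^+(\overline{\Omega\times\BG})$ does not charge $\partial\Omega\times\BG$. Equivalently, the oscillation component $\mu^0=(\mathrm{id}\times\iota_G^{-1})_\#\bigl[(1-G^\infty)\mu\Lcorner\Omegacl\times\BG\bigr]$ gives no mass to $\partial\Omega\times\V$. Then the only part of $\nu$ that can sit over $\partial\Omega$ is the concentration part $\lambda\Lcorner\partial\Omega\otimes\nu^\infty_x$. This must be shown even when $\Leb(\partial\Omega)>0$, so the argument must use that the constraint involves $\Leb\Lcorner\Omega$ with $\Omega$ open, not $\Leb\Lcorner\Omegacl$.

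First, each elementary $\boldsymbol{\eps}_{v_j}$ has $(T^*)^{-1}\boldsymbol{\eps}_{v_j}$ in the set \eqref{T*-1 of YM}. This is the computation already recorded before Definition~\ref{YM definition}: $T^{-1}[\phi(x)(1-G^\infty(\hat z))]=\phi\otimes1$ and $(\phi\otimes1)^\infty=0$, so the pairing equals $\int_\Omega\phi\dd x$. That set is weak$^*$ closed, and $T^*$ is a homeomorphism of the positive cones. Hence the weak$^*$ limit $\mu=(T^*)^{-1}\nu$ satisfies
$$
\int_{\overline{\Omega\times\BG}}\phi(x)\bigl(1-G^\infty(\hat z)\bigr)\dif\mu(x,\hat z)=\int_\Omega\phi\dd x\qquad\text{for all }\phi\in\CC(\Omegacl).
$$
In other words, the $x$-marginal of the positive measure $(1-G^\infty)\mu$ is exactly $\Leb\Lcorner\Omega$.

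Second, pass from continuous $\phi$ to the indicator of $\partial\Omega$. Take $\phi_k\in\CC(\Omegacl)$ with $0\le\phi_k\le1$ and $\phi_k\searrow\boldsymbol{1}_{\partial\Omega}$ pointwise, for instance $\phi_k=(1-k\,\mathrm{dist}(\cdot,\partial\Omega))^+$. This is possible because $\partial\Omega$ is closed. Monotone convergence on both sides then gives
$$
\int_{\partial\Omega\times\BG}\bigl(1-G^\infty\bigr)\dif\mu=\Leb(\Omega\cap\partial\Omega)=0 .
$$
Since $1-G^\infty>0$ on $\BG$ and $\mu\ge0$, this forces $\mu\bigl(\partial\Omega\times\BG\bigr)=0$, which means $\mu^0(\partial\Omega\times\V)=0$. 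In the disintegration $\mu=\kappa\otimes\eta_x$ of the proof of Lemma~\ref{YM measure theoretic definition}, this says that $\eta_x$ is carried by $\SG$ for $\kappa$-a.e.\ $x\in\partial\Omega$. This is why the oscillation family $(\nu_x)$ is indexed by $x\in\Omega$ only, while $(\nu^\infty_x)$ is indexed by $x\in\Omegacl$.

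The main obstacle is the case $\Leb(\partial\Omega)>0$, where one might fear a Lebesgue-absolutely-continuous oscillation sitting on the boundary. The step that handles it is the observation that the right-hand side is $\Leb\Lcorner\Omega$, the restriction to the \emph{open} set, and not $\Leb\Lcorner\Omegacl$. This holds because each $\boldsymbol{\eps}_{v_j}$ integrates only over $\Omega$, and the identity survives the weak$^*$ limit since the test integrands $\phi\otimes1$ with $\phi\in\CC(\Omegacl)$ are admissible.
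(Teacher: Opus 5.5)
Your argument is correct and is essentially the paper's proof. Both test the generation with $\phi\otimes 1$, use that the limit identity has $\int_\Omega\phi\,\dd x$ over the open set on the right, and let $\phi_k\searrow\boldsymbol{1}_{\partial\Omega}$ (the paper's ``alternative'' step). The only difference is that you run this on $\mu=(T^*)^{-1}\nu$ rather than on an oscillation density $\nu_x(\V)$ over $\Omegacl$; this is slightly cleaner, since it also rules out Lebesgue-singular oscillation mass on $\partial\Omega$ instead of presupposing that the oscillation part has the form $\nu_x\,\Leb\Lcorner\Omegacl$.
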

\begin{corollary}
All Young measures defined in Lemma \ref{YM measure theoretic definition} have this property. 
\end{corollary}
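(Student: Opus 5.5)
The plan is to read ``oscillation at the boundary'' as mass of the representing measure $\boldsymbol\mu:=(T^*)^{-1}\nu\in\mathscr{M}^+(\Omegacl\times\overline{\BG})$ on the set $\partial\Omega\times\BG$. This is the part that, by the disintegration in the proof of Lemma~\ref{YM measure theoretic definition}, would produce an oscillation measure $\nu_x$ at points $x\in\partial\Omega$. The goal is then to show $\boldsymbol\mu(\partial\Omega\times\BG)=0$, so that over $\partial\Omega$ only the part on $\partial\Omega\times\SG$ survives, i.e.\ only $\lambda\Lcorner\partial\Omega$ and the $\nu^\infty_x$.

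The first step is to test the generation with the integrands $\phi\otimes1$, $\phi\in\CC(\Omegacl)$. As computed after \eqref{T*-1 of YM}, $T(\phi\otimes 1)(x,\hat z)=\phi(x)\bigl(1-G^\infty(\hat z)\bigr)$ and $(\phi\otimes1)^\infty=0$. Hence $\boldsymbol{\eps}_{v_j}(\phi\otimes1)=\int_\Omega\phi\dd x$ for every $j$. Passing to the limit gives
$$
\int_{\Omegacl\times\overline{\BG}}\phi(x)\bigl(1-G^\infty(\hat z)\bigr)\dif\boldsymbol\mu(x,\hat z)=\int_\Omega\phi\dd x\qquad\text{for all }\phi\in\CC(\Omegacl).
$$
This is exactly membership in \eqref{T*-1 of YM}.

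The second step rewrites this identity as an equality of measures on $\Omegacl$. Set $\rho:=\pi_\#\bigl[(1-G^\infty)\boldsymbol\mu\bigr]\in\mathscr{M}^+(\Omegacl)$, where $\pi(x,\hat z)=x$; this is a positive finite measure because $0\le1-G^\infty\le1$ on $\overline{\BG}$. The display says $\int\phi\dif\rho=\int\phi\dif(\Leb\Lcorner\Omega)$ for all $\phi\in\CC(\Omegacl)$. By the Riesz uniqueness on the compact set $\Omegacl$, it follows that $\rho=\Leb\Lcorner\Omega$. Since $\Omega$ is open, $\Omega\cap\partial\Omega=\emptyset$, and therefore
$$
\int_{\partial\Omega\times\overline{\BG}}\bigl(1-G^\infty(\hat z)\bigr)\dif\boldsymbol\mu=\rho(\partial\Omega)=0 .
$$
This holds even if $\Leb(\partial\Omega)>0$: the measure $\Leb\Lcorner\Omega$ charges no subset of $\partial\Omega$.

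The final step uses $1-G^\infty>0$ on $\BG$ to conclude $\boldsymbol\mu(\partial\Omega\times\BG)=0$. Equivalently, in the disintegration $\boldsymbol\mu=\kappa\otimes\eta_x$, for $\kappa$-a.e.\ $x\in\partial\Omega$ the fibre $\eta_x$ is carried by $\SG$. So $\mu^0=(\mathrm{id}\times\iota_G^{-1})_\#\bigl[(1-G^\infty)\boldsymbol\mu\Lcorner\Omegacl\times\BG\bigr]$ gives no mass to $\partial\Omega\times\V$. The only point needing care is this Riesz identification on $\Omegacl$ rather than on $\Omega$, which is why test functions in $\CC(\Omegacl)$ are used rather than $\CC_0(\Omega)$; I do not expect a genuine obstacle. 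The same argument applies verbatim to any $\nu$ with $(T^*)^{-1}\nu$ in \eqref{T*-1 of YM}, which gives the Corollary.
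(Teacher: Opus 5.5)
Your argument is correct, but it takes a different route from the paper's.

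\textbf{The paper's route.} The paper proves the Corollary by reduction to the preceding Lemma. It asserts, deferring to Kristensen and Rai\cb{t}\u{a} without proof, that every $\nu$ with $(T^*)^{-1}\nu$ in \eqref{T*-1 of YM} is generated by elementary Young measures. It then applies the Lemma, whose proof tests the generation with $\phi\otimes 1$.

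\textbf{Your route.} You observe that the only input of that proof is the identity $\nu(\phi\otimes1)=\int_\Omega\phi\dd x$ for $\phi\in\CC(\Omegacl)$. This identity is literally the defining condition \eqref{T*-1 of YM}. So your first step, which re-derives membership from generation, can be dropped: for the Corollary, membership is the hypothesis. The Riesz identification $\pi_\#\bigl[(1-G^\infty)\boldsymbol\mu\bigr]=\Leb\Lcorner\Omega$ on $\Omegacl$, followed by positivity of $1-G^\infty$ on $\BG$, then gives the result directly.

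\textbf{What your route buys.}
\begin{itemize}
\item It is self-contained and avoids the nontrivial generation/density theorem that the paper only gestures at.
\item It gives the sharper conclusion $\boldsymbol\mu(\partial\Omega\times\BG)=0$, equivalently $\mu^0(\partial\Omega\times\V)=0$.
\item It holds also when $\Leb(\partial\Omega)>0$.
\item It does not presuppose that the oscillation part has a Lebesgue density on $\Omegacl$. The paper's formulation, ``$\nu_x(\V)=0$ for a.e.\ $x\in\partial\Omega$'', does presuppose this. Your identity $\pi_\#\mu^0=\Leb\Lcorner\Omega$ also rules out any part of $\pi_\#\mu^0$ singular with respect to $\Leb$ sitting on $\partial\Omega$.
\end{itemize}

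\textbf{What the paper's route buys.} Only the structural fact that abstract and generated Young measures coincide. That is of independent interest, but it is not needed here.
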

\begin{proof}[Proof of Corollary] One needs to prove that the YM generated by elementary sequences coincide with $(T^*)^{-1}$ of the right-hand side of (\ref{T*-1 of YM}). This result is essentially found in \cite{kristensen raita}.       
\end{proof}
\begin{proof}[Proof of Lemma] We recall that the elementary Young measures $\boldsymbol{\eps}_{v_j}$ act on $\phi \in \CC(\Omegacl)$, $f \in \HH $ as
$$
\langle \boldsymbol{\eps}_{v_j}, \phi \otimes f \rangle = \int_{\Omega}\phi(x)f(v_j^a(x))\dd x + \int_{\Omega}\phi(x) f^\infty \left(\frac{\dd v_j^s}{\dd |v_j^s|}(x)\right) \dd |v_j^s|(x),
$$
and by assumption on the generating sequence
$$
\langle \boldsymbol{\eps}_{v_j}, \phi \otimes f \rangle \to \langle \nu, \phi \otimes f \rangle = \int_{\Omegacl \times \overline{\B}} \phi \otimes f \dif \nu= \int_{ \Omegacl} \phi(x) \langle f, \nu_x \rangle \dif x + \int_{\overline \Omega} \phi(x) \langle f^\infty , \nu_x^\infty \rangle \dif \lambda (x).
$$
In particular taking $f\equiv 1 \in \HH$, so that $f^\infty=0$, it holds
$$
\langle \boldsymbol{\eps}_{v_j}, \phi \otimes 1 \rangle = \int_{\Omega}\phi(x)\dd x  
$$
but also
$$
\langle \boldsymbol{\eps}_{v_j}, \phi \otimes 1 \rangle \to \int_{ \Omegacl} \phi(x) \langle 1, \nu_x \rangle \dif x= \int_{\Omegacl} \phi(x) \nu_x(\V) \dif x.
$$
Therefore 
$$
\int_{\Omegacl} \phi(x) \nu_x(\V) \dif x =\int_\Omega \phi(x) \dif x, \quad \forall \phi \in \CC(\Omegacl).
$$
The measures $\nu_x(\V)\Leb \Lcorner \Omegacl$ and $\Leb\Lcorner\Omega$ are equal for all $\phi \in \CC(\Omegacl)$, so they coincide, and therefore it must hold $\nu_x(\V)=0$ for almost every $x \in \partial \Omega$. Alternatively one takes a sequence $(\phi_k)$ with $\phi_k\equiv 1$ on $\partial \Omega$ and $\phi_k \searrow 0$ in $\Omega$, so that taking the limit at both sides it yields $\int_{\partial \Omega} \nu_x(\V) \dif x= 0$ and then the same conclusion holds.

\end{proof}
\begin{remo}
We found two expressions for the Young measure $\nu$. \\
The first one is defined for $\Phi \in \E_G(\Omega,\V)$ as
    
    $$
    \nu(\Phi)= \int_{\overline{\Omega}\times \V} \Phi \, \dif \mu^0 + \int_{\overline{\Omega}\times \mathbb{S}_G} \Phi^{\infty} \, \dif \mu^\infty,
    $$
with $\mu=(T^*)^{-1}\nu$ and
\begin{align*}
    &\mu^0= (\mathrm{id}\times\iota_G^{-1})_{\#}\left[ \big(1-G^\infty(\hat{z})\big) \mu  \Lcorner \overline{\Omega} \times \B \right] \in \mathscr{M}(\overline{\Omega} \times \V), \\
    &\mu^\infty =\mu  \Lcorner \overline{\Omega} \times \mathbb{S}_G \in \mathscr{M}(\overline{\Omega}\times \mathbb{S}_G),
\end{align*}
The second one for $\Phi \in \E_G(\Omega,\V)$ 
$$
\nu(\Phi)= \int_{ \Omega}\int_\V \Phi(x,z)\dif \nu_x(z) \dif x + \int_{\overline \Omega}\int_{\mathbb{S}_G} \Phi^\infty(x,z)\dif \nu^\infty_x(z) \dif \lambda (x).
$$
This means by identification
$$
\mu^0=\Leb \Lcorner \Omega \otimes \nu_x 
$$
and
$$
\mu^\infty= \lambda \otimes \nu_x^\infty.
$$
The connection with the homogeneous setting - cfr. Section \ref{hahn banach section} where we have $(\nu^0,\nu^\infty) \in \mathscr{M}_1^+(\V) \times \mathscr{M}^+(\mathbb{S}_G)$ -  can be done by taking any reference $\Omega$ of Lebesgue measure $1$, for example $\XX=(-\frac{1}{2},\frac{1}{2})^n$, and no $x$-dependence, so $\nu_x=\nu^0$, $\nu_x^\infty=\nu^\infty$, and therefore
$$
(\mu^0,\mu^\infty) =(\Leb(\Omega)\nu^0, \lambda(\overline{\Omega})\nu^\infty) = (\nu^0, \lambda(\overline{\Omega})\nu^\infty)\in \mathscr{M}_1^+(\V) \times \mathscr{M}^+(\mathbb{S}_G)
$$
since in general $\lambda(\overline{\Omega})\neq 1$.
\end{remo}

\subsection{Kantorovich norm}\label{kantorovich norm subsection}
Let $(X,\text{d}_X)$ be a compact metric space. The dual of the Banach space $\CC(X)$ with the supremum norm is the space of signed bounded Radon measure $\mathscr{M}(X)$, normed by the total variation. The subspace $\text{LIP}(X)$ of all Lipschitz functions $\Phi:X \to \R$ is a Banach space under the norm
$$
\|\Phi\|_{\text{LIP}}= \sup_{x\in X} |\Phi(x)|+\text{lip}(\Phi), \quad \text{lip}(\Phi)= \sup_{x,y \in X, x \neq y} \frac{|\Phi(x)-\Phi(y)|}{\text{d}_X(x,y)}.
$$
The Kantorovich norm of a signed bounded Radon measure $\mu$ on $X$ is given by
$$
\|\mu\|_{\text{K}}=\sup\{ \langle \mu,\Phi \rangle : \Phi \in \text{LIP}(X), \|\Phi\|_{\text{LIP}}\leq 1\}.
$$
As a subspace of $\CC(X)^*$, the space of positive Radon measures $\mathscr{M}^+(X)$ also inherits the weak$^\ast$ topology:
$$
\left\{ O \cap \mathscr{M}^+(X): O \in \sigma(\CC(X)^*,\CC(X)) \right\}.
$$
\begin{lemma}
On $\mathscr{M}^+(X)$ the relative weak$^\ast$ topology is exactly the topology determined by the Kantorovich metric $\emph{d}_{\emph{K}}$.  
\end{lemma}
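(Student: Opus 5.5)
We prove the two implications separately, using on $\mathscr{M}^+(X)$ the standard fact that bounded Lipschitz functions are uniformly dense in $\CC(X)$ when $X$ is compact, by Stone--Weierstrass or by explicit inf-convolution $\Phi_k(x)=\inf_y\{\Phi(y)+k\,\mathrm{d}_X(x,y)\}$.

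\emph{Kantorovich convergence implies weak$^*$ convergence.} Let $\mu_j\to\mu$ in $\mathrm{d}_{\mathrm{K}}$. Testing with the constant function $1$, which has $\|1\|_{\mathrm{LIP}}=1$, gives $\mu_j(X)\to\mu(X)$. Since the measures are positive, this yields a uniform bound on $|\mu_j|(X)$. Given $\Phi\in\CC(X)$ and $\eps>0$, pick a Lipschitz $\Psi$ with $\|\Phi-\Psi\|_\infty<\eps$. Then
\[
|\langle\mu_j-\mu,\Phi\rangle|\le 2\eps\sup_j\mu_j(X)+\|\Psi\|_{\mathrm{LIP}}\,\|\mu_j-\mu\|_{\mathrm{K}},
\]
so $\mu_j\wstar\mu$. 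For the topological statement rather than the sequential one, we repeat this at the level of neighbourhoods. A basic weak$^*$ neighbourhood of $\mu$ in $\mathscr{M}^+(X)$ is $\{\rho:|\langle\rho-\mu,\Phi_i\rangle|<\eps,\ i=1,\dots,m\}$. We show it contains a $\mathrm{d}_{\mathrm{K}}$-ball around $\mu$ of radius $r$ small. On such a ball $\rho(X)\le\mu(X)+r$, so the same $\eps/3$ splitting applies.

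\emph{Weak$^*$ convergence implies Kantorovich convergence.} This is the main step, since it requires uniformity over the unit ball of $\mathrm{LIP}(X)$. By Arzel\`a--Ascoli, the set $\mathcal{K}=\{\Phi:\|\Phi\|_{\mathrm{LIP}}\le1\}$ is compact in $\CC(X)$. Given $\eps>0$, choose a finite $\eps$-net $\Phi_1,\dots,\Phi_N$ of $\mathcal{K}$ in the sup norm. Take a weak$^*$ neighbourhood $U$ of $\mu$ in $\mathscr{M}^+(X)$ determined by $1,\Phi_1,\dots,\Phi_N$ with tolerance $\eps$. For $\rho\in U$ we have $\rho(X)\le\mu(X)+\eps$, and for every $\Phi\in\mathcal{K}$ there is $i$ with $\|\Phi-\Phi_i\|_\infty<\eps$. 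Hence
\[
|\langle\rho-\mu,\Phi\rangle|\le\eps\big(\rho(X)+\mu(X)\big)+\eps\le\eps\big(2\mu(X)+2\big).
\]
So $U$ lies in a $\mathrm{d}_{\mathrm{K}}$-ball of radius $O(\eps)$, which shows that every $\mathrm{d}_{\mathrm{K}}$-ball around $\mu$ contains a relative weak$^*$ neighbourhood of $\mu$.

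Together the two inclusions of neighbourhood bases prove that the topologies coincide. The only delicate point is positivity: it gives the mass control $|\rho|(X)=\rho(X)$, read off from testing with $1$. Without it the statement fails on all of $\mathscr{M}(X)$, because signed measures need not be uniformly bounded.
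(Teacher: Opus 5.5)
Your proof is correct. The paper states this lemma without proof, but it uses exactly your mechanism for the analogous bounded-Lipschitz distance $d$ in the metrizability paragraph of Section~\ref{compactification section} and in the martingale step of Proposition~\ref{inhomogenization ac part}: uniform density of Lipschitz functions in $\CC(X)$ for one inclusion, and a finite Arzel\`a--Ascoli net of the $\mathrm{LIP}$ unit ball for the other, with local mass control from positivity and testing against $1$; only your closing aside needs qualification, since it presupposes $X$ infinite (for finite $X$ the two topologies agree on all of $\mathscr{M}(X)$).
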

Therefore the distance between two positive measures in $\mathscr{M}^+(X)$ amounts to
$$
\|\mu-\nu\|_{\text{K}} = \sup_{\Phi \in \text{LIP}(X), \|\Phi\|_{\text{LIP}}\leq 1 } \left|\int_X \Phi \dif \mu - \int_X \Phi \dif \nu \right|.
$$
These results are used for the compact metric space $X=\overline{\BG}$, with the Euclidean distance. The next remark clarifies a relation for the integrands of this space.
\begin{remo}\label{lip remark}
By definition, the isomorphism $T$ in \eqref{definition of T homogeneous setting} sends each $\Phi \in \HH$ to $T\Phi \in  \CC(\overline{\B})$, and $\Phi=(1+G^\infty)\,T\Phi\circ\iota_G$. With Lipschitz constants on $\V$ and on $\overline{\B}\subset\V$ computed with respect to the Euclidean distance, the two functions satisfy
$$
\text{lip}(\Phi)\leq \text{lip}(G^\infty)\max|T\Phi|+\Big(1+\frac{\text{lip}(G^\infty)}{c}\Big)\text{lip}(T\Phi)\leq L\,\|T\Phi\|_{\text{LIP}},
$$
where $c$ is the constant in \textup{(A2)} and $L$ is defined in \eqref{lip constant L}; for $G^\infty=|\cdot|$ this reads $\text{lip}(\Phi)\leq \max|T\Phi|+2\,\text{lip}(T\Phi)\leq 2\|T\Phi\|_{\text{LIP}}$. Indeed, let $x,y\in\V$; by symmetry we may assume that $G^\infty(y)\leq G^\infty(x)$. Then
\begin{multline*}
|\Phi(x)-\Phi(y)|=\Big|\big(G^\infty(x)-G^\infty(y)\big)\,T\Phi\big(\iota_G(x)\big)+\big(1+G^\infty(y)\big)\Big(T\Phi\big(\iota_G(x)\big)-T\Phi\big(\iota_G(y)\big)\Big)\Big| \\
\leq \text{lip}(G^\infty)\max|T\Phi|\,|x-y| + \big(1+G^\infty(y)\big)\,\text{lip}(T\Phi)\,\big|\iota_G(x)-\iota_G(y)\big|,
\end{multline*}
and we are left to prove that
$$
\big(1+G^\infty(y)\big)\big|\iota_G(x)-\iota_G(y)\big|\leq \Big(1+\frac{\text{lip}(G^\infty)}{c}\Big)|x-y|.
$$
Since $x\big(1+G^\infty(y)\big)-y\big(1+G^\infty(x)\big)=(x-y)\big(1+G^\infty(y)\big)+y\big(G^\infty(y)-G^\infty(x)\big)$, we have
\begin{multline*}
\big(1+G^\infty(y)\big)\big|\iota_G(x)-\iota_G(y)\big|=\frac{\big|x\big(1+G^\infty(y)\big)-y\big(1+G^\infty(x)\big)\big|}{1+G^\infty(x)}\\
\leq \frac{1+G^\infty(y)+\text{lip}(G^\infty)|y|}{1+G^\infty(x)}\,|x-y|,
\end{multline*}
and the claim follows from $G^\infty(y)\leq G^\infty(x)$ and $|y|\leq c^{-1}G^\infty(y)\leq c^{-1}\big(1+G^\infty(x)\big)$.
\end{remo}
\subsection{Extension of \texorpdfstring{$\bv_0$}{bv0} functions}
\begin{proof}[Proof of Proposition \ref{Prop bv_0}]
Find countably many relatively compact open sets $\Omega_i, i \in \N$, so that $\Omega_i \Subset \Omega_{i+1}$, $\Omega= \bigcup_i \Omega_i$, with smooth boundary (always possible by mollification, thus they have zero Lebesgue measure) and so that 
$$
|Du|\left(\bigcup_i \partial \Omega_i\right)=0,
$$
which is possible by $|Du|(\Omega)< \infty$. Take any test function $\eta \in \CC_c^{\infty}(\R^n,\R^n)$, then by $\bv$ definition
$$
\langle Dv, \eta \rangle= -\int_{\R^n} v  \di \eta \, \dd x,
$$
but $v=u$ in $\Omega$ and $0$ outside so
$$
\langle Dv, \eta \rangle= -\int_{\R^n} v  \di \eta \, \dd x = -\int_{\Omega} u\di \eta \, \dd x = -\lim_j \int_{\Omega} \phi_j \di \eta \, \dd x,
$$
here we used the $\LL^1(\Omega)$ convergence only. We still need to use the strict convergence $\emph{and}$ the compact support of $\phi_j$ inside $\Omega$ (otherwise, they could be just smooth and the strict convergence approximation holds likewise). First of all, for each $j \in \N$ eventually there exists some $i=i(j) \in \N$ for which the smooth set $\Omega_{i}$ previously constructed contains $\text{supp}(\phi_j)$ for all $i \geq i(j)$. Set $\Omega_{i(j)}$ and there we can apply the smooth Gauss-Green theorem: 
\begin{align*}
-\lim_j \int_{\Omega} \phi_j \di \eta \, \dd x &= -\lim_j \int_{\Omega_{i(j)}} \phi_j \di \eta \, \dd x \\ &= \lim_j \left( \int_{\Omega_{i(j)}} \nabla \phi_j \cdot \eta \, \dd x - \int_{\partial \Omega_{i(j)}} \phi_j \, \eta \, \dd \Ha \right) \\
&= \lim_j \int_{\Omega_{i(j)}} \nabla \phi_j \cdot \eta \, \dd x = \lim_j \int_{\Omega} \nabla \phi_j \cdot \eta \, \dd x .
\end{align*}
Now, in general measure theory holds for $\mu_j,\mu \in \mathscr{M}(\Omega,\R^n)$
\begin{equation}\label{general measure theory upgrade convergence}
\begin{cases}
    \mu_j \wstar \mu \\ 
    |\mu_j|(\Omega) \to |\mu|(\Omega)
\end{cases} \implies \mu_j \to \mu \quad w^*\text{-}\CC_b(\Omega,\R^n)
\end{equation}
meaning in duality with continuous bounded functions in $\Omega$, see below for a proof. Applying this to $D\phi_j=\nabla \phi_j  \Leb, Du$ and $\eta \in \CC_c^{\infty}(\R^n,\R^n) $, with $\eta|_\Omega \in \CC_b(\Omega,\R^n)$ 
$$
\lim_j \int_{\Omega} \nabla \phi_j \cdot \eta \, \dd x  = \int_{\Omega} \eta \cdot \dd Du  = \int \eta \cdot \dd (Du \Lcorner \Omega).
$$
So
$$
\langle Dv, \eta \rangle = \int \eta \cdot \dd (Du \Lcorner \Omega).
$$
This says also that $v \in \bv(\R^n)$, with $Dv= Du \Lcorner \Omega$. \\
\emph{Proof of} \eqref{general measure theory upgrade convergence}: First fix $i\in \N$ so large that 
$$
|Du|(\Omega \setminus \Omega_i)\leq \frac{1}{i}.
$$
Find $\psi_i \in \CC_c(\Omega)$, $\psi_i \equiv 1 $ in $\overline{\Omega}_i$, $0 \leq \psi_i \leq 1$. Then by triangle inequality
\begin{multline*}
\left|\int_{\Omega} \nabla \phi_j \cdot \eta \, \dd x - \int_{\Omega} \eta \cdot \dd Du  \right| \leq \left|\int_{\Omega} \nabla \phi_j \cdot \eta\psi_i \, \dd x - \int_{\Omega} \eta \psi_i \cdot \dd Du  \right| \\+ \left|\int_{\Omega} \nabla \phi_j \cdot \eta \, \dd x - \int_{\Omega} \nabla \phi_j \cdot \eta\psi_i \, \dd x   \right|+ \left| \int_{\Omega} \eta \psi_i \cdot \dd Du - \int_{\Omega} \eta \cdot \dd Du \right|= I_j + I'_j + I''.
\end{multline*}
Since $\eta \psi_i \in \CC_c(\Omega)$ for each $i \in \N$, by $\D\phi_j \wstar Du$ in $\Omega$ 
$$
\lim_{j \to \infty} I_j = 0.
$$
For the second term $I_j'$, by the choice of $\psi_i$
\begin{multline*}
\limsup_{j\to \infty}I'_j=    \limsup_{j\to \infty} \left| \int_\Omega \nabla \phi_j \cdot \eta(1-\psi_i) \, \dd x \right| \leq \norm{\eta}_\infty   \limsup_{j\to \infty} |D\phi_j|(\Omega\setminus\Omega_i)\\ = \norm{\eta}_\infty  \limsup_{j\to \infty} \big(|D\phi_j|(\Omega) - |D\phi_j|(\Omega_i)\big)=  \norm{\eta}_\infty \big(|Du|(\Omega) - \liminf_{j \to \infty}|D\phi_j|(\Omega_i) \big) \\
\leq \norm{\eta}_\infty \big(|Du|(\Omega) - |Du|(\Omega_i) \big) = \norm{\eta}_\infty |Du|(\Omega \setminus \Omega_i),
\end{multline*}
by the lower semicontinuity of the total variation on the open set $\Omega_i$, given by $\D\phi_j \wstar Du$. \\ The third term $I''$ is constant in $j$ and
$$
I''= \left| \int_{\Omega} \eta(1-\psi_i)\cdot Du \right| \leq \norm{\eta}_\infty |Du|(\Omega \setminus\Omega_i). 
$$
Finally by the previous three estimates
$$
\limsup_{j \to \infty} \left|\int_{\Omega} \nabla \phi_j \cdot \eta \, \dd x - \int_{\Omega} \eta \cdot \dd Du  \right| \leq 2\norm{\eta}_\infty |Du|(\Omega \setminus\Omega_i) \leq 2\norm{\eta}_\infty \, \frac{1}{i},
$$
and let $i \to \infty$.
\end{proof}
\section*{Acknowledgements}
The author wishes to thank his supervisor, Jan Kristensen, who suggested the problem considered here together with the main ideas for its solution, and who provided guidance throughout this work. This paper forms part of the author’s doctoral thesis.
\section*{Use of AI tools}
A large language model was used in two limited ways in the preparation of
this paper. First, it suggested the definition of the dyadic martingale
applied to Young measures, namely the conditional expectations of
Section~\ref{inhomogenization section}; it had no part in the estimates of
that section or in their proofs, which are the author's own. The use of a martingale argument for Young measures was a long-standing idea of the author's supervisor. Second, it was
used to help with the exposition of Sections~\ref{wave cone section}
and~\ref{G strict rigidity principle}, whose core mathematical ideas are all
already present in Section~\ref{convergence of energy section}. Apart from
that definition, all the ideas and the mathematics of the paper predate the
use of AI tools; the ideas were suggested by the author's supervisor, Jan
Kristensen, as part of the author's doctoral project. All of the main results, arguments and techniques presented here in were developed solely by the author. The author takes full
responsibility for the content of the paper. 
\section*{Statements and Declarations}

\subsection*{Funding}
Open access funding provided by Scuola Internazionale Superiore di Studi
Avanzati -- SISSA within the CRUI-CARE Agreement.

\subsection*{Competing interests}
The author has no relevant financial or non-financial interests to disclose.

\subsection*{Data availability}
Data sharing not applicable to this article as no datasets were generated or
analysed during the current study.

\end{document}